\documentclass{article}
\usepackage[utf8]{inputenc}
\usepackage[a4paper, total={7in, 10in}]{geometry}
\usepackage{amsmath}
\usepackage{graphicx}
\usepackage{caption}
\usepackage{subcaption}
\usepackage{amsfonts}
\usepackage{amssymb}
\usepackage{amsthm}
\usepackage{mathtools}
\usepackage{cite}
\newtheorem{theorem}{Theorem}[section]

\newtheorem{definition}{Definition}[subsection]
\numberwithin{equation}{section}
\makeatletter
\def\maketag@@@#1{\hbox{\m@th\normalfont\normalsize#1}}
\makeatother
\DeclareMathOperator*{\esssup}{ess\,sup}

\providecommand{\keywords}[1]
{{\small\textbf{Key words.} #1}}

\usepackage{xcolor}

\title{The least squares RBF-PU method for linear elasticity in the diaphragm geometry\footnotemark[1]}
\date{January 2023}
\author{Andreas Michael\footnotemark[3] \and Elisabeth Larsson\footnotemark[3] \and Pierre-Fr\'ed\'eric Villard\footnotemark[4] \and Davoud Mirzaei\footnotemark[3]}

\begin{document}
\renewcommand{\thefootnote}{\fnsymbol{footnote}}
\footnotetext[1]{This work was supported by the Swedish Research Council, grant
no.~2020-03488. The computations were partially enabled by resources in projects SNIC 2022/22-238, NAISS 2023/22-580 and NAISS 2024/22-774 provided by the National Academic Infrastructure for Supercomputing in Sweden (NAISS) at UPPMAX, funded by the Swedish Research Council through grant agreement no. 2022-06725.}
\footnotetext[3]{ Department of Information Technology, Uppsala University, Uppsala, Sweden (andreas.michael@it.uu.se, elisabeth.larsson@it.uu.se, davoud.mirzaei@it.uu.se).}
\footnotetext[4]{Universit\'e de Lorraine, CNRS, Inria, LORIA, Nancy, France (pierrefrederic.villard@loria.fr).}

\maketitle

\begin{abstract}
The human diaphragm is vital for respiration and its simulation can aid in further understanding complications arising in individuals that have been subjected to mechanical ventilation. The diaphragm geometry is very thin and hence its numerical simulation poses a challenge for most numerical methods. In this paper, we compute the deformation of real diaphragm geometries which are modelled as linearly elastic and are subject to realistic boundary conditions. The method used is the unfitted least-squares radial basis function partition of unity method, which is well suited to solve problems on thin geometries given its unfitted nature and the shape and refinement of the cylindrical patches that are used. We derive a theoretical proof showing that the method converges with high order to the solution of this problem. 
\end{abstract}

\keywords{radial basis function, least squares, partition of unity, partial differential equation, linear elasticity, diaphragm, RBF-PU, RBF-PUM.}

\section{Introduction}
Understanding how biological structures work is the driving application for the theory and method development in this work. As part of the INVIVE project~\cite{invive_project}, we aim to compute the deformation of the human diaphragm as a first step towards understanding breathing impairments related to ventilator induced diaphragmatic dysfunction (VIDD) \cite{llano2012mechanisms}. The diaphragm is a skeletal muscle, whose deformation subject to external load can be modelled in the macroscopic regime by complex constitutive laws which consider the anisotropy of the muscle and its non-linear behaviour under high stresses. In this work we specifically use the diaphragm geometry, but in a simplified setting with isotropic linearly elastic constitutive laws, as a driving application to implement and analyse a high order least-squares (LS) radial basis function (RBF) partition of unity method (PUM) that can solve elastic partial differential equations (PDEs) on complex thin 3D geometries. 


The diaphragm is considered a thin body as its thickness is much smaller compared to its other two dimensions. The human diaphragms we consider have an approximate thickness of 2--5 mm, with the other two dimensions being determined by the dimensions of the human torso. Figure~\ref{fig:diaphragm_Recon} shows an examples of two geometries reconstructed from medical image data.

In engineering applications, and sometimes in biomechanical modelling, plane stress approximations are applied for thin geometries, such that the geometry is reduced from a thin volume to a surface. A shell finite element approximation was applied to a human diaphragm simulation in~\cite{Pato11} and the diaphragm geometry was modelled as a hemispherical surface in~\cite{CheBor22}.
While assuming plane stress reduces the dimensionality of the problem and improves computational efficiency, we believe it would not be accurate enough to use for a detailed simulation of the human diaphragm.
The approximation only holds for very thin structures and becomes less accurate close to the edges of the thin geometry \cite{TheoryOfElasticity1951}. Additionally, the thickness of the diaphragm is not uniform, especially during the breathing cycle (see \cite{diaphragmRBFFD2022}). We are interested in the stress distribution along its thin dimension to fully capture the loading distribution both during natural breathing and during mechanical ventilation. 
A detailed 3D finite element (FEM) model of the diaphragm is derived in~\cite{Coelho18} and 3D FEM models involving also the lungs and rib cage can be found in~\cite{Zhang16,Ladjal25}. These models can predict the movement of the diaphragm well, but due to computational cost, do not resolve the diaphragm fully, making it hard to study the internal stress distribution.

A specific property of biological systems is the inherent smoothness of the geometries involved. For the diaphragm, we also expect that the physiological deformation is smooth. If this smoothness is captured by the mathematical and numerical modeling of the problem, a high-order method can be effective in reducing the computational cost. In this work we use a high-order unfitted LS-RBF-PU method~\cite{LarsonShcherbakovHeryudono2017,larsson2024rbfpartitionunitymethod}, which has specific advantages for computational domains with a thin dimension, such as the diaphragm geometry. Having an unfitted approximation lets us resolve the thin dimension with a larger fill distance $h$, than would be needed for a fitted discretization.


Using radial basis function approximation in a partition of unity method was first suggested by Babu\v{s}ka and Melenk in~\cite{BabMel97}.
The first realizations of the RBF-PUM idea can be found in~\cite{LaMo02,Wend02}.
The method has been used to solve PDEs in, e.g., option pricing~\cite{ShcheLa16}, and for glacial ice flow~\cite{AhlShche17}. Other types of RBF-PUM have also been introduced, such as the direct RBF-PUM by Mirzaei~\cite{Mirzaei21}.
It was shown in~\cite{LarsonShcherbakovHeryudono2017} that collocation RBF-PUM in the PDE context has limitations regarding approximation stability for large scale discretization. An unfitted oversampled version of the method LS-RBF-PUM was introduced and was shown to have good stability properties, lower computational cost, and the metod also simplifies meshfree node distribution significantly. LS-RBF-PUM was used in~\cite{larsson2024rbfpartitionunitymethod} for reconstruction of the diaphragm geometry as well as for solving Poisson problems in the same geometry. Here we apply LS-RBF-PUM to the more challenging elastic system of PDEs for the first time. In particular, we use medical image data to set up boundary conditions that reflect the real deformation problem for the diaphragm that we want to study.



There has previously not been a complete theoretical framework for analysis of RBF-PUM or LS-RBF-PUM applied to a PDE problem. In this work, we use analysis techniques from least squares FEM \cite{bochev2009least}, ideas from~\cite{stableRBFFD2021}, where convergence for a least-squares RBF-FD approximation was analysed, and the techniques introduced in~\cite{larsson2022numerical} for quadrature errors, to derive a complete and rather general convergence proof for unfitted LS-RBF-PUM. The proof techniques can be used for other well-posed problem settings.

The paper is organized as follows: Section~\ref{sec:prob} introduces the displacement formulation of the continuous linear elasticity system of PDEs. Then in Section~\ref{sec:method}, the least-squares formulation of the PDE problem and its LS-RBF-PUM discretization are derived. Section~\ref{sec:theory} is dedicated to theoretical analysis of the continuous and discretized problems. Implementation and numerical results are discussed in Section~\ref{sec:res}, followed by conclusions in Section~\ref{sec:conc}. In addition, two appendices provide detailed proofs for results needed in Section~\ref{sec:theory}.

\section{Problem formulation}\label{sec:prob} 
\subsection{Linear elasticity}
We aim to compute the displacement field, $\mathbf{u} = \left(u_x,u_y,u_z\right)^T$ for thin linear elastic three dimensional bodies. Thin bodies refer to ones for which one of their dimensions is considerably smaller to the other two (see for example Figures \ref{fig:diaphragm_Recon}). We define the displacement of a deformed body, $\mathbb{B}$ as
\begin{equation}
    \mathbf{u}(\mathbf{X}_0) = \mathbf{x}(\mathbf{X}_0) - \mathbf{X}_0,
\end{equation}
where $\mathbf{x}$ is the position of a particle on $\mathbb{B}$ in the deformed or current configuration, $\Omega$ and $\mathbf{X}_0$ is the position of the same particle in the undeformed or reference configuration, $\Omega_0$. We assume that the body obeys momentum balance \cite[p.~145]{Holzapfel} described by Cauchy's equation of equilibrium, used to formulate the following boundary value problem:
\begin{align}
    \begin{split}
        -\nabla\cdot\boldsymbol\sigma = \mathbf{f}, & \hspace{0.5cm} \text{in} \hspace{0.1cm} \Omega, \\
        \boldsymbol\sigma\cdot\mathbf{n} = \mathbf{t}, & \hspace{0.5cm} \text{on} \hspace{0.1cm} \partial\Omega_1,\\ 
        \mathbf{u} = \mathbf{g}, & \hspace{0.5cm} \text{on} \hspace{0.1cm} \partial\Omega_0,\\
    \end{split}
\end{align}
where $\boldsymbol\sigma$ is the second order stress tensor and $\boldsymbol\sigma=\sigma_{ij}\mathbf{e}_i\otimes\mathbf{e}_j$, where $\otimes$ is the tensor product and ${\mathbf{e}_i}$ form a Cartesian basis, $\mathbf{n}$ is the outward boundary normal, $\mathbf{f}$ is internal forcing, $\mathbf{t}$ is the traction on boundary segment $\partial\Omega_1$ and $\mathbf{g}$ the displacement on boundary segment $\partial\Omega_0$.  The system of differential equations is reformulated in terms of displacement using the following linear constitutive relation between stress and displacement:
\begin{equation}
\boldsymbol\sigma = 2\mu\boldsymbol\epsilon + \lambda tr(\boldsymbol\epsilon)\mathbf{I},
\label{eq:ConstitutiveHook}
\end{equation}
where $\boldsymbol\epsilon$ is the second order strain tensor given by the symmetric gradient of the displacement
\begin{equation}
    \boldsymbol\epsilon = \frac{1}{2}(\nabla\mathbf{u} + \nabla^T\mathbf{u}),
\end{equation}
where $tr(\boldsymbol\epsilon) = \epsilon_{ii}$ and $\mathbf{I}$ is the unit tensor. The Lamé parameters $\mu$, $\lambda$ are defined as:
\begin{equation}
\mu = \frac{E}{2(1+\nu)}, \hspace{1cm} \lambda = \frac{E\nu}{(1+\nu)(1-2\nu)},
\end{equation}
where $E, \nu$ are the Young's Modulus and Poisson's ratio of the modelled material, where $\nu\in[0,0.5)$. 

The resulting boundary value problem with respect to the displacement is given by the Navier-Cauchy equations:
\begin{align}
    \begin{split}
        -\mu\nabla^2\mathbf{u} - (\lambda + \mu)\nabla(\nabla\cdot\mathbf{u}) = \mathbf{f}, & \hspace{0.5cm} \text{in} \hspace{0.1cm} \Omega, \\
        (\mu(\nabla\mathbf{u} + \nabla^T\mathbf{u}) + \lambda(\nabla\cdot\mathbf{u})\mathbf{I})\cdot\mathbf{n} = \mathbf{t}, & \hspace{0.5cm} \text{on} \hspace{0.1cm} \partial\Omega_1,\\ 
        \mathbf{u} = \mathbf{g}, & \hspace{0.5cm} \text{on} \hspace{0.1cm} \partial\Omega_0,\\ 
    \end{split}
    \label{eq:linear_elasticity_displacement}
\end{align}
where $\nabla^2$ is the vector Laplacian and $\Omega\subset\mathbb{R}^d$ and $d=3$ for this problem. However we note that the dimension number is only used for certain theoretical results, whereas general results relating to the approximation method are proven for arbitrary $d$.

\subsection{Geometry}
The method used and further described in Section \ref{sec:RBFPU} has been developed for thin geometries and specifically to discretise  the linear elasticity problem on realistic reconstructed human diaphragm muscles. Diaphragm geometries were reconstructed by first segmenting 3D Computed Tomography (CT) scans. In this work we solve problems on two different reconstructed diaphragms which will henceforth be referred to as diaphragm 1 shown in Figure \ref{fig:diaphragm1_Recon} and diaphragm 2 shown in Figure \ref{fig:diaphragm2_Recon}. The reconstruction is computed from the medical image data as described in \cite{larsson2024rbfpartitionunitymethod}. Hence the surface is implicitly parametrised using level set function, $l(y) = 0$ which we use to generate both interior and boundary points. We are interested in solving benchmark problems on this geometry to both support our theoretical estimates and show that the method can work on problems arising from more realistic scenarios.

\begin{figure}
    \centering
    \begin{subfigure}{.3\textwidth}
        \centering
        \includegraphics[width=\linewidth]{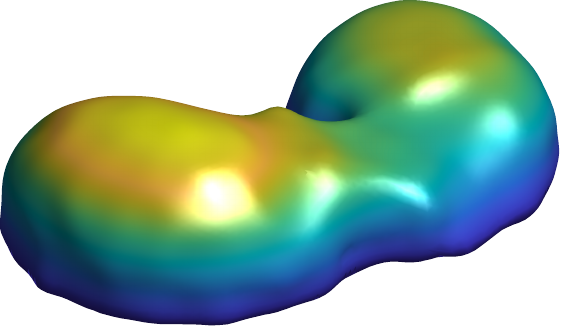}
        \caption{}
        \label{fig:diaphragm1_Recon}
    \end{subfigure}
    \hspace{2cm}
    \begin{subfigure}{.3\textwidth}
        \centering
        \includegraphics[width=\linewidth]{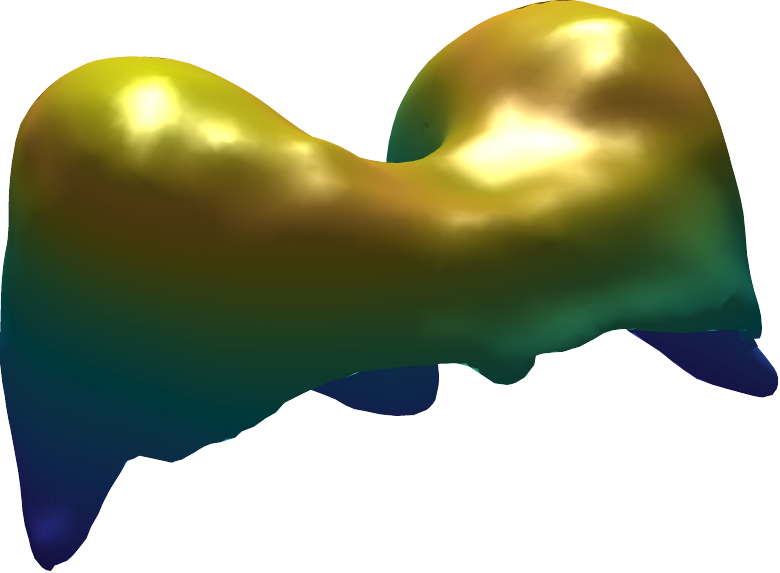}
        \caption{}
        \label{fig:diaphragm2_Recon}
    \end{subfigure}
    \caption{(a) Smooth reconstruction of diaphragm 1 (anterosuperior view). (b) Smooth reconstruction of diaphragm 2 (anterior view).  }
    \label{fig:diaphragm_Recon}
\end{figure}

\section{Unfitted Least Squares RBF-PU method}\label{sec:method}
\subsection{Definitions of spaces, norms and notation}  \label{sec:Definitions}
We start by defining the spaces and norms used throughout this article. Additionally we note that all functions are defined on a bounded domain $\Omega\subseteq\mathbb{R}^d$, with boundary $\partial\Omega$. The Lebesgue inner products for scalar, vector and tensor valued functions $v:\mathbb{R}^d\rightarrow\mathbb{R}$, $\mathbf{v}:\mathbb{R}^d\rightarrow\mathbb{R}^d$, $\mathbf{A}:\mathbb{R}^d\rightarrow\mathbb{R}^{d\times d}$ are defined as
\begin{align}
    \begin{split}
        &\left(u,v\right)_{L_2(\Omega)} = \int_{\Omega}u(\mathbf{y})v(\mathbf{y})d\mathbf{y}, \hspace{0.5cm}\left(\mathbf{u},\mathbf{v}\right)_{\mathbf{L}_2(\Omega)} = \int_{\Omega}\mathbf{u}(\mathbf{y})\cdot\mathbf{v}(\mathbf{y})d\mathbf{y},\\
    &\left(\mathbf{A},\mathbf{B}\right)_{\tilde{\mathbf{L}}_2(\Omega)} = \int_{\Omega}\mathbf{A}(\mathbf{y}):\mathbf{B}(\mathbf{y})d\mathbf{y}, 
    \end{split}
    \label{eq:Lebesgue_Products}
\end{align}
where we use the tensor inner product
\begin{equation}
    \mathbf{A}:\mathbf{B} = \sum_{i,j=1}^d A_{ij}B_{ij}.
\end{equation} 

We additionally define the equivalent norms
\begin{align}
    \begin{split}
        &\left\|v\right\|^2_{L_2(\Omega)} = \left(v,v\right)_{L_2(\Omega)} = \int_{\Omega}\left\|v\right\|^2_2d\mathbf{y},\hspace{0.25cm} \left\|\mathbf{v}\right\|^2_{\mathbf{L}_2} = \left(\mathbf{v},\mathbf{v}\right)_{\mathbf{L}_2(\Omega)} = \int_{\Omega}\left\|\mathbf{v}\right\|^2_2d\mathbf{y}, \\ &\left\|\mathbf{A}\right\|^2_{\tilde{\mathbf{L}}_2(\Omega)} = \left(\mathbf{A},\mathbf{A}\right)_{\tilde{\mathbf{L}}_2(\Omega)} = \int_{\Omega}\left\|\mathbf{A}\right\|_2^2 d\mathbf{y},     
    \end{split}
    \label{eq:Lebesgue_Norms}
\end{align}
where $|\cdot|$ is the absolute value, $\left\|\cdot\right\|_2$ is the Euclidian 2-norm and $\|\cdot\|_2$ is the tensor norm for a higher order tensor, equivalent to the Frobenius norm for a second order tensor \cite{tensors2009}. $\|\cdot\|_2$ will always denote these norms unless stated otherwise. Hence the Lebesgue spaces for scalar, vector valued and tensor valued functions are defined using the equivalent norms as
\begin{align}
    \begin{split}
        & L_2(\Omega):=\left\{v:\left\|v\right\|_{L_2(\Omega)}<\infty\right\}, \hspace{0.25cm}
        \mathbf{L}_2(\Omega):=\left\{\mathbf{v}:\left\|\mathbf{v}\right\|_{\mathbf{L}_2(\Omega)}<\infty\right\}, \\ 
        & \tilde{\mathbf{L}}_2(\Omega):=\left\{\mathbf{A}:\left\|\mathbf{A}\right\|_{\mathbf{L}_2(\Omega)}<\infty\right\},
    \end{split}
    \label{eq:Lebesgue_Spaces}
\end{align}
all equipped with the Lebesgue inner products for functions $u,v\in L_2(\Omega)$,  $\mathbf{u},\mathbf{v}\in \mathbf{L}_2(\Omega)$ and \\$\mathbf{A},\mathbf{B}\in\tilde{\mathbf{L}}_2$. Additionally, we introduce multi-index $\zeta = \left(\zeta_1,\zeta_2,\dots,\zeta_d\right)$, with order
\begin{equation}
    \left|\zeta\right| = \sum_{i=1}^d\zeta_i,
\end{equation}
where $d$ is the dimension. The derivative operator, $\zeta$ applied to any function $v(\mathbf{x}):\mathbb{R}^d\rightarrow \mathbb{R}$ is  
\begin{equation}
    D^{\zeta}v = \frac{\partial^{|\zeta|}v}{\partial^{\zeta_1}x_1\partial^{\zeta_2}x_2\dots\partial^{\zeta_d}x_d},
    \label{eq:multiIndex}
\end{equation}
and similarly for tensor valued functions, $\mathbf{A}(\mathbf{x}):\mathbb{R}^d\rightarrow\mathbb{R}^{d\times d}$, we have $D^{\zeta}\mathbf{A} = D^{\zeta}A_{ij}$, for $i,j=1,\dots,d$.

In general, unless it is necessary to prove specific properties in vector or tensor valued function spaces, we will use the notation for scalar functions, $v$. For example the $L_2(\Omega)$ space, norm and inner product will refer to either of the definitions in \eqref{eq:Lebesgue_Products},~\eqref{eq:Lebesgue_Norms},~\eqref{eq:Lebesgue_Spaces}. We follow this to avoid overly specific definitions for situations where the correct space can be inferred based on the tensor rank. We can hence introduce the following Sobolev spaces (here equivalent to Hilbert spaces) for any tensor valued function, $v$, non-negative integer $k \in\mathbb{N}_0$ and fraction $s\in\left(0,1\right)$:
\begin{subequations}
\begin{align}
        & W^{k}_2(\Omega) \equiv H^k(\Omega) :=\{v\in L_2(\Omega):\left\|v\right\|_{H^k} <\infty\}, \\
        & W^{s}_2(\Omega) \equiv H^{s}(\Omega) :=\{v\in L_2(\Omega):\left\|v\right\|_{H^s} <\infty\},
\end{align}\label{eq:Sobolev_Spaces}
\end{subequations}
where $W^{s}_2$ is a fractional Sobolev space with properties discussed in \cite{FractionalSobolev2012}. Both spaces are equipped with norms
\begin{subequations}
\begin{align}
        & \left\|v\right\|^2_{H^k(\Omega)} = \sum_{|\zeta|\leq k}\left\|D^{\zeta}v\right\|^2_{L_2(\Omega)}, \label{eq:Sobolev_norm}\\
        & \left\|v\right\|^2_{H^s(\Omega)} = \left\|v\right\|^2_{L_2(\Omega)} + \left[v\right]^2_{H^{s}(\Omega)} = \left\|v\right\|^2_{L_2(\Omega)} + \int_{\Omega}\int_{\Omega}\frac{\left\|v(\mathbf{x}) - v(\mathbf{y})\right\|_2^2}{\left\|\mathbf{x} - \mathbf{y}\right\|^{d+2s}_2}d\mathbf{x}d\mathbf{y}. \label{eq:FracSobolev_norm}
\end{align}
\end{subequations}
Note that given order $\tau\in\mathbb{N}_0$ and the tensor $L_2-\text{norm}$ \eqref{eq:Lebesgue_Norms}, we can also define the Sobolev norm as
\begin{equation}
    \left\|v\right\|^2_{H^k(\Omega)} = \sum_{\alpha = 0}^k\left\|D^{\tau}v\right\|^2_{L_2(\Omega)}, \label{eq:Sobolev_norm_2}
\end{equation}
where for example $D^0v = v$, $D^1 v \triangleq \nabla v = \partial_i v$ is the gradient operator and $D^2 v \triangleq \partial_{ij} v$ is the Hessian operator, for $i,~j = 1,\dots,d$. Hilbert norms over the boundary, $\partial\Omega$ are defined using tangential derivatives as in \cite[Definition 2.11]{SurfPDEs_Dziuk_Elliott_2013}.

Additionally, we define the native Hilbert space for the Gaussian kernel, $\phi(r) = e^{-\epsilon^2 r^2}$ \cite[Theorem 10.12]{Wendland_2004}, \cite{rieger2010sampling}
\begin{equation}
        \mathcal{N}_{G}(\mathbb{R}^d) := \left\{v\in C(\mathbb{R}^d)\cup L_2(\mathbb{R}^d):\left\|v\right\|^2_{\mathcal{N}_G}:=\int_{\mathbb{R}^d}\|\hat{v}(\omega)\|^2e^{\frac{r^2}{4\varepsilon^2}}d\omega < \infty \right\}, 
        \label{eq:NativeSpaceRd}
\end{equation}
where $\varepsilon$ is the shape parameter, $\hat{v}$ is the Fourier transform of $v$ and we also define the native space of the Gaussian in domain $\Omega$ including all the functions that vanish outside of $\Omega$ as
\begin{equation}
        \mathcal{N}_{G}(\Omega) := \left\{v\in \mathcal{N}_G(\mathbb{R}^d)~:~v|_{\Omega} = 0\right\}^{\perp_{\mathcal{N}_G}}.
        \label{eq:NativeSpace}
\end{equation}
We also define the discrete $l_2$-norm and inner product for scalar functions $u,v:\mathbb{R}^d\rightarrow\mathbb{R}$ as given in \cite{stableRBFFD2021}
\begin{equation}
    \left(u,v\right)_{l_2(\Omega)} = \sum_{j=1}^MW_iu(\mathbf{y}_i)v(\mathbf{y}_i), \hspace{0.5cm} \left(u,u\right)_{l_2(\Omega)} = \left\|u\right\|^2_{l_2(\Omega)},
    \label{eq:l2}
\end{equation}
where $M$ is the number of evaluation points used to approximate the inner product and $W_i$ are weights giving an approximate measure of node density around each evaluation point $\mathbf{y}_i$. The weights are explicitly defined in Section \ref{sec:RBFPU_4}. Similarly it is possible to compute the inner products for vector and tensor valued functions (as explained for the continuous case). The discrete counterpart to the continuous Hilbert norm from \eqref{eq:Sobolev_norm}, $\|\cdot\|_{h_2(\Omega)}$ can similarly be defined using the discrete $l_2$-norm.
\subsection{Least Squares weak form}\label{sec:LSWF}
Using least squares we reformulate our boundary value problem \eqref{eq:linear_elasticity_displacement} to the following minimization problem:
\begin{align}
    & \min_{\mathbf{u}\in H^2(\Omega)}J(\mathbf{u};\mathbf{f},\mathbf{t},\mathbf{g}), \\
    & J(\mathbf{u};\mathbf{f},\mathbf{t},\mathbf{g}) = \left\|\nabla \cdot\boldsymbol{\sigma} + \mathbf{f} \right\|^2_{L_2(\Omega)} + \left\|\kappa_0(\mathbf{u} - \mathbf{g}) +  \kappa_1(\boldsymbol{\sigma}\cdot\mathbf{n} - \mathbf{t})\right\|^2_{H^{1/2}(\partial\Omega)},
    \label{eq:energy_functional}
\end{align}
where we assume that the solution, $\mathbf{u}\in H^2(\Omega)$ is smooth enough and the boundary norm $\left\|\cdot\right\|_{H^{1/2}(\partial\Omega)}$ is chosen such that the weak problem is well posed. In addition, the coefficients are smooth enough, $\kappa_0\in C^1(\Omega),~\kappa_1\in C^1(\Omega)$ and vary such that $k_0\in(\varepsilon,1),~k_1\in(\varepsilon,1)$, where the constant is strictly positive, $\varepsilon > 0$ and $\varepsilon<<1$. The specific choice of boundary conditions is further justified in Section \ref{sec:theory}. The minimization problem is equivalent to the following weak formulation solved for $\mathbf{u}\in H^2(\Omega)$
\begin{equation}
\alpha(\mathbf{u},\mathbf{v}) = l(\mathbf{v}), \hspace{1cm} \forall\mathbf{v}\in H^2(\Omega),
\label{eq:LSQWF}
\end{equation}
where $\alpha(\mathbf{u},\mathbf{v})$ is a continuous and coercive bilinear form and $l(\mathbf{v})$ a continuous linear form, defined as
\begin{align}
    & \alpha(\mathbf{u},\mathbf{v}) = -\left(\nabla\cdot\boldsymbol{\sigma}(\mathbf{u}),\nabla\cdot\boldsymbol{\sigma}(\mathbf{v})\right)_{L_2(\Omega)} + \left(\kappa_0\mathbf{u} + \kappa_1\boldsymbol\sigma(\mathbf{u})\cdot\mathbf{n},\kappa_0\mathbf{u} + \kappa_1\boldsymbol\sigma(\mathbf{v})\cdot\mathbf{n}\right)_{H^{1/2}(\partial\Omega)}, \label{eq:ContBilinear}\\ 
    & l(\mathbf{v}) = \left(\mathbf{f},\nabla\cdot\boldsymbol{\sigma}(\mathbf{v})\right)_{L_2(\Omega)} + \left(\kappa_0\mathbf{g} + \kappa_1\mathbf{t},\kappa_0\mathbf{u} + \kappa_1\boldsymbol\sigma(\mathbf{v})\cdot\mathbf{n}\right)_{H^{1/2}(\partial\Omega)}.  
    \label{eq:ContLinear}
\end{align}
Using our numerical method, we search for a discrete solution to the problem $\mathbf{u}_h$ from the finite dimensional space $V_h(\Omega)$, where $V_h(\Omega)\subset H^2(\Omega)$. The above weak form is defined using fractional Sobolev inner products which are impractical to compute numerically. Hence, we approximate the bilinear form using weighed boundary terms as
\begin{align}
    & \tilde{\alpha}(\mathbf{u}_h,\mathbf{v}) = -\left(\nabla\cdot\boldsymbol{\sigma}(\mathbf{u}_h),\nabla\cdot\boldsymbol{\sigma}(\mathbf{v})\right)_{L_2(\Omega)} + \beta\left(\gamma\kappa_0\mathbf{u}_h + \kappa_1\boldsymbol\sigma(\mathbf{u}_h)\cdot\mathbf{n},\gamma\kappa_0\mathbf{u}_h + \kappa_1\boldsymbol\sigma(\mathbf{v})\cdot\mathbf{n}\right)_{L^{2}(\partial\Omega)}, \label{eq:ContWeightBilinear}\\ 
    & \tilde{l}(\mathbf{v}) = \left(\mathbf{f},\nabla\cdot\boldsymbol{\sigma}(\mathbf{v})\right)_{L_2(\Omega)} + \beta\left(\gamma\kappa_0\mathbf{g} + \kappa_1\mathbf{t},\gamma\kappa_0\mathbf{u} + \kappa_1\boldsymbol\sigma(\mathbf{v})\cdot\mathbf{n}\right)_{L^{2}(\partial\Omega)},
    \label{eq:ContWeightLinear}
\end{align}
where weights $\beta,~\gamma$ are used to ensure the new bilinear form is coercive for all $v\in V_h(\Omega)$ and can be used to compute error estimates (see Section \ref{sec:theoryDisc}). An additional practical modification involves approximating the $L_2(\Omega)$ inner products using a quasi-Monte Carlo quadrature rule which corresponds to solving a modified discrete weak form to find solution $\mathbf{u}_h\in V_h(\Omega)\subset H^2(\Omega)$ given by
\begin{equation}
    \alpha_h(\mathbf{u}_h,\mathbf{v}) = l(\mathbf{v}), \hspace{1cm} \forall\mathbf{v}\in V_h(\Omega),
    \label{eq:disc_WF}
\end{equation}
where the discrete bilinear and linear forms are defined using the $l_2$ inner product as
\begin{align}
    & \alpha_h(\mathbf{u}_h,\mathbf{v}) = -\left(\nabla\cdot\boldsymbol{\sigma}(\mathbf{u}_h),\nabla\cdot\boldsymbol{\sigma}(\mathbf{v})\right)_{l_2(\Omega)} + \beta\left(\gamma\kappa_0\mathbf{u}_h + \kappa_1\boldsymbol\sigma(\mathbf{u}_h)\cdot\mathbf{n},\gamma\kappa_0\mathbf{u}_h + \kappa_1\boldsymbol\sigma(\mathbf{v})\cdot\mathbf{n}\right)_{l_2(\partial\Omega)}, \label{eq:DiscBilinear}\\ 
        & l_h(\mathbf{v}) = \left(\mathbf{f},\nabla\cdot\boldsymbol{\sigma}(\mathbf{v})\right)_{l_2(\Omega)} + \beta\left(\gamma\kappa_0\mathbf{g} + \kappa_1\mathbf{t},\gamma\kappa_0\mathbf{u}_h + \kappa_1\boldsymbol\sigma(\mathbf{v})\cdot\mathbf{n}\right)_{l_2(\partial\Omega)}.\label{eq:DiscLinear}
\end{align}

The space $V_h$ is spanned by a specific set of global basis functions $\left\{\psi_i\right\}^N_{i=1}$, such that the global solution can be approximated as
\begin{equation}
    \mathbf{u}_h(\mathbf{y}) = \sum_{i=1}^{N}\boldsymbol{\rho}(\mathbf{x}_i)\psi_i(\mathbf{y}),
    \label{eq:Global_interpolant}
\end{equation}
where $\boldsymbol{\rho}(\mathbf{x}_i) = (\rho_x(\mathbf{x}_i),\rho_y(\mathbf{x}_i),\rho_z(\mathbf{x}_i))$ are nodal coefficients and $\psi_i$ the basis functions.

\subsection{Oversampled and unfitted RBF Partition of Unity method}\label{sec:RBFPU}

We use the oversampled and unfitted RBF-PU method to construct the cardinal basis functions. Initially, the bounded domain $\Omega\subseteq\mathbb{R}^d$ is covered by overlapping patches $\{\Omega_k\}^{P}_{k=1}$, where $\Omega\subset\tilde{\Omega}=\cup_{k = 1}^{P}\Omega_k$. Here we use an adaptive algorithm which generates one layer of cylindrical patches in $\mathbb{R}^3$. The generated patches are of similar volume, $V_k = \mathfrak{C}_VR_k^{d-1}H_k$, such that they fully enclose the thin body and are close to its boundary, where $\mathfrak{C}_V$ is a constant and the patch radius $R_k$ is the distance from the centre to the boundary of the patch in the tangential direction to the surface. Parameters such as the patch size and the overlap are controlled by the algorithm, further discussed in \cite{larsson2024rbfpartitionunitymethod}. An example of the generated patches on the 3D diaphragm geometry are shown in Figure \ref{fig:patches_1}.

\begin{figure}
\centering
    \begin{subfigure}{.4\textwidth}
        \centering
        \includegraphics[width=\linewidth]{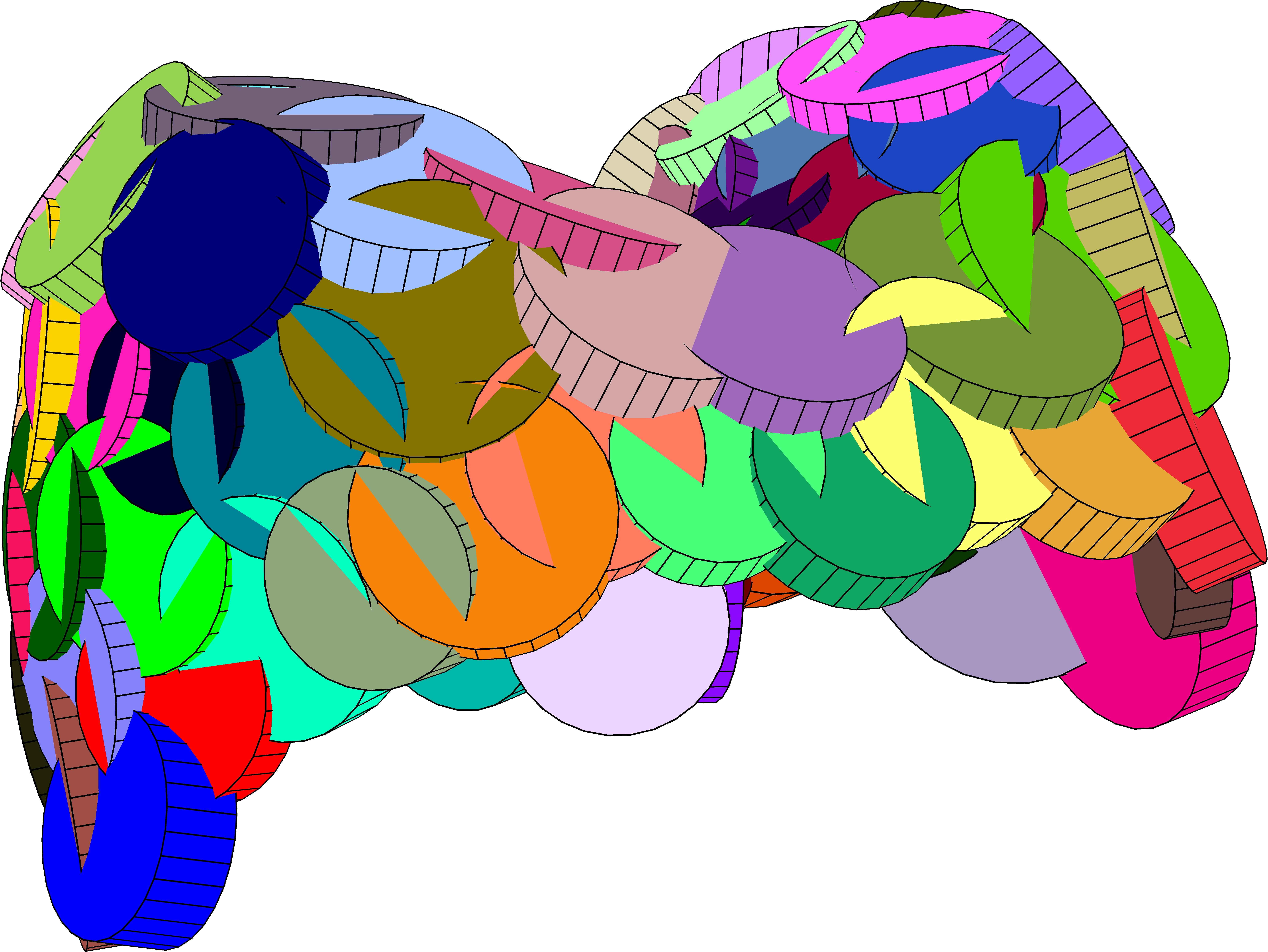}
        \caption{}
        \label{fig:patches_1}
    \end{subfigure}
    \hspace{2cm}
    \begin{subfigure}{.45\textwidth}
        \centering
        \includegraphics[width=\linewidth]{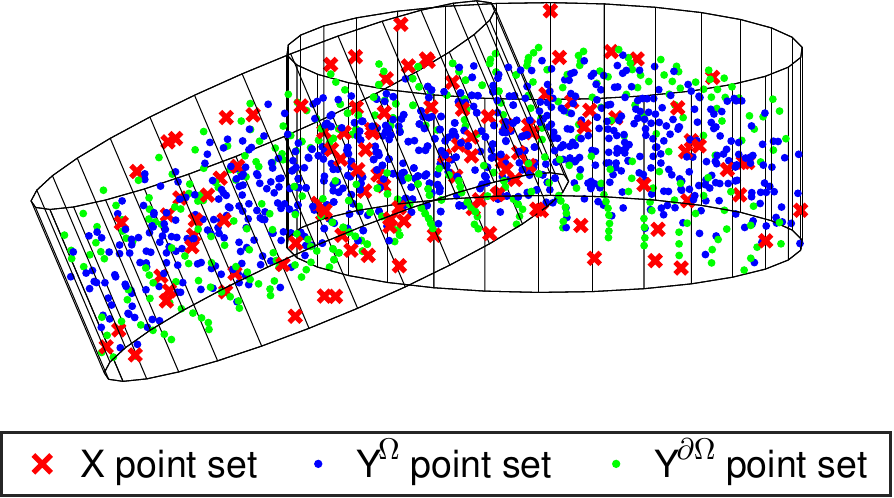}
        \caption{}
        \label{fig:patches_2}
    \end{subfigure}
    \caption{(a) Patch cover over diaphragm 1 geometry (anterior view). (b) Cylindrical overlapping patches with both centre (X) and evaluation point sets (Y) on the boundary and interior of the domain. Oversampling, $q=5$ used for visualisation.}
    \label{fig:patches}
\end{figure}

The RBF-PU method is mesh-free so there is no underlying mesh connectivity but rather scattered point sets which cover the domain and patches. We use two different global point sets $Y = \{\mathbf{y}_j\}_{j=1}^M\subseteq \Omega$ and $X = \{\mathbf{x}_j\}_{j=1}^N\subseteq\tilde\Omega$, referred to as the the evaluation or integration point set and the node or centre point set respectively shown in Figure \ref{fig:patches_2}. The naming conventions arise from the fact that the approximation is evaluated on the $Y$ point set, meaning it is also used to numerically approximate the integrals in the weak formulation \eqref{eq:disc_WF} and the fact that the cardinal functions are centred on the $X$ point set. Both are quasi-uniform, constructed using the Halton sequence and since this is an oversampled method the number of evaluation points is larger $M>N$. The centre points are constructed locally on each patch, such that $X= \cup_{k=1}^P X_k$, where $P$ is the number of patches, and they do not sample the boundary of the domain. Conversely, the $Y$ point set consists of points sampled both on the boundary and domain, indexed by index sets $Y^{\partial\Omega}$, and $Y^{\Omega}$ respectively. Both the boundary points and normals $\{\mathbf{n}^{(k)}_j\}_{j = 1}^{M_{\partial\Omega}}$ are sampled from an implicit representation of the surface.

We start by constructing the local cardinal basis in each patch. To improve computations we use a template centre point layout, $X_0$ in a reference patch $\Omega_0$, and the linear map $T_k:\Omega_k\rightarrow\Omega_0$ such that each patch can be defined as
\begin{equation}
    \Omega_k:=\{\mathbf{x} = T_k^{-1}(\mathbf{x}')~:~\forall\mathbf{x}'\in\Omega_0\}.
    \label{eq:referencePatchMap_1}
\end{equation}
The local interpolant $\mathbf{u}_h^{(k)}:\mathbb{R}^d\rightarrow \mathbb{R}^d$ defined in patch $\Omega_k$ is hence given by
\begin{equation}
\mathbf{u}^{(k)}_h(\mathbf{x}) = \tilde{\mathbf{u}}_h^{(k)}(T_k(\mathbf{x})) \triangleq \tilde{\mathbf{u}}^{(k)}_h(\mathbf{x}'),
\end{equation}
where $\tilde{\mathbf{u}}^{(k)}_h$ is the interpolant in the reference patch. The linear map, $T_k$ includes a translation and rotation of the patch as well as a scaling in the radial and vertical directions and is given by
\begin{equation}
    \mathbf{x}' = T_k(\mathbf{x}) = S_kQ_k\mathbf{x} - C_k, 
    \label{eq:referencePatchMap}
\end{equation}
where $S_k$ is the diagonal scaling matrix, $Q_k$ the orthogonal rotation matrix and $C_k$ the translation matrix, given column vectors $\mathbf{x},~\mathbf{x}'$. Note that for cylindrical patches in $d=3$, first two diagonal elements of $S_k$ are equal and scale the radius of the patch with ${}_{sc}R_{k}$, while the third element scales the height with ${}_{sc}H_{k}$.  
For more details on how the map is computed and used in the code refer to \cite{larsson2024rbfpartitionunitymethod}. The interpolant is constructed using a linear combination of RBFs as
\begin{equation}
    \tilde{\mathbf{u}}^{(k)}_h(\mathbf{x}') = \sum_{j=1}^{n}\boldsymbol{\lambda}^{(k)}_j\phi(\|\mathbf{x}'-\mathbf{x}'_j\|_2),
    \label{eq:RBFinterpolantLocal}
\end{equation}
where $\mathbf{x}_j'\in X_0$ are the template centre points, $\tilde{\mathbf{u}}_h(\mathbf{x}') = ({}_{1}{\tilde{u}}_h(\mathbf{x}'),\dots,{}_{d}{\tilde{u}_h}(\mathbf{x}'))$ is the d-dimensional interpolant in the reference patch and $n$ are the number of points in each patch, $\boldsymbol{\lambda}_j= ({}_{1}\lambda_j,\dots,{}_{d}\lambda_j)$ is a coefficient vector and $\phi:\mathbb{R}^d\rightarrow\mathbb{R}$ is the univariate basis function. To achieve spectral convergence required for sufficiently accurate results with a limited amount of points we use an infinitely smooth RBF. We choose the Gaussian RBF, $\phi(r) = e^{-(\varepsilon r)^2}$, where $r = \|\mathbf{x}' - \mathbf{x}_j'\|_2$ and $\varepsilon$ is the shape parameter. A small shape parameter will lead to increasingly flat RBFs as we refine point set $X$. However, we are able to perform stable computations with Gaussian RBFs as outlined in \cite{RBFQR}. The coefficient vector is computed once in the reference patch by solving
\begin{equation}
    A\boldsymbol{\lambda} = \tilde{\mathbf{u}}^{(k)}_h(X_0),
    \label{eq:linear_system_patch}
\end{equation}
where $A$ is the RBF matrix with elements $A_{ij} = \phi(\|\mathbf{x}'_i - \mathbf{x}'_j\|_2)$ which is symmetric and strictly positive definite \cite{Schoenberg1938}. Given $\tilde{\mathbf{u}}_h^{(k)}$ is a vector, the above system is solved $d$ times, once for each component and $\mathbf{u}_h^{(k)}(X_k) = \tilde{\mathbf{u}}_h^{(k)}(X_0) = (\tilde{\mathbf{u}}_h^{(k)}(\mathbf{x}'_1),\dots,\tilde{\mathbf{u}}_h^{(k)}(\mathbf{x}'_n))^T$ are the nodal values on centre points of patch $k$. The interpolant $\mathbf{u}_h^{(k)}$ is then evaluated on the $Y$ point set
\begin{equation}
    \mathbf{u}^{(k)}_h(\mathbf{y}) = \tilde{\mathbf{u}}^{(k)}_h(T_k(\mathbf{y}')) = \tilde{\mathbf{u}}^{(k)}_h(\mathbf{y}') = \sum_{j=1}^{n}\tilde{\psi}_j(\mathbf{y}')\tilde{\mathbf{u}}^{(k)}_h(\mathbf{x}'_j),
    \label{eq:RBFinterpolantLocal2}
\end{equation}
where we denote $\mathbf{y}'\in\Omega_0$, while $\mathbf{y}\in\Omega$. The local cardinal functions in the reference patch are defined as 
\begin{equation}
    \tilde{\underline{\psi}}(\mathbf{y}') = \underline{\phi}(r)A^{-1},
    \label{eq:cardinalBasis}
\end{equation}
and the basis function vectors are defined as $\tilde{\underline{\psi}}(\mathbf{y}') = (\tilde{\psi}_1(\mathbf{y}'),...,\tilde{\psi}_n(\mathbf{y}'))$ and $\underline{\phi}(r) = (\phi(\left\|\mathbf{y}' - \mathbf{x}'_1\right\|),... , \\ \phi(\left\|\mathbf{y}' - \mathbf{x}'_n\right\|))$. 

The global approximation is then constructed using partition of unity weight functions as
\begin{equation}
    \mathbf{u}_h(\mathbf{y}) = \sum_{k=1}^{P}\sum_{j=1}^{n}w_k(\mathbf{y})\psi^{(k)}_j(\mathbf{y})\mathbf{u}_h^{(k)}(\mathbf{x}_j),
    \label{eq:global_func_approx}
\end{equation}
where $\psi_j^{(k)}(\mathbf{y})$ are transformed cardinal basis functions to the physical patch $\Omega_k$, such that $\psi_j^{(k)}(\mathbf{y}) = \tilde{\psi}_j^{(k)}(T_k(\mathbf{y}))$. Function ${w_k(\mathbf{y})}\in C^{\gamma}(\Omega_k)$ is the $\gamma$-stable compactly supported weight function on patch $k$, with property $\sum_{k=1}^Pw_k(\mathbf{y}) = 1$ in $\Omega$. Note that $\mathbf{u}_h(\mathbf{y})$ here is a general global approximant and not necessarily the solution to \eqref{eq:disc_WF}.

For the problem we solve, the weights are generated using the $C^2$ compactly supported Wendland function \cite{Wendland1995PiecewisePP}, defined for $d \leq 3$ as
\begin{equation}
    f_W(r) = (4r+1)(1-r)_{+}^4,
\end{equation}
such that the basis is smooth enough. The function is generated by taking a tensor product of the two dimensional function and the one dimensional function on the reference patch (see \cite{larsson2024rbfpartitionunitymethod}). The weight functions are then constructed using Shepard's method \cite{Shepard1968ATI,larsson2024rbfpartitionunitymethod}. Additionally, any linear operator can be applied to both the weight functions and cardinal functions \eqref{eq:global_func_approx} as
\begin{equation}
    \mathcal{L}\mathbf{u}_h(\mathbf{y}) = \sum_{k=1}^{P}\sum_{j=1}^{n}\mathcal{L}(w_k(\mathbf{y})\psi^{(k)}_j(\mathbf{y}))\mathbf{u}_h^{(k)}(\mathbf{x}_j).
    \label{eq:global_approx_linearOp}
\end{equation}

Note that we can write \eqref{eq:global_func_approx} and \eqref{eq:global_approx_linearOp} in terms of global basis functions, similar to \eqref{eq:Global_interpolant} as
\begin{equation}
    \psi_i(\mathbf{y}) =  w_k(\mathbf{y})\psi_j^{(k)}(\mathbf{y}),
    \label{eq:globalCardinalFunc}
\end{equation}
where $w_k(\mathbf{y})$ are the locally supported weight functions \eqref{eq:global_func_approx},~\eqref{eq:global_approx_linearOp} and 
we have the index transformation, $i = (k-1)n + j$. 

\subsection{Discrete least squares formulation}\label{sec:RBFPU_4}
A discretisation of our linear elasticity problem using the oversampled and unfitted RBF partition of unity method follows. We start by providing a reformulation of our problem \eqref{eq:linear_elasticity_displacement} in terms of different domain and boundary operators $\mathcal{L},~\mathcal{B}_1,~\mathcal{B}_0$
\begin{align}
        \mathcal{L}\mathbf{u} = \mathbf{f}, & \hspace{0.5cm} \text{in} \hspace{0.1cm} \Omega, \label{eq:StrongPDE}\\
        \left(\kappa_0\mathcal{B}_0 + \kappa_1\mathcal{B}_1\right)\mathbf{u} = \kappa_0\mathbf{g} + \kappa_1\mathbf{t}, & \hspace{0.5cm} \text{on} \hspace{0.1cm} \partial\Omega, 
    \label{eq:StrongBC}
\end{align}
where $\mathbf{u} = (u_x,u_y,u_z)^T \in H^2(\Omega)$, $\mathbf{f}\in L_2(\Omega),~\mathbf{g}\in H^{1/2}(\partial\Omega),~\mathbf{t}\in H^{1/2}(\partial\Omega),~\kappa_0\in C^1(\partial\Omega),~\kappa_1\in C^1(\partial\Omega)$. To ensure a unique solution we assume $\kappa_0,~\kappa_1 > 0$ (see Appendix \ref{app:ContinuousBounds}). The operators for linear elasticity are
\makeatletter 
 \def\@eqnnum{{\normalsize \normalcolor (\theequation)}} 
  \makeatother
{\scriptsize
\begin{eqnarray}
    \begin{aligned}
    & \mathcal{L} = -\begin{pmatrix}(\lambda+2\mu)\nabla_{xx}+\mu(\nabla_{yy} + \nabla_{zz}) & (\lambda + \mu)\nabla_{xy} & (\lambda + \mu)\nabla_{xz} \\
    (\lambda + \mu)\nabla_{yx} & (\lambda + 2\mu)\nabla_{yy} + \mu(\nabla_{xx} + \nabla_{zz}) & (\lambda + \mu)\nabla_{yz} \\
    (\lambda + \mu)\nabla_{zx} & (\lambda + \mu)\nabla_{zy} & (\lambda + 2\mu)\nabla_{zz} + \mu(\nabla_{xx} + \nabla_{yy})\end{pmatrix}, \\
    & \mathcal{B}_1 = \begin{pmatrix}(\lambda + 2\mu)n_1\nabla_{x} + \mu(n_2\nabla_y + n_3\nabla_z) & \lambda n_1\nabla_{y} + \mu n_2\nabla_{x} & \lambda n_1\nabla_z + \mu n_3 \nabla_{x} \\
    \lambda n_2\nabla_{x} + \mu n_1\nabla_y & (\lambda + 2\mu)n_2\nabla_y + \mu(n_1\nabla_x + n_3\nabla_z) & \lambda n_2\nabla_z + \mu n_3\nabla_y \\
    \lambda n_3\nabla_x + \mu n_1\nabla_z & \lambda n_3\nabla_y + \mu n_2\nabla_z & (\lambda + 2\mu)n_3 \nabla_z + \mu(n_1\nabla_x + n_2\nabla_y)
    \end{pmatrix}, \\
    & \mathcal{B}_0 = I,
    \end{aligned}
    \label{eq:Elasticity_Operators}
\end{eqnarray}
}
where $\nabla_{x},~\nabla_{y},~\nabla_{z}$ are derivatives in the $x,~y,~z$ direction in a Cartesian coordinate frame.

Here we derive the discrete bilinear form and formulate the linear system of equations. The continuous but weighed bilinear and linear forms given in \eqref{eq:ContWeightBilinear},~\eqref{eq:ContWeightLinear} for the approximate solution, $\mathbf{u}_h\in V_h(\Omega)$ are
\begin{align}
\begin{split}
    &\tilde{\alpha}(\mathbf{u}_h,\mathbf{v}) = -\int_{\Omega} \left(\mathcal{L}\mathbf{u}_h\right)\cdot\left(\mathcal{L}\mathbf{v}\right)d\mathbf{y} + \beta\int_{\partial\Omega}\left(\kappa_1\mathcal{B}_1\mathbf{u}_h + \gamma\kappa_0\mathcal{B}_0\mathbf{u}_h\right)\cdot\left(\kappa_1\mathcal{B}_1\mathbf{v} + \gamma\kappa_0\mathcal{B}_0\mathbf{v}\right)d\mathbf{y}, \\
     &l(\mathbf{v}) = \int_{\Omega} \mathbf{f}\cdot\left(\mathcal{L}\mathbf{v}\right)d\mathbf{y} + \beta\int_{\partial\Omega}(\kappa_1\mathbf{t} + \gamma\kappa_0\mathbf{g})\cdot\left(\kappa_1\mathcal{B}_1\mathbf{v} + \gamma\kappa_0\mathcal{B}_0\mathbf{v}\right)d\mathbf{y}, \hspace{1cm} \forall\mathbf{v}\in V_h(\Omega).
\end{split}
\end{align}
We obtain a weighed weak formulation by equating the bilinear and linear forms. We additionally substitute the definition of $\mathbf{u}_h = (\sum_{j=1}^{N}\rho_x(\mathbf{x}_j)\psi_j,~\sum_{j=1}^{N}\rho_y(\mathbf{x}_j)\psi_j,~\sum_{j=1}^{N}\rho_z(\mathbf{x}_j)\psi_j)^T$ from \eqref{eq:Global_interpolant} but as a column vector for $d = 3$, and set test function $\mathbf{v} = I\psi_i$ for $i = 1,...,N$ to get
\begin{align}
\begin{split}
     -&\sum_{j=1}^N\left(\int_{\Omega} \left(\mathcal{L}\psi_i\right)^T\left(\mathcal{L}\psi_j\right)d\mathbf{y}\right)\begin{pmatrix}
        \rho_x(\mathbf{x}_j) \\ \rho_y(\mathbf{x}_j) \\ \rho_z(\mathbf{x}_j)
    \end{pmatrix} + \\ & \beta\sum_{j=1}^N\left(\int_{\partial\Omega} \left(\kappa_1\mathcal{B}_1\psi_i + \gamma\kappa_0\mathcal{B}_0\psi_i\right)^T\left(\kappa_1\mathcal{B}_1\psi_j + \gamma\kappa_0\mathcal{B}_0\psi_j\right)d\mathbf{y}\right)\begin{pmatrix}
        \rho_x(\mathbf{x}_j) \\ \rho_y(\mathbf{x}_j) \\ \rho_z(\mathbf{x}_j)
    \end{pmatrix}  = \\ 
    & \int_{\Omega} \left(\mathcal{L}\psi_i\right)^T\mathbf{f}d\mathbf{y} + \beta\int_{\partial\Omega}\left(\kappa_1\mathcal{B}_1\psi_i + \gamma\kappa_0\mathcal{B}_1\psi_i\right)^T\left(\kappa_1\mathbf{t} + \gamma\kappa_0\mathbf{g}\right)d\mathbf{y}, \hspace{1cm} i = 1,...,N,
\end{split}
\end{align}
where $I$ is the identity matrix, and which can be expressed as a linear system of equations
\begin{equation}
    \mathbf{K}\underline{\rho} = \mathbf{b},
    \label{eq:ExactIntLinearSystem}
\end{equation}
where $\underline{\rho} = (\underline{\rho}_x,\underline{\rho}_y,\underline{\rho}_z)^T = (\rho_x(\mathbf{x}_j), \rho_y(\mathbf{x}_j), \rho_z(\mathbf{x}_j))^T$, for $j = 1,\dots,N$ and $\mathbf{K}$ is the stiffness matrix.

In practice we don't solve \eqref{eq:ExactIntLinearSystem} since exact integration of the cardinal functions is not currently available and quadrature rules generally require the existence of a mesh. Alternatively we approximate the integrals using a quasi-Monte Carlo method leading to discrete bilinear and linear forms \eqref{eq:DiscBilinear},~\eqref{eq:DiscLinear} which preserve the least squares orthogonality property. We use the density of point set $Y$ in the domain and on the boundary to approximate the integrals. We approximate the node density by
\begin{align}
    \begin{split}
            W^{\Omega}_k & \triangleq \frac{\left|\Omega\right|}{\sum_{j\in Y^{\Omega}}\tilde{h}_j^d} \tilde{h}_k^d, \hspace{1.6cm} \forall k\in Y^{\Omega}, \\
            W^{\partial\Omega}_k & \triangleq \frac{\left|\partial\Omega\right|}{\sum_{j\in Y^{\partial\Omega}}\tilde{h}_{j,\partial\Omega}^{d-1}}\tilde{h}_{k,\partial\Omega}^{d-1}, \hspace{0.75cm}  \forall k\in Y^{\partial\Omega},
            \label{eq:integrationWeights}
    \end{split}
\end{align}
where approximate fill distances are $\tilde{h}_{k} = \frac{1}{d+1}\sum_{i\in Z^{\Omega}_k}\|\mathbf{y}_k - \mathbf{y}_i\|_2,~\tilde{h}_{k,\partial\Omega} = \frac{1}{d}\sum_{i\in Z^{\partial\Omega}_k}\|\mathbf{y}_k - \mathbf{y}_i\|_2$ with index sets $Z^{\Omega}_k~,Z^{\partial\Omega}_k$ for the $d+1,\text{and}~d$ closest neighbours of point $\mathbf{y}_k$ respectively and for our problem the dimension is $d = 3$. Hence, using \eqref{eq:l2}
we write \eqref{eq:disc_WF} as
\begin{align}
    \begin{split}
         -&\left(\sum_{k \in Y^{\Omega}}W^{\Omega}_k\left(\mathcal{L}\psi_i(\mathbf{y}_k)\right)^T\left(\mathcal{L}\psi_j(\mathbf{y}_k)\right)\right) \begin{pmatrix}
        \rho_x(\mathbf{x}_j) \\ \rho_y(\mathbf{x}_j) \\ \rho_z(\mathbf{x}_j)
        \end{pmatrix} + \\
        & \beta\left(\sum_{k \in Y^{\partial\Omega}}W^{\partial\Omega}_k\left(\kappa_1\mathcal{B}_1\psi_i(\mathbf{y}_k) + \gamma\kappa_0\mathcal{B}_0\psi_i(\mathbf{y}_k)\right)^T\left(\kappa_1\mathcal{B}_1\psi_j(\mathbf{y}_k) + \gamma\kappa_0\mathcal{B}_0\psi_j(\mathbf{y}_k)\right)\right) \begin{pmatrix}
        \rho_x(\mathbf{x}_j) \\ \rho_y(\mathbf{x}_j) \\ \rho_z(\mathbf{x}_j) \end{pmatrix} = \\
        & \left(\sum_{k \in Y^{\Omega}}W^{\Omega}_k\left(\mathcal{L}\psi_i(\mathbf{y}_k)\right)^T\mathbf{f}(\mathbf{y}_k)\right) + \\ & \beta\left(\sum_{k \in Y^{\partial\Omega}}W^{\partial\Omega}_k\left(\kappa_1\mathcal{B}_1\psi_i(\mathbf{y}_k) + \gamma\kappa_0\mathcal{B}_0\psi_i(\mathbf{y}_k)\right)^T\left(\kappa_1\mathbf{t}(\mathbf{y}_k) + \gamma\kappa_0\mathbf{g}(\mathbf{y}_k)\right)\right), \hspace{1cm} i = 1,...,N,
    \end{split}
\end{align}
which can be expressed as a linear system
\begin{equation}
    \begin{pmatrix}
        D^{\mathcal{L}} \\ \kappa_1D^{\mathcal{B}_1} + \kappa_0D^{\mathcal{B}_0}
    \end{pmatrix}^T
    \begin{pmatrix}
        D^{\mathcal{L}} \\ \kappa_1D^{\mathcal{B}_1} + \kappa_0D^{\mathcal{B}_0}
    \end{pmatrix} 
    \begin{pmatrix}
        \rho_x \\ \rho_y \\ \rho_z
    \end{pmatrix} = 
    \begin{pmatrix}
        D^{\mathcal{L}} \\ \kappa_1D^{\mathcal{B}_1} + \kappa_0D^{\mathcal{B}_0}
    \end{pmatrix}^T
    \begin{pmatrix}
        \underline{\mathbf{f}} \\ \kappa_1\underline{\mathbf{t}} + \kappa_0\underline{\mathbf{g}}
    \end{pmatrix},
    \label{eq:normalEqutions}
\end{equation}
where $D^{\mathcal{L}},~D^{\mathcal{B}_1},~D^{\mathcal{B}_0}$ are matrices of sizes $(3M_{\Omega}\times 3N),~(3M_{\partial\Omega}\times 3N),~(3M_{\partial\Omega}\times 3N)$ respectively and they correspond to the discretised PDE operator and boundary operators. The above linear system corresponds to the normal equations of least squares problem
\begin{equation}
    \min_{\tilde{\rho}}\|D\underline{\rho} - r\|_2
    \label{eq:leastSquaresProb}
\end{equation}
where $D$ is a $(3M \times 3N)$ rectangular matrix and $r = (\underline{\mathbf{f}},~\underline{\mathbf{t}},~\underline{\mathbf{g}})$ here is a $(3M \times 1)$ vector and weights $\beta,~\gamma$ as well as the quadrature scaling leads to a weighted least squares problem. 
\section{Theory}\label{sec:theory}
We construct convergence error estimates for the LS-RBF-PU method specifically used to solve a linear elasticity problem in the least squares weak formulation \eqref{eq:LSQWF}. This includes a proof of well-posendess for the continuous problem followed by a best approximation estimate for the discrete problem. Combined with available sampling inequalities, we derive an a-priori error estimate for our method. Connecting the discrete and continuous norms and bilinear forms relies on the robustness of the approximation, which is achieved through oversampling as shown numerically in \cite{larsson2022numerical} for the least squares RBF-FD method and is related to the existence of a discrete coercivity estimate.

\subsection{Continuous Analysis}\label{sec:theoryCont}
To the authors knowledge, the second order least squares weak form for linear elasticity was not yet proven to be well posed in the literature, hence we also prove well posedness of the continuous problem here. According to Lax-Milgram a unique solution $\mathbf{u}\in H^2(\Omega)$ exists if the bilinear form is coercive and continuous and the linear form is continuous. For the coercivity estimate we rely on estimates for elliptic PDEs in the Agmon-Douglis-Nirenberg setting \cite{ADN_2} and trace inequalities for the continuity estimates. Some  theorems to be presented require specific boundary regularity, which we assume is always available for our problem from the reconstruction using $C^{\infty}(\Omega)$ weight functions in the RBF-PU method described in \cite{larsson2024rbfpartitionunitymethod}. Throughout this proof, the continuous $L_2$ norm on $\Omega$ will be denoted as $\|\cdot\|$, while on any part of the boundary, $\partial\Omega$ it will be denoted as $\|\cdot\|_{\partial\Omega}$.

Multiple inequalities will be used in our proofs. We specifically use the trace inequality given in \cite[Theorem 5.5]{necas2011direct} adapted for vector valued functions, $\mathbf{u}\in H^{2}(\Omega)$ and used in the following form
\begin{align}
\begin{split}
    \left\|\nabla\mathbf{v}\cdot\mathbf{n}\right\|_{H^{1/2}(\partial\Omega)}\leq C_T\left\|\mathbf{v}\right\|_{H^{2}(\Omega)}, \\
    \left\|\mathbf{v}\right\|_{H^{1/2}(\partial\Omega)} \leq C'_T\left\|\mathbf{v}\right\|_{H^1(\Omega)},
    \end{split}
    \label{eq:Theorem5.5}
\end{align}
which hold when $\partial\Omega$ is of class $C^1$. 

Additionally, we use \cite[Lemma 5.3]{FractionalSobolev2012}, for any $v\in H^{1/2}(\Omega)$
\begin{align}
\left\|\kappa v\right\|_{H^{1/2}(\Omega)}\leq C_f\left\|v\right\|_{H^{1/2}(\Omega)},
\label{eq:Lemma5.3}
\end{align}
which holds for any function $\kappa\in C^0(\Omega)$ and $\kappa:\Omega\rightarrow\left[-1,1\right]$. We note that the proof for \cite[Lemma 5.3]{FractionalSobolev2012} assumes that $\kappa:\Omega\rightarrow\left[0,1\right]$ but the range can be easily extended to $\left[-1,1\right]$. Additionally, we prove in Appendix \ref{app:ContinuityBounds} that similar inequalities hold for vector and tensor valued functions as follows
\begin{align}
    \begin{split}
        \left\|\left(\nabla\cdot\mathbf{v}I\right)\cdot\mathbf{n}\right\|_{H^{1/2}(\Omega)} \leq C_f\left\|\nabla\cdot\mathbf{v}\right\|_{H^{1/2}(\Omega)}, \\
        \left\|\nabla^a\mathbf{v}\cdot\mathbf{n}\right\|_{H^{1/2}(\Omega)} \leq C'_f\left\|\nabla^a\mathbf{v}\right\|_{H^{1/2}(\Omega)}, 
    \end{split}
    \label{eq:Lemma5.3_modified}
\end{align}
where $\mathbf{n}$ is the normal to the boundary $\partial\Omega$, $\left\|\mathbf{n}\right\|_2 = 1$ and $\nabla^a\mathbf{v} = \nabla\mathbf{v} - \epsilon(\mathbf{v})$ is the antisymmetric gradient operator. 

Additionally, we construct the following estimates with the detailed derivation provided in Appendix \ref{app:ContinuityBounds} as
\begin{align}
    \begin{split}
        & \left\|\nabla\cdot\mathbf{v}\right\|_{L_2(\Omega)}\leq d^{1/2}\left\|\nabla\mathbf{v}\right\|_{L_2(\Omega)}, \hspace{0.25cm} \left\|\nabla\left(\nabla\cdot\mathbf{v}\right)\right\|_{L_2(\Omega)} \leq d^{1/2}\left\|D^2\mathbf{v}\right\|_{L_2(\Omega)}, \\
        & \left\|\nabla^a\mathbf{v}\right\|_{L_2(\Omega)}\leq\left\|\nabla\mathbf{v}\right\|_{L_2(\Omega)}, \hspace{1.1cm} \left\|\nabla\nabla^a\mathbf{v}\right\|_{L_2(\Omega)} \leq \left\|D^2\mathbf{v}\right\|_{L_2(\Omega)}, \\
        & \left\|\epsilon(\mathbf{v})\right\|_{L_2(\Omega)} \leq \left\|\nabla\mathbf{v}\right\|_{L_2(\Omega)}, \hspace{1.1cm} \left\|\nabla\cdot\epsilon(\mathbf{v})\right\|_{L_2(\Omega)}\leq d^{1/2}\left\|D^2\mathbf{v}\right\|_{L_2(\Omega)},   
        \label{eq:generalBounds}
    \end{split}
\end{align}
where $D^2$ is the Hessian and defined as $D^2\mathbf{v} = \partial_{kj}v_i$, for $i,j,k = 1,...,d$.  
\begin{theorem}
    The bilinear and linear forms, $\alpha(\cdot,\cdot)$, $l(\cdot)$ from \eqref{eq:LSQWF} are both continuous in the sense that 
    \begin{align}
        & \left|\alpha\left(\mathbf{u},\mathbf{v}\right)\right|\leq C_2\left\|\mathbf{u}\right\|_{H^2(\Omega)}\left\|\mathbf{v}\right\|_{H^2(\Omega)},\hspace{0.5cm} & \forall\mathbf{u},\mathbf{v}\in H^2(\Omega), \\
        & \left|l(\mathbf{v})\right| \leq C_3\left\|\mathbf{v}\right\|_{H^2(\Omega)},\hspace{0.5cm} & \forall \mathbf{v}\in H^2(\Omega),
    \end{align}
where $C_2$ is a constant that depends on the Lam\'e parameters $\mu$, $\lambda$, the domain $\Omega$, the dimension $d$, the Robin boundary condition operators $k_0$, $k_1$ and the outward boundary normal, $\mathbf{n}$. Similarly $C_3$ additionally depends on the data $\mathbf{f},~\mathbf{t},~\mathbf{g}$.
    \label{theorem:Continuity}
\end{theorem}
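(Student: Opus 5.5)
The plan is to split both forms into an interior term and a boundary term, and bound each by Cauchy--Schwarz in the relevant inner product. That reduces everything to bounding $\|\nabla\cdot\boldsymbol\sigma(\mathbf{v})\|_{L_2(\Omega)}$ and $\|\kappa_0\mathbf{v}+\kappa_1\boldsymbol\sigma(\mathbf{v})\cdot\mathbf{n}\|_{H^{1/2}(\partial\Omega)}$ by $C\|\mathbf{v}\|_{H^2(\Omega)}$.

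For $\alpha$ we write
\[
|\alpha(\mathbf{u},\mathbf{v})|\le \|\nabla\cdot\boldsymbol\sigma(\mathbf{u})\|\,\|\nabla\cdot\boldsymbol\sigma(\mathbf{v})\| + \|\mathcal{B}\mathbf{u}\|_{H^{1/2}(\partial\Omega)}\|\mathcal{B}\mathbf{v}\|_{H^{1/2}(\partial\Omega)},
\]
with $\mathcal{B}=\kappa_0 I+\kappa_1\boldsymbol\sigma(\cdot)\cdot\mathbf{n}$. We read the second slot of the boundary inner product in \eqref{eq:ContBilinear} as $\mathcal{B}\mathbf{v}$, which is evidently the intended meaning of the typo.

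\textbf{Interior term.} Using $\boldsymbol\sigma=2\mu\epsilon(\mathbf{v})+\lambda(\nabla\cdot\mathbf{v})I$ we get $\nabla\cdot\boldsymbol\sigma = 2\mu\nabla\cdot\epsilon(\mathbf{v})+\lambda\nabla(\nabla\cdot\mathbf{v})$. The triangle inequality together with \eqref{eq:generalBounds} then gives
\[
\|\nabla\cdot\boldsymbol\sigma(\mathbf{v})\|\le (2\mu+\lambda)d^{1/2}\|D^2\mathbf{v}\|\le (2\mu+\lambda)d^{1/2}\|\mathbf{v}\|_{H^2(\Omega)}.
\]

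\textbf{Boundary term.*} First split
\[
\|\mathcal{B}\mathbf{v}\|_{H^{1/2}(\partial\Omega)}\le \|\kappa_0\mathbf{v}\|_{H^{1/2}(\partial\Omega)}+\|\kappa_1\boldsymbol\sigma(\mathbf{v})\cdot\mathbf{n}\|_{H^{1/2}(\partial\Omega)}.
\]
Because $\kappa_i$ take values in $(\varepsilon,1)\subset[-1,1]$ and are continuous, the multiplier estimate \eqref{eq:Lemma5.3} removes them at the cost of $C_f$. The trace inequality \eqref{eq:Theorem5.5} then gives $\|\mathbf{v}\|_{H^{1/2}(\partial\Omega)}\le C_T'\|\mathbf{v}\|_{H^1}\le C_T'\|\mathbf{v}\|_{H^2}$.

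For the traction part, expand
\[
\boldsymbol\sigma(\mathbf{v})\cdot\mathbf{n}=2\mu\,\nabla\mathbf{v}\cdot\mathbf{n}-2\mu\,\nabla^a\mathbf{v}\cdot\mathbf{n}+\lambda(\nabla\cdot\mathbf{v}I)\cdot\mathbf{n},
\]
using $\epsilon=\nabla-\nabla^a$. The three pieces are handled as follows.
\begin{itemize}
\item The first piece is bounded directly by $C_T\|\mathbf{v}\|_{H^2}$ via \eqref{eq:Theorem5.5}.
\item The second and third pieces are reduced by \eqref{eq:Lemma5.3_modified} to $\|\nabla^a\mathbf{v}\|_{H^{1/2}(\partial\Omega)}$ and $\|\nabla\cdot\mathbf{v}\|_{H^{1/2}(\partial\Omega)}$.
\item These are functions in $H^1(\Omega)$, so the second trace inequality bounds them by $C_T'\|\nabla^a\mathbf{v}\|_{H^1(\Omega)}$ and $C_T'\|\nabla\cdot\mathbf{v}\|_{H^1(\Omega)}$.
\item Finally, \eqref{eq:generalBounds} bounds these $H^1$ norms by $d^{1/2}\|\mathbf{v}\|_{H^2(\Omega)}$.
\end{itemize}
Collecting the constants gives $\|\mathcal{B}\mathbf{v}\|_{H^{1/2}(\partial\Omega)}\le C_B\|\mathbf{v}\|_{H^2}$, with $C_B$ depending on $\mu,\lambda,d,C_f,C_f',C_T,C_T'$ (hence on $\Omega$ and $\mathbf{n}$) and on $\kappa_i$. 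Therefore $C_2=(2\mu+\lambda)^2d+C_B^2$.

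\textbf{Linear form.} By Cauchy--Schwarz,
\[
|l(\mathbf{v})|\le\|\mathbf{f}\|\,\|\nabla\cdot\boldsymbol\sigma(\mathbf{v})\|+\|\kappa_0\mathbf{g}+\kappa_1\mathbf{t}\|_{H^{1/2}(\partial\Omega)}\|\mathcal{B}\mathbf{v}\|_{H^{1/2}(\partial\Omega)}.
\]
Applying \eqref{eq:Lemma5.3} to the data term gives $C_3=(2\mu+\lambda)d^{1/2}\|\mathbf{f}\|+C_f(\|\mathbf{g}\|_{H^{1/2}}+\|\mathbf{t}\|_{H^{1/2}})C_B$.

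\textbf{Main obstacle.} The delicate step is the traction term. The multiplier lemma \eqref{eq:Lemma5.3} and its vector version \eqref{eq:Lemma5.3_modified} are stated on $\Omega$, but they must be used on the $(d-1)$-dimensional boundary, where $\mathbf{n}$ is only as regular as $\partial\Omega$. I would invoke them on $\partial\Omega$, assuming the smooth reconstruction so that $\mathbf{n}$ and $\kappa_i$ are at least $C^1$. I would also need to check that the trace step applied to $\nabla^a\mathbf{v}$ and $\nabla\cdot\mathbf{v}$ is legitimate. If the fractional multiplier estimate proved awkward, the fallback is to extend $\mathbf{n}$ smoothly into $\Omega$. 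Then the products $\nabla\mathbf{v}\,\mathbf{n}$ lie in $H^1(\Omega)$ with norm at most $C\|\mathbf{n}\|_{C^1}\|\mathbf{v}\|_{H^2}$, and a single trace inequality finishes the bound.
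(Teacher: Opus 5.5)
Your proposal is correct and follows the paper's proof essentially step for step. Both use Cauchy--Schwarz on the two inner products and the interior bound $\|\nabla\cdot\boldsymbol\sigma(\mathbf{v})\|\le d^{1/2}(2\mu+\lambda)\|\mathbf{v}\|_{H^2(\Omega)}$. Both split $\boldsymbol\sigma(\mathbf{v})\cdot\mathbf{n}$ through $\epsilon(\mathbf{v})=\nabla\mathbf{v}-\nabla^a\mathbf{v}$ and handle the pieces with \eqref{eq:Lemma5.3_modified}, \eqref{eq:Theorem5.5} and \eqref{eq:generalBounds}, and both give the analogous estimate for $l$.

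Your fallback is actually the cleaner way to justify the boundary step. Extending $\mathbf{n}$ and $\kappa_i$ as $C^1$ functions into $\Omega$ puts $\kappa_0\mathbf{v}+\kappa_1\boldsymbol\sigma(\mathbf{v})\cdot\mathbf{n}$ in $H^1(\Omega)$ with norm at most $C\|\mathbf{v}\|_{H^2(\Omega)}$, so a single trace inequality finishes. This avoids applying the fractional multiplier lemma on $\partial\Omega$, which in any case requires Lipschitz multipliers, not merely continuous ones.
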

\begin{proof}
    We start by using the Cauchy-Schwarz inequality, the constitutive law for stress \eqref{eq:ConstitutiveHook} and the triangle inequality to get
    \begin{align}
        \begin{split}
            \left|\alpha\left(\mathbf{u},\mathbf{v}\right)\right| 
            = & \left|-\left(\nabla\cdot\sigma(\mathbf{u}),\nabla\cdot\sigma(\mathbf{v})\right)_{L_2(\Omega)} + \left(\kappa_0\mathbf{u} + \kappa_1\sigma(\mathbf{u})\cdot\mathbf{n},\kappa_0\mathbf{v} + \kappa_1\sigma(\mathbf{v})\cdot\mathbf{n}\right)_{H^{1/2}(\partial\Omega)} \right| \\
            \leq & \left\|\nabla\cdot\sigma(\mathbf{u})\right\|\left\|\nabla\cdot\sigma(\mathbf{\mathbf{v}})\right\| + \left\|\kappa_0\mathbf{u} + \kappa_1\sigma(\mathbf{u})\cdot\mathbf{n}\right\|_{H^{1/2}(\partial\Omega)}\left\|\kappa_0\mathbf{v} + \kappa_1\sigma(\mathbf{v})\cdot\mathbf{n}\right\|_{H^{1/2}(\partial\Omega)}.
            \label{eq:continuityBound1}
        \end{split}
    \end{align}
    We find bounds for the domain and boundary norms separately and combine the estimate in the end. Starting with the domain term, we use the definition for the constitutive law \eqref{eq:ConstitutiveHook}, noting that the trace of the strain is equal to the divergence of the displacement. Additionally, we use bounds \eqref{eq:generalBounds} to bound both terms with respect to the norm of the Hessian operator as
    \begin{align}
        \begin{split}
            \left\|\nabla\cdot\sigma(\mathbf{u})\right\| = & \left\|2\mu\nabla\cdot\epsilon(\mathbf{u}) + \lambda\nabla\cdot\left(\nabla\cdot\mathbf{u}I\right)\right\| \leq 2\mu\left\|\nabla\cdot\epsilon(\mathbf{u})\right\| + \lambda\left\|\nabla\cdot\left(\nabla\cdot\mathbf{u}I\right)\right\| 
            \\ \leq & 
            2d^{1/2}\mu\left\|D^2\mathbf{u}\right\| + d^{1/2}\lambda\left\|D^2\mathbf{u}\right\| =  d^{1/2}\left(2\mu + \lambda\right)\left\|D^2\mathbf{u}\right\| \\ 
            \leq & d^{1/2}\left(2\mu + \lambda\right)\left\|\mathbf{u}\right\|_{H^{2}(\Omega)},        \label{eq:domainNormContinuity}  
        \end{split}
    \end{align}
    where in the final step we used the definition of the Sobolev norm definition \eqref{eq:Sobolev_Spaces}. Similarly we try to find a bound for the boundary norm in terms of the same Sobolev norm. We start by using the fractional Sobolev bound \eqref{eq:generalBounds}, the constitutive law \eqref{eq:ConstitutiveHook}, the definition for the symmetric gradient tensor, $\epsilon(\mathbf{v}) = \nabla\mathbf{v} - \nabla^{\alpha}\mathbf{v}$ and the inequalities \eqref{eq:Lemma5.3_modified}, to get
    \begin{align}
        \begin{split}
            \left\|\kappa_0\mathbf{u} + \kappa_1\sigma(\mathbf{u})\cdot\mathbf{n}\right\|_{H^{1/2}(\partial\Omega)}\leq &\left\|\kappa_0\mathbf{u}\right\|_{H^{1/2}(\partial\Omega)} + \left\|\kappa_1\sigma(\mathbf{u})\cdot\mathbf{n}\right\|_{H^{1/2}(\partial\Omega)} \\
            \leq & C_{\kappa_0}\left\|\mathbf{u}\right\|_{H^{1/2}(\partial\Omega)} + C_{\kappa_1}\left\|\sigma(\mathbf{u})\cdot\mathbf{n}\right\|_{H^{1/2}(\partial\Omega)} \\
            \leq & C_{\kappa_0}\left\|\mathbf{u}\right\|_{H^{1/2}(\partial\Omega)} \\ & + C_{\kappa_1}\left\|2\mu\nabla\mathbf{u}\cdot\mathbf{n} - 2\mu\nabla^{\alpha}\mathbf{u}\cdot\mathbf{n}+\lambda\left(\nabla\cdot\mathbf{u} I\right)\cdot\mathbf{n}\right\|_{H^{1/2}(\partial\Omega)} \\
            \leq & C_{\kappa_0}\left\|\mathbf{u}\right\|_{H^{1/2}(\partial\Omega)} + 2\mu C_{\kappa_1}\left\|\nabla\mathbf{u}\cdot\mathbf{n}\right\|_{H^{1/2}(\partial\Omega)} \\ & + 2\mu C_{\kappa_1}\left\|\nabla^{\alpha}\mathbf{u}\cdot\mathbf{n}\right\|_{H^{1/2}(\partial\Omega)} + \lambda C_{\kappa_1}\left\|\left(\nabla\cdot\mathbf{u} I\right)\cdot\mathbf{n}\right\|_{H^{1/2}(\partial\Omega)} \\
            \leq & C_{\kappa_0}\left\|\mathbf{u}\right\|_{H^{1/2}(\partial\Omega)} + 2\mu C_{\kappa_1}\left\|\nabla\mathbf{u}\cdot\mathbf{n}\right\|_{H^{1/2}(\partial\Omega)} \\ & + 2\mu C'_fC_{\kappa_1}\left\|\nabla^{\alpha}\mathbf{u}\right\|_{H^{1/2}(\partial\Omega)} + \lambda C_fC_{\kappa_1}\left\|\nabla\cdot\mathbf{u} \right\|_{H^{1/2}(\partial\Omega)}.
        \end{split}
    \end{align}
    Moreover, we use the trace inequalities \eqref{eq:Theorem5.5} to bound the boundary norms by norms over the domain as 
    \begin{align}
        \begin{split}
            \left\|\kappa_0\mathbf{u} + \kappa_1\sigma(\mathbf{u})\cdot\mathbf{n}\right\|_{H^{1/2}(\partial\Omega)} \leq & C'_TC_{\kappa_0}\left\|\mathbf{u}\right\|_{H^{1}(\Omega)} + 2\mu C_TC_{\kappa_1}\left\|\mathbf{u}\right\|_{H^{2}(\Omega)} \\ & + 2\mu C'_TC'_fC_{\kappa_1}\left\|\nabla^{\alpha}\mathbf{u}\right\|_{H^{1}(\Omega)} + \lambda C'_TC_fC_{\kappa_1}\left\|\nabla\cdot\mathbf{u} \right\|_{H^{1}(\Omega)},
        \end{split}
    \end{align}
    We note that the following inequalities can be derived using \eqref{eq:generalBounds}: 
    \begin{align}
        \begin{split}
            \left\|\nabla^a\mathbf{v}\right\|^2_{H^1(\Omega)} & = \left\|\nabla^a\mathbf{v}\right\|^2_{L_2(\Omega)} + \left\|\nabla\nabla^a\mathbf{v}\right\|^2_{L_2(\Omega)} \\
            & \leq \left\|\nabla\mathbf{v}\right\|^2_{L_2(\Omega)} + \left\|D^2\mathbf{v}\right\|^2_{L_2(\Omega)} \leq \left\|\mathbf{v}\right\|^2_{H^2(\Omega)} \\
            \left\|\nabla\cdot\mathbf{v}\right\|^2_{H^1(\Omega)} & = \left\|\nabla\cdot\mathbf{v}\right\|^2_{L_2(\Omega)} + \left\|\nabla\left(\nabla\cdot\mathbf{v}\right)\right\|^2_{L_2(\Omega)} \\
            & \leq d\left(\left\|\nabla\mathbf{v}\right\|^2_{L_2(\Omega)} + \left\|D^2\mathbf{v}\right\|^2_{L_2(\Omega)}\right) \leq d\left\|\mathbf{v}\right\|^2_{H^2(\Omega)}.
        \end{split}
    \end{align} 
    Finally we use the above inequalities to combine all the terms as
    \begin{align}
        \begin{split}
            \left\|\kappa_0\mathbf{u} + \kappa_1\sigma(\mathbf{u})\cdot\mathbf{n}\right\|_{H^{1/2}(\partial\Omega)} \leq & \left(C'_TC_{\kappa_0} + 2\mu C_TC_{\kappa_1}+ 2\mu C'_TC'_fC_{\kappa_1} + d^{1/2}\lambda C'_TC_fC_{\kappa_1}\right)\left\|\mathbf{u}\right\|_{H^{2}(\Omega)}.
            \label{eq:boundaryNormContinuity}
        \end{split}
    \end{align}
    We then combine the bounds from \eqref{eq:domainNormContinuity},~\eqref{eq:boundaryNormContinuity} back in our initial continuity bound \eqref{eq:continuityBound1} which gives:
    \begin{align}
        \begin{split}
            \left|\alpha\left(\mathbf{u},\mathbf{v}\right)\right| \leq & \left(2d^{1/2}\mu + d^{1/2}\lambda\right)^2\left\|\mathbf{u}\right\|_{H^{2}(\Omega)}\left\|\mathbf{v}\right\|_{H^{2}(\Omega)} \\
            & + \left(C'_TC_{\kappa_0} + 2\mu C_TC_{\kappa_1}+ 2\mu C'_TC'_fC_{\kappa_1} + d^{1/2}\lambda C'_TC_fC_{\kappa_1}\right)^2\left\|\mathbf{u}\right\|_{H^2(\Omega)}\left\|\mathbf{v}\right\|_{H^2(\Omega)} \\
            = & C_2\left\|\mathbf{u}\right\|_{H^2(\Omega)}\left\|\mathbf{v}\right\|_{H^2(\Omega)},
        \end{split}
    \end{align}
where the constant $C_2 = C_2(C_K,C_f,C'_f,C_T,C'_T,d,\lambda,\mu)$.
Similarly it is possible to prove continuity of the linear form which results in the following estimate
\begin{align}
    \begin{split}
        \left|l\left(\mathbf{v}\right)\right| \leq & \left(d^{1/2}\left(2\mu + \lambda\right)\left\|\mathbf{f}\right\| + \left(\left(C_{\kappa_0}\right)^2 + 2\mu C_{\kappa_0}C_{\kappa_1}C_T + 2\mu C_{\kappa_0}C_{\kappa_1}C'_fC'_T + d^{1/2}\lambda \left(C_{\kappa_1}\right)^2C_f C'_T\right)\right. \\
        & \left.  \left(\left\|\mathbf{g}\right\|_{H^{1/2}(\partial\Omega)} + \left\|\mathbf{t}\right\|_{H^{1/2}(\partial\Omega)}\right)\right)\left\|\mathbf{v}\right\|_{H^2(\Omega)} = C_3\left\|\mathbf{v}\right\|_{H^2(\Omega)},
    \end{split}
\end{align}
where the constant $C_3 = C_3(C_K,C_f,C'_f,C_T,C'_T,d,\lambda,\mu,\mathbf{f},\mathbf{g},\mathbf{t})$.

\end{proof}
\begin{theorem}
    Assume that $\Omega$ is a bounded domain in $\mathbb{R}^d$ with a smooth boundary, $\partial\Omega\in C^2$. Additionally assume the coefficients of the continuous differential operator \eqref{eq:StrongPDE} are of class $C^0(\Omega)$ and the coefficients of the boundary operator \eqref{eq:StrongBC} are of class $C^1(\partial\Omega)$. Further assume the data, $\mathbf{f}\in L_2(\Omega),~\mathbf{g}\in H^{1/2}(\partial\Omega),~\mathbf{t}\in H^{1/2}(\Omega)$. Then the boundary value problem \eqref{eq:StrongPDE},~\eqref{eq:StrongBC} is ADN elliptic, uniformly elliptic, regular elliptic, the boundary conditions are complementary and the boundary value problem has a unique solution. The following coercivity bound for the bilinear form, $\alpha(\cdot,\cdot)$ from \eqref{eq:LSQWF}, holds
    \begin{align}
        & C_1\alpha\left(\mathbf{u},\mathbf{u}\right)\geq \left\|\mathbf{u}\right\|^2_{H^2(\Omega)}, \hspace{0.5cm} \forall \mathbf{u}\in H^2(\Omega),
    \end{align}
    where $C_1$ is a constant that depends on the Lam\'e parameters $\mu,~\lambda$, the domain $\Omega$, the dimension $d$, the Robin boundary condition operators $\kappa_0,~\kappa_1$ and their derivatives $\nabla\kappa_0,~\nabla\kappa_1$, the boundary $\partial\Omega$ and the outward boundary normal $\mathbf{n}$.          
    \label{theorem:Coercivity}
\end{theorem}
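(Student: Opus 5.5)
The plan is to obtain coercivity from an a priori estimate of Agmon--Douglis--Nirenberg type \cite{ADN_2}. Concretely, the target is
\begin{equation*}
\|\mathbf{u}\|_{H^2(\Omega)} \leq C\left(\|\mathcal{L}\mathbf{u}\|_{L_2(\Omega)} + \|(\kappa_0\mathcal{B}_0+\kappa_1\mathcal{B}_1)\mathbf{u}\|_{H^{1/2}(\partial\Omega)}\right).
\end{equation*}
Squaring and using $(a+b)^2\le 2a^2+2b^2$ then gives $\|\mathbf{u}\|^2_{H^2}\le 2C^2\,\alpha(\mathbf{u},\mathbf{u})$, which is the claim with $C_1=2C^2$. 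Here $\alpha(\mathbf{u},\mathbf{u})$ is read as the sum of the two squared residual norms, since that is the least-squares energy. The work therefore splits into three parts: check the ADN structural conditions, derive the a priori estimate with a lower-order term, and remove that term by uniqueness.

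\textbf{Step 1: ellipticity.} The principal symbol of $\mathcal{L}$ in \eqref{eq:Elasticity_Operators} is $L(\xi)=\mu|\xi|^2 I+(\lambda+\mu)\xi\xi^T$. Its eigenvalues are $\mu|\xi|^2$ with multiplicity $d-1$ and $(\lambda+2\mu)|\xi|^2$. Because $\mu>0$ and $\lambda+2\mu>0$ for $\nu\in[0,0.5)$, we get $\det L(\xi)=\mu^{d-1}(\lambda+2\mu)|\xi|^{2d}\neq 0$. This yields ellipticity with ADN weights $s_i=0$, $t_j=2$, uniform ellipticity (the bound is uniform because the coefficients are constant), and the root condition: for $\xi,\eta$ linearly independent, $\det L(\xi+\tau\eta)$ has exactly $d$ roots in each half plane.

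\textbf{Step 2: complementing condition.} Since $\kappa_1>0$, the principal part of the boundary operator is the traction operator $\kappa_1\mathcal{B}_1$ of order one, and $\kappa_0 I$ is a lower-order perturbation. The ADN a priori estimate permits such lower-order terms, with the $C^1$ coefficients entering the constant. The task is then the standard half-space ODE check for the traction problem. Freeze a boundary point, take tangential frequency $\xi'\neq0$, and use the decaying solutions of $L(\xi'+\mathbf{n}\,D_t)\mathbf{w}=0$. These are $\mathbf{w}=(\mathbf{a}+t\,\mathbf{b})e^{-|\xi'|t}$, with the usual shear/pressure structure. One then shows that zero traction forces $\mathbf{w}=0$. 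I would do this by an energy argument: integrating $\mathbf{w}\cdot L\mathbf{w}$ over the half line produces $\int 2\mu|\epsilon(\mathbf{w})|^2+\lambda|\nabla\cdot\mathbf{w}|^2$ plus a boundary term equal to the traction, which vanishes. Hence $\epsilon(\mathbf{w})=0$, so $\mathbf{w}$ is an infinitesimal rigid motion of the form $e^{i\xi'\cdot x'}\mathbf{w}(t)$, and such a motion must vanish. This is the step I expect to be the main obstacle. The argument must be carried out carefully with complex exponentials and needs $\lambda+\mu>0$ (equivalently Lopatinskii's computation for the Neumann/traction problem), so I would present it concisely and cite the known result that the traction problem satisfies the complementing condition.

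\textbf{Step 3: a priori estimate.} The ADN theorem with $\partial\Omega\in C^2$ gives
\[
\|\mathbf{u}\|_{H^2}\le C\left(\|\mathcal{L}\mathbf{u}\|+\|(\kappa_0+\kappa_1\mathcal{B}_1)\mathbf{u}\|_{H^{1/2}(\partial\Omega)}+\|\mathbf{u}\|_{L_2}\right).
\]
To drop $\|\mathbf{u}\|_{L_2}$, I would use uniqueness together with a compactness (Peetre/Lions) argument. Suppose the homogeneous problem $\mathcal{L}\mathbf{u}=0$, $\kappa_0\mathbf{u}+\kappa_1\sigma(\mathbf{u})\mathbf{n}=0$ has a solution. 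Green's formula gives
\[
\int_\Omega 2\mu|\epsilon(\mathbf{u})|^2+\lambda(\nabla\cdot\mathbf{u})^2 + \int_{\partial\Omega}\frac{\kappa_0}{\kappa_1}|\mathbf{u}|^2=0 .
\]
Since $\kappa_0/\kappa_1>0$, this forces $\mathbf{u}=0$ on $\partial\Omega$ and $\epsilon(\mathbf{u})=0$, so $\mathbf{u}$ is a rigid motion vanishing on the boundary and hence $\mathbf{u}=0$. This is where the positivity assumption on $\kappa_0,\kappa_1$ is used. A contradiction argument then removes the $L_2$ term. Take a normalized sequence with residuals tending to zero. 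It is bounded in $H^2$, Rellich gives a subsequence converging in $L_2$, and the limit solves the homogeneous problem, so it is zero. This contradicts the normalization.

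Finally, existence of a unique solution of the boundary value problem follows from the Fredholm property of ADN problems, which have index zero since the operator can be homotoped to a self-adjoint one, combined with the injectivity just shown. Alternatively, it follows from Lax--Milgram using this coercivity together with Theorem~\ref{theorem:Continuity}.
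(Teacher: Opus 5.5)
Your argument is correct. It has the same skeleton as the paper's proof: verify the ADN structural conditions, establish uniqueness, then apply the ADN $H^2$ estimate with the lower-order term removed. Where you differ is in how each ingredient is obtained.

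In Appendix~\ref{app:CoercivityBounds} the paper checks the complementing condition algebraically. It fixes $\boldsymbol{\nu}=(1,0,0)^T$ and $\mathbf{n}=(0,0,1)^T$, forms $\mathcal{B}^P\mathcal{L}'$, and shows that the identities \eqref{eq:complementingBC} modulo $M^+=(\tau-i)^3$ force $C_1=C_2=C_3=0$. Uniqueness is quoted from \cite{knops1971uniqueness}, and dropping $\|\mathbf{u}\|_{L_2(\Omega)}$ is left to the cited theorems \cite{ADN_2,bochev2009least}.

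Your half-line energy identity covers every boundary point and tangential frequency at once. The paper's single-frame computation tacitly relies on isotropy and homogeneity to reach all frames. Your route also makes clear what is actually used: positivity of the elastic energy on the relevant strains ($\mu>0$, $\lambda+\mu>0$), which holds for $\nu\in[0,0.5)$. The algebraic route is instead sharp, since its final polynomial acquires the root $\tau=i$ exactly when $\lambda+\mu=0$.

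Similarly, your Green identity with $\boldsymbol\sigma(\mathbf{u})\mathbf{n}=-(\kappa_0/\kappa_1)\mathbf{u}$ and the Peetre--Tartar compactness step show explicitly where $\kappa_0,\kappa_1>0$ enters and why the $L_2$ term can be discarded. The cost is a non-constructive $C_1$, which the citation-based proof does not make explicit either. Your reading of $\alpha(\mathbf{u},\mathbf{u})$ as the sum of the two squared residuals, despite the minus sign in \eqref{eq:ContBilinear}, matches how the paper itself uses it in \eqref{eq:ContBilinear2WeightBilinear}.

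One caveat concerns your \emph{alternatively, Lax--Milgram} remark: it does not give existence for \eqref{eq:StrongPDE}--\eqref{eq:StrongBC}. Lax--Milgram only produces the least-squares minimizer. Its residual vanishes only if $(\mathcal{L},\kappa_0\mathcal{B}_0+\kappa_1\mathcal{B}_1):H^2(\Omega)\to L_2(\Omega)\times H^{1/2}(\partial\Omega)$ is onto. That surjectivity is exactly what your index-zero homotopy argument supplies, so keep that argument and drop the alternative.
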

\begin{proof}
The above is a reformulated theorem from \cite[Theorem 10.5]{ADN_2}, \cite[Theorem D.1]{bochev2009least}, \cite{JaRoĭtberg_1969}, but assuming we have the minimum possible smoothness in our coefficients and data, such that a solution $\mathbf{u}\in H^2(\Omega)$ exists. Smoother solutions are possible given increased boundary, coefficient and data smoothness. The proof involves showing uniqueness of the strong elasticity problem \eqref{eq:StrongPDE}, \eqref{eq:StrongBC}, as well as showing that the differential operator is ADN elliptic, uniformly elliptic and regular elliptic and the boundary conditions are complementary. All definitions are provided in \cite{ADN_2}, \cite[Appendix D]{bochev2009least} and are explicitly shown for the linear elasticity problem in Appendix \ref{app:CoercivityBounds}. For details about the constant $C_1$ refer to \cite{ADN_2}.
\end{proof}

\begin{theorem}
    Given Theorems \ref{theorem:Continuity}, \ref{theorem:Coercivity} hold, there exists a unique solution $\mathbf{u}\in H^2(\Omega)$  to 
    \begin{equation}
        \alpha\left(\mathbf{u},\mathbf{v}\right) = l(\mathbf{v}), \hspace{0.5cm}\forall \mathbf{v}\in H^{2}(\Omega),
    \end{equation}
    with $\alpha(\cdot,\cdot)$, $l(\cdot)$ as defined in \eqref{eq:ContBilinear},~\eqref{eq:ContLinear}.
    \label{theorem:WellPosedness}
\end{theorem}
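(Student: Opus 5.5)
The plan is a direct application of the Lax--Milgram lemma on the Hilbert space $V = H^2(\Omega)$ (vector valued), equipped with the norm $\|\cdot\|_{H^2(\Omega)}$ from \eqref{eq:Sobolev_norm}. Lax--Milgram needs four hypotheses: a real Hilbert space, a bounded bilinear form, a coercive bilinear form, and a bounded linear functional. Completeness of $H^2(\Omega)$ is standard. The three inequality hypotheses are exactly the content of Theorem \ref{theorem:Continuity} (constants $C_2$ and $C_3$) and Theorem \ref{theorem:Coercivity} (constant $C_1$).

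The steps, in order, are as follows.

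\emph{First}, I check that $\alpha(\cdot,\cdot)$ is a well defined bilinear form on $H^2(\Omega)\times H^2(\Omega)$. For $\mathbf{u}\in H^2(\Omega)$ we have $\nabla\cdot\boldsymbol\sigma(\mathbf{u})\in L_2(\Omega)$. By the trace inequalities \eqref{eq:Theorem5.5} together with \eqref{eq:Lemma5.3} and \eqref{eq:Lemma5.3_modified}, the boundary combination $\kappa_0\mathbf{u}+\kappa_1\boldsymbol\sigma(\mathbf{u})\cdot\mathbf{n}$ lies in $H^{1/2}(\partial\Omega)$. Hence both inner products in \eqref{eq:ContBilinear} are finite, and bilinearity follows from linearity of $\boldsymbol\sigma$, of the trace and of the inner products. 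The same argument shows that $l$ in \eqref{eq:ContLinear} is a linear functional on $H^2(\Omega)$, given $\mathbf{f}\in L_2(\Omega)$ and $\mathbf{g},\mathbf{t}\in H^{1/2}(\partial\Omega)$.

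\emph{Second}, I cite Theorem \ref{theorem:Continuity} for boundedness of $\alpha$ and $l$. I cite Theorem \ref{theorem:Coercivity} in the form $\alpha(\mathbf{u},\mathbf{u})\geq C_1^{-1}\|\mathbf{u}\|^2_{H^2(\Omega)}$; here $C_1>0$, so the coercivity constant is $C_1^{-1}>0$.

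\emph{Third}, Lax--Milgram then gives a unique $\mathbf{u}\in H^2(\Omega)$ with $\alpha(\mathbf{u},\mathbf{v})=l(\mathbf{v})$ for all $\mathbf{v}\in H^2(\Omega)$, together with the stability bound $\|\mathbf{u}\|_{H^2(\Omega)}\leq C_1C_3$. Since $\alpha$ is symmetric, one can alternatively argue via the Riesz representation theorem in the energy inner product $\alpha(\cdot,\cdot)$. That inner product is equivalent to the $H^2$ inner product by the two theorems, and this also identifies $\mathbf{u}$ with the minimizer of $J$ in \eqref{eq:energy_functional}.

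\emph{Main obstacle.} The only delicate point is consistency of the forms as written. The first term of \eqref{eq:ContBilinear} carries a minus sign, and the second argument in the boundary terms is written with $\mathbf{u}$ instead of $\mathbf{v}$. Read literally, $\alpha$ is then neither symmetric nor bilinear, and the sign would clash with coercivity. I would interpret both as typographical: the form is the one obtained from the first variation of $J$, namely $(\nabla\cdot\boldsymbol\sigma(\mathbf{u}),\nabla\cdot\boldsymbol\sigma(\mathbf{v}))+(\kappa_0\mathbf{u}+\kappa_1\boldsymbol\sigma(\mathbf{u})\cdot\mathbf{n},\kappa_0\mathbf{v}+\kappa_1\boldsymbol\sigma(\mathbf{v})\cdot\mathbf{n})_{H^{1/2}(\partial\Omega)}$, and similarly for $l$. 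This is the form to which Theorems \ref{theorem:Continuity} and \ref{theorem:Coercivity} actually apply; it is the one used in the Cauchy--Schwarz step of the proof of Theorem \ref{theorem:Continuity}, where the minus sign is absorbed into the absolute value. With that reading, the proof is a routine invocation of Lax--Milgram.
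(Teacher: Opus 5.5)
Your proposal is correct and matches the paper's proof, which is a one-line application of the Lax--Milgram lemma using continuity from Theorem~\ref{theorem:Continuity} and coercivity from Theorem~\ref{theorem:Coercivity}. Your additional checks are sound: well-definedness, the stability bound $\|\mathbf{u}\|_{H^2(\Omega)}\leq C_1C_3$, and reading the minus sign and the $\mathbf{u}$-for-$\mathbf{v}$ slip in \eqref{eq:ContBilinear},~\eqref{eq:ContLinear} as typos, which is consistent with how the paper itself expands $\alpha(\mathbf{v},\mathbf{v})$ with a plus sign in \eqref{eq:ContBilinear2WeightBilinear}.
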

\begin{proof}
    Given continuity of the linear and bilinear forms from Theorem \ref{theorem:Continuity} and coercivity from Theorem \ref{theorem:Coercivity} we conclude well-posedness in view of the Lax-Milgram lemma \cite[Theorem 2.7.7]{brenner2008mathematical}. 
\end{proof}

\subsection{Discrete Analysis}\label{sec:theoryDisc}

Following discretization of our problem as described in Section \ref{sec:RBFPU} we compute the solution $\mathbf{u}_h\in V_h(\Omega)$ to the discrete weak form \eqref{eq:disc_WF}. In this section we show how the solution from this least squares projection is the best approximation in our discrete space $V_h(\Omega)$ and additionally formulate an a-priori error estimate for our method. We start by defining measures for our discretised domain and its boundary. Specifically, the fill distance for a quasi-uniform point set $Z = \{z_1,\dots,z_N\}$ in $\mathcal{B}$, where $\mathcal{B}\triangleq \Omega$ or $\mathcal{B}\triangleq \partial\Omega$ is defined as 
\begin{equation}
    h_f(Z,\mathcal{B}) = \sup_{\mathbf{x}\in\mathcal{B}}\min_{\mathbf{x}_j\in Z}\|\mathbf{x}-\mathbf{x}_j\|_2,
    \label{eq:fill}
\end{equation}
and the separation distance for the same point set as
\begin{equation}
    h_s(Z,\mathcal{B}) = \frac{1}{2}\underset{\mathbf{x}_j,\mathbf{x}_k\in Z}{\min_{j\neq k}}\|\mathbf{x}_j-\mathbf{x}_k\|_2,
    \label{eq:sepDistance}
\end{equation}
where the node quality can then be measured using $c_{q,Z} > 0$, where $h_s(Z,\mathcal{B}) \leq h_f(Z,\mathcal{B}) \leq c_{q,Z} h_s(Z,\mathcal{B})$. Our definitions for the fill distance and separation distance use the Euclidean distance metric even for the boundary as opposed to a distance metric intrinsic to $\partial\Omega$. However these are equivalent for a sufficiently dense node set \cite[Theorem 6]{WrightFuselier2012}.


As explained in Section \ref{sec:RBFPU}, we discretise our domain using two global quasi-uniform point sets, $Y$ and $X = \cup_{k=1}^P X_k$ each with a fill distance measure defined as
\begin{align}
    & h = \max_{k}h_f(X_k,\Omega_k), 
    \label{eq:fillX}\\
    & h_y = h_f(\{\mathbf{y}_i\}_{i\in Y^{\Omega}},\Omega) 
    \label{eq:fillY}\\
    & h_{y,\partial\Omega} = h_f(\{\mathbf{y}_i\}_{i\in Y^{\partial\Omega}},\partial\Omega),
    \label{eq:fillYBnd}
\end{align}
where $h_k = h_f(X_k,\Omega_k)$ and is the fill distance for patch $\Omega_k$ in the physical domain. We compare this with the fill distance of the reference patch $h_0$ which is where we construct the local approximation \eqref{eq:RBFinterpolantLocal}. We firstly define the pointwise $2-\text{norm}$ of the distance between two points in the reference patch as
\begin{equation}
\|\mathbf{x}'-\mathbf{x}'_j\|_2 = \|T_k(\mathbf{x}) - T_k(\mathbf{x}_j)\|_2 = \|S_kQ_k\left(\mathbf{x} - \mathbf{x}_j\right)\|_2,
\end{equation}
where we used \eqref{eq:referencePatchMap}.

Given property $\|Q_k\mathbf{x}\|_2 = \|\mathbf{x}\|_2$ for an orthonormal matrix gives
\begin{equation}
    \min\left({}_{sc}R_k,{}_{sc}H_k\right)\|\mathbf{x} - \mathbf{x}_j\|\leq \|\mathbf{x}'-\mathbf{x}'_j\|_2 \leq  \max\left({}_{sc}R_k,{}_{sc}H_k\right)\|\mathbf{x} - \mathbf{x}_j\|_2,
\end{equation}
where $\|\cdot\|_2$ is the matrix $2-\text{norm}$ and it  differs from the definition of the tensor norm given in Section \ref{sec:Definitions}. Taking the minimum distance over all centre points $\mathbf{x}_j$ and supremum over the patch we have
\begin{equation}
C_{0,k}h_k = \min\left({}_{sc}R_k,{}_{sc}H_k\right)h_k \leq h_0\leq \max\left({}_{sc}R_k,{}_{sc}H_k\right)h_k = C'_{0,k}h_k.
\label{eq:referenceFilltoPhysicalFill}
\end{equation}

A concrete definition for finite dimensional space $V_h(\Omega)$ is required to proceed with the discrete proofs. This is a direct sum of the same space spanned by global basis functions in each dimension:
\begin{equation}
    V_h(\Omega) := \underset{1\leq j \leq d}{\bigoplus}~\text{span}\{\psi_i\}, \hspace{0.5cm} \forall i = 1,\dots,N,
    \label{eq:FDspace}
\end{equation}
where $\oplus$ is the direct sum, $d$ is the dimension, $\psi_i$ are the functions defined in \eqref{eq:globalCardinalFunc} centred on discrete set $X$, with fill distance \eqref{eq:fillX}. The solution $\mathbf{u}_h$ to \eqref{eq:disc_WF} is in this space and we denote any vector valued function from this space as $\mathbf{v}\in V_h(\Omega)$.


The global function can be expressed as $\mathbf{v = }\sum_{k=1}^Pw_k\mathbf{v}^{(k)}$, where
we denote a local approximant on patch $k$ as $\mathbf{v}^{(k)}(\mathbf{y}) = \tilde{\mathbf{v}}^{(k)}(T_k(\mathbf{y})) = (\tilde{v}_1^{(k)},\dots,\tilde{v}_d^{(k)})$. This function is constructed on a reference patch $\Omega_0$ \eqref{eq:RBFinterpolantLocal} using a positive definite kernel $\phi:\Omega_0\times\Omega_0\rightarrow\mathbb{R}$. So for each component of the local approximant on the reference patch, we have $\tilde{v}^{(k)}_i\in\mathcal{N}_{G}(\Omega_0)$, where $\mathcal{N}_{G}$ is the native Hilbert function space corresponding to kernel $\phi$. For our method we use the Gaussian kernel, $\phi(r) = e^{-\epsilon^2r^2}$, with its associated native space defined in \eqref{eq:NativeSpace}. We also refer to \cite[Definition 10.9]{Wendland_2004} for further details on its properties. 

Since the RBF approximation is constructed on a reference patch we express derivative bounds of physical derivatives with respect to derivatives taken in the reference patch. We consider the pointwise tensor norm as defined in Section \ref{sec:Definitions} and apply the chain rule noting that $T_k(\mathbf{y})$ is a linear transformation to get
\begin{align}
    \begin{split}
        \|D^{\tau}\mathbf{v}^{(k)}\|_2^2 & = \|D^{\tau}_{\mathbf{y}}\mathbf{v}^{(k)}(\mathbf{y})\|^2_2 = \|D^{\tau}_{\mathbf{y}}\tilde{\mathbf{v}}^{(k)}(\mathbf{y}')\|_2^2 = \sum_{\substack{i=1, \\ j_1,\dots,j_{\tau}=1}}^d\left|\frac{\partial^{\tau}\tilde{v}_i^{(k)}(\mathbf{y}')}{\partial y_{j_1}\dots\partial y_{j_{\tau}}}\right|^2 \\
        & = \sum_{\substack{i=1, \\ j_1,\dots,j_{\tau}=1}}^d\left|\sum_{k_1,\dots,k_{\tau}=1}^d
        \frac{\partial^{\tau}\tilde{v}_i^{(k)}(\mathbf{y}')}{\partial y'_{k_1}\dots\partial y'_{k_{\tau}}}\frac{\partial y'_{k_1}}{\partial y_{j_1}}\dots\frac{\partial y'_{k_{\tau}}}{\partial y_{j_{\tau}}}\right|^2 \\ 
        & \leq d^{\tau}\sum_{\substack{i=1, 
        \\ j_1,\dots,j_{\tau}=1 \\ k_1,\dots,k_{\tau}=1}}^d\left|\frac{\partial^{\tau}\tilde{v}_i^{(k)}(\mathbf{y}')}{\partial y'_{k_1}\dots\partial y'_{k_{\tau}}}\frac{\partial T_k(y_{k_1})}{\partial y_{j_1}}\dots\frac{\partial T_k(y_{k_{\tau}})}{\partial y_{j_{\tau}}}\right|^2 \\
        & \leq d^{\tau}\|J_{T_k}\|_2^{2\tau}\|D^{\tau}_{\mathbf{y}'}\tilde{\mathbf{v}}^{(k)}(\mathbf{y}')\|^2_2,
        \label{eq:derivativePhysical2Ref_derivation}
    \end{split}
\end{align}
where $\tau$ is the derivative order, meaning $D^{\tau}\mathbf{v}^{(k)}$ is a tensor of order $\tau+1$, $J_{T_k}$ is the Jacobian of the transformation matrix and is given by $J_{T_k} = S_kQ_k$ \eqref{eq:referencePatchMap} and $D^{\tau}_{\mathbf{y}'},~D^{\tau}_{\mathbf{y}}$ are derivative operators taken in the reference and physical coordinates respectively. For simplicity we omit the subscript and the derivative is taken where the function is defined such that $D^{\tau}\tilde{\mathbf{v}}^{(k)} = D^{\tau}_{\mathbf{y}'}\tilde{\mathbf{v}}^{(k)}$, where $\mathbf{v}^{(k)}\in\Omega_k$ and $\tilde{\mathbf{v}}^{(k)}\in\Omega_0$. Taking the square root and essential supremum on both sides gives
\begin{equation}
    C_{ref}'(\tau)\|D^{\tau}\tilde{\mathbf{v}}^{(k)}\|_{L_{\infty}(\Omega_0)} \leq \|D^{\tau}\mathbf{v}^{(k)}\|_{L_{\infty}(\Omega_k)} \leq  d^{\tau}\|S_k\|_2^{\tau}\|D^{\tau}\tilde{\mathbf{v}}^{(k)}\|_{L_{\infty}(\Omega_0)} = C_{ref}(\tau)\|D^{\tau}\tilde{\mathbf{v}}^{(k)}\|_{L_{\infty}(\Omega_0)},
    \label{eq:derivativePhysical2Ref}
\end{equation} 
where we used the fact that $\|Q_k\|^2_{2} = d$, $~C_{ref}(\tau) = d^{\tau}(2~{}_{sc}R_k^2 + {}_{sc}H_k^2)^{\tau/2}$ since we have cylindrical patches, $d=3$ and ${}_{sc}R_k,~{}_{sc}H_k$ are the radial and vertical scaling between the reference patch and patch $\Omega_k$. We use the same process for the lower bound with $C_{ref}'(\tau) = d^{\tau}(2{}_{sc}R_k^{-2} + {}_{sc}H_k^{-2})^{\tau/2}$. 

The above estimates can be reproduced for the $L_2-\text{norm}$ with scaling of the integration element (see \cite[p. 146]{lai2009continuumMech}) as
\begin{align}
    \begin{split}
        \frac{C'_{ref}(\tau)}{~{}_{sc}R_k~{}_{sc}H_k^{\frac{1}{2}}}\|D^{\tau}\tilde{\mathbf{v}}^{(k)}\|_{L_{2}(\Omega_0)} \leq \|D^{\tau}\mathbf{v}^{(k)}\|_{L_{2}(\Omega_k)} \leq \frac{C_{ref}(\tau)}{{}_{sc}R_k~{}_{sc}H_k^{\frac{1}{2}}}\|D^{\tau}\tilde{\mathbf{v}}^{(k)}\|_{L_{2}(\Omega_0)}, 
    \label{eq:derivativeRef2PhysicalL2Norms}
    \end{split}
\end{align}
where the volume element in the reference patch is scaled as $dV_k = \det(J^{-1}_{T_k})dV_0 = \det(S_k^{-1})\det(Q_k^T)dV_0 = \det(S_k^{-1})dV_0 = dV_0/({}_{sc}R_k^2~{}_{sc}H_k)$.

For norms on the boundary, we integrate over surface $\partial\Omega\cap\Omega_k$, where the boundary is implicitly parametrised using a level set function $l(\mathbf{y})=0$ as defined in \cite{larsson2024rbfpartitionunitymethod}. In this case we scale the area element on the surface of the boundary $d\mathbf{S}_k = \|\mathbf{n}dA\|_2 = \|dA_0\det(J_{T_k}^{-1})J_{T_k}\mathbf{n}_{0,k}\|_2$, using the derivation from \cite[p. 145]{lai2009continuumMech}. Note that $\mathbf{n}_{0,k}$ is the normal on the boundary $\partial\Omega\cap\Omega_k$ in the reference patch and the integral bound is given by
\begin{align}
    \begin{split}
        \int_{\partial\Omega\cap\Omega_k} \|D^{\tau}\mathbf{v}^{(k)}\|_2^2 d\mathbf{S}_k & = \int_{\partial\Omega_{0,k}\cap\Omega_0}\|D^{\tau}\mathbf{v}^{(k)}\|_2^2\|\det(J_{T_k}^{-1})J_{T_k}\mathbf{n}_{0,k}dA_{0,k}\|_2 \\
        & \leq \left|\det(J_{T_k}^{-1})\right|\|J_{T_k}\|_2\int_{\partial\Omega_{0,k}\cap\Omega_0}\|D^{\tau}\mathbf{v}^{(k)}\|_2^2d\mathbf{S}_{0,k} \\
        & = \left({}_{sc}R_k^2~{}_{sc}H_k\right)^{-1}\max{\left({}_{sc}R_k,~{}_{sc}H_k\right)}\int_{\partial\Omega_{0,k}\cap\Omega_0}\|D^{\tau}\mathbf{v}^{(k)}\|_2^2 d\mathbf{S}_{0,k} \\
        & \leq C^2_{ref}(\tau)\left({}_{sc}R_k^2~{}_{sc}H_k\right)^{-1}\max{\left({}_{sc}R_k,~{}_{sc}H_k\right)}\int_{\partial\Omega_{0,k}\cap\Omega_0}\|D^{\tau}\tilde{\mathbf{v}}^{(k)}\|_2^2d\mathbf{S}_{0,k},
        \label{eq:derivativeRef2PhysicalL2NormBnd_1}
    \end{split}
\end{align}
where we used the definition of the matrix $2-\text{norm}$ for $\|J_{T_k}\|_2$ which differs from the definition in Section \ref{sec:Definitions}. Similarly we use the transformation in the opposite direction which gives the following norm bounds on the boundary
\begin{align}
    \begin{split}
        C'_{r,\partial\Omega}(\tau)\|D^{\tau}\tilde{\mathbf{v}}^{(k)}\|_{L_2(\partial\Omega_{0,k}\cap\Omega_0)} \leq \|D^{\tau}\mathbf{v}^{(k)}\|_{L_2(\partial\Omega\cap\Omega_k)} \leq  C_{r,\partial\Omega}(\tau)\|D^{\tau}\tilde{\mathbf{v}}^{(k)}\|_{L_2(\partial\Omega_{0,k}\cap\Omega_0)} 
        \label{eq:derivativeRef2PhysicalL2NormBnd}
    \end{split}
\end{align}
where $C_{r,\partial\Omega}(\tau) = C_{ref}(\tau)\max{\left({}_{sc}R_k,~{}_{sc}H_k\right)}^{\frac{1}{2}}/({}_{sc}R_k~{}_{sc}H_k^{\frac{1}{2}}),~C'_{r,\partial\Omega}(\tau) = C'_{ref}(\tau)\min{\left({}_{sc}R_k,~{}_{sc}H_k\right)}^{\frac{1}{2}}/({}_{sc}R_k~{}_{sc}H_k^{\frac{1}{2}})$ and $\partial\Omega_{0,k}\cap\Omega_0$ is the boundary cut by patch $\Omega_k$ and scaled to the reference patch. 
 
Additionally, we consider derivatives of the $\gamma$-stable weight functions \eqref{eq:Global_interpolant}, $w_k\in C^{\gamma}(\Omega_k)$, which are locally supported on each patch $\Omega_k$ and sum to unity on the entire domain, $\Omega$. We have for all patches that
\begin{equation}
    \|D^{\zeta}w_k\|_{L_{\infty}(\Omega_k)}\leq G_{|\zeta|}\delta_k^{- |\zeta|},
    \label{eq:stabilityWeightFunc}
\end{equation}
where $\zeta$ is a multi-index satisfying $|\zeta|\leq\gamma$, $G_{|\zeta|}>0$ is a constant and $\delta_k$ is the minimum relative overlap between patch $k$ and all neighbouring patches. This definition is equivalent to the one provided in \cite{larsson2024rbfpartitionunitymethod} and deviates slightly from \cite[Definition 15.16]{Wendland_2004}. In our definition note that we use the notion of absolute overlap which is fixed for all patches such that $\delta_k = \delta_0 R_k$, where $R_k$ is the radius of patch $k$.

\begin{theorem}
    Given a finite dimensional space $V_h(\Omega)$ spanned by basis functions $\psi_i$ as defined in \eqref{eq:FDspace}, where $\psi_i$ are constructed as in \eqref{eq:globalCardinalFunc} using positive definite kernels on a reference patch, $\phi:\Omega_0\times\Omega_0\rightarrow \mathbb{R}$ with native Hilbert space $\mathcal{N}_{G}(\Omega_0)$ and $\gamma$-stable weight functions, $w_k\in C^{\gamma}(\Omega_k)$ with property \eqref{eq:stabilityWeightFunc}, we have $V_h(\Omega)\subset H^2(\Omega)$ for $\gamma \geq 2$.
    \label{theorem:VhInH2}
\end{theorem}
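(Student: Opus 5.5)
Since $V_h(\Omega)$ is a finite direct sum of spans of the global basis functions $\psi_i = w_k\psi^{(k)}_j$ from \eqref{eq:globalCardinalFunc}, and $H^2(\Omega)$ is a vector space, it suffices to show that each $\psi_i\in H^2(\Omega)$. Any finite linear combination then lies in $H^2(\Omega)$, and so does each vector component. We therefore fix a patch $k$ and a local index $j$ and show that $w_k\psi^{(k)}_j$ has square integrable derivatives of orders $\tau=0,1,2$ on $\Omega$, using the norm \eqref{eq:Sobolev_norm_2}.

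First we treat the local cardinal function. On the reference patch, $\tilde\psi_j = \sum_{l}(A^{-1})_{lj}\phi(\|\cdot-\mathbf{x}'_l\|_2)$ by \eqref{eq:cardinalBasis}. This is a finite combination of Gaussians, which are $C^\infty(\mathbb{R}^d)$ with all derivatives bounded; equivalently, $\tilde\psi_j\in\mathcal{N}_G(\Omega_0)$, and this space embeds into every $H^m$ because of the Fourier weight $e^{r^2/(4\varepsilon^2)}$ in \eqref{eq:NativeSpaceRd}. Hence $\|D^\tau\tilde\psi_j\|_{L_\infty(\Omega_0)}<\infty$ for $\tau\le 2$. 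Transferring to the physical patch through the affine map $T_k$ with \eqref{eq:derivativePhysical2Ref} gives
\begin{equation*}
\|D^\tau\psi^{(k)}_j\|_{L_\infty(\Omega_k)}\le C_{ref}(\tau)\,\|D^\tau\tilde\psi_j\|_{L_\infty(\Omega_0)}<\infty, \qquad \tau\le 2.
\end{equation*}

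Next we apply the Leibniz rule to the product:
\begin{equation*}
D^\zeta(w_k\psi^{(k)}_j)=\sum_{\eta\le\zeta}\binom{\zeta}{\eta}D^{\eta}w_k\,D^{\zeta-\eta}\psi^{(k)}_j, \qquad |\zeta|\le 2.
\end{equation*}
Because $\gamma\ge2$, the stability property \eqref{eq:stabilityWeightFunc} bounds $\|D^\eta w_k\|_{L_\infty(\Omega_k)}\le G_{|\eta|}\delta_k^{-|\eta|}$ for all $|\eta|\le 2$. Each derivative of the product is therefore bounded in $L_\infty$ by a finite constant depending on $G_0,G_1,G_2$, $\delta_k$, $C_{ref}(\tau)$ and the reference-patch norms. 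The weight $w_k$ is compactly supported in $\Omega_k$, so the product and all its derivatives vanish outside $\Omega_k\cap\Omega$, which has finite measure. This gives
\begin{equation*}
\|D^\tau\psi_i\|_{L_2(\Omega)}\le|\Omega_k\cap\Omega|^{1/2}\,\|D^\tau\psi_i\|_{L_\infty(\Omega_k)}<\infty .
\end{equation*}
The $L_2$ scaling \eqref{eq:derivativeRef2PhysicalL2Norms} could be used instead and yields the same conclusion. Summing over $\tau\le2$ gives $\psi_i\in H^2(\Omega)$.

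The main point to check carefully is that the classical (pointwise) derivatives coincide with the weak derivatives on $\Omega$. This requires $w_k\in C^2$ across the whole of $\Omega$, including the edge of its support $\partial\Omega_k$, and not merely in the interior of $\Omega_k$. I would argue this from the construction: the Wendland function $(4r+1)(1-r)_+^4$ is $C^2$ on $\mathbb{R}$ and vanishes to fourth order at $r=1$. The Shepard normalisation divides by $\sum_l f_W(\cdot)$, which is $C^2$ and bounded below by a positive constant on $\Omega$ because the patches cover $\Omega$. Together these show that the zero extension of $w_k$ is $C^2(\Omega)$, which is exactly the $\gamma$-stability hypothesis with $\gamma\ge2$. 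Once that holds, $\psi_i\in C^2(\bar\Omega)$ on the bounded set $\Omega$, and membership in $H^2(\Omega)$ follows immediately.
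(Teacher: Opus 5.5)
Your proof is correct but takes a different route from the paper.

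\textbf{The paper's route.} It works with a general $\mathbf{v}=\sum_k w_k\mathbf{v}^{(k)}$ and bounds $\|\mathbf{v}\|^2_{H^2(\Omega)}$ by $\sum_k\|\mathbf{v}^{(k)}\|^2_{H^2(\Omega\cap\Omega_k)}\|w_k\|^2_{H^2(\Omega_k)}$. It estimates $\|w_k\|_{H^2(\Omega_k)}$ through $|\Omega_k|$ and \eqref{eq:stabilityWeightFunc}. The local factor is mapped to the reference patch with \eqref{eq:derivativeRef2PhysicalL2Norms} and controlled by the native-space embedding \eqref{eq:H2includedInN}. The final bound is therefore expressed through $\|\tilde v_i^{(k)}\|_{\mathcal{N}_G(\Omega_0)}$ and the patch parameters $R_k,H_k,\delta_0$.

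\textbf{Your route.} You reduce to the finitely many basis functions $w_k\psi^{(k)}_j$. You then use only three facts: finite Gaussian combinations have bounded derivatives on $\mathbb{R}^d$, the affine chain rule, and the Leibniz rule with $L_\infty$ bounds. No native-space embedding is needed.

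\textbf{What your route buys.} It is rigorous at two steps the paper passes over.
\begin{itemize}
\item The paper's inequalities $\|\sum_k f_k\|^2\le\sum_k\|f_k\|^2$ and $\|fg\|_{H^2}\le\|f\|_{H^2}\|g\|_{H^2}$ hold only up to constants. The first needs an overlap factor such as $\chi$; the second needs the algebra property of $H^2$ for $d\le 3$. Pairing an $L_\infty$ bound on $D^\eta w_k$ with the other factor, as you do, is the estimate that actually holds.
\item You check explicitly that the zero extension of $w_k$ is $C^2$ across $\partial\Omega_k$, so the classical derivatives are the weak ones on $\Omega$. The paper takes this for granted.
\end{itemize}

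\textbf{What the paper's route buys.} It gives a quantitative bound for an arbitrary element of $V_h$ in terms of native-space norms and patch geometry. This is the same form as the machinery used later in the paper. Yours establishes membership only, with constants depending on $A^{-1}$ and $\delta_k$.

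\textbf{One small precision.} The positive lower bound on the Shepard denominator needs the open patches to cover the compact set $\bar\Omega$, not merely $\Omega$. Otherwise the denominator may degenerate at boundary points that lie only on patch edges.
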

\begin{proof}
    We show that any vector function $\mathbf{v}\in V_h(\Omega)$, where $\Omega\in\mathbb{R}^d$, is bounded in $H^2(\Omega)$. For this we use property \eqref{eq:stabilityWeightFunc} as well as the bound 
    \begin{equation}
        \|v\|_{H^2(\Omega)} \leq C_{\mathcal{N}_G}\|v\|_{\mathcal{N}_G(\Omega)},\hspace{0.5cm} \forall v\in\mathcal{N}_G,
        \label{eq:H2includedInN}
    \end{equation}
    from \cite[Theorem 7.5]{rieger2010sampling}, where $\Omega\in\mathbb{R}^d$ is a bounded domain and $C_{\mathcal{N}_G}$ depends on $d$ and the shape parameter of $\phi$, $\varepsilon$. Using the partition of unity definition \eqref{eq:global_func_approx}, the triangle and Cauchy-Schwartz inequalities we get
    \begin{align}
    \begin{split}
    \|\mathbf{v}\|^2_{H^2(\Omega)} & = \|\sum_{k=1}^P\mathbf{v}^{(k)}w_k\|^2_{H^2(\Omega)} \leq \sum_{k=1}^P\|\mathbf{v}^{(k)}w_k\|^2_{H^2(\Omega)} \\ 
    & \leq \sum_{k=1}^P \|\mathbf{v}^{(k)} w_k\|^2_{H^2(\Omega)} \leq \sum_{k=1}^P \|\mathbf{v}^{(k)}\|^2_{H^2(\Omega\cap\Omega_k)}\|w_k\|^2_{H^2(\Omega_k)}.
    \label{eq:Th4.4_1}
    \end{split}
    \end{align}
    We find a bound for the norm of the weigh functions using $\|w_k\|^2_{H^2(\Omega_k\cap\Omega)}\leq \|w_k\|^2_{H^2(\Omega_k)}$, property \eqref{eq:stabilityWeightFunc} and the definition of the patch volume $|\Omega_k| = \mathfrak{C}_V R^{d-1}H$ as
    \begin{align}
        \begin{split}
            \|w_k\|^2_{H^2(\Omega_k)} & = \sum_{|\zeta|\leq 2}\|D^{\zeta}w_k\|^2_{L_2(\Omega_k)} \leq \sum_{|\zeta|\leq 2}\left|\Omega_k\right|\|D^{\zeta}w_k\|^2_{L_{\infty}(\Omega_k)} \\
            & \leq \sum_{|\zeta|\leq 2}\left|\Omega_k\right|G^2_{|\zeta|}\delta_k^{-2|\zeta|} \leq \sum_{|\zeta| =  2}\left|\Omega_k\right|G^2_{|\zeta|}\delta_0^{-2|\zeta|}R^{-2|\zeta|}_k \\ & = \sum_{|\zeta|\leq 2}\mathfrak{C}_V HG^2_{|\zeta|}\delta_0^{-2|\zeta|}R^{d-1-2|\zeta|}_k.
        \end{split}
    \end{align}
    Additionally, we use \eqref{eq:derivativeRef2PhysicalL2Norms} to bound the map of function $\mathbf{v}^{(k)}$ in the reference patch, $\tilde{\mathbf{v}}^{(k)}$, whose components $\tilde{v}^{(k)}_i$ are in the native Hilbert space of the Gaussian $\mathcal{N}_G(\Omega_0)$ as
    \begin{align}
        \begin{split}
            \|\mathbf{v}^{(k)}\|^2_{H^2(\Omega_k\cap \Omega)} \leq \|\mathbf{v}^{(k)}\|^2_{H^2(\Omega_k)} & = \sum_{\alpha = 0}^2\|D^{\alpha}\mathbf{v}^{(k)}\|^2_{L_2(\Omega_k)} \leq \frac{1}{~{}_{sc}R_k^2~{}_{sc}H_k}\sum_{\alpha = 0}^2C^2_{ref}(\alpha)\|D^{\alpha}\tilde{\mathbf{v}}^{(k)}\|^2_{L_2(\Omega_0)} \\ &
            \leq C_{\mathcal{N}_G}\left(\frac{C_{ref}}{{}_{sc}R_k~{}_{sc}H_k^{\frac{1}{2}}}\right)^2\sum_{i=1}^d\|\tilde{v}_i^{(k)}\|^2_{\mathcal{N}_G(\Omega_0)},
       \end{split}
    \end{align}
    where we used the second definition for Hilbert norm \eqref{eq:Sobolev_norm_2} and \eqref{eq:derivativeRef2PhysicalL2Norms}. In the last step we used \eqref{eq:H2includedInN} and defined the maximum constant as $C_{ref}$. Substituting both bounds in \eqref{eq:Th4.4_1} gives
    \begin{equation}
        \|\mathbf{v}\|^2_{H^2(\Omega)} \leq C_{\mathcal{N}_G}\sum_{k=1}^P\left(\frac{C_{ref}}{{}_{sc}R_k~{}_{sc}H_k^{\frac{1}{2}}}\right)^2\sum_{i=1}^d\|\tilde{v}_i^{(k)}\|^2_{\mathcal{N}_G(\Omega_0)}\sum_{|\zeta|\leq 2}\mathfrak{C}_V HG^2_{|\zeta|}\delta_0^{-2|\zeta|}R_k^{d-1-2|\zeta|}.
    \end{equation}
    Hence we can conclude that function $\mathbf{v}\in V_h(\Omega)$ is bounded in the $H^2$ norm for fixed patch sizes, $R_k,~H_k$.
\end{proof}

The best approximation error estimate for the solution of \eqref{eq:disc_WF} is given in Theorem \ref{theorem:bestApproximation} and requires four prerequisites, a coercivity estimate for the weighted bilinear form \eqref{eq:ContBilinear2WeightBilinear}, a discrete least squares orthogonality property \eqref{eq:LSQ_orthogonality}, an integration error \eqref{eq:genEigenProblems} and a continuity bound for the discrete bilinear form \eqref{eq:discContinuity}. We should also assume that the solution to \eqref{eq:LSQWF} has additional regularity which would allow for point-wise evaluations necessary for $l_2-\text{norms}$ in \eqref{eq:disc_WF}. As such we assume $\mathbf{u}\in H^2(\Omega)\cap C^{2,\gamma}(\bar{\Omega})$, where $C^{2,\gamma}(\bar{\Omega})$ is a Hölder space with exponent $\gamma > 0$, for each component of vector valued function $\mathbf{u}$. Using the Sobolev embedding theorem it is sufficient to have $\mathbf{u}\in H^4(\Omega)$ for a Lipschitz domain, $\Omega$ \cite[p. 85]{adams2003sobolev}. 

We start with a coercivity estimate for the weighed bilinear form $\tilde{\alpha}\left(\cdot,\cdot\right)$ defined in \eqref{eq:ContWeightBilinear}. We use the continuous coercivity estimate from Theorem \ref{theorem:Coercivity} as well as an inverse inequality on the boundary proven in Appendix \ref{app:halfNorm}, using \cite[Lemma 4.5.3]{brenner2008mathematical}. The estimate for a function $\mathbf{v}\in V_h$, is given by
\begin{align}
    \begin{split}
        \|\mathbf{v}\|^2_{H^2(\Omega)} & \leq C_1\alpha\left(\mathbf{v},\mathbf{v}\right) = C_1\|\nabla\cdot\sigma(\mathbf{v})\|^2_{L_2(\Omega)} + C_1\|\kappa_1\sigma(\mathbf{v})\cdot\mathbf{n} + \kappa_0\mathbf{v}\|^2_{H^{1/2}(\partial\Omega)} \\
        & \leq C_1(1 + C_{inv})\|\nabla\cdot\sigma(\mathbf{v})\|^2_{L_2(\Omega)} + C_1C_{inv}h^{-1}\|\kappa_1\sigma(\mathbf{v})\cdot\mathbf{n} + h^{-1}\kappa_0\mathbf{v}\|^2_{L_2(\partial\Omega)}
         \\
        & \leq  C_1(1 + C_{inv})\left(\|\nabla\cdot\sigma(\mathbf{v})\|^2_{L_2(\Omega)} + h^{-1}\|\kappa_1\sigma(\mathbf{v})\cdot\mathbf{n} + h^{-1}\kappa_0\mathbf{v}\|^2_{L_2(\partial\Omega)}\right)  \\ &  = C_1(1 + C_{inv})\tilde{\alpha}\left(\mathbf{v},\mathbf{v}\right),
    \end{split}
    \label{eq:ContBilinear2WeightBilinear}
\end{align}
where $h$ is the fill distance \eqref{eq:fillX}. 

Additionally, we have a least squares orthogonality property for the discrete bilinear form \eqref{eq:DiscBilinear}
\begin{equation}
    \alpha_h\left(\mathbf{u}_h - \mathbf{u}, \mathbf{v}\right) = 0, \hspace{0.5cm} \forall\mathbf{v}\in V_h,
    \label{eq:LSQ_orthogonality}
\end{equation}
where $\mathbf{u}\in H^4(\Omega)$ is the solution to \eqref{eq:LSQWF}, while $\mathbf{u_h}\in V_h(\Omega)$ is the solution to \eqref{eq:disc_WF}. 

The integration error can be quantified between the discrete and continuous bilinear forms and norms $\forall\mathbf{v}\in V_h$ as
\begin{subequations}
\label{eq:genEigenProblems}
\begin{align}  
    \|\mathbf{v}\|^2_{l_2(\Omega)} - \|\mathbf{v}\|^2_{L_2(\Omega)} &\leq \tau_0\|\mathbf{v}\|^2_{L_2(\Omega)}, 
    \label{eq:genEigenProblemL2}\\
    \|\mathbf{v}\|^2_{h^2(\Omega)} - \|\mathbf{v}\|^2_{H^2(\Omega)} &\leq \tau_2\|\mathbf{v}\|^2_{H^2(\Omega)}, 
    \label{eq:genEigenProblemH2}\\
    \tilde{\alpha}\left(\mathbf{v},\mathbf{v}\right) - \alpha_h\left(\mathbf{v},\mathbf{v}\right) &\leq \tau_{\alpha}\tilde{\alpha}\left(\mathbf{v},\mathbf{v}\right), 
    \label{eq:genEigenProblemBilinear}
\end{align}
\end{subequations}
which were numerically investigated for the least squares RBF-FD method in \cite{larsson2022numerical},~\cite{Larsson_2024} and the constants $\tau_{\alpha},~\tau_{0},~\tau_{2}$ depend on the oversampling, $q = \frac{M}{N}$, given sufficiently high node quality. The derivation of these integration errors is given in Appendix \ref{app:intErrorEstimates}.

A continuity bound for the discrete bilinear form \eqref{eq:DiscBilinear} is derived using the bounds provided in Appendix \ref{app:ContinuityBounds} applied to $l_2$-norms, $\forall\mathbf{v}\in V_h(\Omega)$
\begin{align}
    \begin{split}
        \alpha_h\left(\mathbf{v},\mathbf{v}\right) &  = \|\nabla\cdot\sigma(\mathbf{v})\|^2_{l_2(\Omega)} + h^{-1}\|\kappa_1\sigma(\mathbf{v}) + h^{-1}\kappa_0\mathbf{v}\|^2_{l_2(\partial\Omega)} \\
        & \leq d\left(2\mu + \lambda\right)^2\|D^2\mathbf{v}\|^2_{l_2(\Omega)} \\
        & + h^{-1}\max\left(|\kappa_0|^2,|\kappa_1|^2\right)\left(\|\sigma(\mathbf{v})\cdot\mathbf{n}\|^2_{l_2(\partial\Omega)} + h^{-2}\|\mathbf{v}\|^2_{l_2(\partial\Omega)}\right) \\
        & \leq  d\left(2\mu + \lambda\right)^2\|D^2\mathbf{v}\|^2_{l_2(\Omega)} \\
        & + \max\left(|\kappa_0|^2,|\kappa_1|^2\right)\left(\left(4\mu^2 + d\lambda^2\right)h^{-1}\|\nabla\mathbf{v}\|_{l_2(\partial\Omega)} + h^{-3}\|\mathbf{v}\|^2_{l_2(\partial\Omega)}\right) \\
        & \leq C_4\left(\|D^2\mathbf{v}\|^2_{l_2(\Omega)} + h^{-1}\|\nabla\mathbf{v}\|^2_{l_2(\partial\Omega)} + h^{-3}\|\mathbf{v}\|^2_{l_2(\partial\Omega)}\right),
    \end{split}
    \label{eq:discContinuity}
\end{align}
where constant $C_4$ depends on $\lambda,~\mu,~d,~\kappa_0,~\text{and}~\kappa_1$.
\begin{theorem}
    Given a solution to the continuous least squares projection problem \eqref{eq:LSQWF} $\mathbf{u}\in H^4(\Omega)$, any function $\mathbf{u}_I\in V_h(\Omega)$ and the solution to the discrete problem \eqref{eq:disc_WF} $\mathbf{u}_h\in V_h(\Omega)$, the following approximation estimate holds 
    \begin{equation}
        \|\mathbf{u} - \mathbf{u}_h \|^2_{h^2(\Omega)}\leq C_5\alpha_h\left(\mathbf{u}-\mathbf{u}_I,\mathbf{u}-\mathbf{u}_I\right) + \|\mathbf{u}-\mathbf{u}_I\|^2_{h^2(\Omega)},
    \end{equation} 
    where $\alpha_h(\cdot,\cdot)$ is the discrete bilinear form defined in \eqref{eq:DiscBilinear} and $C_5$ is a constant dependent on integration errors $\tau_2,~\tau_{\alpha}$ and the coercivity constant $C_1$.
    \label{theorem:bestApproximation}
\end{theorem}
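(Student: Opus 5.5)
The plan is the standard least-squares best-approximation argument, carried out in the discrete norms. I would start from the triangle inequality: write $\mathbf{u}-\mathbf{u}_h = (\mathbf{u}-\mathbf{u}_I) + (\mathbf{u}_I-\mathbf{u}_h)$ and set $\mathbf{e}_h := \mathbf{u}_I-\mathbf{u}_h\in V_h(\Omega)$. Then
\begin{equation*}
\|\mathbf{u}-\mathbf{u}_h\|^2_{h^2(\Omega)}\leq 2\|\mathbf{u}-\mathbf{u}_I\|^2_{h^2(\Omega)} + 2\|\mathbf{e}_h\|^2_{h^2(\Omega)}.
\end{equation*}
The constant $2$ in front of the interpolation term can either be absorbed by redefining the terms, or avoided by a weighted Young inequality. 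Since the statement has coefficient one there, I would instead expand the square exactly and control the cross term through $\alpha_h$. Everything then reduces to bounding the discrete piece $\|\mathbf{e}_h\|_{h^2(\Omega)}$ by $\alpha_h(\mathbf{u}-\mathbf{u}_I,\mathbf{u}-\mathbf{u}_I)$.

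For the discrete piece I would chain the prerequisites already established, using only functions in $V_h$:
\begin{equation*}
\|\mathbf{e}_h\|^2_{h^2(\Omega)} \leq (1+\tau_2)\|\mathbf{e}_h\|^2_{H^2(\Omega)} \leq (1+\tau_2)C_1(1+C_{inv})\tilde{\alpha}(\mathbf{e}_h,\mathbf{e}_h).
\end{equation*}
The first inequality is \eqref{eq:genEigenProblemH2} and the second is \eqref{eq:ContBilinear2WeightBilinear}. Next, \eqref{eq:genEigenProblemBilinear} gives $(1-\tau_\alpha)\tilde{\alpha}(\mathbf{e}_h,\mathbf{e}_h)\leq\alpha_h(\mathbf{e}_h,\mathbf{e}_h)$, which requires $\tau_\alpha<1$, i.e.\ sufficient oversampling. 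Hence
\begin{equation*}
\|\mathbf{e}_h\|^2_{h^2(\Omega)}\leq \frac{(1+\tau_2)C_1(1+C_{inv})}{1-\tau_\alpha}\,\alpha_h(\mathbf{e}_h,\mathbf{e}_h).
\end{equation*}

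Then I would use the orthogonality \eqref{eq:LSQ_orthogonality}. Since $\mathbf{e}_h\in V_h$, $\alpha_h(\mathbf{u}_h-\mathbf{u},\mathbf{e}_h)=0$, so
\begin{equation*}
\alpha_h(\mathbf{e}_h,\mathbf{e}_h)=\alpha_h(\mathbf{u}_I-\mathbf{u},\mathbf{e}_h).
\end{equation*}
The form $\alpha_h$ is a symmetric positive semidefinite sum of weighted $l_2$ products of residuals; I would interpret the sign in \eqref{eq:DiscBilinear} as $+$, consistent with \eqref{eq:discContinuity}. Cauchy--Schwarz for this semi-inner product then gives $\alpha_h(\mathbf{e}_h,\mathbf{e}_h)\le\alpha_h(\mathbf{u}-\mathbf{u}_I,\mathbf{u}-\mathbf{u}_I)$. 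Equivalently, $\alpha_h(\mathbf{u}-\mathbf{u}_h,\mathbf{u}-\mathbf{u}_h)\le \alpha_h(\mathbf{u}-\mathbf{u}_I,\mathbf{u}-\mathbf{u}_I)$: the discrete least-squares solution minimizes the discrete residual over $V_h$. Combining the steps gives the estimate with
\begin{equation*}
C_5 = \frac{2C_1(1+C_{inv})(1+\tau_2)}{1-\tau_\alpha},
\end{equation*}
which depends on $\tau_2$, $\tau_\alpha$ and $C_1$ as claimed ($C_{inv}$ being a fixed geometric constant).

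The main obstacle is rigor in transferring between norms. The integration-error bounds \eqref{eq:genEigenProblems} hold only on $V_h$, while $\mathbf{u}-\mathbf{u}_h\notin V_h$; this is exactly why I split off $\mathbf{e}_h$ and apply every continuous/discrete comparison to $\mathbf{e}_h$ alone. The terms involving $\mathbf{u}$ stay in discrete form, which is meaningful because $\mathbf{u}\in H^4(\Omega)\subset C^2$ allows pointwise evaluation. The remaining subtleties are matching the constant in front of $\|\mathbf{u}-\mathbf{u}_I\|^2_{h^2}$ and ensuring $\tau_\alpha<1$. For the latter I would add an explicit oversampling assumption, citing Appendix~\ref{app:intErrorEstimates}.
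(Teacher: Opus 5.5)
Your treatment of the discrete part follows the paper's argument. You use the same splitting $\mathbf{e}=\mathbf{e}_I+\mathbf{e}_h$ with $\mathbf{e}_h=\mathbf{u}_I-\mathbf{u}_h\in V_h$, the same chain \eqref{eq:genEigenProblemH2}, \eqref{eq:ContBilinear2WeightBilinear}, \eqref{eq:genEigenProblemBilinear}, and orthogonality to get $\alpha_h(\mathbf{e}_h,\mathbf{e}_h)\le\alpha_h(\mathbf{e}_I,\mathbf{e}_I)$. The paper uses the Pythagorean identity $\alpha_h(\mathbf{e}_h,\mathbf{e}_h)=\alpha_h(\mathbf{e}_I,\mathbf{e}_I)-\alpha_h(\mathbf{e},\mathbf{e})$ where you use Cauchy--Schwarz, which comes to the same thing. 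That part is sound. You are also more careful than the paper: you keep the factor $1+C_{inv}$, read the sign in $\alpha_h$ as $+$, and require $\tau_\alpha<1$.

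The gap is the coefficient $1$ in front of $\|\mathbf{u}-\mathbf{u}_I\|^2_{h^2(\Omega)}$. Your chain as written proves
\[
\|\mathbf{u}-\mathbf{u}_h\|^2_{h^2(\Omega)}\le C_5\,\alpha_h(\mathbf{u}-\mathbf{u}_I,\mathbf{u}-\mathbf{u}_I)+2\|\mathbf{u}-\mathbf{u}_I\|^2_{h^2(\Omega)},
\]
not the stated estimate, so the claim that ``combining the steps gives the estimate'' is wrong.

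Your proposed repair, expanding the square and controlling the cross term $2(\mathbf{e}_I,\mathbf{e}_h)_{h^2(\Omega)}$ through $\alpha_h$, cannot work. That term is only bounded by $2\|\mathbf{e}_I\|_{h^2(\Omega)}\|\mathbf{e}_h\|_{h^2(\Omega)}$. But $\alpha_h(\mathbf{e}_I,\mathbf{e}_I)$ does not control $\|\mathbf{e}_I\|_{h^2(\Omega)}$, because $\mathbf{e}_I\notin V_h$ and $\alpha_h$ is only a seminorm there. Young's inequality therefore always leaves $(1+\epsilon)\|\mathbf{e}_I\|^2_{h^2(\Omega)}$, with $C_5\sim 1+\epsilon^{-1}$.

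In fact no argument can give coefficient $1$. Take $\mathbf{u}=\mathbf{z}$ smooth and supported in the interior, equal to a constant $\mathbf{c}\neq\mathbf{0}$ near one interior evaluation point $\mathbf{y}_1$ and vanishing near all other evaluation points. It solves \eqref{eq:LSQWF} with $\mathbf{f}=\mathcal{L}\mathbf{z}$ and $\mathbf{g}=\mathbf{t}=\mathbf{0}$. All residuals of $\mathbf{z}$ vanish at every evaluation point, so $\mathbf{u}_h=\mathbf{0}$ and $\alpha_h(\mathbf{z},\cdot)=0$. Now choose $\mathbf{u}_I=s\mathbf{w}$ with $\mathbf{w}\in V_h$ and $(\mathbf{z},\mathbf{w})_{h^2(\Omega)}=W_1\,\mathbf{c}\cdot\mathbf{w}(\mathbf{y}_1)\neq0$. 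The claimed inequality then reduces to
\[
0\le s^2\bigl(C_5\,\alpha_h(\mathbf{w},\mathbf{w})+\|\mathbf{w}\|^2_{h^2(\Omega)}\bigr)-2s\,(\mathbf{z},\mathbf{w})_{h^2(\Omega)}.
\]
This fails for small $s$ with the sign of $(\mathbf{z},\mathbf{w})_{h^2(\Omega)}$, whatever $C_5$ is.

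The paper reaches coefficient $1$ only through the step $\|\mathbf{e}\|^2_{h^2(\Omega)}\le\|\mathbf{e}_h\|^2_{h^2(\Omega)}+\|\mathbf{e}_I\|^2_{h^2(\Omega)}$, which is not a valid inequality. What you can actually claim is the factor-$2$ version (or $1+\epsilon$ with a correspondingly larger $C_5$). This is harmless downstream, since it only changes the constant in \eqref{eq:FinalApriori}.
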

\begin{proof}
The proof closely resembles the proof for \cite[Theorem 5.2]{stableRBFFD2021}. We start by defining the approximation error $\mathbf{e} = \mathbf{u} - \mathbf{u}_h$ and the interpolation error $\mathbf{e}_I = \mathbf{u} - \mathbf{u}_I$ such that $\mathbf{e} = \mathbf{e}_I + \mathbf{e}_h$ and $\mathbf{e}_h\in V_h(\Omega)$, where we refer to $\mathbf{e}_I$ as an interpolation error since we later choose $\mathbf{u}_I$ to be a locally constructed partition of unity interpolant.

We utilise the least squares orthogonality property from \eqref{eq:LSQ_orthogonality} which gives the discrete bilinear form
\begin{align}
    \begin{split}
        \alpha_h\left(\mathbf{e}_h,\mathbf{e}_h\right) & = \alpha_h\left(\mathbf{e}-\mathbf{e}_I,\mathbf{e}_h\right) 
         = \alpha_h\left(\mathbf{e},\mathbf{e}_h\right) + \alpha_h\left(-\mathbf{e}_I,\mathbf{e}_h\right) \\ 
        & = \alpha_h\left(-\mathbf{e}_I,\mathbf{e}_h\right) = \alpha_h\left(\mathbf{e}_I,\mathbf{e}_I\right) + \alpha_h\left(-\mathbf{e}_I,\mathbf{e}\right) \\
        & = \alpha_h\left(\mathbf{e}_I,\mathbf{e}_I\right) + \alpha_h\left(-\mathbf{e},\mathbf{e}\right) + \alpha_h\left(\mathbf{e}_h,\mathbf{e}\right) =  \alpha_h\left(\mathbf{e}_I,\mathbf{e}_I\right) - \alpha_h\left(\mathbf{e},\mathbf{e}\right),
    \end{split}
    \label{eq:DiscBilinearBound}
\end{align}
and shows that $\alpha_h\left(\mathbf{e}_h,\mathbf{e}_h\right)\leq \alpha_h\left(\mathbf{e}_I,\mathbf{e}_I\right)$.

We additionally bound the discrete $h^2-\text{norm}$ with the continuous one using the generalised eigenvalue problem \eqref{eq:genEigenProblemH2} and the coercivity bound \eqref{eq:ContBilinear2WeightBilinear} to show
\begin{align}
    \begin{split}
        \|\mathbf{e}\|^2_{h^2(\Omega)} & \leq \|\mathbf{e}_h\|^2_{h^2(\Omega)} + \|\mathbf{e}_I\|^2_{h^2(\Omega)}  \leq (1+\tau_2)\|\mathbf{e}_h\|^2_{H^2(\Omega)} + \|\mathbf{e}_I\|^2_{h^2(\Omega)} \\
        & \leq \left(1+\tau_2\right)C_1\tilde{\alpha}\left(\mathbf{e}_h,\mathbf{e}_h\right) + \|\mathbf{e}_I\|^2_{h^2(\Omega)}.
    \end{split}
\end{align}
Using estimates \eqref{eq:genEigenProblemBilinear} and \eqref{eq:DiscBilinearBound} we derive the approximation error bound
\begin{align}
    \begin{split}
        \|\mathbf{e}\|^2_{h^2(\Omega)} & \leq \left(1+\tau_2\right)C_1\tilde{\alpha}\left(\mathbf{e}_h,\mathbf{e}_h\right) + \|\mathbf{e}_I\|^2_{h^2(\Omega)} \\
        & \leq \frac{1+\tau_2}{1-\tau_{\alpha}}C_1\alpha_h\left(\mathbf{e}_h,\mathbf{e}_h\right) + \|\mathbf{e}_I\|^2_{h^2(\Omega)}\\ 
        & \leq \frac{1+\tau_2}{1-\tau_{\alpha}}C_1\alpha_h\left(\mathbf{e}_I,\mathbf{e}_I \right) + \|\mathbf{e}_I\|^2_{h^2(\Omega)}.
    \end{split}
\end{align}
\end{proof}

Theorem \ref{theorem:bestApproximation} shows that the error for our approximate solution $\mathbf{u}_h\in V_h(\Omega)$ can be upper bounded by any function in space $V_h$. We choose $\mathbf{u}_I\in V_h$ to be a function constructed using interpolants in a reference patch $\Omega_0$. We compute the interpolation error using reference interpolants $\tilde{\mathbf{u}}^{(k)}_I = ({}_{1}\tilde{u}^{(k)}_I,\dots,{}_{d}\tilde{u}^{(k)}_I)$, where each component is in the native space of the Gaussian RBF, $\mathcal{N}_G(\Omega_0)$. This is a local interpolant of the global solution to \eqref{eq:LSQWF} in patch $\Omega_k$ transformed to the reference patch $\Omega_0$. We define the local solution in a reference patch by first expressing $\mathbf{u}\in H^4(\Omega)$ as
\begin{equation}
    \mathbf{u}(\mathbf{x}) = \sum_{k=1}^P w_k(\mathbf{x})\mathbf{u}(\mathbf{x}), \hspace{2cm} \forall \mathbf{x}\in\Omega,
    \label{eq:globalTrueSolIntEst}
\end{equation}
where the solution has a restriction to each patch $u_i^{(k)} = u_i|_{(\Omega_k\cap\Omega)}$ assuming that $\Omega\cap\Omega_k$ is a Lipschitz domain \cite[Theorem 1.4.5]{brenner2008mathematical}, where $\mathbf{u}^{(k)} = (u^{(k)}_1,\dots,u^{(k)}_d)$. Similarly the restriction $u_i^{(k)}\in H^4(\Omega\cap\Omega_k)$ has a norm equivalent extension to the entire patch $Eu_i^{(k)}\in H^{4}(\Omega_k)$. Using the linear map between patch $\Omega_k$ and the reference patch \eqref{eq:referencePatchMap_1}, we have the extension of the true solution in the reference patch $E\tilde{\mathbf{u}}(\mathbf{x}') = E\mathbf{u}(\mathbf{x})$, whose components in each dimension are assumed to be in $\mathcal{N}_G(\Omega_0)$. This extension is approximated by local reference interpolant $\tilde{\mathbf{u}}_I^{(k)}$, such that $\tilde{\mathbf{u}}_I^{(k)}(\mathbf{x}'_j) = E\tilde{\mathbf{u}}(\mathbf{x}'_j),~\mathbf{x}_j' \in X_0$. Hence, the global partition of unity interpolant is defined as
\begin{equation}
    \mathbf{u}_I(\mathbf{x}) = \sum_{k=1}^Pw_k(\mathbf{x})\mathbf{u}_I^{(k)}\left(\mathbf{x}\right), \hspace{2cm} \forall \mathbf{x}\in\Omega,
    \label{eq:globalExtendedInterpolant}
\end{equation}
where $\mathbf{u}_I^{(k)}(\mathbf{x}) = \tilde{\mathbf{u}}_I^{(k)}(\mathbf{x}')$. Definitions \eqref{eq:globalTrueSolIntEst},~\eqref{eq:globalExtendedInterpolant} are used in deriving the global interpolation error estimate for the LS-RBF-PU method in the following theorem. 

The following theorem is based on the equivalent estimate for the fitted method derived in \cite[Theorem 15.19]{Wendland_2004}. Additionally, we use sampling inequality \cite[Theorem 2.4]{rieger2010sampling} and bound \eqref{eq:H2includedInN} for Gaussian kernels. A similar bound for the Laplace operator of the interpolation error is shown in \cite{larsson2024rbfpartitionunitymethod}.

\begin{theorem}
    Let $\Omega\subset \mathbb{R}^d$ be a bounded domain partitioned in a single layer of overlapping cylindrical patches $\{\Omega_k\}$, with radius $R_k$ and height $H_k$, which form a regular cover of $\Omega$ such that every $\mathbf{x}\in\Omega$ is overlapped by at most $\chi$ patches. Given linear map to a reference patch $T_k:\Omega_k\rightarrow\Omega_0$, let $\Omega_0$ be star shaped with respect to a ball and satisfy an interior cone condition with radius $r$ and angle $\theta\in\left(0,\pi/2\right)$. Let $\{w_k\}$ be $\gamma$-stable in the sense of \eqref{eq:stabilityWeightFunc}. Given function $\mathbf{u}\in H^4(\Omega)$ \eqref{eq:globalTrueSolIntEst}, its oversampled partition of unity interpolant $\mathbf{u}_I\in V_h(\Omega)$ \eqref{eq:globalExtendedInterpolant} and sufficiently dense discrete template node set $X_0\subset \Omega_0$ with fill distance $h_0 = h_f(X_0,\Omega_0)$, where $0 \leq h_0\leq \frac{r\sin\theta}{4(1+\sin\theta)l^2}$, the following holds for $|\alpha|\leq l$
\begin{align}
    \|D^{\alpha}\left(\mathbf{u}-\mathbf{u}_I\right)\|_{L_{\infty}(\Omega)} \leq C_I h_0^{p+1-|\alpha| - \frac{d}{2}}\max_{\substack{1\leq i\leq d \\ 1\leq k\leq P}} \ \|E\tilde{u}^{(k)}_i\ \|_{\mathcal{N}_G(\Omega_0)},
\end{align}
where $p>|\alpha| + \frac{d}{2} - 1$ is the degree of polynomial space with dimension $n = |X_0|$ in $d$ dimensions, $C_I$ is a constant that depends on $n$, $\chi$, multi-index $\alpha$, $d$, $p$, $\theta$, the maximum patch height $H$ and the shape parameter $\varepsilon$ of the kernel \eqref{eq:NativeSpaceRd}, and $E\tilde{u}_i^{(k)}\in\mathcal{N}_G(\Omega_0)$ is the extension of $u_i|_{(\Omega_k\cap\Omega)}$ transformed to $\Omega_0$.
    \label{theorem:InterpolationError}
\end{theorem}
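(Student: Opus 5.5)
The plan follows the proof of \cite[Theorem 15.19]{Wendland_2004}: first localize the error through the partition of unity, then bound each local error on the reference patch with the sampling inequality.

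\textbf{Localization.} For $\mathbf{x}\in\Omega$, combine \eqref{eq:globalTrueSolIntEst} and \eqref{eq:globalExtendedInterpolant} to write
\begin{equation*}
\mathbf{u}-\mathbf{u}_I=\sum_{k=1}^P w_k\left(E\mathbf{u}^{(k)}-\mathbf{u}_I^{(k)}\right),
\end{equation*}
which uses $E\mathbf{u}^{(k)}=\mathbf{u}$ on $\Omega\cap\Omega_k$. Apply the Leibniz rule to $D^{\alpha}$ of each product. This gives terms $\binom{\alpha}{\beta}D^{\alpha-\beta}w_k\,D^{\beta}(E\mathbf{u}^{(k)}-\mathbf{u}_I^{(k)})$ for $\beta\leq\alpha$. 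At a fixed $\mathbf{x}$, at most $\chi$ patches are nonzero. Bound the weight derivatives with \eqref{eq:stabilityWeightFunc}, which gives $G_{|\alpha-\beta|}\delta_0^{-|\alpha-\beta|}R_k^{-|\alpha-\beta|}$. Then
\begin{equation*}
\|D^{\alpha}(\mathbf{u}-\mathbf{u}_I)\|_{L_\infty(\Omega)}\leq \chi\max_k\sum_{\beta\leq\alpha}\binom{\alpha}{\beta}G_{|\alpha-\beta|}\delta_k^{-|\alpha-\beta|}\|D^{\beta}(E\mathbf{u}^{(k)}-\mathbf{u}_I^{(k)})\|_{L_\infty(\Omega_k)}.
\end{equation*}

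\textbf{Pullback to $\Omega_0$.} Use \eqref{eq:derivativePhysical2Ref} to replace each physical derivative by the reference one: $\|D^{\beta}(\cdot)\|_{L_\infty(\Omega_k)}\leq C_{ref}(|\beta|)\|D^{\beta}(E\tilde{\mathbf{u}}^{(k)}-\tilde{\mathbf{u}}_I^{(k)})\|_{L_\infty(\Omega_0)}$. Then treat each component separately. The local error $e_i=E\tilde u_i^{(k)}-{}_{i}\tilde u_I^{(k)}$ vanishes on $X_0$. For $\Omega_0$ star shaped with respect to a ball and satisfying the cone condition, and for $h_0\leq r\sin\theta/(4(1+\sin\theta)l^2)$, the sampling inequality \cite[Theorem 2.4]{rieger2010sampling} gives a bound on $\|D^{\beta}e_i\|_{L_\infty(\Omega_0)}$. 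The bound is $C h_0^{p+1-|\beta|-d/2}$ times a Sobolev norm of $e_i$ of order $p+1$, plus a data term that vanishes because $e_i|_{X_0}=0$. The condition $p>|\beta|+d/2-1$ makes this exponent positive. Next, control the Sobolev norm of $e_i$ by its native space norm using \eqref{eq:H2includedInN}, or the higher-order analogue from \cite[Theorem 7.5]{rieger2010sampling}. Finally use $\|e_i\|_{\mathcal{N}_G(\Omega_0)}\leq\|E\tilde u_i^{(k)}\|_{\mathcal{N}_G(\Omega_0)}$, which holds because the interpolant is the native-space orthogonal projection.

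\textbf{Collecting terms.} Each $\beta$-term is bounded by a multiple of $h_0^{p+1-|\beta|-d/2}\,\delta_k^{-|\alpha-\beta|}$. Since $h_0\leq 1$, the powers satisfy $h_0^{p+1-|\beta|-d/2}\leq h_0^{p+1-|\alpha|-d/2}$. The factors $\delta_k^{-|\alpha-\beta|}=(\delta_0R_k)^{-|\alpha-\beta|}$ and the scalings $C_{ref}$ depend only on the fixed patch geometry ($R_k$, $H_k$ and the maximal height $H$). All of these go into $C_I$. Taking the maximum over $i$ and $k$ then gives the stated estimate.

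\textbf{Main obstacle.} The hardest step is applying the sampling inequality with an exponent tied to $p$. I would take the polynomial-reproduction version on $\Omega_0$ with $n=|X_0|$ points fixed, so that $p$ is effectively limited by $n$. I would then check that the loss of $d/2$ from $L_\infty$ sampling and the switch from Sobolev to native norm only introduce constants depending on $\varepsilon$, $d$, $p$ and $\theta$. A secondary concern is that the weight-derivative factors do not degrade with $h_0$. This holds because the patch sizes are fixed for the cylindrical cover.
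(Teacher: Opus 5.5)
Your outline (partition-of-unity localization, Leibniz rule, pullback with $C_{ref}$, the sampling inequality on $\Omega_0$, the native-space bound and the projection property of the interpolant) matches the paper's proof. The final combination step, however, has a genuine gap.

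You absorb $\delta_k^{-|\alpha-\beta|}=(\delta_0R_k)^{-|\alpha-\beta|}$ into $C_I$ on the grounds that the patch sizes are fixed. You also discard the surplus $h_0^{|\alpha|-|\beta|}$ of the lower-order terms using $h_0\le 1$. Neither move is legitimate in the setting of the theorem:
\begin{itemize}
\item $n=|X_0|$ is fixed, since it is one of the quantities $C_I$ is allowed to depend on. So $h_0\to 0$ is only possible if the patches shrink.
\item A cylinder of fixed height $H_k$ carrying $n$ nodes with fill distance $h_k$ has $R_k\sim h_k^{d/(d-1)}$, cf.\ \eqref{eq:fillX_VolumeRel}. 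The weight-derivative bounds in \eqref{eq:stabilityWeightFunc} therefore grow like $\delta_k^{-|\zeta|}$ with $\delta_k\to 0$.
\item If the patches and $n$ were both fixed, $h_0$ would be a fixed number and the estimate would carry no rate at all.
\end{itemize}
As a result, your constant depends on $\delta_0$ and $\min_k R_k$, which are not in the admissible dependence list for $C_I$. If you track these factors with $\delta_k\sim h_0$, your rounded-down bound for the $\beta=0$ term becomes $h_0^{p+1-d/2-2|\alpha|}$, so you lose $|\alpha|$ orders.

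The missing idea is to couple the overlap to the node density, which is the central step of the paper's argument. Refining the relative overlap as $\delta_0\sim h_k^{-1/(d-1)}$ gives $\tilde C_V h_k\le\delta_k\le C_V h_k$, as in \eqref{eq:OverlapFillDistRelation}. Combined with \eqref{eq:referenceFilltoPhysicalFill}, this yields $\delta_k^{-|\zeta|}\lesssim h_0^{-|\zeta|}$. Now keep each Leibniz term in the form $h_0^{p+1-d/2-|q|}\,\delta_k^{-|\zeta|}$ with $|q|+|\zeta|=|\alpha|$, instead of rounding the exponent down. The extra powers $h_0^{|\zeta|}$ then exactly pay for the weight derivatives. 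Every term is $O(h_0^{p+1-d/2-|\alpha|})$, with a constant that depends on $n$, $H$ and $d$ through $\tilde C_V$ rather than on $R_k$ or $\delta_0$.

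A related point you also need to handle: because the patch diameter is tied to $h_0$, the constant in the sampling inequality must not depend on the shrinking patch size. The paper arranges this through $h_0\le\operatorname{diam}(\Omega_k)\le(n+1)h_0$, so that $C_S$ depends only on $n$, $d$, $\alpha$, $p$ and $\theta$.
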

\begin{proof}
    To prove the interpolation error estimate we require a local interpolation estimate constructed using sampling inequalities for smooth functions in star shaped domains with respect to a ball. We modify the result of \cite[Theorem 2.4]{rieger2010sampling}, by using relation $h_0\leq\text{diam}(\Omega_k)\leq (n+1)h_0$, where $h_0 = h_f(\Omega_0,X_0)\leq \frac{r\sin\theta}{4(1+\sin\theta)l^2}$ is the local fill distance for a patch and $n = |X_0|$ is the number of local points. Hence for functions $v\in H^{p+1}(\Omega_0)$ we have
    \begin{equation}
        \|D^{\alpha}v\|_{L_{\infty}(\Omega_0)} \leq C_S h_0^{p + 1 - \frac{d}{2}-|\alpha|}|v|_{H^{p+1}(\Omega_0)} + 2h_0^{-|\alpha|}\|v|_{X_k}\|_{l_{\infty}(X_0)},
    \end{equation}
    where $C_S$ depends on the dimension $d$, multi-index $\alpha$, order $p$ and angle $\theta$.
    
    If we further assume that $v\in\mathcal{N}_G(\Omega_0)$ we use \cite[Theorem 7.5]{rieger2010sampling} and the existence of an extension (see \cite[Section 5]{rieger2010sampling}) to get
    \begin{equation}
        \|D^{\alpha}v\|_{L_{\infty}(\Omega_0)} \leq C_{\mathcal{N}_G}C_Sh_0^{p +1 - \frac{d}{2}-|\alpha|}\|v\|_{\mathcal{N}_G(\Omega_0)} + 2h_0^{-|\alpha|}\|v|_{X_0}\|_{l_{\infty}(X_0)},
        \label{eq:LocalSamplingIneq}
    \end{equation}
    which can be used to construct a local interpolation estimate. Let $v_I\in\mathcal{N}_G(\Omega_0)$ be a local interpolant of function $v\in\mathcal{N}_G(\Omega_0)$, which is constructed using \eqref{eq:RBFinterpolantLocal} and a Gaussian RBF basis, such that $v_I(\mathbf{x}_i) = v(\mathbf{x}_i)$, with $\mathbf{x}_i\in X_0$ for local point set $X_0\subset\Omega_0$ and with fill distance $h_0 = h_f(X_0,\Omega_0)$. Then using \eqref{eq:LocalSamplingIneq} and the property from \cite[Proposition 3.1]{wendland2005approximate} we have
    \begin{equation}
        \|D^{\alpha}(v-v_I)\|_{L_{\infty}(\Omega_0)}\leq C_{\mathcal{N}_G}C_Sh_0^{p + 1 - \frac{d}{2}-|\alpha|}\|v\|_{\mathcal{N}_G(\Omega_0)}.
        \label{eq:LocalInterpEstimate}
    \end{equation}

    We then proceed to bound the global interpolation error $\mathbf{e}_I = \mathbf{u} - \mathbf{u}_I$, where $\mathbf{u}$ is the true solution to our problem defined in \eqref{eq:globalTrueSolIntEst} and $\mathbf{u}_I$, the oversampled PU interpolant constructed using interpolants of the locally extended true solution as shown in \eqref{eq:globalExtendedInterpolant}. Using sub-additivity we get a bound for a derivative of the global error
\begin{align}
    \begin{split}
        \|D^{\alpha}\left(\mathbf{u}-\mathbf{u}_I\right)\|_{L_{\infty}(\Omega)} & = \|\sum_{k=1}^P D^{\alpha}(w_k\mathbf{u} - w_k\mathbf{u}_{I}^{(k)})\|_{L_{\infty}(\Omega)} \leq \sum_{i=1}^d\sum_{k=1}^P\|D^{\alpha}(w_k(u_i  - {}_{i}u_I^{(k)}))\|_{L_{\infty}(\Omega)}  \\
        & =  \sum_{i=1}^d\sum_{k=1}^P\|D^{\alpha}(w_k(u_i^{(k)}  - {}_{i}u_I^{(k)}))\|_{L_{\infty}(\Omega_k\cap\Omega)},
    \end{split}
\end{align}
where we used the fact that $w_k$ are compactly supported. We further expand the bound by using the Leibnitz rule for partial derivatives as expressed in \cite[Section 3.2]{Comtet1974}. Considering every point $\mathbf{x}\in\Omega$ is overlapped by at most $\chi$ patches gives 
\begin{align}
    \begin{split}
        \|D^{\alpha}\mathbf{e}_I\|_{L_{\infty}(\Omega)} & \leq \sum_{i=1}^d\sum_{k=1}^P\|D^{\alpha}(w_k(u_i^{(k)}  - {}_{i}u_I^{(k)}))\|_{L_{\infty}(\Omega_k\cap\Omega)} \\ 
        & = \sum_{i=1}^d\sum_{k=1}^P\|\sum_{q+ \zeta = \alpha}\frac{\alpha!}{q!\zeta!}D^q(u_i^{(k)} - {}_{i}u_I^{(k)}) D^{\zeta}w_k \|_{L_{\infty}(\Omega_k\cap\Omega)} \\ 
        & \leq d \chi\max_{\substack{1\leq i\leq d \\ 1\leq k\leq P}}\sum_{q+ \zeta = \alpha}\frac{\alpha!}{q!\zeta!}\|D^q(u_i^{(k)} - {}_{i}u_I^{(k)}) D^{\zeta}w_k \|_{L_{\infty}(\Omega_k\cap\Omega)} \\
        & \leq d \chi\max_{\substack{1\leq i\leq d \\ 1\leq k\leq P}}\sum_{q+ \zeta = \alpha}\frac{\alpha!}{q!\zeta!}\|D^q(Eu_i^{(k)} - {}_{i}u_I^{(k)})\|_{L_{\infty}(\Omega_k)}\|D^{\zeta}w_k \|_{L_{\infty}(\Omega_k)},
    \end{split}
\end{align}
where $\zeta,~q$ are multi-indices and in the last step we used the Cauchy-Schwartz inequality and $Eu_i^{(k)}\in H^2(\Omega_k)$ is the local extension in $\Omega_k$. Note that an equivalent detailed expression for the the Leibnitz rule in multiple dimensions is given by \cite[Proposition 6]{Hardy2006}. We transform the norm of local interpolation error to the reference patch with \eqref{eq:derivativePhysical2Ref} where we can use \eqref{eq:LocalInterpEstimate} which gives
\begin{equation}
    \|D^{\alpha}\mathbf{e}_I\|_{L_{\infty}(\Omega)} \leq d \chi\max_{\substack{1\leq i\leq d \\ 1\leq k\leq P}}\sum_{q+ \zeta = \alpha}\frac{\alpha!}{q!\zeta!}C_{ref}\|D^q(E\tilde{u}_i^{(k)} - {}_{i}\tilde{u}_I^{(k)})\|_{L_{\infty}(\Omega_0)}\|D^{\zeta}w_k \|_{L_{\infty}(\Omega_k)}
\end{equation}

Then using the local interpolation estimate \eqref{eq:LocalInterpEstimate} and the weight function property \eqref{eq:stabilityWeightFunc} gives 
\begin{equation}
        \|D^{\alpha}\mathbf{e}_I\|_{L_{\infty}(\Omega)} \leq d \chi\max_{\substack{1\leq i\leq d \\ 1\leq k\leq P}}\sum_{q+ \zeta = \alpha}\frac{\alpha!}{q!\zeta!}G_{\zeta}C_{\mathcal{N}_G}C_SC_{ref}h_0^{p + 1 - \frac{d}{2}-|q|}\delta_k^{- |\zeta|}\|E\tilde{u}_i^{(k)}\|_{\mathcal{N}_G(\Omega_0)} 
\end{equation}
where $\delta_k$ is the minimum relative overlap of patch $\Omega_k$. The overlap for patch $k$ is computed as $\delta_k = R_k\delta_0$, where $R_k$ is the radius of the patch and $\delta_0$ is the absolute overlap. The aim is to refine such that $\delta_k\propto h_0$ and all summands converge with the same order. For our patches when $d=3$ the patch volume is computed as $|\Omega_k| \triangleq V_k = \mathfrak{C}_V R_k^{d-1}H_k$, where $\mathfrak{C}_V = \pi$. Moreover, we assume that the height $H_k$ remains constant during refinement and is proportional to the thickness of thin geometries since we maintain one layer of patches during refinement. For $d=3$, with cylindrical patches and a sufficiently dense node set $X_k$ such that $h_s(X_k,\Omega_k)\leq \max(R_k,H_k)$, the patch volume can be lower bounded by the separation distance as
\begin{align}
\begin{split}
    \frac{4}{3}\pi nh_s(X_k,\Omega_k)^d & \leq V_k + \pi R_k^2h_s\left(X_k\Omega_k\right) + 2\pi R_k H_k h_s\left(X_k\Omega_k\right) + \pi H_kh_s^2\left(X_k,\Omega_k\right) + \pi^2 R_kh_s^2\left(X_k,\Omega_k\right) \\ 
    & \leq (5+\pi) V_k,
\end{split}
\end{align}
where we added a band of thickness $h_s(X_k,\Omega_k)$ around the cylinder to ensure all spheres with radius $h_s(X_k,\Omega_k)$ are included in the volume. The above estimate only holds for a sufficiently small fill distance which corresponds to a sufficiently large number of points, $n$. The condition for the number of centre points can also be derived and is given by $n\geq (1+\sqrt{R_k^2 + (0.5H_k)^2} / \min(R_k,H_k))^3$. Additionally we construct an upper bound and use the node quality to get 
\begin{align}
    \begin{split}
        \frac{4}{3}\pi\left(5+\pi\right) n c_{q,X_k}^{1/d} h_k^{d} & \leq V_k \leq \frac{4}{3}\pi n h_k^{d} \\
        \tilde{C}_V h_{k}^{\frac{d}{d-1}} & \leq R_k \leq C_V h_k^{\frac{d}{d-1}}
    \end{split}
    \label{eq:fillX_VolumeRel}
\end{align}
where $c_{q,X_k}$ is the node quality and $h_k = h_f(X_k,\Omega_k)$ the fill distance for point set $X_k\subset\Omega_k$. This indicates that $\delta_0$ should be refined as $h_k^{-1/(d-1)}$, which gives
\begin{equation}
    \tilde{C}_Vh_k \leq \delta_k= R_k\delta_0 \leq C_V h_k^{\frac{d}{d-1}}h_k^{-\frac{1}{d-1}} = C_V h_k \leq C_V(C'_{0,k})^{-1}h_0,
    \label{eq:OverlapFillDistRelation}
\end{equation}
where we used \eqref{eq:referenceFilltoPhysicalFill} and $C_V, \tilde{C}_V$ are constants dependent on the patch height $H_k$, number of points $n$ and the dimension $d$. 

Using \eqref{eq:OverlapFillDistRelation} we have the global interpolation estimate
\begin{align}
    \begin{split}
        \|D^{\alpha}\mathbf{e}_I\|_{L_{\infty}(\Omega)}\leq  d\chi\tilde{C}_V\mathfrak{C}_{\alpha}C_{\mathcal{N}_G}C_S h_0^{p + 1 -\frac{d}{2}-|\alpha|}\max_{\substack{1\leq i\leq d \\ 1\leq k\leq P}}\|E\tilde{u}_i^{(k)}\|_{\mathcal{N}_G(\Omega_0)},
    \end{split}
\end{align}
where $\mathfrak{C}_{\alpha}$ is a constant dependent on the multi-index $\alpha$ and $d$.
\end{proof}

We briefly note the important result by Schaback in \cite{schaback1996approximation} on achieving optimal orders when approximating less regular functions in Sobolev spaces using the Gaussian basis. This is achievable for specific approximants (not interpolants) in the native space of the Gaussian and as such the local patch extensions, $E\tilde{u}_i^{(k)}$ do not need to be functions in $\mathcal{N}_{G}(\Omega_0)$ to achieve convergence. 

The a-priori estimate for our method is hence derived by using the best approximation estimate from Theorem \ref{theorem:bestApproximation} and the interpolation estimate from Theorem \ref{theorem:InterpolationError}. The error between the true and approximated solutions $\mathbf{u},~\mathbf{u}_h$ respectively is
\begin{align}
\begin{split}
    \|\mathbf{u}-\mathbf{u}_h\|^2_{h^{2}(\Omega)} & \leq C_5\mathbf{\alpha}_h\left(\mathbf{u}-\mathbf{u}_I,\mathbf{u}-\mathbf{u}_I\right) + \|\mathbf{u}-\mathbf{u}_h\|^2_{h^2(\Omega)}
    \\
    & \leq C_4C_5\left(\|D^2\left(\mathbf{u}-\mathbf{u}_I\right)\|^2_{l_2(\Omega)} + h^{-1}\|\nabla\left(\mathbf{u}-\mathbf{u}_I\right)\|^2_{l_2(\partial\Omega)} + h^{-3}\|\mathbf{u}-\mathbf{u}_I\|^2_{l_2(\partial\Omega)}\right) \\ 
    & + \|\mathbf{u}-\mathbf{u}_I\|^2_{h^2(\Omega)},
\end{split}
\end{align}
where we also used the discrete continuity estimate \eqref{eq:discContinuity} and $h$ is the fill distance \eqref{eq:fillX}. Expanding the $h^2$-norm and bounding $l_2$-norms by the infinity norm we have
\begin{align}
    \begin{split}
         \|\mathbf{u}-\mathbf{u}_h\|^2_{h^{2}(\Omega)} & \leq \left(C_4C_5 + 1\right)\|D^2\left(\mathbf{u}-\mathbf{u}_I\right)\|^2_{l_2(\Omega)} + C_4C_5h^{-1}\|\nabla\left(\mathbf{u}-\mathbf{u}_I\right)\|^2_{l_2(\partial\Omega)} \\ & + C_4C_5h^{-3}\|\mathbf{u}-\mathbf{u}_I\|^2_{l_2(\partial\Omega)} + \|\nabla\left(\mathbf{u}-\mathbf{u}_I\right)\|^2_{l_2(\Omega)} + \|\mathbf{u}-\mathbf{u}_I\|^2_{l_2(\Omega)} \\
         & \leq |\Omega|\left(C_4C_5 + 1\right)\|D^2\left(\mathbf{u}-\mathbf{u}_I\right)\|^2_{L_{\infty}(\Omega)} + |\partial\Omega|C_4C_5 h^{-1}\|\nabla\left(\mathbf{u}-\mathbf{u}_I\right)\|^2_{L_{\infty}(\Omega)} \\ & + 
         |\partial\Omega|C_4C_5 h^{-3}\|\mathbf{u}-\mathbf{u}_I\|^2_{L_{\infty}(\Omega)} + |\Omega|\|\nabla\left(\mathbf{u}-\mathbf{u}_I\right)\|^2_{L_{\infty}(\Omega)} + |\Omega|\|\mathbf{u}-\mathbf{u}_I\|^2_{L_{\infty}(\Omega)}.
         \label{eq:AprioriEstimateP1}
    \end{split}
\end{align}

We use Theorem \ref{theorem:InterpolationError} by first rewriting the derivative operators in terms of a multi-index, $\alpha$ as 
\begin{align}
    \begin{split}
        \|\mathbf{u}-\mathbf{u}_h\|^2_{h^{2}(\Omega)} & \leq |\Omega|\left(C_4C_5 + 1\right)\sum_{|\alpha| = 2}\|D^{\alpha}\left(\mathbf{u}-\mathbf{u}_I\right)\|^2_{L_{\infty}(\Omega)} \\
        & + \left(|\Omega| + |\partial\Omega|C_4C_5h^{-1}\right) \sum_{|\alpha|=1}\|D^{\alpha}\left(\mathbf{u}-\mathbf{u}_I\right)\|^2_{L_{\infty}(\Omega)} \\   & + 
         \left(|\Omega| + |\partial\Omega|C_4C_5h^{-3}\right)\|\mathbf{u}-\mathbf{u}_I\|^2_{L_{\infty}(\Omega)} \\
        & \leq C_I^2d^2\Bigl(|\Omega|\left(C_4C_5 + 1\right) h_0^{2\left(p-1-\frac{d}{2}\right)} + d\left(|\Omega| + |\partial\Omega|C_4C_5h^{-1}\right)h_0^{2\left(p-\frac{d}{2}\right)} \\ 
         & + \left(|\Omega| + |\partial\Omega|C_4C_5h^{-3}\right)h_0^{2\left(p+1-\frac{d}{2}\right)}\Bigr)\max_{\substack{1\leq i\leq d \\ 1\leq k\leq P}}\|E\tilde{u}_i^{(k)}\|_{\mathcal{N}_G(\Omega_0)}.
    \end{split}
\end{align}

Using \eqref{eq:referenceFilltoPhysicalFill} and \eqref{eq:fillX} to bound the reference patch fill distance, $h_0$ by the global fill distance measure, $h$ and collecting the leading terms gives the a-priori estimate
\begin{equation}
    \|\mathbf{u}-\mathbf{u}_h\|_{h^{2}(\Omega)} \leq Ch^{p-1-\frac{d}{2}}\max_{\substack{1\leq i\leq d \\ 1\leq k\leq P}}\|E\tilde{u}_i^{(k)}\|_{\mathcal{N}_G(\Omega_0)},
    \label{eq:FinalApriori}
\end{equation}
where $C = \max_{1\leq k \leq P}{(C'_{0,k})^{p-1-d/2}}C_Id(|\Omega|(C_4C_5 + 1))^{1/2}$. 
\section{Numerical Implementation}\label{sec:res}
The method implementation is written in \texttt{MATLAB} and all experiments have been performed in \texttt{MATLAB} R2024a on a Linux 64-bit machine with 1TB RAM and an \textit{AMD EPYC 7502P} 32 core processor, where 32 out of the 64 physical cores are used. 
\subsection{Error measures \& Node generation}
We produce numerical experiments for three benchmark problems on both diaphragm geometries introduced in Figure \ref{fig:diaphragm_Recon}. We investigate the deformation for these problems and produce convergence plots for the normalised error in displacement measured using the discrete $l_2-\text{norm}$ as
\begin{equation}
    E = \frac{\|\mathbf{u} - \mathbf{u}_h\|_{l_2(\bar{\Omega})}}{\|\mathbf{u}\|_{l_2(\bar{\Omega})}},
    \label{eq:numericalError}
\end{equation}
which is computed using both evaluation points in the interior and boundary $Y = \{\mathbf{y}_j\}_{j=1}^M\subseteq \bar{\Omega}$ and $\mathbf{u}_h$ is the numerical solution of \eqref{eq:leastSquaresProb} evaluated on $Y$, while $\mathbf{u}$ is either a manufactured smooth solution (problem 1) or a very refined numerical solution (problems 2,3). We additionally compute and present the normalised residual of \eqref{eq:leastSquaresProb}, $R = \|D\underline{\rho} - r\|_2/\|r\|_2$. 

As described in Theorem \ref{theorem:InterpolationError} we perform patch refinement in the tangential direction to the diaphragm surface which means the patch radii, $R_k$ decrease, while the heights, $H_k$, remain constant and are related to the thickness of the diaphragm. This leads to algebraic convergence rates as given in \eqref{eq:FinalApriori} for different numbers of local points, $n$, corresponding to a polynomial space dimension related to the order as $n = \binom{p+d}{d}$ for spatial dimension $d = 3$.

The refinement is measured against an approximate measure of the fill distance for the centre point set, $X$ which is computed using the patch volumes
\begin{equation}
    h\approx \max_{1\leq k\leq P}\frac{|\Omega_k|}{n},
    \label{eq:fillX_numerical}
\end{equation}
where $n = |X_0|$ is the number of local centre points in each patch and the approximate fill distance is equivalent to the theoretical value (see  \eqref{eq:fillX_VolumeRel}). The stencil centre point set $X_0$ is selected from the Halton sequence \cite{halton1960efficiency} in a box enclosing the reference patch and we keep the points generated inside the patch itself. This point set is then transformed to $X_k$ for each patch using \eqref{eq:referencePatchMap}.

The evaluation point sets $\{\mathbf{y}_i\}_{i\in Y^{\Omega}},~\{\mathbf{y}_i\}_{i\in Y^{\partial\Omega}}$ are constructed using the implicit level set function $l(\mathbf{y})$ used to define the boundary. The level set as well as a set of boundary points, $Y_{bnd}$, are computed using the RBF-PU method as described in \cite{larsson2024rbfpartitionunitymethod}. The size of this set is $|Y_{bnd}| = 10726$ and also includes segments forming a surface mesh which is used for visualisation of results in the forthcoming sections. The total number of evaluation points to be generated is computed using the oversampling factor $q$ as $M = qN$, where $N = |X|$ and are split between domain and boundary points. The number of points are split by using the following requirement of equal node density  
\begin{equation}
    \frac{|\partial\Omega|}{C_{\partial\Omega/\Omega}s^{d-1}} + \frac{|\Omega|}{s^d} = M,
\end{equation}
for the approximate node density $s$, where $C_{\partial\Omega/\Omega}$ is the ratio between boundary and interior density. The number of points to generate in the domain and boundary are then given as $M_{\Omega} = |\Omega|/s^d,~M_{\partial\Omega} = |\partial\Omega|/(C_{\partial\Omega/\Omega}s^{d-1})$. We use $C_{\partial\Omega/\Omega} = 1.6$ which numerically provides equivalent fill distance measures between boundary and interior sampling as shown in Figures \ref{fig:evalFillDistances1},~\ref{fig:evalFillDistances2} for both diaphragm geometries. The oversampling used for these plots is $q = 2.5$ and the fill distance is computed as $h_y = \max_{k\in Y^{\Omega}}\tilde{h}_k$ with the point-wise measure of the fill distance computed as in \eqref{eq:integrationWeights} for both the domain and boundary points. Figure \ref{fig:fillDistanceRatio} indicates that the ratio between fill distances for the evaluation and centre node sets remains constant during refinement given $q$ is constant. The ratio is expected to be approximately $q^{1/d} \approx 0.74$.

\begin{figure}
\centering
    \begin{subfigure}{.32\textwidth}
        \centering
        \includegraphics[width=\linewidth]{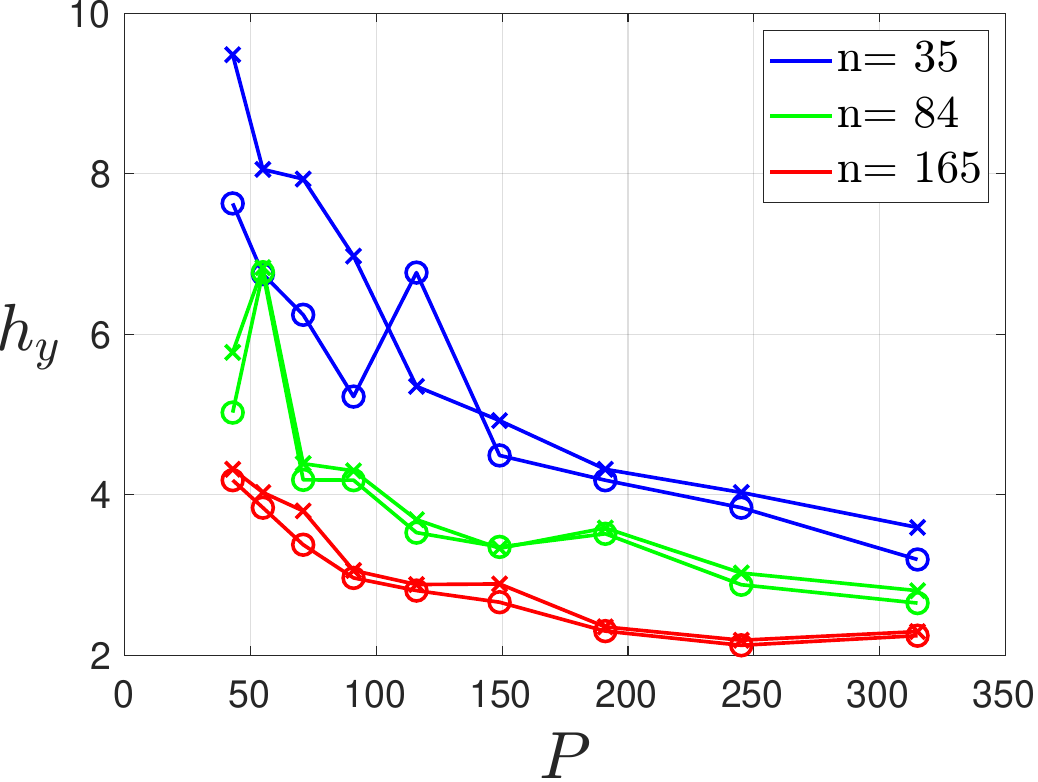}
        \caption{}
        \label{fig:evalFillDistances1}
    \end{subfigure}
    \hspace{.1cm}
    \begin{subfigure}{.32\textwidth}
        \centering
        \includegraphics[width=\linewidth]{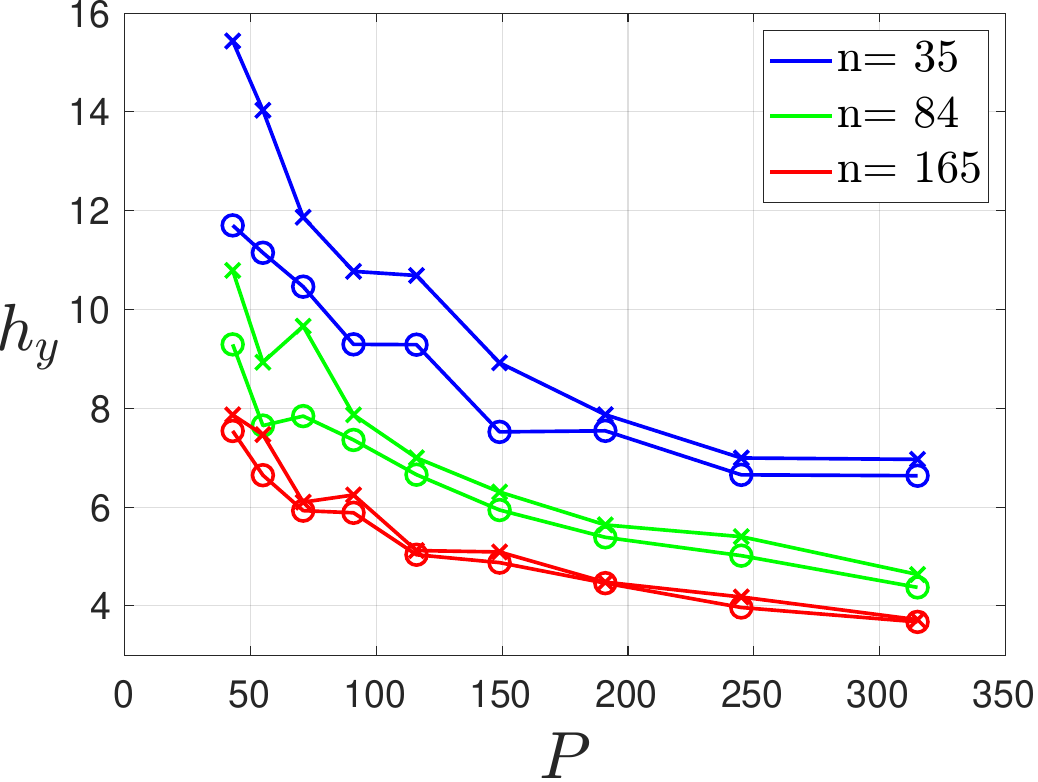}
        \caption{}
        \label{fig:evalFillDistances2}
    \end{subfigure}
    \hspace{.1cm}
    \begin{subfigure}{.32\textwidth}
        \centering
        \includegraphics[width=\linewidth]{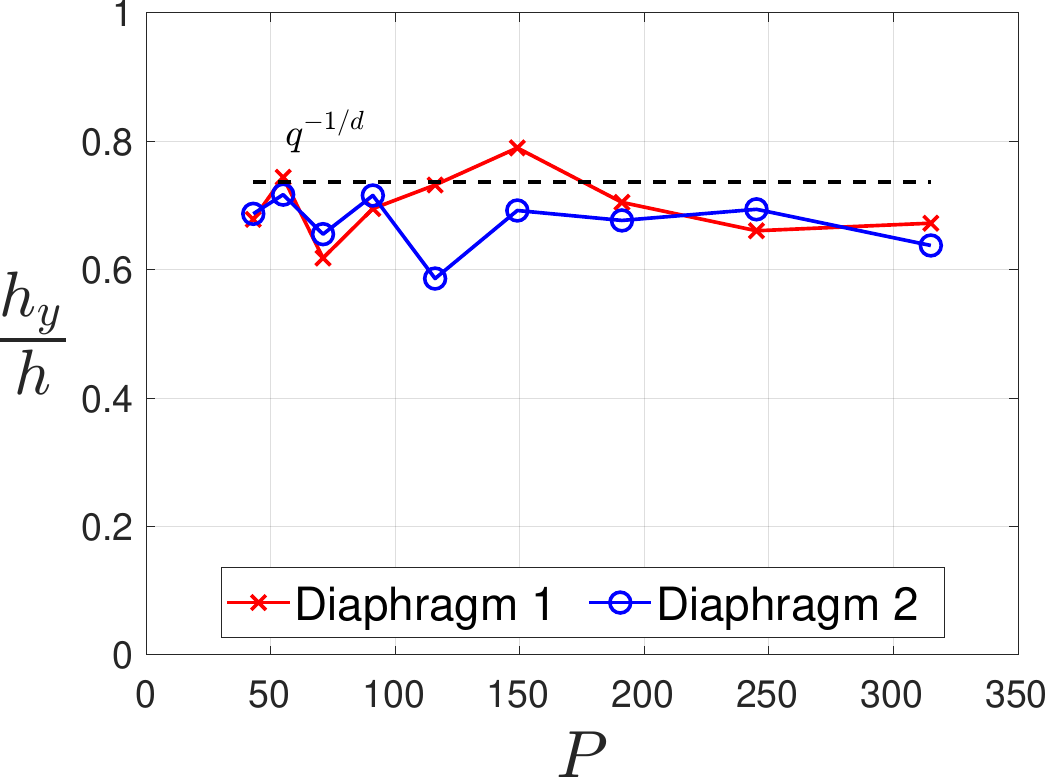}
        \caption{}
        \label{fig:fillDistanceRatio}
    \end{subfigure}
    \caption{Fill distances, $h_y$, of evaluation points in the domain (crosses) and on the boundary (circles) against the number of patches. The fill distance is plotted for a varying number of centre points, $n$ and is given for points generated with $C_{\Omega/\partial\Omega} = 1.6$ and oversampling $q = 2.5$ for (a) Diaphragm 1 and (b) Diaphragm 2. (c) Ratio between fill distance of evaluation points in $\Omega$ and of centre points plotted against the number of patches for both diaphragm geometries. Data shown is only for $n = 165$ local centre points.}
    \label{fig:evalFillDistances}
\end{figure}

To generate both sets of evaluation points we start with a stencil node set $X_{ref}$ in the reference patch $\Omega_0$ and transform the points to each patch using \eqref{eq:referencePatchMap}. For consistency during patch refinement, we use the reconstruction patch cover, not the patch cover created to solve the PDE. We only keep a non-overlapping point set, meaning we remove the extra points generated in the overlapped regions due to the overlap itself. All points which give a negative level set value are inside the domain and form set $\{\mathbf{y}_i\}_{i\in Y^{\Omega}}$. To generate the boundary points we first place points in the interior of the domain and then move them to the boundary using Newton iterations of the form
\begin{equation}
    \mathbf{y}_{new} = \mathbf{y} - l\left(\mathbf{y}\right)\frac{\nabla l\left(\mathbf{y}\right)}{\|\nabla l\left(\mathbf{y}\right)\|_2},
\end{equation}
where $l(\mathbf{y})$ is the level set function evaluated at the interior points. The tolerance used is $10^{-9}$ and the maximum iterations for the move are set to $100$.

Crucially, the number of stencil points, $X_{ref}$, used to generate the full evaluation point sets for the domain is computed as
\begin{equation}
    \left|X_{ref}\right| = f_{\Omega}\frac{M_{\Omega}}{P},
\end{equation}
where $f_{\Omega}$ is a fraction approximating the ratio between all initial points generated in the patch cover to the points that are inside $\Omega$. It depends on the cover itself and has to be computed prior to the generation of points. Hence interior points are generated twice, with the first being an arbitrary number of points used to compute this fraction for the given patch cover. For the boundary points we simply ensure the set $X_{ref}$ is large enough and the generated points are downsampled to $M_{\partial\Omega}$ using the local fill distance measure \eqref{eq:integrationWeights}. An example of the evaluation and centre point sets was shown in Figure \ref{fig:patches_2}.

We note that the method described in \cite{larsson2024rbfpartitionunitymethod} produces spurious zero level sets, especially when working with 3D geometries. Their potential existence introduces complications when generating evaluation points inside or on the boundary of the domain. We deal with this issue by looking for outliers after points inside the domain are selected. Outliers are points with a distance larger than $\tau_{ND}\tilde{h}_{ref}$ from every other point, where $\tau_{ND}$ is the node distance tolerance and $\tilde{h}_{ref}$ is an approximate measure of the fill distance between all points generated with set $X_{ref}$. Spurious zero level sets depend on the geometry itself as well as the reconstruction, so we use $\tau_{ND} = 1.5$ for diaphragm 1 and $\tau_{ND} = 1.2$ for diaphragm 2.

\subsection{Results \& Discussion}
The linear elasticity problem \eqref{eq:linear_elasticity_displacement} is discretised and numerically solved on diaphragm 1 or diaphragm 2 based on the method described in Sections \ref{sec:RBFPU},~\ref{sec:RBFPU_4}. The resulting linear system of equations is given by \eqref{eq:normalEqutions},~\eqref{eq:leastSquaresProb} and is modified accordingly for each of the three benchmark problems solved. The resulting oversampled and sparse linear system is solved in \texttt{MATLAB} using \texttt{mldivide} which uses a sparse QR decomposition through the \texttt{SuiteSparseQR} package \cite{DavisSuiteSparse}. 

In this section we first discuss and justify the selected parameters included in the algorithm before presenting the three benchmark problems solved on the diaphragm geometries.

\subsubsection{Parameter selection}

The method involves first creating the patch cover over the domain and boundary, generating the centre and evaluation points and subsequently formulating the system of equations \eqref{eq:leastSquaresProb} using the local RBF basis and Wendland weights. Several parameters involved in the method formulation are selected. A certain number of patches, $P$ are generated, each with $n$ centre points. These two parameters are varied in the following convergence studies. Regarding the cover generation, we select a fixed overlap for $P = 20$, since the absolute overlap $\delta_0$ increases during refinement (see \eqref{eq:OverlapFillDistRelation}). The effect of the overlap on the error is minimal, unless it becomes very high. In this case patches that are fully overlapped are detected by the algorithm and deleted, while the remaining are extended, coarsening the point cloud. This is necessary to ensure the problem is discretely coercive as indicated in the integration error derivation shown in Appendix \ref{app:intErrorEstimates}.  Hence we choose a relatively small overlap at $P = 20$, $\tilde{\delta} = 0.1$. Even though a small overlap results in large basis function gradients in the overlapped regions, the effect is less significant for solving a PDE as compared to the reconstruction (see \cite{larsson2024rbfpartitionunitymethod}). Additionally, a low overlap results in a less dense linear system, as shown in Figure \ref{fig:spyMatrix}, and hence is less computationally expensive to solve.

The oversampling, $q$, influences the integration error \eqref{eq:integrationError}. Figure \ref{fig:oversampConv} shows how just refining with respect to oversampling affects the error for problem 1. Given the low convergence rate and the increase in cost for larger values of oversampling, we choose $q = 2.5$. Note that $q > 1$ is a strict condition for each patch, to sample each basis function at least once. Generally, the conditioning of the least squares problem is much worse for a low oversampling leading to very large errors and hence we specifically require $q > 1.5$ for all patches (see \cite{larsson2022numerical} for numerical experiments looking at oversampling for the least squares RBF-FD method). 

The shape parameter for the local approximation \eqref{eq:RBFinterpolantLocal} is chosen as $\epsilon = 0.1$. Basis functions closer to the flat limit produce better approximations, while conditioning issues are addressed by computing the interpolation matrix inverse using the RBF-QR algorithm \cite{RBFQR},~\cite{Larsson_Heryudono_Michael_Piret_2026}.

The material parameters present in the boundary value problem \eqref{eq:StrongPDE} are Young's modulus and Poisson's ratio. These are determined by uniaxial tests along the fibre direction on specimens from 6 human diaphragms by the authors in \cite{Gaur2016}. The paper in question focuses on material parameters at different strain rates that can occur during motor vehicle accidents. We expect low strain rates during breathing or ventilation but relatively high strains (deformation is not expected to be in the toe region) and hence we use $E = 12MPa$. However we note that our linear elastic model is limited in describing the deformation of the diaphragm, given the uniaxial stress strain behaviour of specimens suggests the need for a hyperelastic or bilinear model along the fibre direction. Additionally, we note that the relatively small sample set in \cite{Gaur2016} has a very large standard deviation. The Poisson ratio, $\nu = 0.434$ for these diaphragm samples is given in the modelling paper \cite{Gaur2019} by some of the same authors. Moreover, as also noted in \cite{Gaur2019} the density of the diaphragm is very close to the density of water, and we use $\rho = 1000kg/m^3$.

The weighted least squares problem solved also includes constants $\beta=\gamma=h^{-1}$ which are determined by the coercivity bound for the weighted bilinear form, $\tilde{\alpha}(\cdot,\cdot)$ \eqref{eq:ContBilinear2WeightBilinear}. We additionally multiply the Neumann and domain terms ($D^{\mathcal{B}_1},~D^{\mathcal{L}},~\mathbf{f},~\mathbf{t}$) in least squares problem \eqref{eq:normalEqutions} by $0.5/\mu$ which significantly improves conditioning. 

\begin{figure}[ht!]
\centering
    \begin{subfigure}{.35\textwidth}
        \centering
        \includegraphics[width=\linewidth]{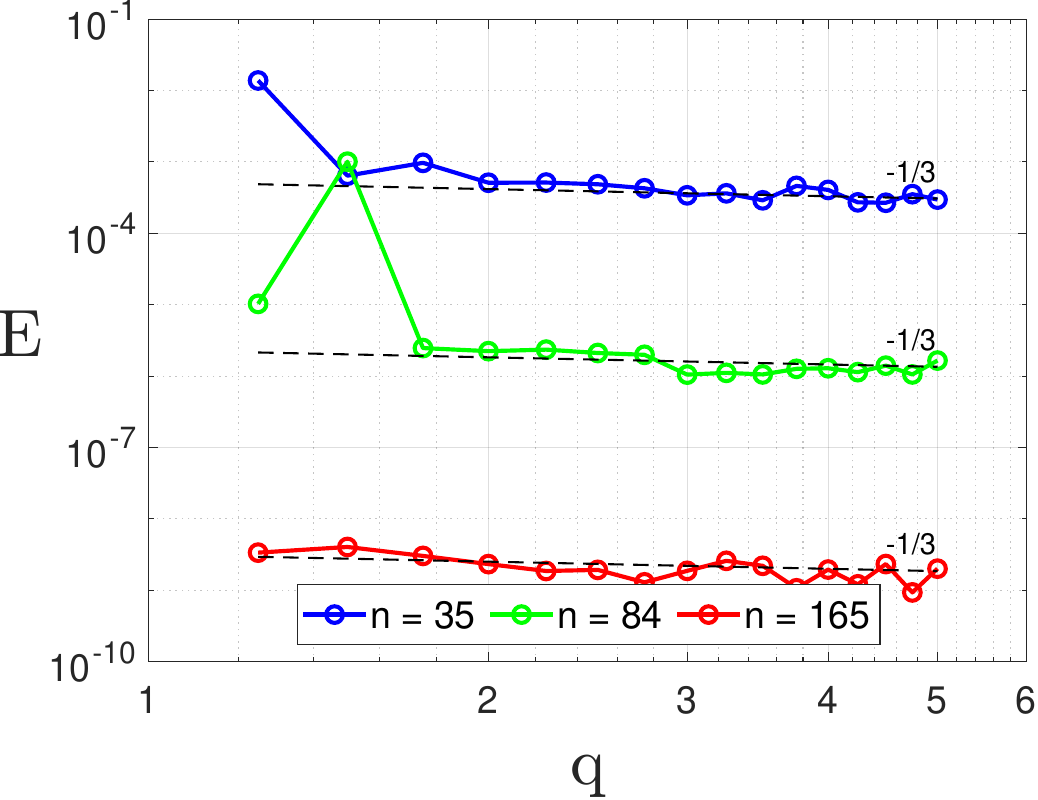}
        \caption{}
        \label{fig:oversampConv}
    \end{subfigure}
    \hspace{.5cm}
    \begin{subfigure}{.32\textwidth}
        \centering
        \includegraphics[width=\linewidth]{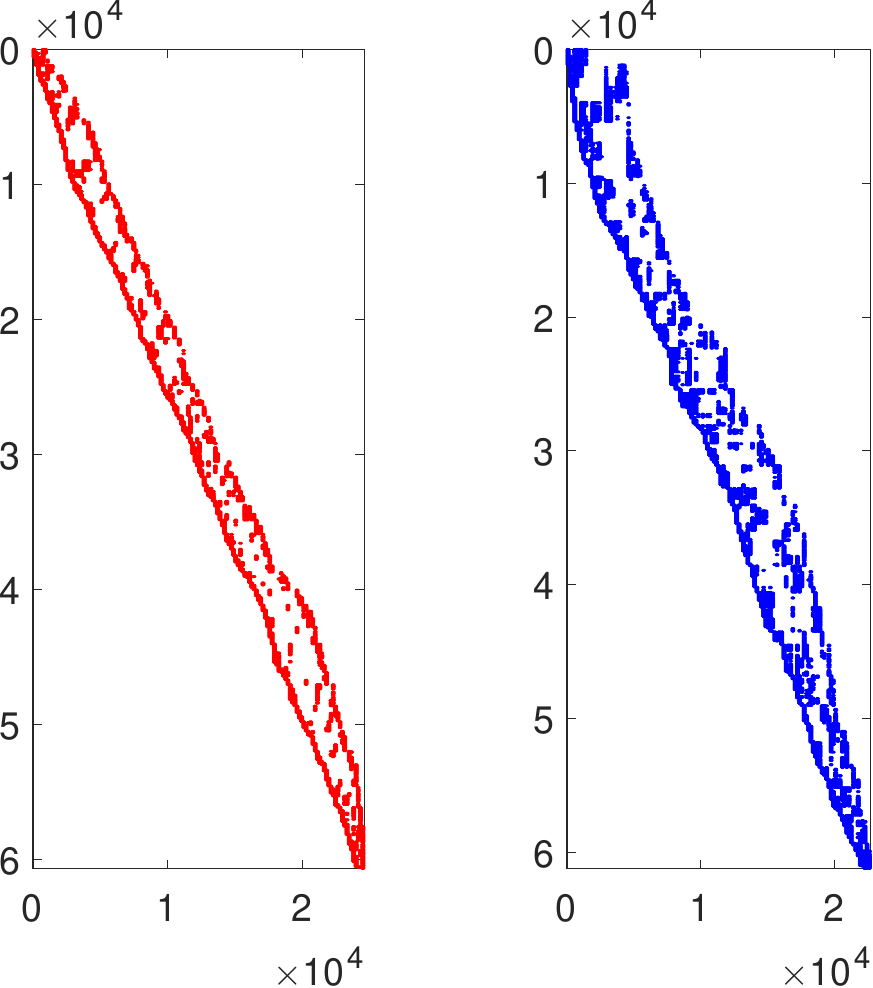}
        \caption{}
        \label{fig:spyMatrix}
    \end{subfigure}
    \caption{(a) Numerical error from solving problem 1 (smooth manufactured solution) with varying oversampling and fixed overlap 
   $\tilde{\delta}_0 = 0.1$. The solution is computed using $P = 120$ patches and $n = 35,~84,~165$ local points. (c) Global rectangular approximation matrices D \eqref{eq:leastSquaresProb} constructed using a set of $P = 120$ patches, $n = 56$ local points, an oversampling factor $q = 2.5$ and overlap $\tilde{\delta}_0 = 0.01$ (left) and $\tilde{\delta}_0 = 0.3$ (right). The density of the two matrices is $0.014$ (left) and $0.026$ (right). }
    \label{fig:paramConv}
\end{figure}

\subsubsection{Problem 1: Smooth manufactured solution}

\begin{figure}[ht!]
\centering
    \begin{subfigure}{.52\textwidth}
        \centering
        \includegraphics[width=\linewidth]{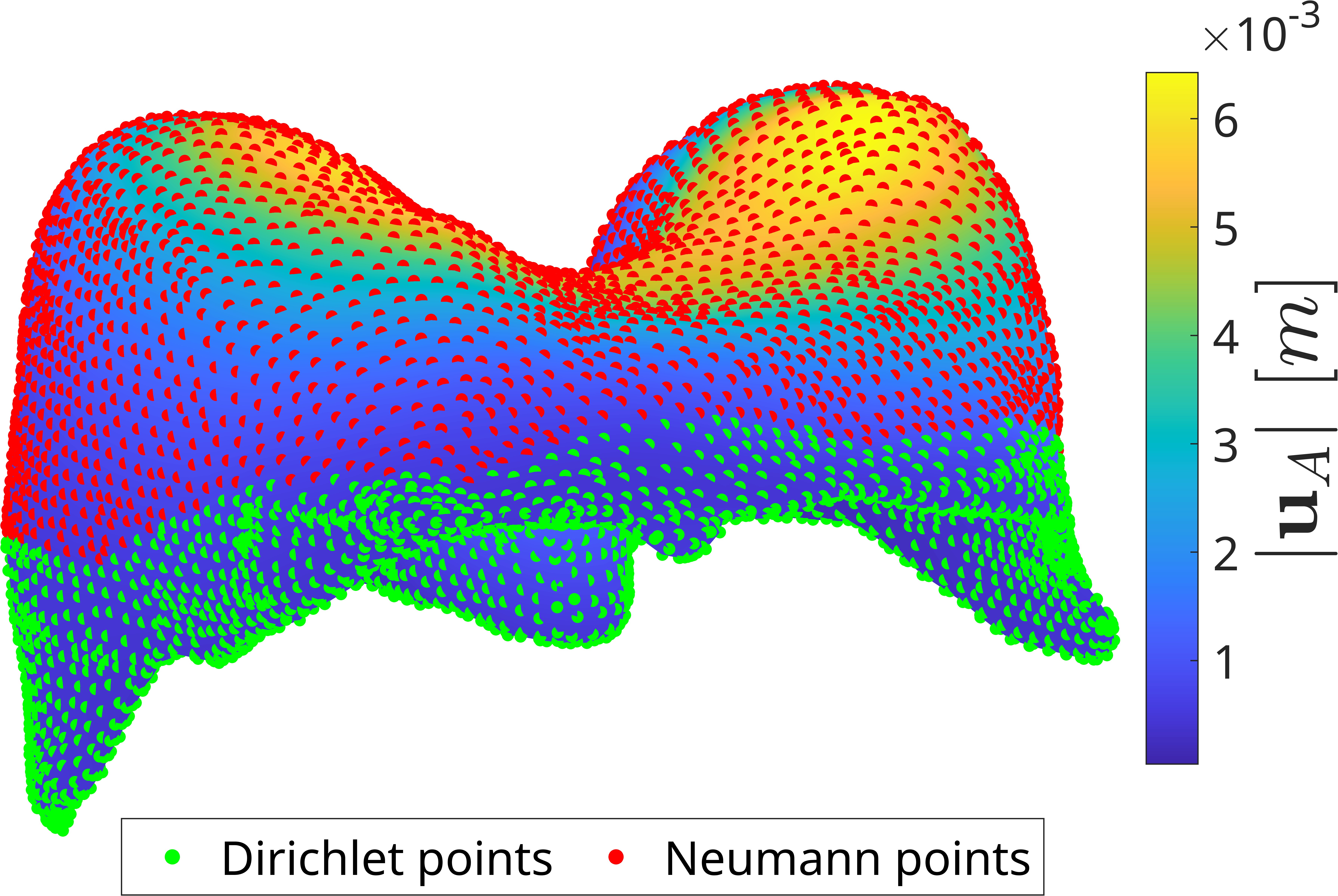}
        \caption{}
        \label{fig:problem1_form}
    \end{subfigure}
    \hspace{.3cm}
    \begin{subfigure}{.42\textwidth}
        \centering
        \includegraphics[width=\linewidth]{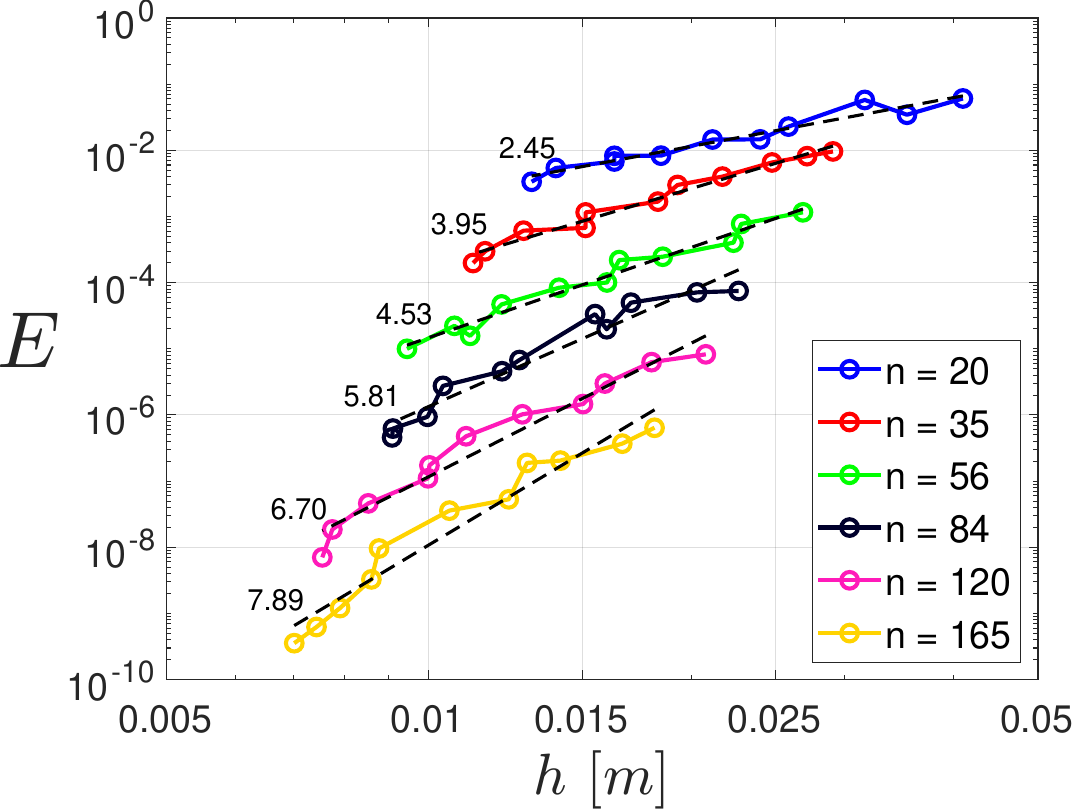}
        \caption{}
        \label{fig:problem1_conv}
    \end{subfigure}
    \caption{(a) Diaphragm 2 and magnitude of the smooth manufactured solution \eqref{eq:smooth_ManSol} on boundary points $Y_{bnd}$. (b) Problem 1 convergence study for patch numbers $P = 21$ to $P = 245$ and number of local centre points $n$ corresponding polynomials of degree $p = 3,4,\dots 8$. }
    \label{fig:problem1}
\end{figure}

We solve \eqref{eq:leastSquaresProb} assuming the existence of a smooth solution which is used to compute both the forcing, $\mathbf{f}$ and boundary conditions, $\mathbf{g},~\mathbf{t}$. The manufactured solution is given by 
\begin{equation}
    \mathbf{u}_A(\mathbf{x}) = \begin{pmatrix}
        -5\times 10^{-3} \left(\frac{\left(x-x_{min}\right)\left(x-x_{max}\right)}{L} + \frac{\left(x_{max}-x_{min}\right)^2}{4L}\right) \\
        -2\times 10^{-3}\frac{\left(y-y_{min}\right)^4}{W^3} \\
        \frac{\left(x-2x_{min}\right)^2\left(x-1.5x_{max}\right)^2}{L^4}\frac{\left(y-y_{min}\right)^2\left(y-y_{max}\right)^2}{W^4}\frac{\left(z-z_{min}\right)^2}{H}e^{-\left(\frac{x-\bar{x}}{L}\right)^2 - 
        \left(\frac{y-\bar{y}}{W}\right)^2 - 
        \left(\frac{z-\bar{z}}{H}\right)^2}
    \end{pmatrix},
    \label{eq:smooth_ManSol}
\end{equation}
where $\mathbf{x} = \left(x,y,z\right) \in\Omega$ and $\bar{\mathbf{x}} = \left(\bar{x},\bar{y},\bar{z}\right)$ is the centre of mass of $\Omega$ and both the position and displacement are given in meters. We also note that $x_{max} = \max_{\mathbf{x}\in\partial\Omega} x, ~x_{min} = \min_{\mathbf{x}\in\partial\Omega} x,~ L = x_{max} - x_{min}$, and equivalently for $W,~H$ in the $y$ and $z$ directions respectively. The solution is chosen such that it loosely replicates the movement of the diaphragm during mechanical ventilation. As such the displacement amplitude is largest on the domes which move in the superior (positive $z$) direction, while the region closer to the sternum, intercostal muscles and spine displaces slightly towards the centre of mass of the diaphragm. The amplitude of the smooth solution on point set $Y_{bnd}$ is shown in Figure \ref{fig:problem1_form}. The figure also indicates the area of the diaphragm where Dirichlet and Neumann conditions are enforced, showing that for this problem we use discontinuous Robin coefficients $\kappa_0,~\kappa_1$. Even though this is contrary to the coercivity requirements in Theorem \ref{theorem:Coercivity} the existence of the manufactured solution is guaranteed.

The convergence study in Figure \ref{fig:problem1_conv} plots discrete error \eqref{eq:numericalError} against fill distance \eqref{eq:fillX_numerical}. Convergence rates are expected to be $p - 1.5$ \eqref{eq:FinalApriori} which corresponds to $1.5,~2.5,~3.5,~4.5,~5.5,~6.5$ for the number of local points used in the study. The numerical rates are significantly better for some polynomial orders which is potentially because the manufactured solution can be easily approximated by Gaussians and the fact that the theoretical estimate is given in the discrete $h_2-\text{norm}$, whereas the error here is computed for the $l_2-\text{norm}$. 
                       
\subsubsection{Problem 2: Dirichlet conditions from medical image data}

\begin{figure}[ht!]
\centering
    \begin{subfigure}[b]{.4\textwidth}
        \centering
        \includegraphics[width=\linewidth]{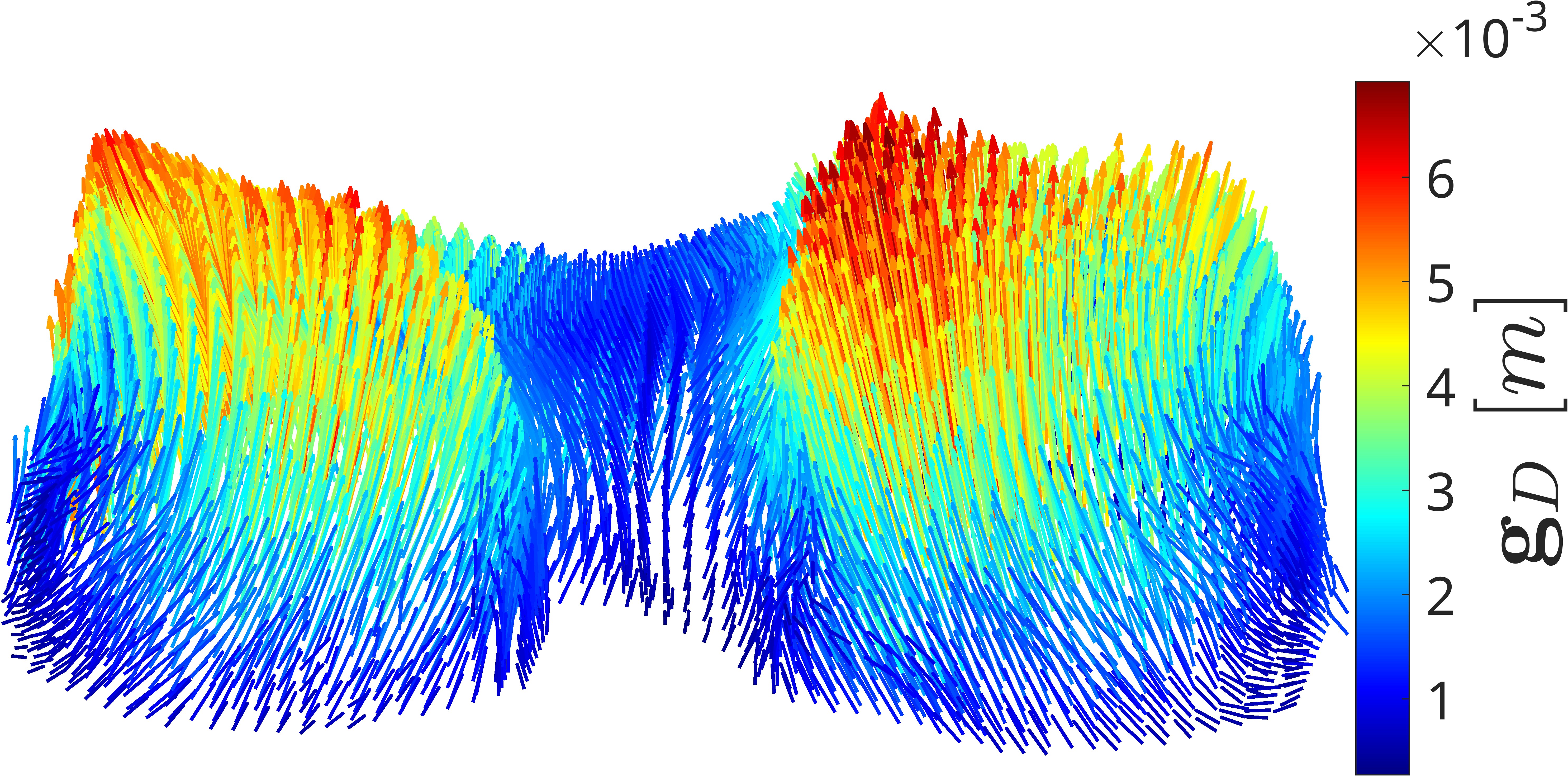}
        \vspace{.2cm}
        \includegraphics[width=\linewidth]{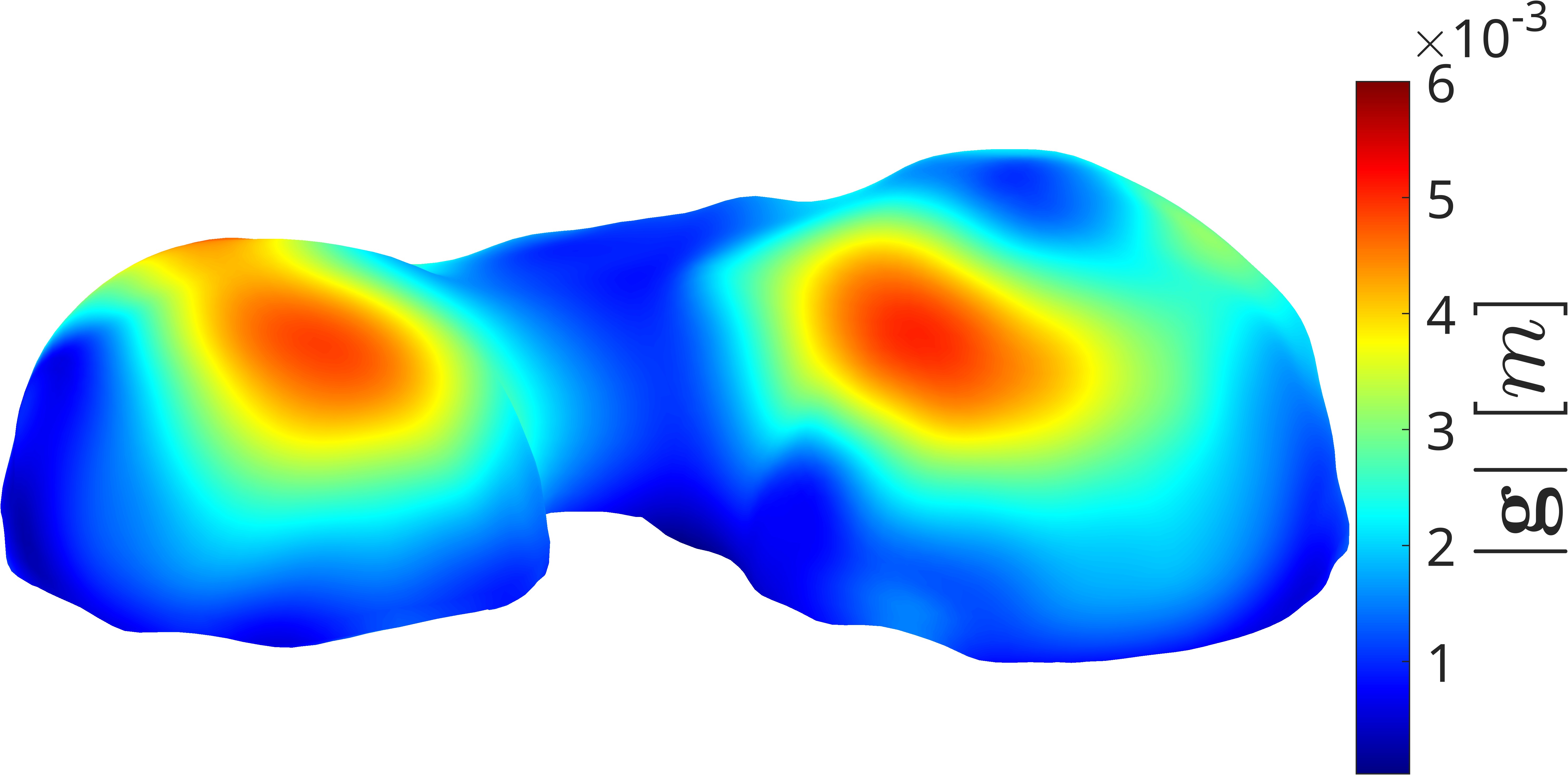}
        \caption{}
        \label{fig:problem2_BC}
    \end{subfigure}
    \hspace{.2cm}
    \begin{subfigure}[b]{.45\textwidth}
        \centering
        \includegraphics[width=\linewidth]{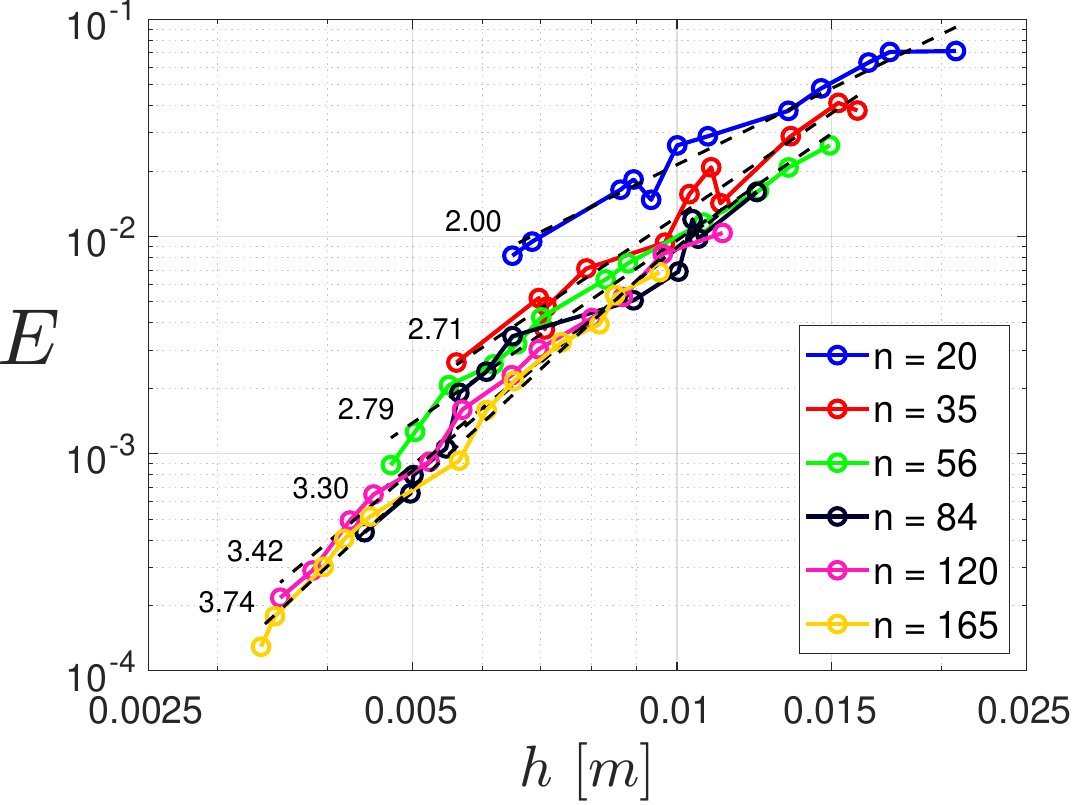}
        \caption{}
        \label{fig:problem2_conv}
    \end{subfigure}
    \caption{(a) Deformation vector field extracted from CT images on point set $X_{BC}$ (top) and magnitude of the smoothed Dirichlet boundary condition on point set $Y_{bnd}$ (bottom). (b) Problem 2 convergence study for patch numbers $P = 21$ to $P = 315$ and number of local centre points $n$ corresponding to the dimension of polynomials of degree $p = 3,4,\dots 8$. The error is computed against a reference numerical solution with $P = 315,~n = 220$ which corresponds to $207900$ degrees of freedom. }
    \label{fig:problem2}
\end{figure}
For this problem we solve \eqref{eq:leastSquaresProb} assuming $\kappa_0 = 1$ everywhere on the boundary, meaning there is only a Dirichlet boundary condition. Additionally we set forcing, $\mathbf{f} = (0,~0,~-\rho g)$, where $\rho = 1000~kg/m^3$ is the density of the diaphragm and $g = 9.81~m/s^2$ the gravitational acceleration. The Dirichlet condition is constructed from the expected physical movement of diaphragm 1 during respiration.

Given that the pre-processing step for our simulation starts with the segmentation of CT images, we are able to use a given set of images taken from the same diaphragm during respiration at different time steps. Two images of the same diaphragm are then used to create a displacement vector field which provides a reference movement of the diaphragm. The vector field is provided on a specific point set, $X_{BC}$, by using the image registration module of the Medical Image Registration ToolKit (MIRTK) \cite{Mirtk}. The displacement vector field on this point set, $\mathbf{g}_D$, is shown in Figure \ref{fig:problem2_BC} (top).

The vector field, $\mathbf{g}_D$ is defined pointwise and hence is fully discrete. It describes a step in the respiratory motion of diaphragm 1 and hence is used to compute a Dirichlet boundary condition on $\partial\Omega$. We require that the Dirichlet boundary condition, $\mathbf{g}$ is smooth enough to ensure the existence of a solution to the continuous problem (see Theorem \ref{theorem:Coercivity}). Hence we solve the following discrete least squares projection problem using the LS-RBF-PU method
\begin{equation}
 \min_{\mathbf{g}}\|\mathbf{g} - \mathbf{g}_D\|^2_{l_2\left(X_{BC}\right)} + C_{\Delta}h^{-4}\|\Delta\mathbf{g}\|^2_{l_2\left(X_{BC}\right)},
 \label{eq:problem2DirichletProjection}
\end{equation}
where $\|\mathbf{g}\|_{l_2(X_{BC})} = \sum_{\mathbf{x}_i\in X_{BC}}^M W_i\|\mathbf{g}(\mathbf{x}_i)\|^2_2$ is the discrete $l_2-\text{norm}$ on point set $X_{BC}$ and $h$ is the fill distance on the centre point set $X$ constructed for this problem, with weight $C_{\Delta} = 1.5$ for the regularisation term which is used to smooth the boundary condition. This problem is solved once using a cover of $P = 30$ patches, $n = 35$ local points, an overlap of $\tilde{\delta}_0 = 0.4$ and shape parameter $\epsilon = 0.1$. The weight functions used in each patch are generated using a bump function $f_w(r) = \exp{-1/(1-r^2)},~r < 1$, which ensures $\mathbf{g}\in C^{\infty}(\partial\Omega)$. Ensuring $\mathbf{g}$ is much smoother than the minimum requirement for well-posedness is necessary to improve convergence rates. Once solved, the boundary condition is simply evaluated on the boundary evaluation points $\{\mathbf{y}_i\}_{i\in Y^{\partial\Omega}}$. In Figure \ref{fig:problem2_BC} we show the smoothed boundary condition on set $Y_{bnd}$.

Figure \ref{fig:problem2_conv} shows that high convergence rates can be achieved for this problem, but rates are much lower than theoretically expected. Given this is not the case for the manufactured problem (problem 1), we assume that more refinement is required to capture larger gradients and second derivatives that are present in the solution. Moreover, since we do not manufacture a smooth solution, the sharpness of tangential gradients and the surface Hessian of the level set function defining the boundary influence convergence rates.



\subsubsection{Problem 3: Robin boundary condition compatible with rib rotation}

\begin{figure}[ht!]
\centering
    \begin{subfigure}{.26\textwidth}
        \centering
        \includegraphics[width=\linewidth]{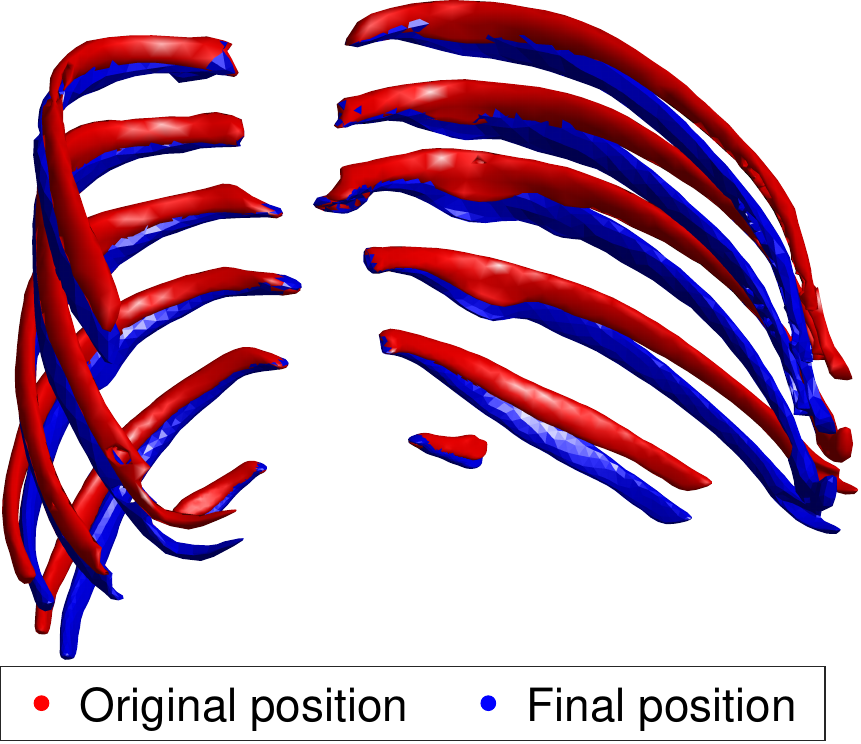}
        \caption{}
        \label{fig:problem3_ribs}
    \end{subfigure}
    \hspace{.3cm}
    \begin{subfigure}{.32\textwidth}
        \centering
        \includegraphics[width=\linewidth]{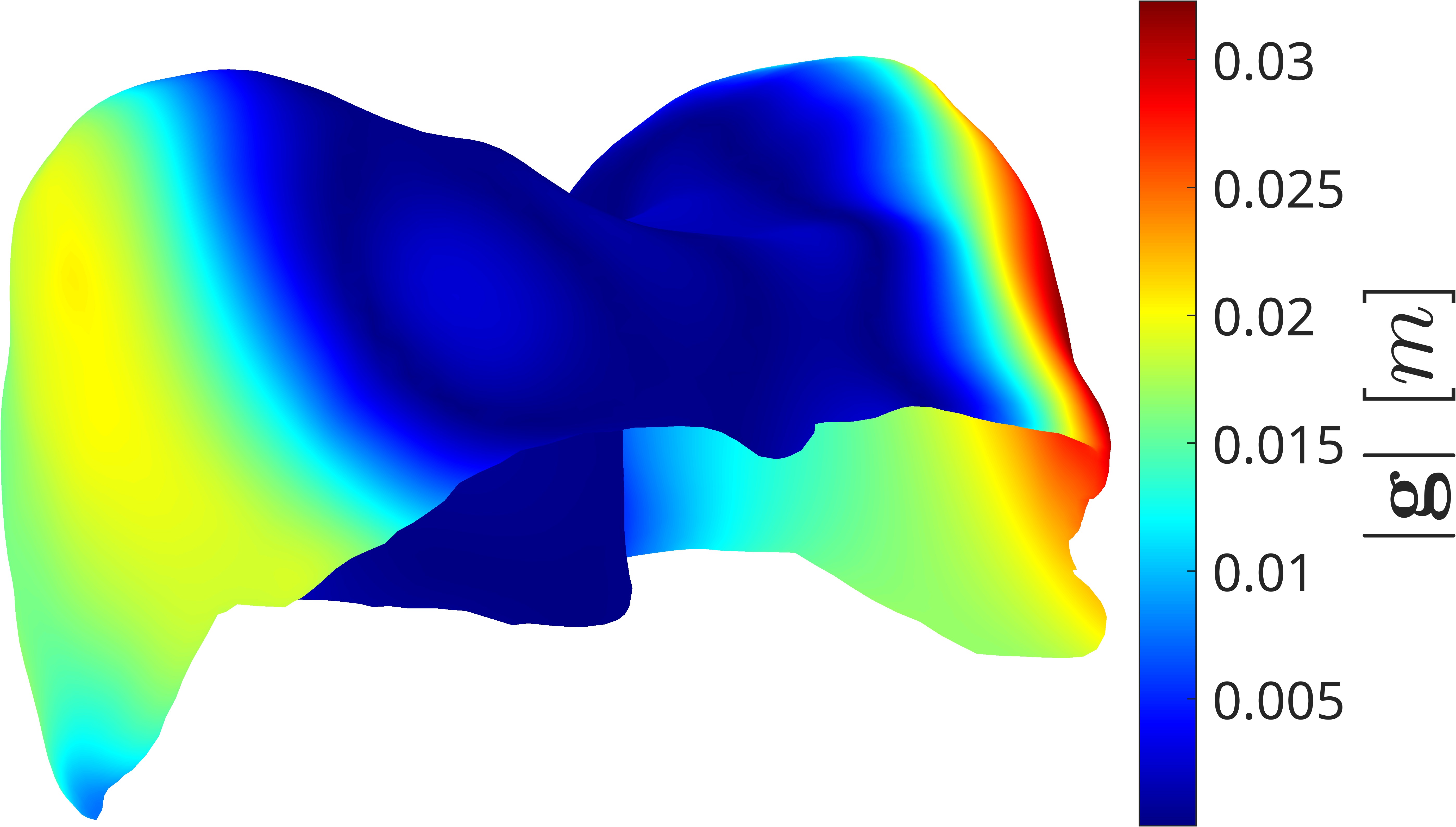}
        \caption{}
        \label{fig:problem3_Dirichlet}
    \end{subfigure}
    \hspace{.3cm}
    \begin{subfigure}{.32\textwidth}
        \centering
        \includegraphics[width=\linewidth]{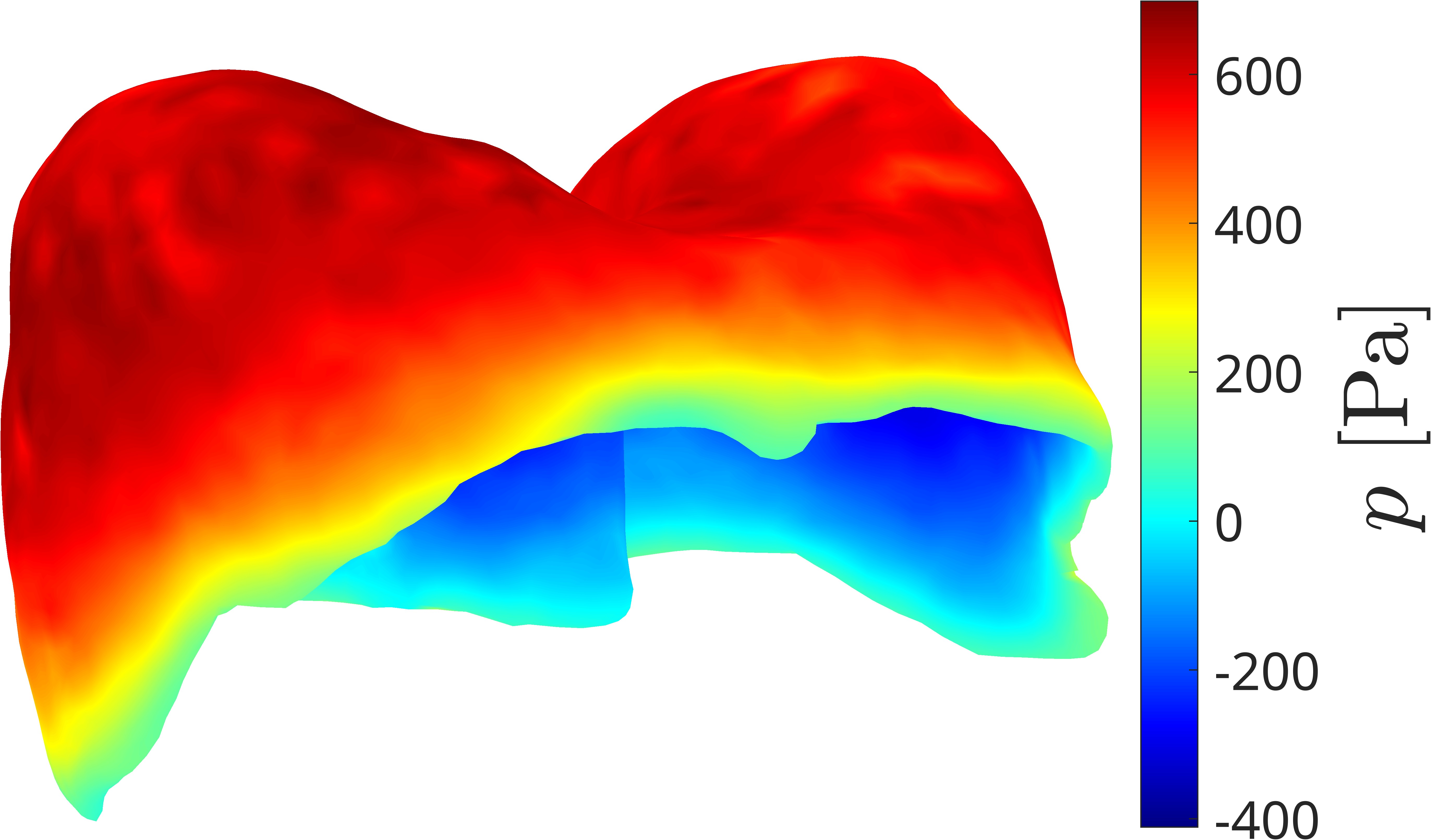}
        \caption{}
        \label{fig:problem3_Neumann}
    \end{subfigure}
    \caption{(a) Rib movement (anterior view). (b) Magnitude of smoothed Dirichlet boundary condition, $\mathbf{g}$, computed based on rib rotation. Defined on entire boundary, $\partial\Omega$ and presented on point set $Y_{bnd}$. (c) Smoothed pressure $p$ used to compute the traction (Neumann) boundary condition on point set $Y_{bnd}$. Condition computed assuming an intra abdominal pressure, $p_A = 800$ Pa and pleural pressure $p_L = -500$ Pa. }
    \label{fig:problem3_Ribs_k0_k1}
\end{figure}

\begin{figure}
\centering
    \begin{subfigure}{.42\textwidth}
        \centering
        \includegraphics[width=\linewidth]{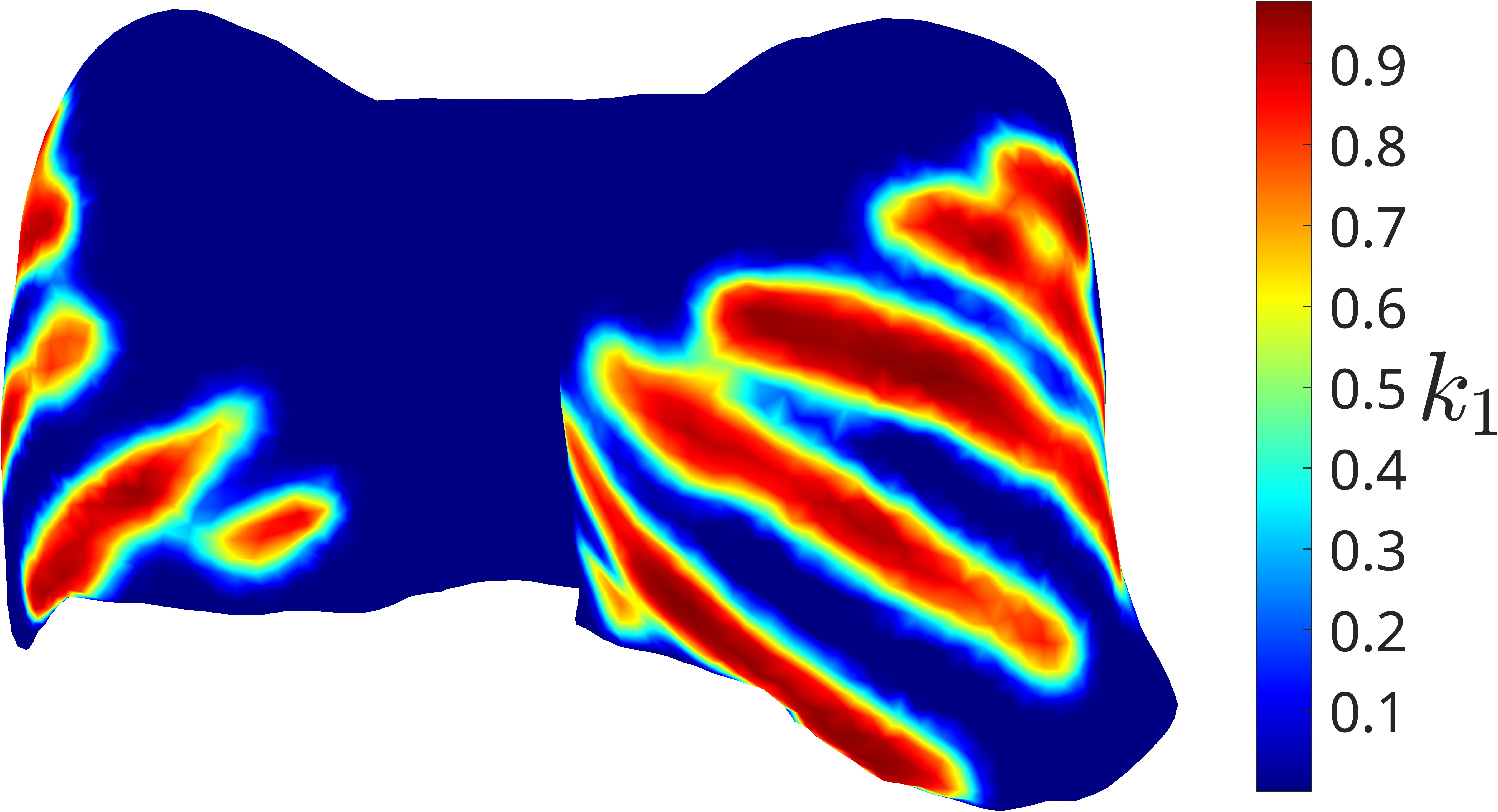}
        \caption{}
        \label{fig:problem3_k1}
    \end{subfigure}
    \hspace{.3cm}
    \begin{subfigure}{.42\textwidth}
        \centering
        \includegraphics[width=\linewidth]{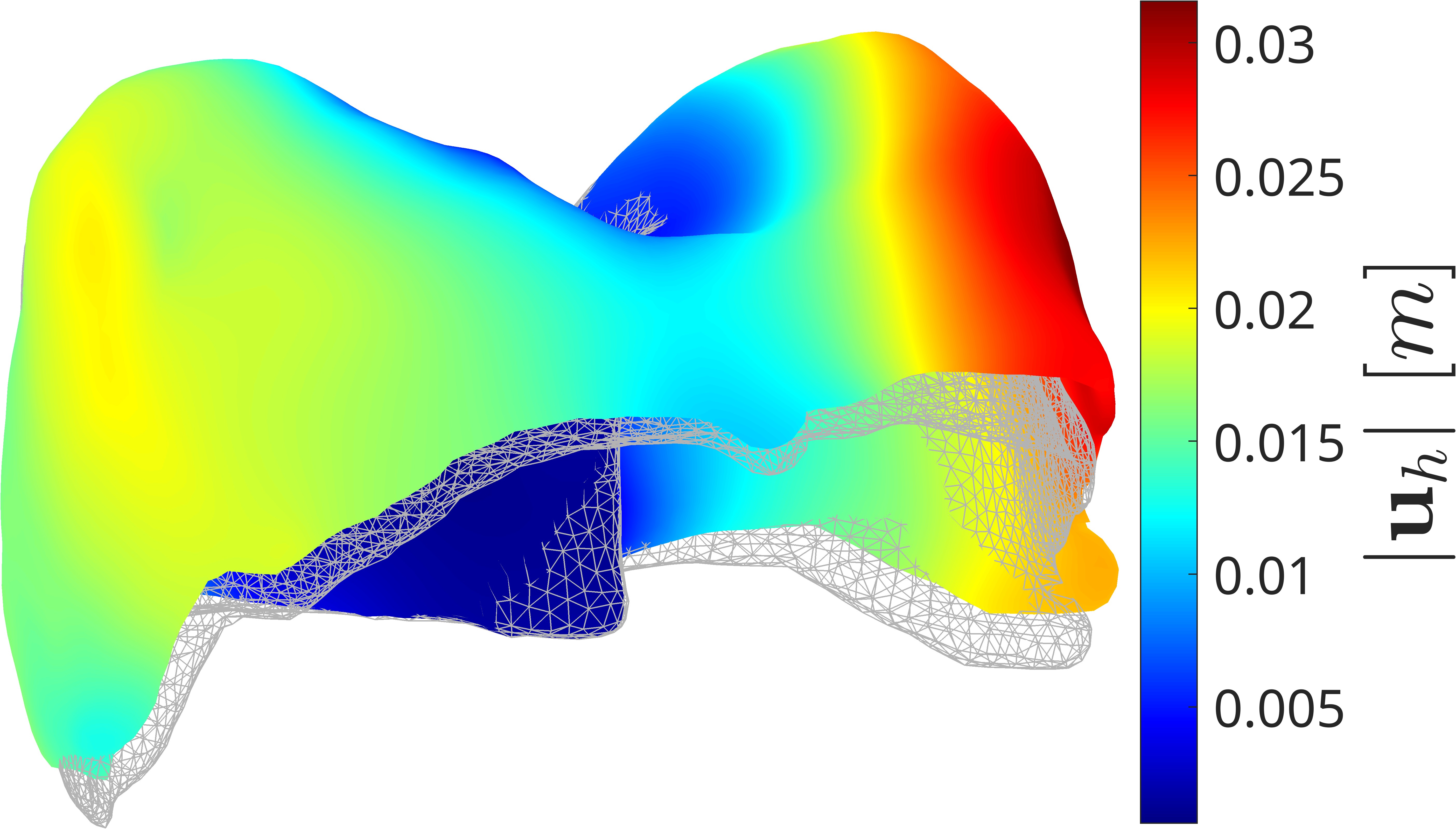}
        \caption{}
        \label{fig:problem3_Sol}
    \end{subfigure}
    \caption{(a) Robin coefficient, $\kappa_0$ on point set $Y_{bnd}$. Note that coefficient $\kappa_1 = 1- \kappa_0$. (b) Magnitude of numerical solution of problem 3 computed with $P = 315$ patches and $n = 220$ local points in each patch, $q = 2.5~\tilde{\delta}_0 = 0.1,~\epsilon = 0.1$. Reference position of diaphragm 2 indicated by gray wireframe.}
    \label{fig:problem3DirichletNeumann}
\end{figure}

\begin{figure}
\centering
    \begin{subfigure}{.45\textwidth}
        \centering
        \includegraphics[width=\linewidth]{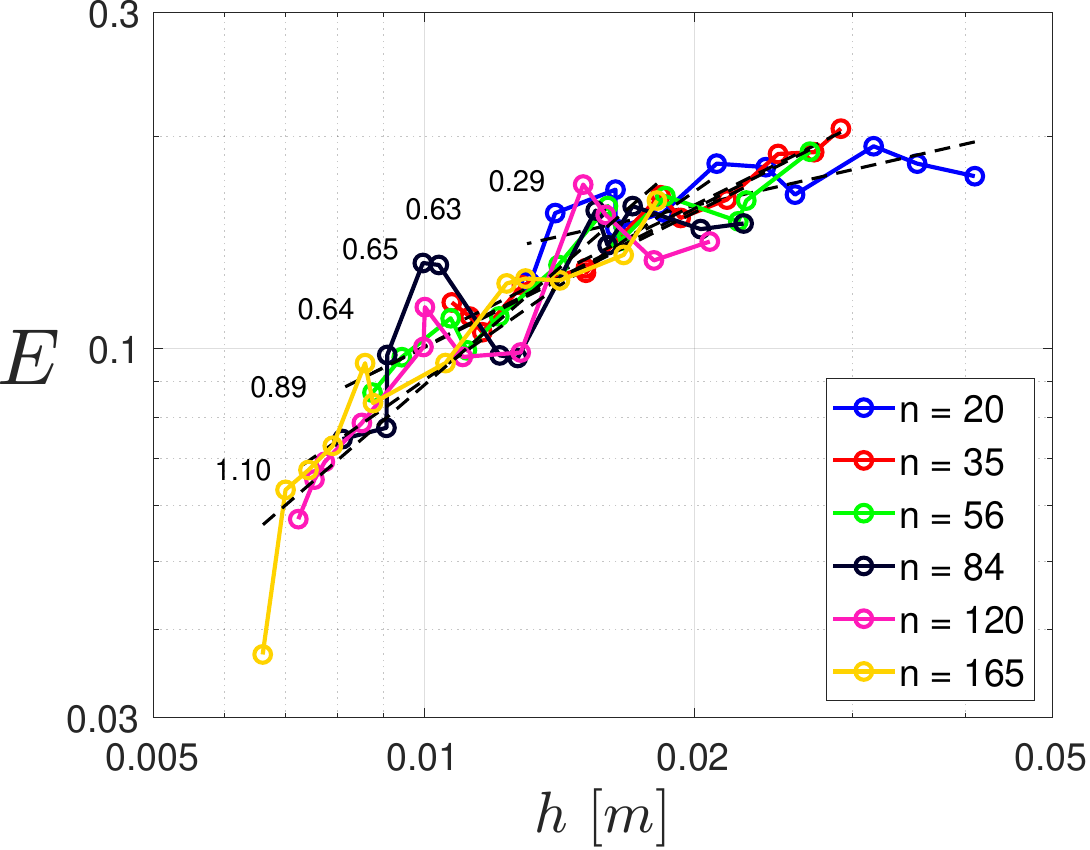}
        \caption{}
        \label{fig:problem3_conv}
    \end{subfigure}
    \hspace{.3cm}
    \begin{subfigure}{.45\textwidth}
        \centering
        \includegraphics[width=\linewidth]{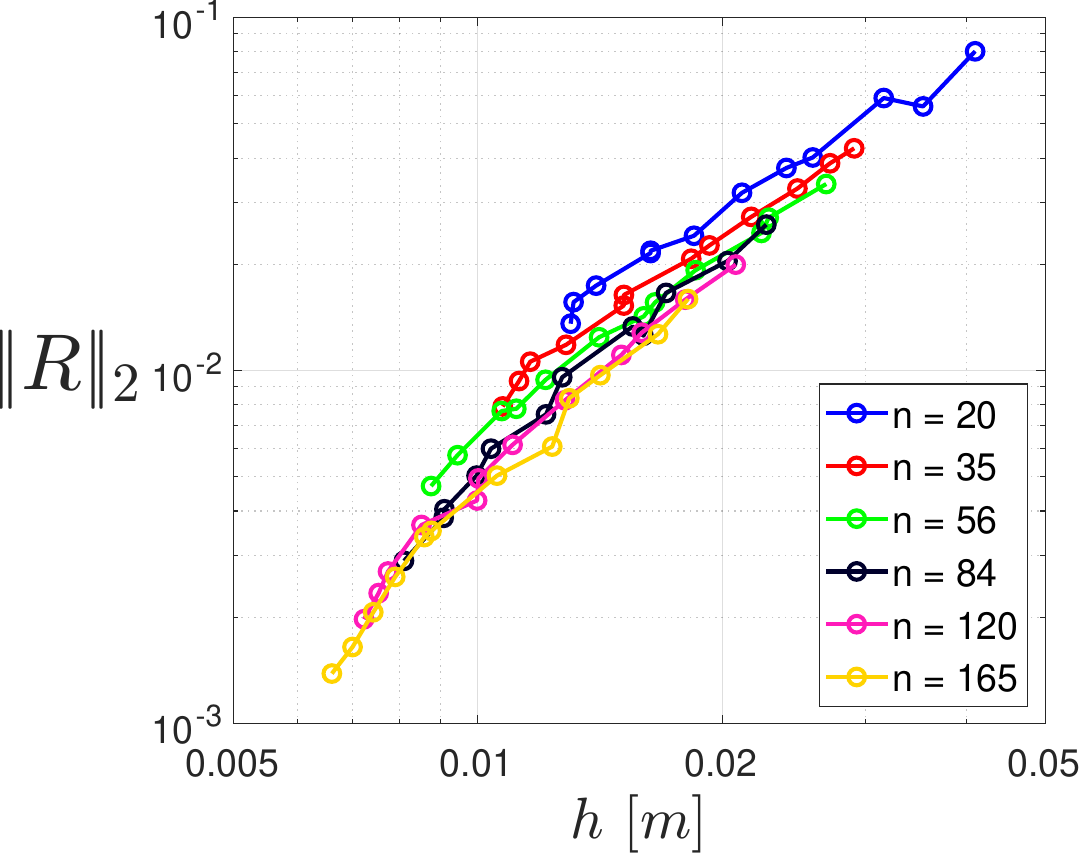}
        \caption{}
        \label{fig:problem3_residual}
    \end{subfigure}
    \caption{Convergence study for problem 3 for patch numbers $P = 21$ to $P = 315$ and number of local centre points $n$ corresponding to the dimension of polynomials of degree $p = 3,4,\dots 8$. (a) Normalised residual $R$ plotted against fill distance \eqref{eq:fillX_numerical}. (b) Normalised error, $E$, plotted against fill distance \eqref{eq:fillX_numerical}. The error is computed against a reference numerical solution with $P = 315,~n = 220$ which corresponds to $207900$ degrees of freedom. }
    \label{fig:problem3}
\end{figure}
In this benchmark we solve \eqref{eq:leastSquaresProb} with Robin boundary conditions, meaning $0 < \kappa_0 < 1$ and $\kappa_1 = \kappa_0 - 1$. Similar to problem 2, the forcing term is equal to a constant $\mathbf{f} = (0,~0~,-\rho g)$ and the boundary conditions are defined based on realistic parameters related to respiration. These include the rotation of the ribs connected to the diaphragm and the intra-abdominal and pleural pressures, both quoted in the literature.

The diaphragm is attached in slips to the costal cartilages of ribs $7-12$ and additionally connects to rib $12$ through the lateral arcuate ligament, vertebrae $L1-L3$ through crura and arcuate ligaments, as well as the sternum and pericardium \cite{ryan2010anatomy}. We hence extract the rib geometries from the CT data used for the reconstruction of diaphragm~2. The strategy is to take into account the rib rotation to compute boundary conditions and the deformation of the diaphragm. Note that we omit the simulation of cartilages and ligaments making this a significant simplification of the real connectivity of the diaphragm. 

To model the rib kinematics, we first extracted the rotation axes for each rib by computing the eigenvectors of the inertia tensor derived from their 3D geometries as in \cite{villard2011}. It consists of two orthogonal rotation axes per rib: the pump-handle axis (aligned along the rib) and the bucket-handle axis (orthogonal to both the rib and the vertical axis) as described in \cite{wilson1987}. A centre of rotation for each rib was estimated based on the proximity to the spine mesh. These local kinematic parameters were then used to drive rigid rotations of individual ribs.

We obtain the rotation angles for the pump-handle and bucket handle axes from \cite{wilson2001respiratory} which were measured through CT scans of 5 individuals. The rotation angles for ribs $7-9$ are $7.9,~7.9,~6$ for the bucket-handle axis and $6.6,~6.2~6.3$ for the pump-handle axis.
Given rotations for ribs $10-12$ are not provided in the literature we simply use $5$ degrees for both axes. This is also a simplification, especially for ribs $11-12$ which are expected to experience an additional caliper-like motion \cite{BastirThoraxMotion2017}. The complete movement of ribs $7-12$ is illustrated in Figure \ref{fig:problem3_ribs}.

We use this rib movement to generate the Dirichlet boundary condition, $\mathbf{g}$. Note that we construct the boundary conditions once on reconstruction points $Y_{bnd}$ and then simply evaluate the conditions on the point set used to solve the original problem. We start by setting the deformation of each point in $Y_{bnd}$ equal to the deformation of the closest point on the ribs but set the deformation for points significantly far from the ribs to zero, with a smooth transition enabled by the use of a sigmoid function. The resulting displacement vector field is denoted as $\mathbf{g}_D$ and to obtain the smoothed continuous boundary condition, $\mathbf{g}\in C^{\infty}(\partial\Omega)$ we solve the following discrete least squares projection problem
\begin{equation}
    \min_{\mathbf{g}}\|\mathbf{g}- \mathbf{g}_D\|^2_{l_2(Y_{bnd})} + C_{\Delta}h^{-4}\|\Delta\mathbf{g}\|_{l_2(Y_{bnd})},
    \label{eq:problem3DirichletProjection}
\end{equation}
which is similar to \eqref{eq:problem2DirichletProjection}, but we set $C_{\Delta} = 100$. The large value is necessary here to penalise large derivatives and discontinuities which exist as a result of the way we formulate the data $\mathbf{g}_D$. The resulting boundary condition is shown in \ref{fig:problem3_Dirichlet} and is computed using $P = 80$ patches, $n = 35$ local points, an overlap of $\tilde{\delta}_0 = 0.4$ ad shape parameter $\epsilon = 0.1$. As in \eqref{eq:problem2DirichletProjection} we use the bump function to construct the weights in each patch.

For the traction (Neumann) condition we note that the inferior part of the diaphragm is under intra-abdominal pressure and the superior part under pleural pressure. Values for the pleural pressure vary between inspiration and expiration. Assuming we start the simulation prior to the inspiration stage we use a pleural pressure of $p_L = -500$~Pa and an intra-abdominal pressure of $p_A = 800$~Pa \cite{guyton2006textbook}. Here it is important to note that the point set $Y_{bnd}$ is split in 3 subsets, one for the points on the superior part of the diaphragm, $Y_{outer}$, one for points on the inferior, $Y_{inner}$ and one for the points in between, $Y_{edge}$. Using the pressure parameters and the point sets we set up and solve the following projection problem
\begin{align}
\begin{split}
    & \min_{p}\|p - p_A\|^2_{l_2(Y_{inner})} + \|p - p_L\|^2_{l_2(Y_{outer})} \\  & +  C_{\nabla}h^{-2}\left(\|\nabla p\cdot \mathbf{t}_1\|^2_{l_2(Y_{bnd})}  
    + \|\nabla p\cdot \mathbf{t}_2\|^2_{l_2(Y_{bnd})}\right) + C_{\mathcal{H}^S}h^{-4}\|\mathcal{H}^S p\|^2_{l_2(Y_{bnd})},
    \end{split}
    \label{eq:problem3NeumannProjection}
\end{align}
where $\mathbf{t}_1(\mathbf{y}),~\mathbf{t}_2(\mathbf{y})$ are the tangents at boundary point $\mathbf{y}\in Y_{bnd}$ and $\mathcal{H}^Sp$ is the surface Hessian of the pressure. For a definition of the surface Hessian see \cite{larsson2024rbfpartitionunitymethod}. Note that here we regularise with the surface Hessian term as opposed to the Laplace operator since we wish to smooth tangentially to the surface, while the gradient terms reduce small oscillations which appear due to the approximation with a very smooth basis. The constants used are $C_{\nabla} = 2,~C_{\mathcal{H}^S} = 0.25$ and the smoothed pressure, $p$ is shown in Figure \ref{fig:problem3_Neumann}. The same parameters used to solve \eqref{eq:problem3DirichletProjection} are used and the resulting traction is then computed using the surface normals as $\mathbf{t} = p\cdot\mathbf{n}$.

The smooth coefficient, $\kappa_0$ shown in Figure \ref{fig:problem3_k1} is similarly computed. In this case however, we construct a new set of evaluation points using $P = 450$ patches, with $n = 35$ local points, an overlap of $\tilde{\delta} = 0.1$ and oversampling, $q = 4$. The evaluation points set $Y_{robin}$ on the boundary. From this set we flag points on the outer surface of the diaphragm that are closest to the ribs, $Y_{close}$, as well as points that are further away, $Y_{far}$. This leaves a set of points in between that is not given a condition which reduces oscillations in the solution. We then solve the following projection problem
\begin{equation}
    \min_{\tilde{\kappa}_0}\|\tilde{\kappa}_0-1\|^2_{l_2(Y_{close})} + \|\tilde{\kappa}_0\|^2_{l_2(Y_{far})} + h^{-4}\|\mathcal{H}^S\tilde{\kappa}_0\|^2_{l_2(Y_{robin})}.
\end{equation}
Given the use of a bump function to construct the weights in each patch we ensure that $\tilde{\kappa}_0\in C^{\infty}$, however we are not able to guarantee the strict conditions $0<\kappa_0<1$ given the existence of oscillations. Hence after evaluating $\tilde{\kappa}_0$ on each evaluation point set used to solve problem 3, we use a generalised logistic function as a filter which gives the effective Robin coefficient
\begin{equation}
    \kappa_0 = \frac{1}{(1 + Qe^{-B\tilde{\kappa_0}})^{1/\zeta}},
\end{equation}
where $\zeta = 0.1,~ Q = 2.98,~B = 7.45$ and $\kappa_1 = 1-\kappa_0$.

Using the smoothed forcing, boundary conditions and Robin coefficients we solve \eqref{eq:leastSquaresProb}. The computed deformation for the most refined solution is shown in Figure \ref{fig:problem3_Sol} along with the reference position of the diaphragm. The deformation closely matches what is expected based on the movement of the ribs (see Figure \ref{fig:problem3_ribs}), where the ventral part of the diaphragm moves upwards and the dorsal part mostly stays fixed. From convergence studies in Figures \ref{fig:problem3_conv}, \ref{fig:problem3_residual} we observe that the numerical solution converges to the reference solution but with relatively low convergence rates. Even though we ensure the boundary conditions, reconstruction and Robin coefficients are smooth enough, we observe that they are challenging to fully resolve. In particular note the large gradients present given the sharpness of the Robin coefficients used.

\section{Conclusion}\label{sec:conc}

In this paper we used the least squares RBF-PU method to solve the linear elasticity problem in two 3D diaphragm geometries. We proved that the continuous least squares problem is well-posed under specific conditions and that solutions computed using the LS-RBF-PU method converge to the continuous solution. The derived error estimate for the method is  numerically validated by solving a smooth manufactured problem on the diaphragm geometry. 

Additionally, we performed numerical experiments constructed using the expected diaphragm movement extracted from medical image data and experimentally determined parameters from the literature. We have shown that it is numerically feasible to produce results for realistic and complex problems on the thin diaphragm geometry with the presented formulation of the LS-RBF-PU method. While the theory shows that smoothness of the data, boundary and Robin coefficients is necessary for convergence, results indicate that it can still be expensive to resolve such problems sufficiently on a 3D thin geometry.

Additional refinement becomes a resource limitation but also indicates the direction necessary for future work on this problem. Areas with large gradients arising from the shape of the boundary or sharpness of coefficients can be more finely resolved using an adaptive patch refinement algorithm. Moreover, solving the mixed pressure-displacement formulation for linear elasticity could assist in reducing the numerical errors, to the extent that the large Poisson ratio of the diaphragm affects the scale of the error. Similarly the development of a partition of unity method that solves a Galerkin projection of the PDE can reduce smoothness requirements and improve conditioning of the approximation matrices. In conclusion, we stress that the linear model used is a simplification of the expected hyperelastic and anisotropic behaviour of the human diaphragm.

\appendix
\section{Bounds used for continuous analysis}\label{app:ContinuousBounds}
\setlength\parindent{15pt}
\subsection{Bounds used to prove Theorem \ref{theorem:Continuity}}\label{app:ContinuityBounds}
Initially we prove the modified bounds for fractional Sobolev spaces \eqref{eq:Lemma5.3_modified}. Using the definition of the fractional Sobolev norm as
\begin{align}
    \begin{split}
       \left\|\left(\nabla\cdot\mathbf{v}I\right)\cdot\mathbf{n}\right\|^2_{H^{1/2}(\Omega)} & = \left\|\left(\nabla\cdot\mathbf{v}I\right)\cdot\mathbf{n}\right\|^2_{L_2(\Omega)} + \left[\left(\nabla\cdot\mathbf{v}I\right)\cdot\mathbf{n}\right]^2_{H^{1/2}(\Omega)} \\
       & \leq \sum_{i=1}^d \max\left|n_i\right|^2\left\|\nabla\cdot\mathbf{v}\right\|^2_{L_2(\Omega)} + \sum_{i=1}^d\left[\left(\nabla\cdot\mathbf{v}\right)n_i\right]^2_{H^{1/2}(\Omega)} \\
       & \leq d\left\|\nabla\cdot\mathbf{v}\right\|^2_{L_2(\Omega)} + C\left[\nabla\cdot\mathbf{v}\right]^2_{H^{1/2}(\Omega)} \\
       & \leq C^2_f\left\|\nabla\cdot\mathbf{v}\right\|^2_{H^{1/2}(\Omega)}
       \label{eq:divVdotN}
    \end{split}
\end{align}
where we used \eqref{eq:Lemma5.3} to bound the semi-norm bound and the constant $C_f = C_f(d,\Omega)$. For the bound on $\left\|\nabla^a\mathbf{v}\cdot\mathbf{n}\right\|_{H^{1/2}(\Omega)}$ we have the following bound on the $L_2(\Omega)$ norm using the Cauchy-Schwartz inequality and $\|\mathbf{n}\|_2 = 1$ 
\begin{align}
    \left\|\nabla^a\mathbf{v}\cdot\mathbf{n}\right\|_{L_2(\Omega)} \leq \left\|\nabla^a\mathbf{v}\right\|_{L_2(\Omega)}.
    \label{eq:AntiSymmetricL2Norm}
\end{align}

Moreover we show the following bound on the fractional semi-norm
\begin{align}
\begin{split}
    \left[\nabla^a\mathbf{v}\cdot\mathbf{n}\right]^2_{H^{1/2}(\Omega)} & \leq d\sum_{i,j=1}^d\left[A_{ij}n_j\right]^2_{H^{1/2}(\Omega)} \leq d\sum_{i,j=1}^d C_{ij}\left[A_{ij}\right]^2_{H^{1/2}(\Omega)} \\
    & \leq d\max_{i,j}C_{i,j}\sum_{i,j=1}^d\left[A_{ij}\right]^2_{H^{1/2}(\Omega)} = d\max_{i,j}C_{ij}\left[\nabla^a\mathbf{v}\right]^2_{H^{1/2}(\Omega)}
\end{split}
\label{eq:AntiSymmetricHalfSemiNorm}
\end{align}
where we used the substitution $\nabla^a\mathbf{v} = A$ and \eqref{eq:Lemma5.3}. Adding bounds \eqref{eq:AntiSymmetricHalfSemiNorm},~\eqref{eq:AntiSymmetricL2Norm} we have
\begin{equation}
    \left\|\nabla^a\mathbf{v}\cdot\mathbf{n}\right\|^2_{H^{1/2}(\Omega)}\leq \left(C'_f\right)^2\left\|\nabla^a\mathbf{v}\right\|^2_{H^{1/2}(\Omega)}.
\end{equation}

In addition, we derive the bounds for the estimates \eqref{eq:generalBounds}. Note that for a second order tensor, for example $\nabla\mathbf{v}$ we can write the contraction operator $\|\nabla\mathbf{v}\|^2_2 = \nabla\mathbf{v}:\nabla\mathbf{v} = \sum_{i,j=1}^d \partial_j v_i\partial_j v_i = \sum_{i,j=1}^d \left|\partial_j v_i\right|^2$. Similarly for a third order tensor $\|D^2\mathbf{v}\|^2_2 = D^2\mathbf{v}:D^2\mathbf{v} = \sum_{i,j,k=1}^d \partial_{kj} v_i\partial_{kj} v_i = \sum_{i,j,k=1}^d |\partial_{kj} v_i|^2$. In the bounds to follow we use the fact that the Hessian can be written as:
\begin{align}
    \begin{split}
        \left\|D^2\mathbf{u}\right\|_2^2 = \sum_{i,j,k=1}^d
        \left|\partial_{kj} v_i\right|^2 = \sum_{i\neq k}^d\left|\partial_{ki}v_i\right|^2 + \sum_{i\neq j}^d\left|\partial_{jj}v_i\right|^2 + \sum_{j\neq k}^d\left|\partial_{kj}v_k\right|^2 + \sum_{i=1}^d\left|\partial_{ii}v_i\right|^2+\sum_{i\neq j\neq k}^d\left|\partial_{kj}v_i\right|^2.
    \end{split}
\end{align}

Starting with the bound for the strain tensor, $\left\|\epsilon\right\|_{L_2(\Omega)}$ we note that:
\begin{equation}
    \left\|\nabla\mathbf{v}\right\|^2_2 = \left\|\epsilon(\mathbf{v})+\nabla^A\mathbf{v}\right\|^2_2 = \left\|\epsilon(\mathbf{v})\right\|^2_2 + 2\epsilon(\mathbf{v}):\nabla^A\mathbf{v} + \left\|\nabla^A\mathbf{v}\right\|^2_2, 
\end{equation}
where $\nabla^A\mathbf{v}$ is the anti-symmetric part of the gradient tensor, hence $\epsilon(\mathbf{v}):\nabla^A\mathbf{v} = 0$. We integrate both sides to get:
\begin{equation}
    \|\epsilon(\mathbf{v})\|_{L_2(\Omega)}\leq\|\nabla\mathbf{v}\|_{L_2(\Omega)}.
\end{equation}
We similarly develop the rest of the bounds and then integrate to get \eqref{eq:generalBounds}. For the bound on $\left\|\nabla\cdot\mathbf{v}\right\|_{L_2(\Omega)}$ we have the following:
\begin{align}
    \begin{split}
        \left\|\nabla\cdot\mathbf{v}\right\|^2_2 = 
        \left|\sum_{i=1}^d\partial_i v_i\right|^2\leq d\sum^{d}_{i=1}\left|\partial_i v_i\right|^2\leq d\left\|\nabla\mathbf{v}\right\|^2_2,
        \label{eq:divV}
    \end{split}
\end{align}
while $\|\left(\nabla\cdot\mathbf{v}\right)I\|_2^2 \leq d^2\|\nabla\mathbf{v}\|^2_2$. For the bound on $\left\|\nabla\cdot\left(\left(\nabla\cdot\mathbf{v}\right)I\right)\right\|_{L_2(\Omega)}$ we have:
\begin{equation}
    \left\|\nabla\cdot\left(\left(\nabla\cdot\mathbf{v}\right)I\right)\right\|_2^2 = \left\|\nabla\left(\nabla\cdot\mathbf{v}\right)\right\|^2_2=
    \sum_{j=1}^d\left|\sum_{i=1}^d\partial_{ji}v_i\right|^2\leq d\sum_{i,j=1}^d\left|\partial_{ji}v_i\right|^2\leq d\left\|D^2\mathbf{v}\right\|^2_2,
    \label{eq:DivDivVI}
\end{equation}
while $\left\|\nabla\left(\left(\nabla\cdot\mathbf{v}\right)I\right)\right\|_2^2\leq d^2\|D^2\mathbf{v}\|^2_2$.
Moreover, we construct a bound for the term $\left\|\nabla\cdot\epsilon\right\|_{L_2(\Omega)}$ as:
\begin{align}
    \begin{split}
        \left\|\nabla\cdot\epsilon\right\|^2_2 & = \sum_{i=1}^d\left|\sum_{j=1}^d\partial_j\epsilon_{ij}\right|^2 = \frac{1}{4}\sum_{i=1}^d\left|\sum_{j=1}^d\left(\partial_{ji}v_j + \partial_{jj}v_i\right)\right|^2 \\
        & \leq \frac{d}{4}\sum_{i,j=1}^d\left|\partial_{ji}v_j + \partial_{jj}v_i\right|^2 \leq \frac{d}{2}\sum_{i,j=1}^d\left|\partial_{ji}v_j\right|^2 + \frac{d}{2}\sum_{i,j=1}^d\left|\partial_{jj}v_i\right|^2 \\
        & = \frac{d}{2}\left(2\sum_{i=1}^d\left|\partial_{ii}v_i\right|^2 + \sum_{i\neq j}^d\left|\partial_{ji}v_j\right|^2 + \sum_{i\neq j}^d\left|\partial_{jj}v_i\right|^2\right) \leq d\left\|D^2\mathbf{v}\right\|_2^2.
        \label{eq:DivStrain}
    \end{split}
\end{align}
Finally for the bounds on $\left\|\nabla^a\mathbf{u}\right\|_{L_2(\Omega)}$, $\left\|\nabla\nabla^a\mathbf{u}\right\|_{L_2(\Omega)}$ we have:
\begin{align}
    \begin{split}
        \left\|\nabla^a\mathbf{v}\right\|_2^2 & = \sum_{i,j=1}^d\left|\frac{1}{2}\left(\partial_j v_i - \partial_i v_j\right)\right|^2 = \sum_{i\neq j}^d\left|\frac{1}{2}\left(\partial_j v_i - \partial_i v_j\right)\right|^2 \\
        & \leq \frac{1}{2}\sum_{i\neq j}^d\left|\partial_j v_i\right|^2 + \left|\partial_i v_j\right|^2 = \sum_{i\neq j}^d \left|\partial_j v_i\right|^2 \leq \left\|\nabla\mathbf{v}\right\|_2^2
        \label{eq:gradaV}
    \end{split}
\end{align}
\begin{align}
    \begin{split}
        \left\|\nabla \nabla^a\mathbf{v}\right\|_2^2 & = \sum_{i\neq j, k=1}^d\left|\frac{1}{2}\left(\partial_{kj} v_i - \partial_{ki} v_j\right)\right|^2 \\
        & \leq \frac{1}{2}\sum_{i\neq j, k=1}^d\left|\partial_{kj} v_i\right|^2 + \left|\partial_{ki} v_j\right|^2 = \sum_{i\neq j, k=1}^d \left|\partial_{kj} v_i\right|^2 \\
        & = \sum_{i\neq j}^d\left|\partial_{jj}v_i\right|^2 + \sum_{k\neq j}^d\left|\partial_{kj}v_k\right|^2 \leq \left\|D^2\mathbf{v}\right\|_2^2
        \label{eq:nablaNablaA_D2}
    \end{split}
\end{align}

Some additional bounds including second derivatives and normals are required for the proofs in Appenidix \ref{app:DiscreteBounds}. They are given as 
\begin{align}
    \|\nabla\left(\nabla^{\alpha}\mathbf{v}\cdot\mathbf{n}\right)\|_2^2 & = \sum_{i,j=1}^d\left|\sum_{k=1}^d\frac{1}{2}\left(\partial_{jk}v_i - \partial_{ji}v_k\right)n_k\right|^2 \\
    & \leq \sum_{k=1}^d\left|n_k\right|^2\sum_{i,j,k=1}^d\left|\frac{1}{2}\partial_{jk}v_i - \partial_{ji}v_k\right|^2 = \|\nabla\nabla^{\alpha}\mathbf{v}\|^2_2 \leq \|D^2\mathbf{v}\|^2_2,
\end{align}
where we used the Cauchy-Schwartz inequality and \eqref{eq:nablaNablaA_D2}. 


Furthermore, required bounds including third derivative operators for Appendix \ref{app:DiscreteBounds} are given as
\begin{align}
    \begin{split}
        &\|\nabla\left(\nabla\cdot\epsilon\left(\mathbf{v}\right)\right)\|^2_2 = \sum_{i,k=1}^d\left|\sum_{j=1}^d\frac{1}{2}\left(\partial_{kjj}v_i + \partial_{kji}v_j\right)\right|^2 \leq \frac{d}{2}\sum_{i,j,k=1}^d\left|\partial_{kjj}v_i\right|^2 + \frac{d}{2}\sum_{i,j,k=1}^d\left|\partial_{kji}v_j\right|^2 \leq d\|D^3\mathbf{v}\|^2_2, \\
        & \|\nabla\left(\nabla\cdot\left(\nabla\cdot\mathbf{v}I\right)\right)\|_2^2 = \|\nabla\left(\nabla\left(\nabla\cdot\mathbf{v}\right)\right)\|_2^2 = \sum_{j,k=1}^d\left|\sum_{i=1}^d\partial_{kji}v_i\right|^2 \leq d\sum_{i,j,k=1}^d\left|\partial_{kji}v_i\right|^2\leq d\|D^3\mathbf{v}\|^2_2.
        \label{eq:gradDivDivV}
    \end{split}
\end{align}
\subsection{Bounds used to prove Theorem \ref{theorem:Coercivity}}\label{app:CoercivityBounds}

We will show that the strong problem is ADN elliptic, uniformly elliptic and regular elliptic, while the boundary conditions satisfy the complementing conditions. These definitions are provided in \cite{ADN_2}, \cite{bochev2009least}, \cite{JaRoĭtberg_1969} and are also given here. We initially define a generalized notation for the linear differential and boundary operators from \eqref{eq:StrongBC} 
\begin{align}
    \mathcal{L}_{ij}(\mathbf{x},D) = \sum_{|\zeta|\leq t_{ij}}\alpha^{ij}\mathbf{x}_{\zeta}D^{\zeta}, \\
    \mathcal{B}_{lj}(\mathbf{x},D) = \sum_{|\zeta|\leq r_{lj}}\beta^{lj}\mathbf{x}_{\zeta}D^{\zeta},
\end{align}
where $i,~j=1,\dots,N$ and $l = 1,\dots,m$. $\alpha_{\zeta}^{ij}$ and $\beta_{\zeta}^{lj}$ are the coefficients of the domain and boundary operators, respectively. Additionally, $\zeta$ is a multi-index defined in \eqref{eq:multiIndex}. The order of the operators is determined by three sets of integer weights, $s_i,~t_j$ for operator $\mathcal{L}_{ij}(\mathbf{x},D)$, corresponding to the equations and unknowns, respectively. Similarly for operator $\mathcal{B}_{lj}(\mathbf{x},D)$ the integer weights are $r_l,~t_j$ such that $r_{lj} = r_l + t_j$. For elliptic operators the relationship between the derivative order and the integer weights is given in Definition \ref{def:ADNelliptic} and \eqref{eq:BoundaryOperatorWeightDependence} below. 

\begin{definition}
    A differential operator, $\mathcal{L}(\mathbf{x},D)$, is Agmon–Douglis–Nirenberg (ADN) elliptic if 
    \begin{itemize}
        \item $\text{deg}~\mathcal{L}_{ij}(\mathbf{x},D) = 0$, given $s_i + t_j < 0$,
        \item $\text{deg}~\mathcal{L}_{ij}(\mathbf{x},D) \leq t_{ij}$, where $t_{ij} = s_i + t_j$, given $s_i + t_j \geq 0$,
        \item  $\text{det}~\mathcal{L}^P(\mathbf{x},\boldsymbol{\xi})\neq 0, \hspace{0.5cm} \forall\boldsymbol{\xi}\neq 0$,
    \end{itemize}
    where $s_i, t_j$ are integer weights, $\mathcal{L}(\mathbf{x},\boldsymbol{\xi})$ is the symbol of the differential operator $\mathcal{L}(\mathbf{x},D)$ and \\$\text{deg}~\mathcal{L}_{ij}(\mathbf{x},D)$ is the highest derivative degree for a specific element of the derivative operator. We also note that $\mathcal{L}^P(\mathbf{x},\boldsymbol{\xi})$ is the principal part of the symbol, defined such that $\text{deg}~\mathcal{L}^P_{ij}(\mathbf{x},\boldsymbol{\xi}) = t_{ij}$. 
    \label{def:ADNelliptic}
\end{definition}

\begin{definition}
    A differential operator, $\mathcal{L}(\mathbf{x},D)$, is uniformly elliptic if it is ADN elliptic and there exists a positive ellipticity constant $A$, such that 
    \begin{equation}
        A^{-1}\left|\boldsymbol{\xi}\right|^{2m}\leq \left|\text{det}~\mathcal{L}^P(\mathbf{x},\boldsymbol{\xi})\right|\leq A\left|\boldsymbol{\xi}\right|^{2m},
        \label{eq:UniformElliptic}
    \end{equation}
\end{definition}

\begin{definition}
    A differential operator, $\mathcal{L}(\mathbf{x},D)$, is regular elliptic if it satisfies the supplementary condition. Specifically if the polynomial $\mathcal{L}^P(\mathbf{x},\boldsymbol{\nu}+\tau\mathbf{n})$ only has $2m$ imaginary roots in $\tau$, half of which are positive and half negative. Here $\mathbf{x}\in\partial\Omega$ is any boundary point with normal $\mathbf{n}$ and tangent $\boldsymbol{\nu}$. 
\end{definition}

Prior to defining the complementing condition, we note that the boundary operator for an elliptic problem is defined in a similar manner to the PDE operator. Specifically, the symbol of the boundary operator is defined such that 
\begin{equation}
    \text{deg}~\mathcal{B}(\mathbf{x},\boldsymbol{\xi}) \leq r_l+ t_j,
    \label{eq:BoundaryOperatorWeightDependence}
\end{equation}
where $r_l$ are an additional set of integer weights and $l=1,..m$. The principal part of $\mathcal{B}$ is defined such that 
\begin{equation}
    \text{deg}~\mathcal{B}^P(\mathbf{x},\boldsymbol{\xi}) = r_l + t_j.
\end{equation}
\begin{definition}
    Boundary conditions are complementary if, given a point $\mathbf{x}\in\partial\Omega$ with a real tangent vector, $\boldsymbol{\nu}$ and normal vector, $\mathbf{n}$ at $\mathbf{x}$, the rows of the matrix given by
    \begin{equation}
        A = \mathcal{B}^P\left(\mathbf{x},\boldsymbol{\nu}+\tau\mathbf{n}\right)\mathcal{L}'\left(\mathbf{x},\boldsymbol{\nu}+\tau\mathbf{n}\right),
    \end{equation}
    are linearly independent modulo $M^{+}\left(\mathbf{x},\boldsymbol{\nu},\mathbf{n},\tau\right)$, which is defined as 
    \begin{equation}
        M^{+}\left(\mathbf{x},\boldsymbol{\nu},\mathbf{n},\tau\right) = \prod_{k=1}^m\left(\tau-\tau^{+}_k\right),
    \end{equation}
    which is a polynomial constructed by factorizing the determinant of the principal part of the symbol $\text{det}~\mathcal{L}^P\left(\mathbf{x},\boldsymbol{\nu}+\tau\mathbf{n}\right)$, where $\tau^{+}_k$ are its positive imaginary roots. $\mathcal{L}'\left(\mathbf{x},\boldsymbol{\xi}\right)$ is the adjugate of $\mathcal{L}^P$ and is defined as 
    \begin{equation}
        \mathcal{L}'\left(\mathbf{x},\boldsymbol{\xi}\right) = \text{det}~\mathcal{L}^P\left(\mathcal{L}^P\right)^{-1}. 
    \end{equation}
    Hence the complementary condition can be succinctly stated. The following identities
    \begin{equation}
        \sum_{i=1}^m C_i A_{ij}\left(\tau\right) = M^{+}p_j(\tau),
        \label{eq:complementingBC}
    \end{equation}
    hold for $j = 1,..M$ and any polynomial $p_j(\tau)$, if and only if $C_i = 0$.
\end{definition}

Initially we note that uniqueness for the strong linear elasticity problem provided in ~\eqref{eq:StrongPDE},\\~\eqref{eq:StrongBC} is proven for different sets of boundary conditions in \cite[Theorem 4.4.1]{knops1971uniqueness}. Note that the Robin boundary conditions we use can be converted to the condition that ensures uniqueness as long as the functions $\kappa_0, \kappa_1$ are strictly positive.

To show that the linear elasticity problem is ADN elliptic we note that our problem is in three dimensions and we have three equations such that $i,~j = 1,...,3$. The weights can be defined as $s_i = 0,~t_1 = t_2 = t_3 = 2$ so that the second point in definition \ref{def:ADNelliptic} is fulfilled. The choice of weights also shows that our problem is homogeneous elliptic \cite{JaRoĭtberg_1969}.  We express the principal part of the symbol of our PDE operator given in ~\eqref{eq:Elasticity_Operators} as
\makeatletter 
 \def\@eqnnum{{\normalsize \normalcolor (\theequation)}} 
  \makeatother
{\small\begin{equation}
    \mathcal{L}^P(\mathbf{x},\boldsymbol{\xi}) = -\begin{pmatrix}(\lambda+2\mu)\xi^2_1+\mu(\xi^2_2 + \xi^2_3) & (\lambda + \mu)\xi_1\xi_2 & (\lambda + \mu)\xi_1\xi_3 \\
    (\lambda + \mu)\xi_1\xi_2 & (\lambda + 2\mu)\xi_2^2 + \mu(\xi^2_1 + \xi^2_3) & (\lambda + \mu)\xi_2\xi_3 \\
    (\lambda + \mu)\xi_1\xi_3 & (\lambda + \mu)\xi_2\xi_3 & (\lambda + 2\mu)\xi^2_3 + \mu(\xi^2_1 + \xi^2_2)\end{pmatrix},
\end{equation}}
\flushleft where $\boldsymbol{\xi}^{\alpha} = \left(\xi^{\alpha_1}_1,\xi^{\alpha_2}_2,\xi^{\alpha_3}_3\right)$ are monomials representing derivative operators $D^{\alpha} = \frac{\partial^{\left|\alpha\right|}}{\partial x_1^{\alpha_1}\partial x_2^{\alpha_2}\partial x_3^{\alpha_3}}$, with multi-index $\alpha = \left(\alpha_1,\alpha_2,\alpha_3\right)$. Hence we also show that the determinant is
\begin{equation}
    \text{det}~\mathcal{L}^P(\mathbf{x},\boldsymbol{\xi}) = -\mu^2(\lambda+2\mu)(\xi_1^2 + \xi_2^2 + \xi_3^2)^3 = -\mu^2(\lambda + 2\mu)\left|\boldsymbol{\xi}\right|^6 \neq 0.
    \label{eq:ADN_elliptic}
\end{equation}

Specifically, the linear elasticity operator is also uniformly elliptic assuming $\mu,~\lambda > 0$ with the PDE order being $\sum_i^3 s_i + t_i = 6$ and $m = 3$.

Additionally, we show that the linear elasticity operator is regular elliptic by computing the following polynomial in $\tau$
\begin{align}
    \begin{split}
        \text{det}~\mathcal{L}^P(\mathbf{x},\boldsymbol{\nu}+\tau\mathbf{n}) & = \mu^2(\lambda+2\mu)\left(\left|\boldsymbol{\nu}\right|^2 + 2\tau\boldsymbol{\nu}\cdot\mathbf{n} + \left|\tau\mathbf{n}\right|^2\right) \\
        & = \left(\tau^2 + 1\right)^3 \\
        & = \left(\tau - i\right)^3\left(\tau + i\right)^3 = L^{+}L^{-}.
    \end{split}
\end{align}

The boundary operator for our linear elasticity problem is defined as $\mathcal{B}(\mathbf{x},D) = \kappa_0\mathcal{B}_0 + \kappa_1\mathcal{B}_1$ (see ~\eqref{eq:Elasticity_Operators}). We define the integer weights as $r_1 = r_2 = r_3 = -1$ such that $r_l + t_j = 1, \forall i,j$. The principal part of the operator is
{\scriptsize\begin{equation}
    \mathcal{B}^P(\mathbf{x},\boldsymbol{\xi}) = \kappa_1\begin{pmatrix}(\lambda + 2\mu)n_1\xi_1 + \mu(n_2\xi_2 + n_3\xi_3) & \lambda n_1\xi_2 + \mu n_2\xi_1 & \lambda n_1\xi_3 + \mu n_3\xi_1 \\
    \lambda n_2\xi_1 + \mu n_1\xi_2 & (\lambda + 2\mu)n_2\xi_2 + \mu(n_1\xi_1 + n_3\xi_3) & \lambda n_2\xi_3 + \mu n_3\xi_2 \\
    \lambda n_3\xi_1 + \mu n_1\xi_3 & \lambda n_3\xi_2 + \mu n_2\xi_3 & (\lambda + 2\mu)n_3 \xi_3 + \mu(n_1\xi_1 + n_2\xi_2)
    \end{pmatrix}.
\end{equation}}
We also define the adjugate of the principal PDE operator as
{\scriptsize\begin{equation}
    \mathcal{L}'(\mathbf{x},\boldsymbol{\xi}) = \mu\left|\boldsymbol{\xi}\right|^2\begin{pmatrix}
        -\left(\lambda+2\mu\right)\left(\xi_2^2 + \xi_3^2\right)-\mu\xi_1^2 & \left(\lambda + \mu\right)\xi_1\xi_2 & \left(\lambda + \mu\right)\xi_1\xi_3 \\
        \left(\lambda + \mu\right)\xi_1\xi_2 & -\left(\lambda+2\mu\right)\left(\xi_1^2 + \xi_3^2\right)-\mu\xi_2^2 & \left(\lambda+\mu\right)\xi_2\xi_3 \\
        \left(\lambda + \mu\right)\xi_1\xi_3 & \left(\lambda + \mu\right)\xi_2\xi_3 & -\left(\lambda + 2\mu\right)\left(\xi_1^2 + \xi_2^2\right)-\mu\xi_3^2
    \end{pmatrix},
\end{equation}}
\flushleft while the polynomial $M^{+}$ for our problem is defined as $M^{+} = (\tau-i)^3$. We don't explicitly formulate the matrix, $\mathcal{B}^P\left(\mathbf{x},\boldsymbol{\nu}+\tau\mathbf{n}\right)\mathcal{L}'\left(\mathbf{x},\boldsymbol{\nu}+\tau\mathbf{n}\right)$ for brevity. Note that the $m$ identities from ~\eqref{eq:complementingBC} should hold for all $\mathbf{x}\in\partial\Omega$ if the boundary conditions are not complementing. Hence, we fix the tangent and normal such that $\boldsymbol{\nu} = \left(1,0,0\right)^T,~\mathbf{n} = \left(0,0,1\right)^T$ and note that the following identities should hold if and only if $C_1 = C_2 = C_3 = 0$
\begin{align}
    -\mu^2\left(\tau + i\right)\kappa_1\left(C_3+C_1\tau\right)\left(\tau^2\left(\lambda 
    + 2\mu\right)-\lambda\right) = \left(\tau - i\right)^2 p_1(\tau), \label{eq:LECompCond1} \\
    -\mu^2\kappa_1\left(\tau + i\right)^2C_2\tau\left(\lambda+2\mu\right) = \left(\tau - i\right)p_2(\tau), \label{eq:LECompCond2} 
    \end{align}
    \vspace{-0.75cm}
\begin{align}
    \begin{split}
    & \mu\left(\tau + i\right)\kappa_1\left[C_1\left(\mu\left(\lambda+2\mu\right) + \mu^2\tau^2 -\tau^2\mu\left(\lambda + \mu\right)\right)\right. \\ & \left. + C_3\left(\left(-\left(\lambda+2\mu\right)-\mu\tau^2\right)\left(\tau\left(\lambda+2\mu\right)\right) + \tau\lambda\left(\mu+\lambda\right)\right)\right]= \left(\tau - i\right)^2 p_3(\tau).    \end{split}
    \label{eq:LECompCond3}
\end{align}
In general we assume that $\mu,~\lambda>0$. Hence, ~\eqref{eq:LECompCond2} only holds if $C_2 = 0$ since the polynomial on the left hand side, does not have the root $\tau = i$. ~\eqref{eq:LECompCond1} holds if $C_3=-iC_1$. Substituting this relation in ~\eqref{eq:LECompCond3} the identity simplifies to
\begin{equation}
    \mu^2 i\left(\tau +i\right)\kappa_1 C_1\left[\left(\lambda+2\mu\right)\tau^2 + i\left(2\lambda + 2\mu\right)\tau + \left(2\mu+\lambda\right)\right] = \left(\tau - i\right)p_3(\tau),
\end{equation}
which does not hold unless $C_1 = 0$. This in turn means that $C_3 = 0$ and ~\eqref{eq:LECompCond1},~\eqref{eq:LECompCond2},~\eqref{eq:LECompCond3} hold if and only if $C_1=C_2=C_3$. Hence the linear elasticity problem in ~\eqref{eq:StrongPDE},~\eqref{eq:StrongBC} is elliptic and satisfies the complementing condition.

\section{Bounds used for discrete estimates}\label{app:DiscreteBounds}
\subsection{A note on the Leibniz rule for differential operators}\label{app:leibnizDiffOp}
Given our definition of the Hilbert norm \eqref{eq:Sobolev_norm_2} where $D^{\tau}$ is differential operator of order $\tau\in\mathbb{N}_0$, we work with the tensor $2-\text{norm}$ of $D^{\tau}\mathbf{v}$. See for example the transformation of derivatives to a reference patch \eqref{eq:derivativePhysical2Ref_derivation}. Additionally given we use a partition of unity method, we take derivatives of products between the weight functions and local function, $w_k\mathbf{v}^{(k)}$. Hence we inevitably use the Liebmann rule for partial derivatives in our derivation. This is generally provided in terms of one partial derivative defined in terms of a multi-index (see \cite{Hardy2006}, \cite{Comtet1974}). Given our derivations require a bound in terms of differential operators we use the Leibniz rule to bound a general operator in the tensor $2-\text{norm}$ as
\begin{align}
    \begin{split}
    \|D^{\tau}(\mathbf{v}^{(k)}w_k)\|^2_2 = \sum_{i=1}^d\sum_{|\alpha| = \tau}\left|D^{\alpha}(v_i^{(k)} w_k)\right|^2 = \sum_{i=1}^d\sum_{|\alpha| = \tau}\left|\sum_{\beta + \gamma = \alpha}\frac{\alpha!}{\beta!\gamma!}D^{\beta}v_i^{(k)} D^{\gamma}w_k\right|^2,
    \end{split}
\end{align}
where $\alpha,~\beta,~\gamma$ are multi indices \eqref{eq:multiIndex} and we used the Leibniz rule for partial derivatives. We aim to construct an upper bound of the above norm in terms of differential operators as opposed to sums of derivatives. Hence, we bound the squared sum by the sum of squares using inequality $(a_1+a_2+\dots+a_n)^2\leq n(a_1^2 + a_2^2 + \dots + a_n^2)$, which gives
\begin{align}
    \begin{split}
        \|D^{\tau}(\mathbf{v}^{(k)}w_k)\|_2^2 & \leq \left(1+\frac{\tau}{d}\right)^d\sum_{i=1}^d\sum_{|\alpha|=\tau}\sum_{\beta+\gamma = \alpha}\frac{\alpha!}{\beta!\gamma!}|D^{\beta}v_i^{(k)}|^2|D^{\gamma}w_k|^2, \\
        & = \left(1+\frac{\tau}{d}\right)^d\sum_{i=1}^d\sum_{q + \zeta = \tau}\frac{\tau!}{q!\zeta!}\|D^{q}v_i^{(k)}\|_2^2\|D^{\zeta}w_k\|_2^2, \\
        & = \left(1+\frac{\tau}{d}\right)^d\sum_{q + \zeta = \tau}\frac{\tau!}{q!\zeta!}\|D^{q}\mathbf{v}^{(k)}\|_2^2\|D^{\zeta}w_k\|_2^2
    \end{split}
\end{align}
where $q = |\beta|,~\zeta = |\gamma|$ and $(1 + \tau/d)^d$ is an upper bound for the number of terms in the product rule sum. In some cases we use the bound for different norms ($L_2-\text{norm}$ or $L_{\infty}-\text{norm}$). This can be done by first taking the square root of the above expression and then using the triangle inequality. Then taking the essential supremum on both sides and using the triangle inequality we have the bound for the $L_{\infty}-\text{norm}$
\begin{equation}
    \|D^{\tau}(\mathbf{v}^{(k)}w_k)\|_{L_{\infty}(\mathcal{B})} \leq \left(1+\frac{\tau}{d}\right)^{\frac{d}{2}}\sum_{q + \zeta = \tau}\left(\frac{\tau!}{q!\zeta!}\right)^{\frac{1}{2}}\|D^{q}\mathbf{v}^{(k)}\|_{L_{\infty}(\mathcal{B})}\|D^{\zeta}w_k\|_{L_{\infty}(\mathcal{B})}.
    \label{eq:LeibnizRuleOperator}
\end{equation}

The above can be similarly shown for other norms and is used in the following subsections of this appendix.
\subsection{Integration error estimates \eqref{eq:genEigenProblems}}\label{app:intErrorEstimates}
\setlength\parindent{15pt}
We construct an error bound between continuous and discrete inner products as they appear in generalised eigenvalue bounds \eqref{eq:genEigenProblemL2},~\eqref{eq:genEigenProblemH2},~\eqref{eq:genEigenProblemBilinear}. We aim to show that the bounds are only dependent on the oversampling factor $q$ and do not scale with the fill distance $h$. 

We start by constructing a more general estimate that can be used for all three bounds. Hence, consider a set $\mathcal{B}\subseteq\mathbb{R}^{d}$ which is either the entire domain, $\Omega$ or its boundary $\partial\Omega$. We split integrals over $\mathcal{B}$ into Voronoi regions centred around each point in set $Z$. Voronoi regions are defined as in \cite{Tominec2025} 
\begin{equation}
    V_j = \{\mathbf{y}\in\mathcal{B}~|~\|\mathbf{y} - \mathbf{y}_j\|_2 \leq \|\mathbf{y} - \mathbf{y}_i\|_2,~i\neq j,~i = 1,\dots,M\}, \hspace{0.5cm} j = 1,\dots,M,
    \label{eq:VoronoiRegions}
\end{equation}
where $M = |Z|$ and $\mathcal{B}= V_j\cup_{j=1}^M$. This definition is equivalent to restricted Voronoi cells as described in \cite{edelsbrunner1994triangulating},~\cite{Yan2009}, since we use the Euclidean distance metric and restrict the cells to $\mathcal{B}$.

Our integration error is defined between norms of tensors of order 1 \eqref{eq:genEigenProblemL2}, \eqref{eq:genEigenProblemBilinear} and 3 \eqref{eq:genEigenProblemH2}. We hence denote a general tensor as $A:\mathbb{R}^d\rightarrow\mathbb{R}^{d\times\dots\times d}$ and the error is given by
\begin{align}
    \begin{split}
        \|A\|^2_{l_2(\mathcal{B})} - \|A\|^2_{L_2(\mathcal{B})} & = \sum_{j=1}^MW_j\|A(\mathbf{y}_j)\|_2^2 - \int_{\mathcal{B}}\|A\|_2^2d\mathbf{y} \\ 
        & \leq \left(\max_{1\leq j \leq M}{\frac{W_j}{\left|V_j\right|}}\right)\sum_{j=1}^M\left|V_j\right|f(\mathbf{y}_j) - \int_{\mathcal{B}}f(\mathbf{y})d\mathbf{y} \\
        & = C_{WV}\|A\|^2_{l_2^{*}(\mathbf{B})} - \|A\|^2_{L_2(\mathcal{B})},
    \end{split}
\end{align}
where the definition for the discrete $l_2-\text{norm}$ with general weights \eqref{eq:l2} is replaced by $\|\cdot\|_{l_2^{*}}$, which uses the Voronoi region size as an integration weight. Rearanging and taking the absolute value gives
\begin{equation}
    \|A\|^2_{l_2(\mathcal{B})} - \|A\|^2_{L_2(\mathcal{B})} \leq C_{WV}\left|\|A\|^2_{L_2(\mathcal{B})}-\|A\|^2_{l^{*}_2(\mathcal{B})}\right| + \left|C_{WV} - 1\right|\|A\|^2_{L_2(\mathcal{B})},
    \label{eq:l2NormVoronoiWeights_1}
\end{equation}
and similarly 
\begin{equation}
    \|A\|^2_{L_2(\mathcal{B})} - \|A\|^2_{l_2(\mathcal{B})} \leq C'_{WV}\left|\|A\|^2_{L_2(\mathcal{B})}-\|A\|^2_{l^{*}_2(\mathcal{B})}\right| + \left|C'_{WV} - 1\right|\|A\|^2_{L_2(\mathcal{B})},
    \label{eq:l2NormVoronoiWeights_2}
\end{equation}
where $C_{WV}' = \min_{j}W_j/|V_j|$ and the flipped norm order is used to prove bound \eqref{eq:genEigenProblemBilinear}. In both bounds the integration error includes the ratio between the weights used \eqref{eq:integrationWeights} and a measure of the exact node density given by the size of the Voronoi regions $|V_j|$, which indicates how the approximation accuracy of the Voronoi region volume impacts the integration error. 

The change in norm definition serves to assist with the integration bound computation. We focus on the term involving the new discrete norm definition, since the integral over $\mathcal{B}$ can be split into a sum over Voronoi region integrals as
\begin{align}
    \begin{split}
        \left|\|A\|^2_{L_2(\mathcal{B})} - \|A\|^2_{l^{*}_2(\mathcal{B})}\right| & = \left|\int_{\mathcal{B}}\|A\|_2^2d\mathbf{y} - \sum_{j=1}^M\left|V_j\right|\|A(\mathbf{y}_j)\|_2^2\right| = \left|\int_{\mathcal{B}}fd\mathbf{y}- \sum_{j=1}^M\left|V_j\right|f(\mathbf{y}_j)\right| \\
        & = \left|\sum_{j=1}^M\int_{V_j}f-f(\mathbf{y}_j)d\mathbf{y}\right| \leq \sum_{j=1}^M\int_{V_j}\left|f - f(\mathbf{y}_j)\right|d\mathbf{y} \\
        & \leq \sum_{j=1}^M\int_{V_j}\left|\nabla f(\xi_j)\cdot\left(\mathbf{y}-\mathbf{y}_j\right)\right|d\mathbf{y},
    \end{split}
\end{align}
where we used the mean value theorem for points in Voronoi region $V_j$, while $\xi_j$ is a point in the convex hull of the Voronoi region $\text{co}(V_j)$ \cite{THOMPSON2003717}. Using the Cauchy-Schwartz inequality we have
\begin{align}
    \begin{split}
        \left|\|A\|^2_{L_2(\mathcal{B})} - \|A\|^2_{l^{*}_2(\mathcal{B})}\right| \leq \sum_{j=1}^M\|\nabla f(\xi_j)\|_2\int_{V_j}\|\mathbf{y} - \mathbf{y}_j\|_2d\mathbf{y} \leq \sum_{j=1}^M\left|V_j\right|\|\nabla f\|_{L_{\infty}(\text{co}(V_j))}\|\mathbf{y}-\mathbf{y}_j\|_{L_{\infty}(V_j)},
        \label{eq:intError_beforeFillDist}
    \end{split}
\end{align}
where $\|\cdot\|_{L_{\infty(V_j)}} = \esssup_{\mathbf{y}\in V_j} \|\cdot\|_2$. Given point set $Z$ is either the boundary sampling through set $\{\mathbf{y}_i\}_{i\in Y^{\partial\Omega}}$ or interior sampling through set $\{\mathbf{y}_i\}_{i\in Y^{\Omega}}$ and has sufficiently large node quality, we have the following bound for the size of the Voronoi regions with respect to their diameter
\begin{equation}
     |V_j| \leq C_{V_j}\text{diam}(V_j)^{\nu},
     \label{eq:VoronoiSize_diameter}
\end{equation}
where constant $C_{V_j}\leq1$ for $\mathcal{B}\triangleq \Omega$ and for $\mathcal{B}\triangleq\partial\Omega$ it is equal to $\max_{\mathbf{y}\in V_j}|\mathbf{n}(\mathbf{y})\cdot\mathbf{N}|^{-1}$, where $\mathbf{n}$ is the boundary normal over $V_j$ and $\mathbf{N}$ is the normal of any plane containing the diameter of $V_j$. The diameter itself is equivalent to the fill distance \eqref{eq:fill} as
\begin{equation}
    c_{q,Z}^{-1}h_f(Z,\mathcal{B}) \leq \text{diam}(V_j) \leq 2h_f(Z,\mathcal{B}),
    \label{eq:VoronoiDiameter_fill}
\end{equation}
with fill distance $h_f(Z,\mathbf{B})$ and node quality measure $c_{q,Z}$ relating the fill distance and separation distance \eqref{eq:sepDistance}. The above is proven by using the triangle inequality for the upper bound and by first showing the lower bound holds for the separation distance. 

Hence we can use these relations as well as the fact that $\|\mathbf{y} - \mathbf{y}_j\|_{L_{\infty}(V_j)}\leq \text{diam}(V_j)$ in \eqref{eq:intError_beforeFillDist} which gives
\begin{align}
\begin{split}
   \left|\|A\|^2_{L_2(\mathcal{B})} - \|A\|^2_{l^{*}_2(\mathcal{B})}\right| & \leq \sum_{j=1}^M C_{V_j}\text{diam}(V_j)^{\nu+1} \|\nabla f\|_{L_{\infty}(\text{co}(V_j))} \leq 2^{\nu + 1}h_f(Z,\mathcal{B})^{\nu+1}\sum_{j=1}^M C_{V_j}\|\nabla f\|_{L_{\infty}(\text{co}(V_j))} \\
   & \leq C_{vor}(\mathcal{B},Z)h_f(Z,\mathcal{B})^{\nu+1}\sum_{j=1}^M\|\nabla f\|_{L_{\infty}(\text{co}(V_j))},
   \label{eq:integrationBound_1}
\end{split}
\end{align}
where $C_{vor}(\mathcal{B},Z) = 2^{\nu+1}\max_{1\leq j\leq M}C_{V_j}$ and $\nu = \text{dim}(\mathcal{B})$. 

Given definition $f = \|A\|_2^2$, we can first write the $2-\text{norm}$ of tensor $A = \alpha_{i_1i_2\dots i_{t_0}}$ as
\begin{align}
    \begin{split}
        \|\nabla f\|^2_2 & = \|\nabla\|A\|^2_2\|^2_2 = \sum_{j=1}^d\left|\sum_{i_1,i_2,\dots i_{t_0}}\partial_j\left|\alpha_{i_1i_2\dots i_{t_0}}\right|^2\right|^2 \\
        & = 4\sum_{j=1}^d\left|\sum_{i_1,i_2,\dots i_{t_0}}\alpha_{i_1i_2\dots i_{t_0}}\partial_j\alpha_{i_1i_2\dots i_{t_0}}\right|^2 \leq 4d^{t_0}\sum_{j=1}^d\sum_{i_1,i_2,\dots i_{t_0}}^d\left|\alpha_{i_1i_2\dots i_{t_0}}\partial_j\alpha_{i_1i_2\dots i_{t_0}}\right|^2 \\
        & = 4d^{t_0}\|A\|^2_2\|\nabla A\|_2^2,
    \end{split}
\end{align}
where $t_0$ is the order of the tensor.
Taking the square root and essential supremum on both sides of the bound we have
\begin{equation}
    \|\nabla f\|_{L_{\infty}(\text{co}(V_j))}\leq 2d^{\frac{t_0}{2}}\|A\|_{L_{\infty}(\text{co}(V_j))}\|\nabla A\|_{L_{\infty}(\text{co}(V_j))},
    \label{eq:gradfAA}
\end{equation}
which can be substituted in \eqref{eq:integrationBound_1} to give
\begin{equation}
    \left|\|A\|^2_{L_2(\mathcal{B})} - \|A\|^2_{l^{*}_2(\mathcal{B})}\right| \leq  2d^{\frac{t_0}{2}}C_{vor}(\mathcal{B},Z)h_f(Z,\mathcal{B})^{\nu+1}\sum_{j=1}^M\|\nabla A\|_{L_{\infty}(\text{co}(V_j))}\|A\|_{L_{\infty}(\text{co}(V_j))}.
    \label{eq:integrationBound_21}
\end{equation}

For all relevant integration bounds, tensor $A$ is either equal to a derivative of $\mathbf{v}\in V_h$, $A = D^{\tau}\mathbf{v}$, or is a linear operator, $A = \mathcal{L}(\mathbf{v})$, whose tensor $2-\text{norm}$ can be bounded by a combination of derivatives as
\begin{equation}
    \|A\|^2_2 = \|\mathcal{L}\left(\mathbf{v}\right)\|^2_2 \leq \sum_{\tau \in \mathcal{S}} C^2_{\mathcal{L},\tau}\|D^{\tau}\mathbf{v}\|^2_2,
    \label{eq:genLinearOpBound}
\end{equation}
where $\mathcal{S}\subset \mathbb{N}_0$ is a set of orders based on the linear operator and $C_{\mathcal{L},\tau}$ are constants dependent on the operator. For example if $A = D^2\mathbf{v}$, then $\mathcal{S} = \{2\}$ and $C_{\mathcal{L},2} = 1$. Substituting definition \eqref{eq:genLinearOpBound} in the integration bound gives
\begin{equation}
        \left|\|A\|^2_{L_2(\mathcal{B})} - \|A\|^2_{l^{*}_2(\mathcal{B})}\right| \leq  2d^{\frac{t_0}{2}}C_{vor}(\mathcal{B},Z)h_f(Z,\mathcal{B})^{\nu+1}\sum_{\tau\in\mathcal{S}}C_{\mathcal{L},\tau}\sum_{\chi\in \mathcal{S}+1}C_{\nabla\mathcal{L},\chi}\sum_{j=1}^M\|D^{\chi}\mathbf{v}\|_{L_{\infty}(V_j)}\|D^{\tau}\mathbf{v}\|_{L_{\infty}(V_j)},
        \label{eq:integrationBound_2}
\end{equation}
where $\|\nabla A\|_2$ is upper bounded by a sum of derivatives of order given by set $\mathcal{S} + 1$ and constants $C_{\nabla\mathcal{L},\chi}$. In the case of $A = D^{\tau}\mathbf{v}$ we omit the sums and $\chi = \tau + 1$. 

The function in our discrete space can be written as $\mathbf{v} = \sum_{k=1}^Pw_k\mathbf{v}^{(k)}$ with $\mathbf{v}^{(k)}$ local approximations. The above derivation is equivalent for $\mathcal{B}\triangleq\Omega$ and $\mathcal{B}\triangleq\partial\Omega$, with point sets $\{\mathbf{y}_i\}_{i\in Y^{\Omega}},~\{\mathbf{y}_i\}_{i\in Y^{\partial\Omega}}$ respectively and a different number of points, $M$, in each case. To simplify notation we have also dropped $\text{co}(V_j)$ and for the rest of the section $V_j$ indicates the convex hull of Voronoi region $V_j$. Given our domain is covered by a set of patches, $\{\Omega_k\}_{k=1}^P$, we can upper bound the largest values in the convex hull of the Voronoi regions by the essential supremum in the entire patch, $\Omega_k$. 

We first construct a bound for the sum of the two infinity norms and then combine the result with \eqref{eq:integrationBound_2}. We introduce an index set for each patch, $I_k$, which indexes the neighbours of patch $\Omega_k$ in set $\{\Omega_k\}$. Also consider that each patch includes $m_k$ evaluation points which gives  
\begin{align}
    \begin{split}
        \sum_{j=1}^M\|D^{\chi}\mathbf{v}\|_{L_{\infty}(V_j)}\|D^{\tau}\mathbf{v}\|_{L_{\infty}(V_j)}  \leq & \sum_{k=1}^Pm_k\|D^{\chi}\mathbf{v}\|_{L_{\infty}(\Omega_k)}\|D^{\tau}\mathbf{v}\|_{L_{\infty}(\Omega_k)} \\
         = & \sum_{k=1}^Pm_k\|D^{\chi}\sum_{j\in I_k}\mathbf{v}^{(j)}w_j\|_{L_{\infty}(\Omega_k)}\|D^{\tau}\sum_{j\in I_k}\mathbf{v}^{(j)}w_j\|_{L_{\infty}(\Omega_k)} \\
         \leq & \sum_{k=1}^Pm_k\sum_{j\in I_k}\|D^{\chi}(\mathbf{v}^{(j)}w_j)\|_{L_{\infty}(\Omega_j)}\sum_{j\in I_k}\|D^{\tau}(\mathbf{v}^{(j)}w_j)\|_{L_{\infty}(\Omega_j)}, 
        \label{eq:integrationBound_2_2}
     \end{split}
\end{align}
where the triangle inequality was used in the last step.

Furthermore, we upper bound the number of evaluation points $m_k$ in each patch using the fill distance measures \eqref{eq:fillX}, \eqref{eq:fillY}, \eqref{eq:fillYBnd}. To achieve this we first note the following relation for the fill distance
\begin{equation}
    c_1M^{-\frac{1}{\nu}} \leq h_f(Z,\mathcal{B})\leq c_2M^{-\frac{1}{\nu}}
    \label{eq:fillDistance_Pts}
\end{equation}
where $M = |Z|$ and is proven in \cite[Proposition 14.1]{Wendland_2004} for $\mathcal{B}\triangleq\Omega$. For the fill distance on the boundary, $\mathcal{B} \triangleq\partial\Omega$, the lower bound can be proven by taking the sum of all Voronoi regions and using \eqref{eq:VoronoiSize_diameter},~\eqref{eq:VoronoiDiameter_fill}, while the upper bound can be proven by the fact that $\cup_{j=1}^M B(\mathbf{y}_j,h_s(\partial\Omega,\{\mathbf{y}_i\}_{i\in Y^{\partial\Omega}}))\cap\partial\Omega \subseteq |\partial\Omega|$, where $B(\mathbf{y}_j,h_s(\partial\Omega,\{\mathbf{y}_i\}_{i\in Y^{\partial\Omega}}))$ are $d$ dimensional balls around each point on the boundary and the size of each restricted ball is larger than a disc with the separation distance \eqref{eq:sepDistance} as radius $|B(\mathbf{y}_j,h_s(\partial\Omega,\{\mathbf{y}_i\}_{i\in Y^{\partial\Omega}}))\cap\partial\Omega| \geq h_s(\partial\Omega,\{\mathbf{y}_i\}_{i\in Y^{\partial\Omega}})^{\nu}\pi^{\nu/2}/\Gamma(\nu/2 + 1)$, where $h_s$ is the separation distance on the boundary.

The number of evaluation points in each patch is related to the fill distance by first assuming that $\min_{k}m_k\leq \chi M/P$, where $\chi$ is the maximum number of overlapping patches in the domain and we have considered the limiting case of full overlap ($\delta_0=1$), before which we start refining in the other direction \cite{larsson2024rbfpartitionunitymethod}. Hence using \eqref{eq:fillDistance_Pts} gives
\begin{equation}
    \min_{1\leq k\leq P}m_k \leq \chi \frac{M}{P} \leq \chi c_1^{\nu}\frac{h_f(Z,\mathcal{B})^{-\nu}}{P}.
\end{equation}

Multiplying by $|\Omega_k|/|\Omega_k|$ for any patch in the cover and using the ratio between the largest and smallest patch in the domain $c_{vol} = \min_k|\Omega_k|/\max_k|\Omega_k|$, we have
\begin{equation}
    \min_{1\leq k\leq P}m_k\leq \chi c_1^{\nu} \frac{h_f(Z,\mathcal{B})^{-\nu}|\Omega_k|}{P|\Omega_k|} \leq \chi c_1^{\nu} \frac{h_f(Z,\mathcal{B})^{-\nu}|\Omega_k|}{c_{vol}|\Omega|} \leq \frac{4}{3}\pi n \chi c_1^{-\nu}c_{vol}^{-1}|\Omega|^{-1} h_f(Z,\mathcal{B})^{-\nu}h_k^{d}, 
    \label{eq:cVol_FirstUse}
\end{equation}
where in the final step we used $P\max_{k}|\Omega_k| \geq |\Omega|$ and $|\Omega_k|\leq \frac{4}{3}\pi n h_k^d$, for the centre point fill distance \eqref{eq:fillX}, \eqref{eq:fillX_VolumeRel}. Additionally, assuming we have patches of similar sizes during refinement gives $\max_{k}m_k\leq \tilde{C}_{m}\min_{k}m_k$ and the bound
\begin{equation}
    m_k\leq C_m h_f(Z,\mathcal{B})^{-\nu}h_k^{d},
\end{equation}
where constant $C_m = \frac{4}{3}\pi \tilde{C}_m n \chi c_1^{-\nu}c_{vol}^{-1}|\Omega|^{-1}$. 

Using this estimate for $m_k$ in \eqref{eq:integrationBound_2_2} gives
\begin{equation}
    \sum_{j=1}^M\|D^{\chi}\mathbf{v}\|_{L_{\infty}(V_j)}\|D^{\tau}\mathbf{v}\|_{L_{\infty}(V_j)}  \leq  C_mh_f(Z,\mathcal{B})^{-\nu}\sum_{k=1}^P h_k^d \sum_{j\in I_k}\|D^{\chi}(\mathbf{v}^{(j)}w_j)\|_{L_{\infty}(\Omega_j)}\sum_{j\in I_k}\|D^{\tau}(\mathbf{v}^{(j)}w_j)\|_{L_{\infty}(\Omega_j)},
\end{equation}
as well as the Leibnitz rule from \eqref{eq:LeibnizRuleOperator} gives
{\footnotesize
 \begin{align}
     \begin{split}
        \sum_{j=1}^M\|D^{\chi}\mathbf{v}\|_{L_{\infty}(V_j)}\|D^{\tau}\mathbf{v}\|_{L_{\infty}(V_j)} \leq C_m\left(1+\frac{\chi}{d}\right)^dh_f(Z,\mathcal{B})^{-\nu}&\sum_{k=1}^Ph_k^d\sum_{j\in I_k}\sum_{\tilde{q}+ \tilde{\zeta} = \chi}\left(\frac{\chi!}{\tilde{q}!\tilde{\zeta}!}\right)^{\frac{1}{2}}\|D^{\tilde{q}}\mathbf{v}^{(j)}\|_{L_{\infty}(\Omega_j)}\|D^{\tilde{\zeta}}w_j\|_{L_{\infty}(\Omega_j)} \\
        & \sum_{j\in I_k}\sum_{q+ \zeta = \tau}\left(\frac{\tau!}{q!\zeta!}\right)^{\frac{1}{2}}\|D^{q}\mathbf{v}^{(j)}\|_{L_{\infty}(\Omega_j)}\|D^{\zeta}w_j\|_{L_{\infty}(\Omega_j)},
    \end{split}
\end{align}}where $q,~\tilde{q},~\zeta,~\tilde{\zeta}\in\mathbb{N}_0$.


Our approximation is constructed in a reference patch, $\Omega_0$ \eqref{eq:RBFinterpolantLocal}, using the cardinal basis defined in \eqref{eq:cardinalBasis}. We aim to use properties of this cardinal basis, so we choose to express the derivatives of $\mathbf{v}^{(j)}$ using derivatives in the reference patch. Hence we use \eqref{eq:derivativeRef2PhysicalL2Norms} to express the sum as
\begin{align}
    \begin{split}
        \sum_{j=1}^M\|D^{\chi}\mathbf{v}\|_{L_{\infty}(V_j)}\|D^{\tau}\mathbf{v}\|_{L_{\infty}(V_j)} \leq & C_m\left(1+\frac{\chi}{d}\right)^dh_f(Z,\mathcal{B})^{-\nu}\sum_{k=1}^Ph_k^d \\
        & \sum_{j\in I_k}\sum_{\tilde{q}+ \tilde{\zeta} = \chi}\left(\frac{\chi!}{\tilde{q}!\tilde{\zeta}!}\right)^{\frac{1}{2}}C_{ref}(\tilde{q})\|D^{\tilde{q}}\tilde{\mathbf{v}}^{(j)}\|_{L_{\infty}(\Omega_0)}\|D^{\tilde{\zeta}}w_j\|_{L_{\infty}(\Omega_j)} \\
        & \sum_{j\in I_k}\sum_{q+ \zeta = \tau}\left(\frac{\tau!}{q!\zeta!}\right)^{\frac{1}{2}}C_{ref}(q)\|D^q\tilde{\mathbf{v}}^{(j)}\|_{L_{\infty}(\Omega_0)}\|D^{\zeta}w_j\|_{L_{\infty}(\Omega_j)}.
        \label{eq:integrationBound_3}
    \end{split}
\end{align}

We additionally aim to modify \eqref{eq:integrationBound_3} such that it is bounded by the $L_2-\text{norm}$ for \eqref{eq:genEigenProblemL2}, and the $H^2-\text{norm}$ for \eqref{eq:genEigenProblemH2} and \eqref{eq:genEigenProblemBilinear} over the entire domain, $\Omega$. To achieve this, we require three specific estimates involving the local function $\tilde{\mathbf{v}}^{(k)}$. All these estimates include a scaling argument and certain assumptions on our local cardinal basis. Starting with the scaling argument, consider any bounded domain $K\subseteq \Omega_0\in \mathbb{R}^d$ which is scaled to region $\hat{K}$ defined as
\begin{equation}
    \hat{K} := \{\mathbf{y}'/h_0:\hspace{0.2cm}\forall\mathbf{y}'\in K\}, 
    \label{eq:scaled_K}
\end{equation}
where $h_0$ is the fill distance in the reference patch $\Omega_0$. The local approximation is defined as $\tilde{\mathbf{v}}^{(k)} = \sum_{j=1}^n\tilde{\mathbf{v}}^{(k)}(\mathbf{x}'_j)\tilde{\psi}_j^{(k)}$ and the cardinal basis, $\tilde{\psi}_j^{(k)}$, is invariant under the above scaling such that $\tilde{\psi}_j^{(k)}(\mathbf{y}') = \Psi_j^{(k)}(\hat{\mathbf{y}})$. 

Both our local cardinal basis and its derivatives, $D^{\alpha}\tilde{\psi}_j^{(k)}$, are linearly independent in $\mathbb{R}^d$ since they are constructed using the positive definite Gaussian kernel \cite[Theorem 6.10]{wendland2005approximate}. This can be shown by setting $\tilde{\mathbf{v}}^{(k)} = 0$, taking the Fourier transform and following the second part in the proof of \cite[Theorem 16.4]{wendland2005approximate}. Moreover, given the Gaussian and its derivatives are analytic, we use \cite[Lemma 5.22]{kuchment2016overview} which states that if an analytic function is zero in a connected open domain of positive measure $K$ it is identically zero. Specifically, $\|\tilde{\mathbf{v}}^{(k)}\|_{L_2(K)} = 0,~\|D^{\alpha}\tilde{\mathbf{v}}^{(k)}\|_{L_2(K)} = 0,~\text{iff}~ \tilde{\mathbf{v}}^{(k)}(\mathbf{x}_j') = 0,~j=1,\dots,n$. Analyticity of the Gaussian was also used in \cite{SOMMARIVA2026116983} to prove unisolvency of unsymmetric collocation.

The three local estimates are:
\begin{enumerate}
    \item 
        A bound for the infinity norm of the local function derivatives, similar to \eqref{eq:stabilityWeightFunc} for weight functions. We start with the tensor $2-\text{norm}$ (as defined in Section \ref{sec:Definitions}) 
        \begin{align}
            \begin{split}
                    \|D^{\alpha}\tilde{\mathbf{v}}^{(k)}\|_2 & = \|\sum_{j=1}^n\tilde{\mathbf{v}}^{(k)}(\mathbf{x}'_j)D^{\alpha}\tilde{\psi}_j^{(k)}\|_2 \leq \sum_{i=1}^d\max_{1\leq j \leq n}\left|\tilde{v}_i^{(k)}(\mathbf{x}'_j)\right|\|\sum_{j=1}^n D^{\alpha}\tilde{\psi}_j^{(k)}\|_2 \\
                    & \leq \sum_{i=1}^d\max_{1\leq j\leq n}\left|\tilde{v}_i^{(k)}(\mathbf{x}'_j)\right| h_0^{-|\alpha|}\|\sum_{j=1}^nD^{\alpha}\Psi_j^{k}\|_2 \\
                    & \leq L_{\alpha}h_{0}^{-|\alpha|}d\max_{\substack{1\leq j\leq n \\ 1\leq i\leq d}}\left|\tilde{v}_i^{(k)}(\mathbf{x}'_j)\right|,
            \end{split}
        \end{align}
        where $\alpha$ is a multi-index giving a general derivative of  $\tilde{\mathbf{v}}^{(k)}$, $\tilde{\mathbf{v}}^{(k)}(\mathbf{x}'_j)$ are the nodal values at point $\mathbf{x}'_j \in X_0$, $\Psi_j^{(k)}$ are the scaled cardinals and $L_{\alpha} = \sum_{j=1}^n\|D^{\alpha}\Psi_j^{(k)}\|_2$ is the Lebesgue constant. Taking the essential supremum over the reference patch we have
        \begin{equation}
             \|D^{\alpha}\tilde{\mathbf{v}}^{(k)}\|_{L_{\infty}(\Omega_0)} \leq L_{\alpha}h_{0}^{-|\alpha|}d\max_{\substack{1\leq j\leq n \\ 1\leq i\leq d}}\left|\tilde{v}_i^{(k)}(\mathbf{x}'_j)\right| \leq L_{\alpha}h_0^{-|\alpha|}d\|\tilde{\mathbf{v}}^{(k)}\|_{L_{\infty}(\Omega)}.
            \label{eq:lebesgueConstBound}
        \end{equation}
    
    \item 
        The second estimate is an upper bound of the infinity norm of $\tilde{\mathbf{v}}^{(k)}$ by its $L_2-\text{norm}$ in $\Omega_0$. We compute this bound for each scalar component of $\tilde{\mathbf{v}}^{(k)}$, $\tilde{v}_i^{k}$, and then sum over all dimensions. We define $\|\tilde{v}_i^{(k)}\|^2_{L_{\infty}(\Omega_0)} = |\tilde{v}_i^{(k)}(\mathbf{y}^{*})|^2$ and note that the maximum gradient of this function is
        \begin{equation}
            \|\nabla \tilde{v}_i^{(k)}\|_{L_{\infty}(\Omega_0)} \leq L_{\nabla}h_{0}^{-1}\max_{1\leq j\leq n}\left|\tilde{v}_i^{(k)}(\mathbf{x}'_j)\right| \leq L_{\nabla}h_{0}^{-1}\left|\tilde{v}_i^{(k)}(\mathbf{y}^{*})\right|,
            \label{eq:infty_L2Bound_derivative}
        \end{equation}
        where we used \eqref{eq:lebesgueConstBound}, $L_{\nabla} = \sum_{|\alpha|=1} L_{\alpha}$ and the fact that the maximum nodal values are smaller than the maximum function value.
        
        We aim to relate the two norms by considering the smallest possible $L_2-\text{norm}$ for a given $L_{\infty}-\text{norm}$. The smallest $L_2-\text{norm}$ is defined by integrating a continuous radial function that decays as fast as possible from the maximum value $|\tilde{v}_i^{(k)}(\mathbf{y}^{*})|$. Using \eqref{eq:infty_L2Bound_derivative} the function is defined as
        \begin{equation}
            v(r) = \begin{cases}
            \left(1 - L_{\nabla}h_0^{-1}r\right)\left|\tilde{v}_i^{(k)}(\mathbf{y}^{*})\right|    &, \hspace{0.1cm} r\leq\frac{h_0}{L_{\nabla}},\\
            0 &, \hspace{0.1cm} \text{otherwise},
            \end{cases}
        \end{equation}
        where $r = \|\mathbf{y}' - \mathbf{y}^{*}\|_2$. Integrating the squared function over a ball $B(\mathbf{y}',h_0/(L_{\nabla}))$ gives the lower bound 
        \begin{align}
            \begin{split}
                \|\tilde{v}_i^{(k)}\|^2_{L_2(\Omega_0)} & \geq \|\tilde{v}_i^{(k)}\|^2_{L_{\infty}(\Omega_0)}\int_{B\cap\Omega_0}\left|1 - L_{\nabla}h_0^{-1}r\right|^2d\mathbf{y'} \\ 
                & = c_{f}\|\tilde{v}_i^{(k)}\|^2_{L_{\infty}(\Omega_0)}\frac{2\pi^{\frac{d}{2}}}{\Gamma\left(\frac{d}{2}\right)}\int_{0}^{\frac{h_0}{L_{\nabla}}}r^{d-1}\left(1 -2L_{\nabla}h_0^{-1}r + L_{\nabla}^2h_0^{-2}r^2\right) dr,
            \end{split}
        \end{align}
        where $c_f$ is the fraction of the ball that is inside $\Omega_0$ given its centre is inside $\Omega_0$ and $\Gamma$ is the gamma function. Integrating gives
        \begin{align}
            \begin{split}
                \|\tilde{v}_i^{(k)}\|^2_{L_2(\Omega_0)} & \geq c_f \|\tilde{v}_i^{(k)}\|^2_{L_{\infty}(\Omega_0)}\frac{2\pi^{\frac{d}{2}}}{\Gamma\left(\frac{d}{2}\right)}\left[\frac{r^d}{d}-2L_{\nabla}h_0^{-1}\frac{r^{d+1}}{d+1} + L_{\nabla}^2h_0^{-2}\frac{r^{d+2}}{d+2}\right]_0^{\frac{h_0}{L_{\nabla}}} \\
                & = c_f\frac{2\pi^{d/2}}{L_{\nabla}^d\Gamma(\frac{d}{2} + 1)\left(d+1\right)\left(d+2\right)}h_0^d\|\tilde{v}_i^{(k)}\|^2_{L_{\infty}(\Omega_0)},
            \end{split}
        \end{align}
        and taking the sum over all dimensions and the square root on both sides gives us
        \begin{equation}
            \|\tilde{\mathbf{v}}^{(k)}\|_{L_{\infty}(\Omega_0)}\leq  C_{\infty}h_0^{-\frac{d}{2}}\|\tilde{\mathbf{v}}^{(k)}\|_{L_2(\Omega_0)}.
            \label{eq:inverseEstimateLinfty}
        \end{equation}
    \item 
    The third estimate is a local generalised eigenvalue bound that relates the $L_2-\text{norm}$ of the function in the entire patch with the $L_2-\text{norm}$ in a specific region of the patch. This estimate is required since patches both overlap and cover the domain and a sum of the local functions $\mathbf{v}^{(k)}$ over non-overlapped regions would be upper bounded by the $L_2-\text{norm}$ of the global function $\mathbf{v}\in V_h$ in the entire domain $\Omega$. Consider region $K\subseteq \Omega_0$ where we have the following relation between $L_2-\text{norms}$
    \begin{align}
        \begin{split}
            \frac{\|\tilde{\mathbf{v}}^{(k)}\|^2_{L_2(\Omega_0)}}{\|\tilde{\mathbf{v}}^{(k)}\|^2_{L_2(K)}} = \frac{\tilde{\mathbf{v}}^{(k)}(X_0)^T\mathbf{M}_{\Omega_0}\tilde{\mathbf{v}}^{(k)}(X_0)}{\tilde{\mathbf{v}}^{(k)}(X_0)^T\mathbf{M}_K\tilde{\mathbf{v}}^{(k)}(X_0)} = \frac{h_0^d\tilde{\mathbf{v}}^{(k)}(X_0)^T\mathbf{M}_{\hat{\Omega}_0}\tilde{\mathbf{v}}^{(k)}(X_0)}{h_0^d\tilde{\mathbf{v}}^{(k)}(X_0)^T\mathbf{M}_{\hat{K}}\tilde{\mathbf{v}}^{(k)}(X_0)} =~\lambda^{\hat{\Omega}_0}_{\hat{K}},
            \label{eq:localEigenBoundMass}
        \end{split}
    \end{align}
    where $\lambda^{\hat{\Omega}_0}_{\hat{K}}$ is a generalised eigenvalue, $\hat{\Omega}_0$ is the scaled patch and $\tilde{v}^{(k)}(X_0)$ are the nodal values at point set $X_0$. The nodal values for each component in vector $\tilde{\mathbf{v}}^{(k)}$ are appended vertically such that $\tilde{\mathbf{v}}^{(k)}(X_0) = (\tilde{v}_1^{(k)}(X_0),\dots,\tilde{v}_d^{(k)}(X_0))^T$. The mass matrices $\mathbf{M}_{\Omega_0},~\mathbf{M}_{K}$ are block diagonal matrices with $d$, $n \times n$ blocks, given by
    \begin{equation}
    M_K = \left(\tilde{\psi}_i^{(k)},\tilde{\psi}_j^{(k)}\right)_{L_2(K)} = \int_{K}\tilde{\psi}_i^{(k)}\tilde{\psi}_j^{(k)}dK = h_0^{d}\int_{\hat{K}}\Psi_i^{(k)}\Psi_j^{(k)}d\hat{K} = h_0^{d}M_{\hat{K}}, \hspace{.5cm} i,j=1,\dots,n.    
    \label{eq:localMassMatrix}
    \end{equation}
    Similarly for estimates \eqref{eq:genEigenProblemH2},~\eqref{eq:genEigenProblemBilinear} we aim to reproduce the $H^2-\text{norm}$ of $\mathbf{v}\in V_h$ in the entire domain. As such we introduce generalised eigenvalue bounds between the norm of the function and its derivatives as{
    \footnotesize
    \begin{align}
        \begin{split}
            \frac{\|\tilde{\mathbf{v}}^{(k)}\|^2_{L_2(K)}}{\|D^{\gamma} \tilde{\mathbf{v}}^{(k)}\|^2_{L_2(K)}} & = \frac{\|\tilde{\mathbf{v}}^{(k)}\|^2_{L_2(K)}}{\sum_{|\alpha| = \gamma}\|D^{\alpha} \tilde{\mathbf{v}}^{(k)}\|^2_{L_2(K)}} = \frac{\tilde{\mathbf{v}}^{(k)}(X_0)^T\mathbf{M}_{K}\tilde{\mathbf{v}}^{(k)}(X_0)}{\sum_{|\alpha|=\gamma}\tilde{\mathbf{v}}^{(k)}(X_0)^T\mathbf{K}^1_K\tilde{\mathbf{v}}^{(k)}(X_0)} \\ &=  \frac{h_0^{d}\tilde{\mathbf{v}}^{(k)}(X_0)^T\mathbf{M}_{\hat{K}}\tilde{\mathbf{v}}^{(k)}(X_0)}{h_0^{d}h_0^{-2\gamma}\sum_{|\alpha|=\gamma}\tilde{\mathbf{v}}^{(k)}(X_0)^T\mathbf{K}^1_{\hat{K}}\tilde{\mathbf{v}}^{(k)}(X_0)} =~{}_{\gamma}\lambda^{\hat{K}}_{\hat{K}}h_0^{2\gamma},
            \label{eq:localEigenBoundStifness}
        \end{split}
    \end{align}}where $\gamma$ is the derivative order and $\alpha$ is a multi-index \eqref{eq:multiIndex}. In the following derivation we use $\gamma = 1,~2$ and $\mathbf{K}^1_K$ are block diagonal stiffness matrices with $d$ blocks of size $n\times n$. The blocks are defined as in \eqref{eq:localMassMatrix} but with inner product $(D^{\alpha}\tilde{\psi}_i^{(k)},D^{\alpha}\tilde{\psi}_j^{(k)})_{L_2(K)}$. 
    
\end{enumerate}

Using these three estimates we can continue with \eqref{eq:integrationBound_3}. 
We use the first estimate \eqref{eq:lebesgueConstBound}, and \eqref{eq:stabilityWeightFunc} for the weight function derivatives which gives
\begin{align}
    \begin{split}
        \sum_{j=1}^M\|D^{\chi}\mathbf{v}\|_{L_{\infty}(V_j)}\|D^{\tau}\mathbf{v}\|_{L_{\infty}(V_j)} \leq &   C_m\left(1+\frac{\chi}{d}\right)^{d}h_f(Z,\mathcal{B})^{-\nu}\\
        & \sum_{k=1}^Ph_k^d \sum_{j\in I_k}\|\tilde{\mathbf{v}}^{(j)}\|_{L_{\infty}(\Omega_0)}\sum_{\tilde{q}+ \tilde{\zeta} = \chi}\left(\frac{\chi!}{\tilde{q}!\tilde{\zeta}!}\right)^{\frac{1}{2}}C_{ref}(\tilde{q})\delta^{-\tilde{\zeta}}_j h_0^{-\tilde{q}}C_{GL}(\tilde{\zeta},\tilde{q}) \\
        & \sum_{j\in I_k}\|\tilde{\mathbf{v}}^{(j)}\|_{L_{\infty}(\Omega_0)}\sum_{q+ \zeta = \tau}\left(\frac{\tau!}{q!\zeta!}\right)^{\frac{1}{2}}C_{ref}(q)\delta^{-\zeta}_j h_0^{-q}C_{GL}(\zeta,q),
    \end{split}
\end{align}
where $C_{GL}(\zeta,q) = d^2\sum_{|\alpha|=\zeta}G_{\alpha}\sum_{|\beta|=q}L_{\beta}$ and we used the max norm of $\tilde{\mathbf{v}}^{(j)}$ to bound the maximum nodal value. We use \eqref{eq:OverlapFillDistRelation} and then \eqref{eq:referenceFilltoPhysicalFill} to bound the overlap and then the local fill distance by the fill distance in the reference patch, $h_0$, which gives
{\small
\begin{align}
    \begin{split}
        \sum_{j=1}^M\|D^{\chi}\mathbf{v}\|_{L_{\infty}(V_j)}\|D^{\tau}\mathbf{v}\|_{L_{\infty}(V_j)} \leq    & ~C_m\left(1+\frac{\chi}{d}\right)^{d}h_f(Z,\mathcal{B})^{-\nu} \\ & \sum_{k=1}^Ph_k^d \sum_{j\in I_k}\|\tilde{\mathbf{v}}^{(j)}\|_{L_{\infty}(\Omega_0)}\sum_{\tilde{q}+ \tilde{\zeta} = \chi}\left(\frac{\chi!}{\tilde{q}!\tilde{\zeta}!}\right)^{\frac{1}{2}}C_{ref}(\tilde{q})\tilde{C}_V^{-\tilde{\zeta}}h^{-\tilde{\zeta}}_j h_0^{-\tilde{q}}C_{GL}(\tilde{\zeta},\tilde{q}) \\
        & \sum_{j\in I_k}\|\tilde{\mathbf{v}}^{(j)}\|_{L_{\infty}(\Omega_0)}\sum_{q+ \zeta = \tau}\left(\frac{\tau!}{q!\zeta!}\right)^{\frac{1}{2}}C_{ref}(q)\tilde{C}_V^{-\zeta}h^{-\zeta}_j h_0^{-q}C_{GL}(\zeta,q) \\
        \leq &~C_m\left(1+\frac{\chi}{d}\right)^{d}h_f(Z,\mathcal{B})^{-\nu}  h_0^{d}h_0^{-\chi-\tau} \\ & \sum_{k=1}^PC_{0,k}^{-d}\sum_{j\in I_k}\|\tilde{\mathbf{v}}^{(j)}\|_{L_{\infty}(\Omega_0)}\sum_{\tilde{q}+ \tilde{\zeta} = \chi}\left(\frac{\chi!}{\tilde{q}!\tilde{\zeta}!}\right)^{\frac{1}{2}}C_{ref}(\tilde{q})\left(\frac{\tilde{C}_V}{C'_{0,j}}\right)^{-\tilde{\zeta}}C_{GL}(\tilde{\zeta},\tilde{q}) \\
        & \sum_{j\in I_k}\|\tilde{\mathbf{v}}^{(j)}\|_{L_{\infty}(\Omega_0)} \sum_{q+ \zeta = \tau}\left(\frac{\tau!}{q!\zeta!}\right)^{\frac{1}{2}}C_{ref}(q)\left(\frac{\tilde{C}_V}{C'_{0,j}}\right)^{-\zeta}C_{GL}(\zeta,q).
    \end{split}
\end{align}}
Combining the constant terms and taking the maximum constant over all patches we have
\begin{equation}
    \sum_{j=1}^M\|D^{\chi}\mathbf{v}\|_{L_{\infty}(V_j)}\|D^{\tau}\mathbf{v}\|_{L_{\infty}(V_j)} \leq C_mC_{leb}h_f(Z,\mathcal{B})^{-\nu}  h_0^{d}h_0^{-\chi-\tau}\sum_{k=1}^P\|\tilde{\mathbf{v}}^{(k)}\|^2_{L_{\infty}(\Omega_0)},  
\end{equation}
where the new constant is given by
\begin{align}
\begin{split}
    C_{leb} = &\left(1+\frac{\chi}{d}\right)^{d}\chi^2\max_{1\leq k\leq P}C_{0,k}^{-d}\max_{1\leq k\leq P}\sum_{\tilde{q}+ \tilde{\zeta} = \chi}\left(\frac{\chi!}{\tilde{q}!\tilde{\zeta}!}\right)^{\frac{1}{2}}C_{ref}(\tilde{q})\left(\frac{\tilde{C}_V}{C'_{0,k}}\right)^{-\tilde{\zeta}}C_{GL}(\tilde{\zeta},\tilde{q})
    \\ & \sum_{q+ \zeta = \tau}\left(\frac{\tau!}{q!\zeta!}\right)^{\frac{1}{2}}C_{ref}(q)\left(\frac{\tilde{C}_V}{C'_{0,k}}\right)^{-\zeta}C_{GL}(\zeta,q),
    \end{split}
\end{align}
and $\chi = \max_{1\leq k\leq P}|I_k|$ is the largest number of overlapping patches in the domain which we multiply with to remove the additional sum over $I_k$.

The second estimate, \eqref{eq:inverseEstimateLinfty}, gives a bound with respect to the $L_2-\text{norm}$ as
\begin{equation}
    \sum_{j=1}^M\|D^{\chi}\mathbf{v}\|_{L_{\infty}(V_j)}\|D^{\tau}\mathbf{v}\|_{L_{\infty}(V_j)}\leq C_mC_{leb}C_{\infty}h_f(Z,\mathcal{B})^{-\nu} h_0^{-\chi-\tau}\sum_{k=1}^P\|\tilde{\mathbf{v}}^{(k)}\|^2_{L_2(\Omega_0)},
\end{equation}
where $C_{\infty}$ here is the maximum constant from \eqref{eq:inverseEstimateLinfty} over all patches.
Using both \eqref{eq:localEigenBoundMass} and \eqref{eq:localEigenBoundStifness} we have
\begin{equation}
    \sum_{j=1}^M\|D^{\chi}\mathbf{v}\|_{L_{\infty}(V_j)}\|D^{\tau}\mathbf{v}\|_{L_{\infty}(V_j)}\leq C_mC_{leb}C_{\infty}~{}_{\gamma}\lambda^{\hat{K}}_{\hat{K}}~\lambda^{\hat{\Omega}_0}_{\hat{K}}h_f(Z,\mathcal{B})^{-\nu}h_0^{2\gamma - \chi - \tau}\sum_{k=1}^P\|D^{\gamma}\tilde{\mathbf{v}}^{(k)}\|^2_{L_2(K)},
\end{equation}
for a general region $K\subseteq\Omega_0$ and order $\gamma$.

    Additionally, we scale the $L_2-\text{norm}$ by mapping the function to the physical patch by first using \eqref{eq:derivativeRef2PhysicalL2Norms} and assuming $K\subseteq\Omega_0$. Since we aim to combine the sum into a global $L_2-\text{norm}$, we choose $K = K_k$ which is the non-overlapped patch region in the reference domain for patch $\Omega_k$. The generalised eigenvalues are the largest from each patch problem, ${}_{\gamma}\lambda_{\hat{K}}^{\hat{K}} = \max_{1\leq j \leq P}{}_{\gamma}\lambda_{\hat{K}_k}^{\hat{K}_k},~\lambda_{\hat{K}}^{\hat{\Omega}_0} = \max_{1\leq k \leq P}\lambda_{\hat{K}_k}^{\hat{\Omega}_0}$. Hence we assume that each patch has a non-overlapped region with non-zero measure. In the physical domain this region is denoted as $\Omega\cap K_k$. We also use \eqref{eq:referenceFilltoPhysicalFill} for the fill distance over the physical domain giving
\begin{align}
    \begin{split}
        \sum_{j=1}^M\|D^{\chi}\mathbf{v}\|_{L_{\infty}(V_j)}\|D^{\tau}\mathbf{v}\|_{L_{\infty}(V_j)}  \leq & ~C_mC_{leb}C_{\infty}~{}_{\gamma}\lambda^{\hat{K}}_{\hat{K}}~\lambda^{\hat{\Omega}_0}_{\hat{K}}C_{0,k}^{2\gamma -\chi - \tau}h_f(Z,\mathcal{B})^{-\nu}h_k^{2\gamma -\chi - \tau} \\ 
        & \sum_{k=1}^P\left(\frac{~{}_{sc}R_k~{}_{sc}H_k^{\frac{1}{2}}}{C'_{ref}(\gamma)}\right)^2\|D^{\gamma}\mathbf{v}^{(k)}\|^2_{L_2(K_k\cap\Omega)} \\
        \leq & ~C_mC_{leb}C_{\infty}~{}_{\gamma}\lambda^{\hat{K}}_{\hat{K}}~\lambda^{\hat{\Omega}_0}_{\hat{K}}C_{0,k}^{2\gamma -\chi - \tau}h_f(Z,\mathcal{B})^{-\nu}h_k^{2\gamma -\chi - \tau} \\
        & \max_{1\leq k\leq P}\left(\frac{~{}_{sc}R_k~{}_{sc}H_k^{\frac{1}{2}}}{C'_{ref}(\gamma)}\right)^2\|D^{\gamma}\mathbf{v}\|^2_{L_2(\Omega)}.
        \label{eq:integrationBound_4}
    \end{split}
\end{align}


Combining the above with \eqref{eq:integrationBound_2} gives
\begin{equation}
    \left|\|A\|^2_{L_2(\mathcal{B})} - \|A\|^2_{l_2^{*}(\mathcal{B})}\right|\leq C_{int}h_f\left(Z,\mathcal{B}\right)\sum_{\tau\in\mathcal{S}}C_{\mathcal{L},\tau}\sum_{\chi\in\mathcal{S}+1}C_{\nabla\mathcal{L},\chi}h^{2\gamma - \chi -\tau}\|D^{\gamma}\mathbf{v}\|^2_{L_2(\Omega)}, \hspace{0.5cm} \forall\mathbf{v}\in V_h,
    \label{eq:integrationError}
\end{equation}
where $h_f(Z,\mathcal{B})$ is the fill distance for evaluation points $Z$ on $\mathcal{B}$ and $h$ is a measure of the global centre point fill distance \eqref{eq:fillX}. $C_{int}$ is the integration constant that depends on domains $\mathcal{B},~\Omega$, the number of overlapping patches, $I_k$, orders $\tau,~\gamma$, the scaling between physical patches and the reference patch, ${}_{sc}R_k,~{}_{sc}H_k$, the non-overlapped regions of each patch $\Omega\cap K_k$, the number of local centre points $n$, the node quality of the evaluation point set $Z$ and of the reference centre points $X_0$. Note that in case the power of $h$ is negative we use $h_k^{-1} \leq (\min_{k} h_{k})^{-1} = \tilde{h}^{-1}$ and by using the factor between the largest and smallest patches in the domain $c_{vol}$ (see \eqref{eq:cVol_FirstUse}) we can relate the minimum fill distance and maximum fill distance in all patches to get $\tilde{h}^{-1} \leq c_{vol}^{1/d}c_q^{1/d^2}h^{-1}$, where $h$ is defined in \eqref{eq:fillX}.

More crucially $C_{int}$ depends on the generalised eigenvalues \eqref{eq:localEigenBoundMass},~\eqref{eq:localEigenBoundStifness} and the Lebesgue constant of the approximation in the reference patch \eqref{eq:lebesgueConstBound}. Both depend on the scaled cardinal basis $\Psi_j^{(k)}$. The scaled and unscaled cardinals are equal, $\Psi_j^{(k)} = \tilde{\psi}_j^{(k)}$, meaning that during refinement the shape of the cardinal basis changes. The effect on the basis function is $\phi(r) = e^{-(\varepsilon
r)^2} = e^{-(h_0\varepsilon r')^2}$, where $r' = \|\mathbf{y}'/h_0 - \mathbf{x}'_j/h_0\|_2$. Hence the effective shape parameter $\tilde{\varepsilon} = h_0\varepsilon$ reduces with refinement which leads to more flat functions and eventually to the polynomial limit \cite{RBFQR}. Additionally, given the shape of our patches and the fact that we avoid refining in the direction normal to the surface, the aspect ratio of the patches increases during refinement, which also affects both the Lebesgue constants and generalised eigenvalues. The scale of the Lebesgue constants as well as their dependence on the shape parameter and aspect ratio is investigated numerically in Figure~\ref{fig:Leb}. We note that small shape parameters and high derivatives correspond to the largest constants, while the aspect ratio influences the constants in a less systematic way. Additionally we note that the Lebesgue constants $L_j$ for derivatives are defined as $L_j = \max_{|\alpha| = j}L_{\alpha}$, where $\alpha$ is a multi-index for $L_{\alpha}$ as defined in \eqref{eq:lebesgueConstBound}.
\begin{figure}[!htb]
\centering
\includegraphics[width=0.32\textwidth]{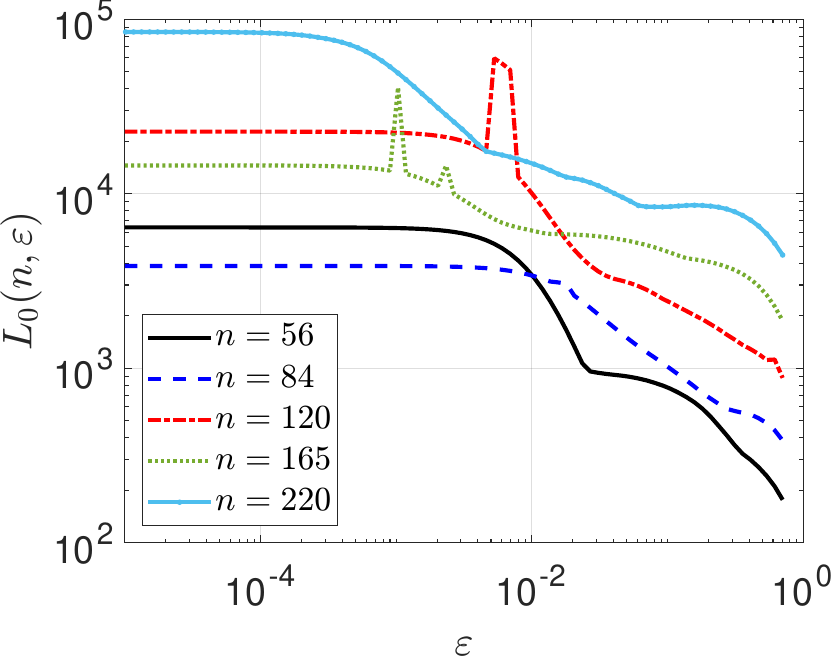}\hspace{0.1cm}
\includegraphics[width=0.32\textwidth]{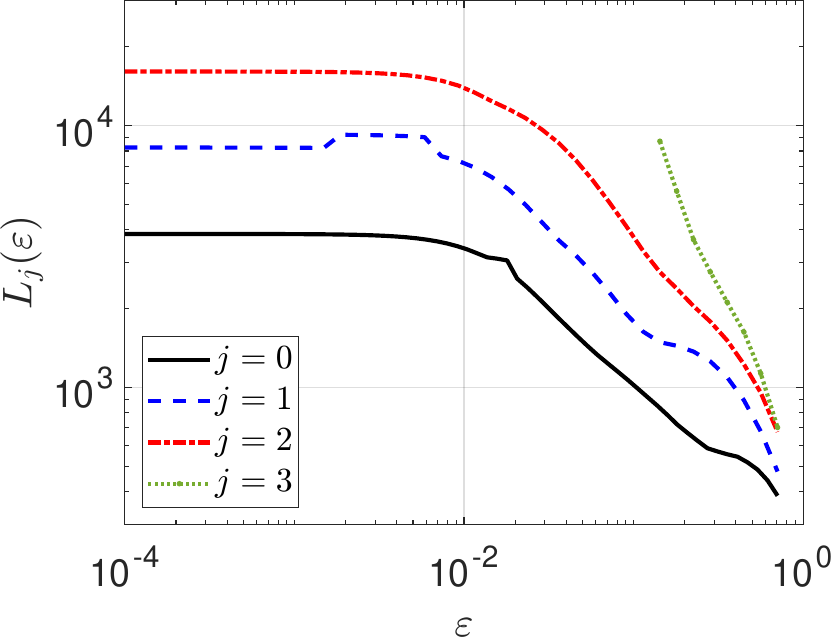}\hspace{0.1cm}
\includegraphics[width=0.32\textwidth]{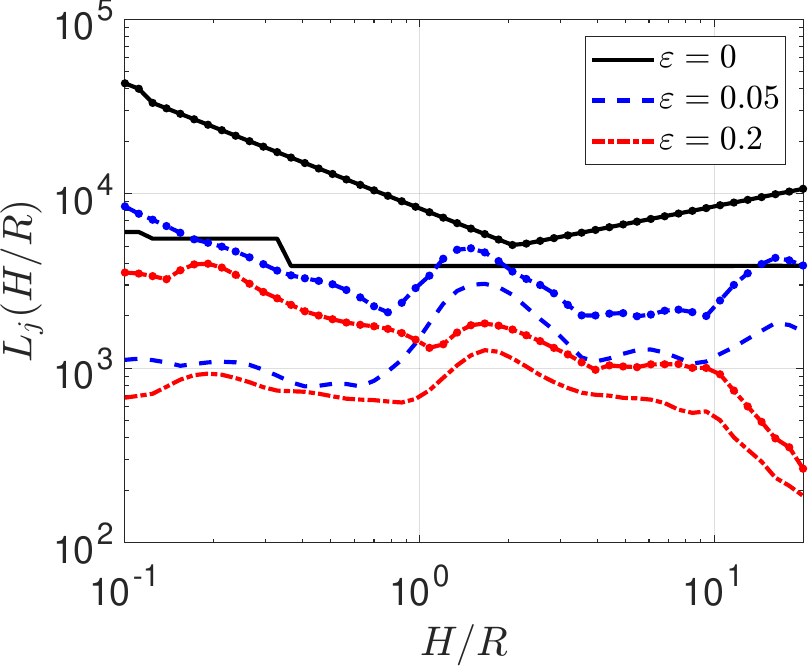}
\caption{The Lebesgue constant $L_0$ for function evaluation as a function of the shape parameter $\tilde{\varepsilon}$ for different numbers of local points $n$ (left). The Lebesgue constants $L_j$ for derivatives of order $j=0,\ldots,3$ for $n=84$ local points (middle). Note that RBF-QR only supports derivatives up to order two, and hence the third derivatives cannot be computed stably for smaller shape parameter values. The Lebesgue constants $L_0$ (no marker) and $L_1$ (dot marker) for different patch aspect ratios for three different shape parameters and $n=84$ local points (right).}
\label{fig:Leb}
\end{figure}

The eigenvalue bounds are more expensive to approximate numerically, but are expected to show a similar dependence on the parameters due to norm equivalence relations. To show the expected scale of these constants we compute the largest eigenvalues for simplified versions of bounds \eqref{eq:localEigenBoundMass},~\eqref{eq:localEigenBoundStifness} which include norms of a scalar valued function $v = \sum_{j=1}^n\tilde{\psi}_j v(\mathbf{x}_j')$ constructed using our cardinal basis \eqref{eq:cardinalBasis} in a reference patch, $\Omega_0$. These eigenvalues as well as the corresponding eigenfunctions for two test problems are shown in Figure~\ref{fig:Eig}. 
\begin{figure}
    \centering  \includegraphics[width=0.31\linewidth]{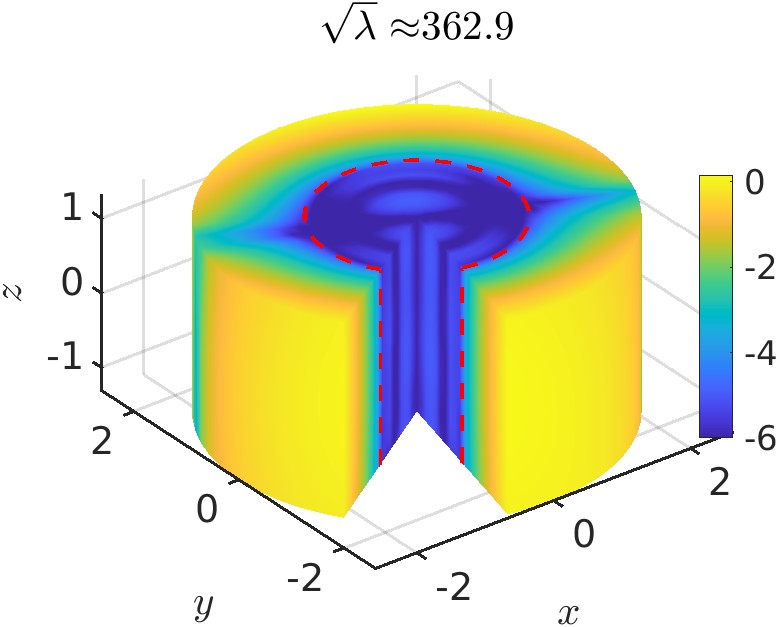} \hspace{0.2cm}
\includegraphics[width=0.31\linewidth]{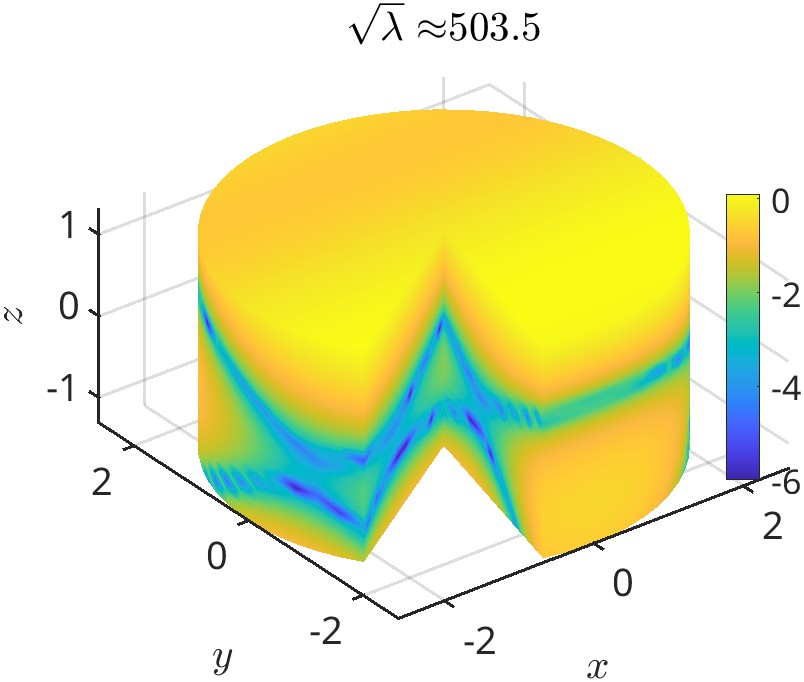}\hspace{0.2cm}
\includegraphics[width=0.31\linewidth]{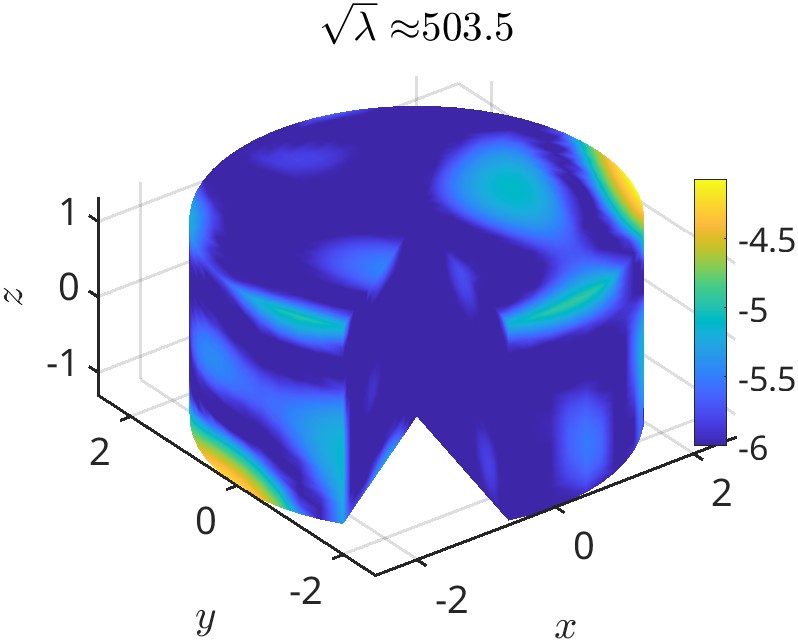}
    \caption{For patch eigenfunctions $v$ in the reference patch $\Omega_0$ with interior part of the domain $K$ (dashed red line) we show $\log_{10}|v|$ for the largest eigenvalue $\lambda=\max_{v}\|v\|_{L_2(\Omega_0)}^2/\|v\|^2_{L_2(K)}$ (left) and $\log_{10}|v|$ (middle) as well as $\log_{10}|v_x|$ (right) for the largest eigenvalue $\lambda=\max_{v}\|v\|_{L_2(\Omega_0)}^2/\|v_x\|^2_{L_2(\Omega_0)}$.
     In both cases, the number of local points $n=56$, the aspect ratio $H/R = 1$ and the shape parameter $\varepsilon=0.5$.}
    \label{fig:Eig}
\end{figure}

We can derive the integration bound \eqref{eq:genEigenProblemL2} by using \eqref{eq:l2NormVoronoiWeights_1} and \eqref{eq:integrationError}, where we choose $A = \mathbf{v}$, $\mathcal{B} = \Omega$, the interior evaluation point set $Z = \{\mathbf{y}_i\}_{i\in Y^{\Omega}}$, and the set of orders $\mathcal{S} = \{0\}$, while we choose order $\gamma = 0$. This gives the following bound
\begin{align}
\begin{split}
    \left|\|\mathbf{v}\|_{L_2(\Omega)}^2 - \|\mathbf{v}\|_{l^{*}_2(\Omega)}^2\right| & \leq C_{WV}\left|\|\mathbf{v}\|^2_{L_2(\Omega)}-\|\mathbf{v}\|^2_{l^{*}_2(\Omega)}\right| + \left|C_{WV} - 1\right|\|A\|^2_{L_2(\mathcal{B})} \\
    & \leq \left(C_{int}C_{WV}\frac{h_y}{h} + \left|C_{WV} - 1\right|\right)\|\mathbf{v}\|^2_{L_2(\Omega)},
\end{split}
\end{align}
where $h_y = h_f(\{\mathbf{y}_i\}_i\in Y^{\Omega},\Omega)$ is the evaluation point fill distance.

For \eqref{eq:genEigenProblemH2} we have
\begin{align}
    \begin{split}
        \|\mathbf{v}\|^2_{h^2(\Omega)} - \|\mathbf{v}\|^2_{H^2(\Omega)}  & = \sum_{\tau = 0}^2 \|D^{\tau}\mathbf{v}\|^2_{l_2(\Omega)} + \|D^{\tau}\mathbf{v}\|^2_{L_2(\Omega)} \\
        & \leq C_{WV}\sum_{\tau = 0}^2\left|\|D^{\tau}\mathbf{v}\|^2_{L_2(\mathcal{B})} - \|D^{\tau}\mathbf{v}\|^2_{l_2^{*}}\right| + \left|C_{WV} - 1\right|\|\mathbf{v}\|^2_{H^2(\Omega)},
    \end{split}
\end{align}
where we used \eqref{eq:l2NormVoronoiWeights_1}. Then choosing $A = D^{\tau}\mathbf{v}$, $\mathcal{B} = \Omega$, $Z = \{\mathbf{y}_i\}_{i\in Y^{\Omega}}$ and for each term in the sum we set $\mathcal{S} = \{\tau\}$ and $\gamma = \tau$ in \eqref{eq:integrationError} which gives 
\begin{equation}
    \|\mathbf{v}\|^2_{h^2(\Omega)} - \|\mathbf{v}\|^2_{H^2(\Omega)} \leq \left(C_{int}C_{WV}\frac{h_y}{h} + \left|C_{WV} - 1\right|\right)\|\mathbf{v}\|^2_{H^2(\Omega)}
\end{equation}
where in this case $C_{int}$ is the maximum constant between the three terms in the $H^2-\text{norm}$. Note that when deriving this bound we rely on third order derivatives of $\mathbf{v}$, since $\chi = 3$ for the highest order term, while the generating Wendland functions have only $2$ continuous derivatives (see \cite[Chapter 9]{Wendland_2004}). However, for the chosen $C^2$ Wendland generating functions, the third derivative is simply discontinuous at $r = 0$, for which the essential supremum ($L_{\infty}-\text{norm}$) is well defined.

The error for the integration of the bilinear form \eqref{eq:genEigenProblemBilinear} can be expressed by two terms, one in the domain and one on the boundary as
\begin{align}
    \begin{split}
        \tilde{\alpha}\left(\mathbf{v},\mathbf{v}\right) -  \alpha_h\left(\mathbf{v},\mathbf{v}\right)  = &  \|\nabla\cdot\sigma\left(\mathbf{v}\right)\|^2_{L_2(\Omega)} - \|\nabla\cdot\sigma\left(\mathbf{v}\right)\|^2_{l_2(\Omega)} \\
        & + h^{-1} \|\kappa_1\sigma\left(\mathbf{v}\right)\cdot\mathbf{n} + h^{-1}\kappa_0\mathbf{v}\|^2_{L_2(\partial\Omega)} - h^{-1}\|\kappa_1\sigma\left(\mathbf{v}\right)\cdot\mathbf{n} + h^{-1}\kappa_0\mathbf{v}\|^2_{l_2(\partial\Omega)} \\
        \leq & C'_{WV}\left| \|\nabla\cdot\sigma\left(\mathbf{v}\right)\|^2_{L_2(\Omega)} - \|\nabla\cdot\sigma\left(\mathbf{v}\right)\|^2_{l^*_2(\Omega)} \right| \\
        & + C'_{WV}h^{-1}\left| \|\kappa_1\sigma\left(\mathbf{v}\right)\cdot\mathbf{n} + h^{-1}\kappa_0\mathbf{v}\|^2_{L_2(\partial\Omega)} - \|\kappa_1\sigma\left(\mathbf{v}\right)\cdot\mathbf{n} + h^{-1}\kappa_0\mathbf{v}\|^2_{l^*_2(\partial\Omega)} \right| \\
        & + \left|C'_{WV}-1\right|\tilde{\alpha}\left(\mathbf{v},\mathbf{v}\right),
        \label{eq:integrationErrorBilinear_1}
    \end{split}
\end{align}
where we used \eqref{eq:l2NormVoronoiWeights_2} and $h$ is the centre point fill distance \eqref{eq:fillX}. In this case both terms include a linear operator for which we compute the constants given in bound \eqref{eq:genLinearOpBound}. We start with the domain operator which is bounded by the Hessian as
\begin{align}
    \begin{split}
        \|\nabla\cdot\sigma\left(\mathbf{v}\right)\|^2_{2} & \leq 8\mu^2\|\nabla\cdot\epsilon\left(\mathbf{v}\right)\|_2^2 + 2\lambda^2\|\nabla\cdot\left(\nabla\cdot\mathbf{v} I\right)\|_2^2 \\
        & \leq 2d\left(4\mu^2 + \lambda^2\right)\|D^2\mathbf{v}\|^2_2,
    \end{split}
\end{align}
where we used bounds \eqref{eq:DivDivVI},~\eqref{eq:DivStrain}. The derivative term is given by
\begin{align}
    \begin{split}
        \|\nabla\left(\nabla\cdot\sigma\left(\mathbf{v}\right)\right)\|_2^2 & \leq 8\mu^2\|\nabla\left(\nabla\cdot\epsilon\left(\mathbf{v}\right)\right)\|^2_2 + 2\lambda^2\|\nabla\left(\nabla\cdot\left(\nabla\cdot\mathbf{v} I\right)\right)\|_2^2 \\
        & \leq 2d\left(4\mu^2+\lambda^2\right)\|D^3\mathbf{v}\|^2_2,
    \end{split}
\end{align}
where we used \eqref{eq:gradDivDivV}. Hence results show that for $A = \nabla\cdot\sigma$, we have $\mathcal{S} = \{2\}$ and $C^2_{\mathcal{L},2} = C^2_{\nabla\mathcal{L},3} = 2d\left(4\mu^2 + \lambda^2\right)$. Additionally choosing $\mathcal{B} = \Omega$, $Z = \{\mathbf{y}_i\}_{i\in Y^{\Omega}}$, $\gamma = 2$ and using \eqref{eq:integrationError} gives the integration error for the domain term
\begin{equation}
    \left| \|\nabla\cdot\sigma\left(\mathbf{v}\right)\|^2_{L_2(\Omega)} - \|\nabla\cdot\sigma\left(\mathbf{v}\right)\|^2_{l^*_2(\Omega)} \right| \leq C_{int,\Omega}\frac{h_y}{h}\|D^2\mathbf{v}\|^2_{L_2(\Omega)},
    \label{eq:DomainIntErrorl2star}
\end{equation}
where $C_{int,\Omega} = 2d(4\mu^2 + \lambda^2)C_{int}$. 

For the $2-\text{norm}$ of the boundary term we have
\begin{align}
    \begin{split}
        \|\kappa_1\sigma\left(\mathbf{v}\right)\cdot\mathbf{n} + h^{-1}\kappa_0\mathbf{v}\|_2^2 & \leq 2\|\sigma\left(\mathbf{v}\right)\cdot\mathbf{n}\|_2^2 + 2h^{-2}\|\mathbf{v}\|^2_2 \leq 2\|\sigma\left(\mathbf{v}\right)\|_2^2 + 2h^{-2}\|\mathbf{v}\|^2_2 \\
        & = 2\|2\mu\nabla\mathbf{v} - 2\mu\nabla^{\alpha}\mathbf{v}+\lambda\left(\nabla\cdot \mathbf{v}I\right)\|^2_2 + 2h^{-2}\|\mathbf{v}\|_2^2 \\
        & \leq 4\mu^2\left(8\|\nabla\mathbf{v}\|_2^2 + 8\|\nabla^{\alpha}\mathbf{v}\|_2^2\right) + 4\lambda^2\|\nabla\cdot\mathbf{v} I\|_2^2 + 2\|\mathbf{v}\|_2^2 \\
        & \leq \left(32\mu^2+4d^2\lambda^2\right)\|\nabla\mathbf{v}\|_2^2 + 2h^{-2}\|\mathbf{v}\|_2^2,
        \label{eq:bndTerm_1}
    \end{split}
\end{align}
where we used $\|\kappa_1\|_{L_{\infty}(\partial\Omega)}=\|\kappa_2\|_{L_{\infty}(\partial\Omega)}=1$, the Cauchy-Schwartz inequality and $\|\mathbf{n}\|_2 = 1$, as well as ~\eqref{eq:divV} and \eqref{eq:gradaV}. The gradient of the boundary term is given by
\begin{align}
    \begin{split}
        \|\nabla\left(\kappa_1\sigma\left(\mathbf{v}\right)\cdot\mathbf{n}\right) + h^{-1}\nabla\left(\kappa_0\mathbf{v}\right)\|_2^2\leq 2\|\nabla\left(\kappa_1\sigma\left(\mathbf{v}\right)\cdot\mathbf{n}\right)\|_2^2 + 2h^{-2}\|\nabla\left(\kappa_0\mathbf{v}\right)\|_2^2.
        \label{eq:gradBndTerm_2_beginning}
    \end{split}
\end{align}

The terms here are treated separately since both the Robin coefficients, $\kappa_1,~\kappa_0$ and the normals, $\mathbf{n}$ are smooth functions on the boundary (see Theorem \ref{theorem:Coercivity}). We start with the first term and use the product rule which gives
\begin{align}
    \begin{split}
        \|\nabla\left(\kappa_1\sigma\cdot\mathbf{n}\right)\|_2^2 & = \sum_{i,j=1}^d\left|\sum_{k=1}^d\left(\partial_j\kappa_1\right)\sigma_{ik}n_k + \left(\partial_j\sigma_{ik}\right)\kappa_1n_k + \left(\partial_j n_k\right)\sigma_{ik}n_k\right|^2 \\
        & \leq 4\sum_{i,j}^d\left|\sum_{k=1}^d\left(\partial_j\kappa_1\right)\sigma_{ik}n_k\right|^2 + 4\sum_{i,j}^d\left|\sum_{k=1}^d\left(\partial_j\sigma_{ik}\right)\kappa_1 n _k\right|^2 + 4\sum_{i,j}\left|\sum_{k=1}^d\kappa_1\left(\partial_j n_k\right)\sigma_{ik}\right|^2 \\
        & \leq 4\sum_{i,j}^d\left(\sum_{k=1}^d\left|\left(\partial_j\kappa_1\right)\sigma_{ik}\right|^2\right)\left(\sum_{k=1}^d\left|n_k\right|^2\right) + 4\sum_{i,j}^d\left(\sum_{k=1}^d\left|\left(\partial_j\sigma_{ik}\right)\kappa_1\right|^2\right)\left(\sum_{k=1}^d\left|n_k\right|^2\right) \\ & 
        + 4\sum_{i,j}^d\left(\sum_{k=1}^d\left|\kappa_1\sigma_{ik}\right|^2\right)\left(\sum_{k=1}^d\left|\partial_j n_k\right|^2\right)\\
        & = 4\|\nabla\kappa_1\|^2_2\|\mathbf{n}\|^2_2\|\sigma\|^2_2 + 4\left|\kappa_1\right|^2\|\mathbf{n}\|_2^2\|\nabla\sigma\|_2^2+4\left|\kappa_1\right|^2\|\nabla\mathbf{n}\|_2^2\|\sigma\|_2^2 \\
        & \leq 4\left(\|\nabla\kappa_1\|_2^2+\|\nabla\mathbf{n}\|_2^2\right)\|\sigma\|_2^2 + 4\|\nabla\sigma\|_2^2,
        \label{eq:gradBndTerm_1}
    \end{split}
\end{align}
where we used $\|\mathbf{n}\|_2^2 = 1$ and $\|\kappa_1\|_{L_{\infty}(\partial\Omega)}=\|\kappa_2\|_{L_{\infty}(\partial\Omega)}=1$. The stress is given by $\sigma = 2\mu\nabla\mathbf{v}-2\mu\nabla^{\alpha}\mathbf{v}+\lambda\left(\nabla\cdot\mathbf{v}\right)I$, and its norm is bounded as
\begin{equation}
        \|\sigma\|^2_2\leq 8\mu^2\|\nabla\mathbf{v}-\nabla^{\alpha}\mathbf{v}\|_2^2+2\lambda^2\|\left(\nabla\cdot\mathbf{v}\right)I\|_2^2 \leq 2\left(8\mu^2 + \lambda^2d^2\right)\|\nabla\mathbf{v}\|_2^2,
\end{equation}
where we used \eqref{eq:divV} and \eqref{eq:gradaV}. Similarly we have
\begin{equation}
    \|\nabla\sigma\|_2^2\leq 8\mu^2\|D^2\mathbf{v} - \nabla\nabla^{\alpha}\mathbf{v}\|_2^2 + 2\lambda^2\|\nabla\left(\nabla\cdot\mathbf{v}I\right)\|_2^2 \leq 2\left(8\mu^2+\lambda^2d^2\right)\|D^2\mathbf{v}\|^2_2,
\end{equation}
where we used \eqref{eq:DivDivVI} and \eqref{eq:nablaNablaA_D2}. Combining with \eqref{eq:gradBndTerm_1} gives
\begin{equation}
    \|\nabla\left(\kappa_1\sigma\left(\mathbf{v}\right)\cdot\mathbf{n}\right)\|_2^2\leq 8\left(8\mu^2 + \lambda^2d^2\right)\left(\|\nabla\kappa_1\|_2^2+\|\nabla\mathbf{n}\|_2^2\right)\|\nabla\mathbf{v}\|_2^2 + 8\left(8\mu^2+\lambda^2d^2\right)\|D^2\mathbf{v}\|_2^2
    \label{eq:bndTerm_21}
\end{equation}
The second term is bounded by
\begin{align}
    \begin{split}
        \|\nabla\left(\kappa_0\mathbf{v}\right)\|_2^2 \leq \|\nabla\kappa_0\|_2^2\|\mathbf{v}\|^2_2 + \|\nabla\mathbf{v}\|_2^2.
        \label{eq:bndTerm_22}
    \end{split}
\end{align}

Combining \eqref{eq:bndTerm_21},~\eqref{eq:bndTerm_22} in \eqref{eq:gradBndTerm_2_beginning} we have
\begin{align}
\begin{split}
    & \|\nabla\left(\kappa_1\sigma\left(\mathbf{v}\right)\cdot\mathbf{n}\right) + h^{-1}\nabla\left(\kappa_0\mathbf{v}\right)\|_2^2\leq 16\left(8\mu^2+\lambda^2d^2\right)\|D^2\mathbf{v}\|^2_2 \\ & +\left(2h^{-2} + 16\left(8\mu^2+\lambda^2d^2\right)\left(\|\nabla\kappa_1\|_2^2+\|\nabla\mathbf{n}\|_2^2\right)\right)\|\nabla\mathbf{v}\|_2^2 + 2h^{-2}\|\nabla\kappa_0\|_2^2\|\mathbf{v}\|_2^2.
    \label{eq:gradientOfBoundary2Norm}
\end{split}
\end{align}

Results \eqref{eq:gradientOfBoundary2Norm} and \eqref{eq:bndTerm_1} show that that for $A = \kappa_1\sigma\cdot\mathbf{n} + \kappa_1\mathbf{v}$ we have order set $\mathcal{S} = \{0,1\}$ and coefficients $C^2_{\mathcal{L},1} = 32\mu^2 + 4d^2\lambda^2,~ C^2_{\mathcal{L},0} = 2h^{-2}$ , while for the gradient of the same term we have constants $C^2_{\nabla\mathcal{L},2} = 16(8\mu^2 + \lambda^2d^2),~C^2_{\nabla\mathcal{L},1} = (2h^{-2} + 16(8\mu^2 + \lambda^2d^2)(\|\nabla\kappa_1\|^2_2 + \|\nabla\mathbf{n}\|^2_2)),~C^2_{\nabla\mathcal{L},0} = 2h^{-2}\|\nabla\kappa_0\|^2_2$. Using these results as well as $\mathcal{B} =\partial\Omega,~Z = \{\mathbf{y}_i\}_{i\in Y^{\partial\Omega}}$, $\gamma = 2$ and substituting in \eqref{eq:integrationError} gives
{\footnotesize
\begin{align}
    \begin{split}
        & \left| \|\kappa_1\sigma\left(\mathbf{v}\right)\cdot\mathbf{n} + h^{-1}\kappa_0\mathbf{v}\|^2_{L_2(\partial\Omega)} - \|\kappa_1\sigma\left(\mathbf{v}\right)\cdot\mathbf{n} + h^{-1}\kappa_0\mathbf{v}\|^2_{l^*_2(\partial\Omega)} \right| \leq C_{int}h_{y,\partial\Omega} \left(2\left(8\mu^2 + d^2\lambda^2\right)^{\frac{1}{2}}h^{-1} + \sqrt{2}h^{-1}h^{0}\right) \\
        & \left(4\left(8\mu^2+\lambda^2d^2\right)^{\frac{1}{2}} h^{-2}+ \left(2h^{-2} + 16\left(8\mu^2 + \lambda^2d^2\right)\left(\|\nabla\kappa_1\|^2_2 + \|\nabla\mathbf{n}\|^2_2\right)\right)^{\frac{1}{2}}h^{-1} + \sqrt{2}h^{-1}\|\nabla\kappa_0\|_2h^{0}\right)h^{4}\|D^2\mathbf{v}\|^2_{L_2(\Omega)} \\ & \leq C_{int}h_{y,\partial\Omega}\biggl(8h\left(8\mu^2+d^2\lambda^2\right) + 4h\sqrt{2}\left(8\mu^2+d^2\lambda^2\right)^{\frac{1}{2}} + 2h\sqrt{2}\left(8\mu^2 + d^2\lambda^2\right)^{\frac{1}{2}} + 8h^{2}\left(8\mu^2 + d^2\lambda^2\right)\left(\|\nabla\kappa_1\|^2_2 + \|\nabla\mathbf{n}\|_2^2\right)^{\frac{1}{2}}  \\ &+ 2h^{2}\sqrt{2}\|\nabla\kappa_0\|_2\left(8\mu^2 + d^2\lambda^2\right)^{\frac{1}{2}} + 2h + 4h^{2}\sqrt{2}\left(8\mu^2 + d^2\lambda^2\right)^{\frac{1}{2}}\left(\|\nabla\kappa_1\|^2_2 + \|\nabla\mathbf{n}\|_2^2\right)^{\frac{1}{2}} + 2h^{2}\|\nabla\kappa_0\|_2\biggr)\|D^2\mathbf{v}\|^2_{L_2(\Omega)} \\
        & \leq C_{int,\partial\Omega}h_{y,\partial\Omega}h\|D^2\mathbf{v}\|^2_{L_2(\Omega)}.
        \label{eq:BndIntErrorl2star}
    \end{split}
\end{align}}

Substituting \eqref{eq:DomainIntErrorl2star},~\eqref{eq:BndIntErrorl2star} in \eqref{eq:integrationErrorBilinear_1} gives the full bilinear form integration error 
\begin{equation}
    \tilde{\alpha}\left(\mathbf{v},\mathbf{v}\right) -  \alpha_h\left(\mathbf{v},\mathbf{v}\right) \leq C'_{WV}C_{int,\Omega}\frac{h_y}{h}\|D^2\mathbf{v}\|^2_{L_2(\Omega)} + C'_{WV}C_{int,\partial\Omega}h_{y,\partial\Omega}\|D^2\mathbf{v}\|^2_{L_2(\Omega)} + \left|C'_{WV}-1\right|\tilde{\alpha}\left(\mathbf{v},\mathbf{v}\right),
\end{equation}
and the final estimate \eqref{eq:genEigenProblemBilinear} is given by 
\begin{equation}
   \tilde{\alpha}\left(\mathbf{v},\mathbf{v}\right) - \alpha_h\left(\mathbf{v},\mathbf{v}\right)  \leq C_{int,\tilde{\alpha}}\frac{h_y}{h}\tilde{\alpha}\left(\mathbf{v},\mathbf{v}\right),
\end{equation}
where we used the coercivity of the bilinear form \eqref{eq:ContBilinear2WeightBilinear} and $C_{int,\tilde{\alpha}} = \max(C'_{WV}C_{int,\Omega}C_1(1+C_{inv}), C'_{WV}C_{int,\partial\Omega}C_1(1+C_{inv}),|C'_{WV}-1|)$. Additionally, we refine the interior and boundary evaluation points such that $h_y\approx h_{y,\partial\Omega}$.

\subsection{Inverse estimate for fractional $H^{1/2}-\text{norm}$ \eqref{eq:genEigenProblems}}\label{app:halfNorm}
The coercive continuous bilinear form \eqref{eq:ContBilinear} includes a fractional Sobolev norm of the Robin boundary operator. Implementing this norm in our method is not trivial and hence we use an inverse inequality to bound the fractional norm with the $L_2-\text{norm}$ on the boundary as is done in least squares Finite Element methods \cite{bochev2009least}. The complete inverse estimate is
\begin{equation}
    \|\kappa_1\sigma\left(\mathbf{v}\right)\cdot\mathbf{n} + \kappa_0\mathbf{v}\|^2_{H^{\frac{1}{2}}(\partial\Omega)} \leq C_{inv}h^{-1}\|\kappa_1\sigma\left(\mathbf{v}\right)\cdot\mathbf{n} + h^{-1}\kappa_0\mathbf{v}\|^2_{L_2(\partial\Omega)} + C_{inv}\|\nabla\cdot\sigma(\mathbf{v})\|^2_{L_2(\Omega)}, \hspace{0.5cm} \forall\mathbf{v}\in V_h.
    \label{eq:boundaryInverseEstimate}
\end{equation}

We start by using the interpolation between Hilbert spaces given in \cite[Theorem B.11]{mclean2000strongly}. The result ensures the existence of an interpolation inequality on a $C^{1,1}$ domain given by 
\begin{equation}
    \|\kappa_1\sigma\left(\mathbf{v}\right)\cdot\mathbf{n} + \kappa_0\mathbf{v}\|^2_{H^{\frac{1}{2}}(\partial\Omega)}\leq C_i\|\kappa_1\sigma\left(\mathbf{v}\right)\cdot\mathbf{n} + \kappa_0\mathbf{v}\|_{H^1(\partial\Omega)}\|\kappa_1\sigma\left(\mathbf{v}\right)\cdot\mathbf{n} + \kappa_0\mathbf{v}\|_{L_2(\partial\Omega)},
\end{equation}
where we assume that the boundary condition $(\kappa_1\sigma(\mathbf{v})\cdot\mathbf{n} + \kappa_0\mathbf{v})\in H^1(\partial\Omega)\cap L_2(\partial\Omega)$. This is expected given $\mathbf{v}\in V_h$, which is constructed using $C^2$ weight functions, $\partial\Omega\in C^2$ and $\kappa_1,~\kappa_0\in C^1(\Omega)$ (see \ref{theorem:Coercivity}). It should be noted that we use \cite[Definition 2.11]{SurfPDEs_Dziuk_Elliott_2013} for the Hilbert space norm, which is defined using the tangential gradient integrated over boundary elements $\partial\mathbf{S}$. In \cite{mclean2000strongly}, where the above interpolation inequality is provided, the norms are computed by assuming a Lipschitz domain and splitting the boundary using partition of unity into hypographs, which can be integrated over a lower dimensional space $\mathbb{R}^{d-1}$. However, it is possible to show that the interpolation inequality holds for our definition as well, since fractional norms and $L_2-\text{norms}$ are equivalent between the two definitions and the tangential derivative on the boundary is lower bounded by the gradient term in the norm definition by \cite{mclean2000strongly}. 

Squaring and using the definition for the $H^1-\text{norm}$ gives
{\footnotesize
\begin{align}
    \begin{split}
        \|\kappa_1\sigma\left(\mathbf{v}\right)\cdot\mathbf{n} + \kappa_0\mathbf{v}\|^4_{H^{\frac{1}{2}}(\partial\Omega)} \leq C^2_i\Bigl(&\|\kappa_1\sigma\left(\mathbf{v}\right)\cdot\mathbf{n} + \kappa_0\mathbf{v}\|^2_{L_2(\partial\Omega)}\|\nabla_{\Gamma}\left(\kappa_1\sigma\left(\mathbf{v}\right)\cdot\mathbf{n} + \kappa_0\mathbf{v}\right)\|^2_{L_2(\partial\Omega)} 
         + \|\kappa_1\sigma\left(\mathbf{v}\right)\cdot\mathbf{n} + \kappa_0\mathbf{v}\|^4_{L_2(\partial\Omega)}\Bigr),
         \label{eq:inverseEstimateGradient}
    \end{split}
\end{align}}
where the tangential derivative on the boundary is defined as $\nabla_{\Gamma}\mathbf{v}|_{\partial\Omega} = \nabla\mathbf{v}|_{\partial\Omega} - \left(\nabla\mathbf{v}|_{\partial\Omega}\cdot\mathbf{n}\right)\mathbf{n}$. 

The objective is to upper bound the first term by a scaled $L_2-\text{norm}$ of the boundary operator. This is done by first rewriting the term with respect to local approximant $\mathbf{v}^{(k)}$ in each patch $\Omega_k$ and then scaling the local norms to the reference patch where the cardinal basis \eqref{eq:cardinalBasis} is defined. A scaling of the reference patch such that $h_0 = 1$ provides the dependence of the term on the fill distance. The transformation to the reference patch is necessary to work with the local linearly independent cardinal basis. This mirrors the scaling done in Appendix \ref{app:intErrorEstimates}.  

We start by substituting the definition for a function in our space $\mathbf{v} = \sum_{k=1}^P w_k\mathbf{v}^{(k)}$. Using the triangle inequality and the product rule gives
{\footnotesize
\begin{align}
    \begin{split}
         & \|\kappa_1\sigma(\mathbf{v})\cdot\mathbf{n} + \kappa_0\mathbf{v}\|^2_{L_2(\partial\Omega)}\|\nabla_{\Gamma}\left(\kappa_1\sigma\left(\mathbf{v}\right)\cdot\mathbf{n} + \kappa_0\mathbf{v}\right)\|^2_{L_2(\partial\Omega\cap\Omega)} \leq \Biggl(2\chi\sum_{k=1}^P\|w_k(\kappa_1\sigma(\mathbf{v}^{(k)})\cdot\mathbf{n} + \kappa_0\mathbf{v}^{(k)})\|^2_{L_2(\partial\Omega\cap\Omega_k)} \\
         & + \|\tilde{\sigma}\left(w_k,\mathbf{n}\right)\kappa_1\mathbf{v}^{(k)}\|^2_{L_2(\partial\Omega\cap\Omega_k)}\Biggr)\Biggl(4\chi\sum_{k=1}^P \|w_k\nabla_{\Gamma}(\kappa_1\sigma(\mathbf{v}^{(k)})\cdot\mathbf{n} + \kappa_0\mathbf{v}^{(k)})\|^2_{L_2(\partial\Omega\cap\Omega_k)}
       \\ 
        & + \|(\kappa_1\sigma(\mathbf{v}^{(k)})\cdot\mathbf{n}+\kappa_0\mathbf{v}^{(k)})\otimes\nabla_{\Gamma} w_k\|^2_{L_2(\partial\Omega\cap\Omega_k)}  + \|\tilde{\sigma}(w_k,\mathbf{n})\otimes\nabla_{\Gamma}(\kappa_1\mathbf{v}^{(k)})\|^2_{L_2(\partial\Omega\cap\Omega_k)}
        + \|\kappa_1\nabla_{\Gamma}\tilde{\sigma}(w_k,\mathbf{n})\mathbf{v}^{(k)}\|^2_{L_2(\partial\Omega\cap\Omega_k)}\Biggr),
    \end{split}
\end{align}}where $\chi$ is the maximum number of overlapping patches, $\otimes$ is the outer product,  $(\nabla_{\Gamma}\tilde{\sigma})\mathbf{v}^{(k)} = (\partial_l\tilde{\sigma}_{ij})v^{(k)}_j - (n_en_l\partial_l\tilde{\sigma}_{ij})v^{(k)}_j$, and the weight function operator $\tilde{\sigma}(w_k,\mathbf{n}) = (\nabla w_k)\cdot\mathbf{n} I + \nabla w_k \otimes\mathbf{n} + \mathbf{n}\otimes\nabla w_k$. Using Hölder's inequality we can bound the terms with outer products and using the Cauchy-Schwartz inequality we can bound the last term which gives
\begin{align}
    \begin{split}
        &\|\kappa_1\sigma(\mathbf{v})\cdot\mathbf{n} + \kappa_0\mathbf{v}\|^2_{L_2(\partial\Omega)}\|\nabla_{\Gamma}\left(\kappa_1\sigma\left(\mathbf{v}\right)\cdot\mathbf{n} + \kappa_0\mathbf{v}\right)\|^2_{L_2(\partial\Omega\cap\Omega)} \leq \\ 
        8\chi^2\Biggl(& \sum_{k=1}^P\|w_k\|^2_{L_{\infty}(\partial\Omega\cap\Omega_k)}\|\kappa_1\sigma(\mathbf{v}^{(k)})\cdot\mathbf{n}+\kappa_0\mathbf{v}^{(k)}\|_{L_2(\partial\Omega\cap\Omega_k)} + \|\tilde{\sigma}\left(w_k,\mathbf{n}\right)\|^2_{L_{\infty}(\Omega\cap\partial\Omega_k)}\|\kappa_1\mathbf{v}^{(k)}\|^2_{L_2(\Omega\cap\Omega_k)}\Biggr)
        \\ \Biggl( & \sum_{k=1}^P \|w_k\|^2_{L_{\infty}(\partial\Omega\cap\Omega_k)}\|\nabla_{\Gamma}(\kappa_1\sigma(\mathbf{v}^{(k)})\cdot\mathbf{n} + \kappa_0\mathbf{v}^{(k)})\|^2_{L_2(\partial\Omega\cap\Omega_k)} 
        \\ 
        + & \|\nabla_{\Gamma} w_k\|^2_{L_{\infty}(\partial\Omega\cap\Omega_k)}\|(\kappa_1\sigma(\mathbf{v}^{(k)})\cdot\mathbf{n}+\kappa_0\mathbf{v}^{(k)})\|^2_{L_2(\partial\Omega\cap\Omega_k)} \\  
        + & \|\tilde{\sigma}(w_k,\mathbf{n})\|^2_{L_{\infty}(\partial\Omega\cap\Omega_k)}\|\nabla_{\Gamma}(\kappa_1\mathbf{v}^{(k)})\|^2_{L_2(\partial\Omega\cap\Omega_k)} + \|\nabla_{\Gamma}\tilde{\sigma}(w_k,\mathbf{n})\|^2_{L_{\infty}(\partial\Omega\cap\Omega_k)}\|\kappa_1\mathbf{v}^{(k)}\|^2_{L_2(\partial\Omega\cap\Omega_k)}\Biggr),
        \label{eq:numerator_1}
    \end{split}
\end{align}
where derivatives of the weight functions scale with different powers of the fill distance. We expand these terms in their $2-\text{norm}$ as
{\footnotesize
\begin{align}
    \begin{split}
        \|\nabla_{\Gamma}w_k\|_2^2 & = \sum_{i=1}^d\left|\partial_iw_k - \sum_{j=1}^d n_in_j\partial_jw_k\right|^2 \leq 2\|\nabla w_k\|^2_2 + 2\sum_{i=1}^d\left|\sum_{j=1}^dn_in_j\partial_jw_k\right|^2 \\ & 
        \leq 2\|\nabla w_k\|^2_2 + 2\sum_{i=1}^d\left(\sum_{j=1}^d |n_in_j|^2\right)\left(\sum_{j=1}^d|\partial_j w_k|^2\right) = 4\|\nabla w_k\|_2^2 \\
        \|\tilde{\sigma}(w_k,\mathbf{n})\|_2^2 & = \sum_{i,j=1}^d\left|\sum_{l=1}^d(\partial_l w_k)n_l\delta_{ij} + (\partial_i w_k)n_j + (\partial_jw_k)n_i \right|^2 \\
        & \leq 4\sum_{i,j=1}^d\left(\sum_{l=1}^d|\partial_lw_k|^2\right)\left(\sum_{l=1}^d|n_l\delta_{ij}|^2\right) + 8\|\nabla w_k\otimes\mathbf{n}\|_2^2 = (8+4d)\|\mathbf{n}\|^2_2\|\nabla w_k\|^2_2 = (8+4d)\|\nabla w_k\|^2_2 \\
        \|\nabla_{\Gamma}\tilde{\sigma}(w_k,\mathbf{n})\|^2_2 &\leq 4\|\nabla \tilde{\sigma}(w_k,\mathbf{n})\|^2_2 = 4\sum_{i,j,l=1}^d\left|\sum_{m=1}^d\partial_l(\partial_{m}w_k n_m)\delta_{ij} + \partial_l(\partial_i w_k n_j) + \partial_l(\partial_jw_k n_i))\right|^2 \\ 
        & = 4\sum_{i,j,l=1}^d\Biggl|\sum_{m=1}^d(\partial_{lm}w_k)n_m\delta_{ij} + (\partial_l n_m)(\partial_m w_k)\delta_{ij} + (\partial_{li}w_k)n_j + (\partial_i w_k)(\partial_l n_j) + (\partial_{lj}w_k)n_i + (\partial_j w_k)(\partial_l n_i)\Biggr|^2 \\
        & \leq 24d\|\mathbf{n}\|_2^2\|D^2w_k\|_2^2 + 24d\|\nabla\mathbf{n}\|_2^2\|\nabla w_k\|_2^2 + 48\|D^2 w_k \otimes\mathbf{n}\|_2^2 + 48\|\nabla w_k\otimes\nabla\mathbf{n}\|_2^2 \\
        & = (24d+48)\|D^2w_k\|^2_2 + (24d+48)\|\nabla\mathbf{n}\|_2^2\|\nabla w_k\|_2^2,
    \end{split}
\end{align}}where $\delta_{ij}$ is the Kronecker delta and we used the Cauchy-Schwartz inequality for the terms with repeated indices and Hölder's inequality for outer products as well as $\|\mathbf{n}\|_2^2=1$. Taking the square root and using \eqref{eq:stabilityWeightFunc}, \eqref{eq:OverlapFillDistRelation} and \eqref{eq:referenceFilltoPhysicalFill} we can express \eqref{eq:numerator_1} as
{\small
\begin{align}
    \begin{split}
        &\|\kappa_1\sigma(\mathbf{v})\cdot\mathbf{n}+\kappa_0\mathbf{v}\|^2_{L_2(\partial\Omega)}\|\nabla_{\Gamma}\left(\kappa_1\sigma\left(\mathbf{v}\right)\cdot\mathbf{n} + \kappa_0\mathbf{v}\right)\|^2_{L_2(\partial\Omega)} \leq 8\chi^2\Biggl(\sum_{k=1}^P\|\kappa_1\sigma(\mathbf{v}^{(k)})\cdot\mathbf{n} + \kappa_0\mathbf{v}^{(k)}\|^2_{L_2(\partial\Omega\cap\Omega_k)} \\ & +\left(8+4d\right)\sum_{\left|\alpha\right|}G_{\alpha}\left(\frac{\tilde{C}_V}{C'_{0,k}}\right)^{-2}h_0^{-2}\|\kappa_1\mathbf{v}^{(k)}\|^2_{L_2(\partial\Omega\cap\Omega_k)}\Biggr)\Biggl(\sum_{k=1}^P \|\nabla_{\Gamma}(\kappa_1\sigma(\mathbf{v}^{(k)})\cdot\mathbf{n} + \kappa_0\mathbf{v}^{(k)})\|^2_{L_2(\partial\Omega\cap\Omega_k)} 
        \\ & + 4\sum_{|\alpha|=1}G_{\alpha}\left(\frac{\tilde{C}_V}{C'_{0,k}}\right)^{-2}h_0^{-2} \|(\kappa_1\sigma(\mathbf{v}^{(k)})\cdot\mathbf{n}+\kappa_0\mathbf{v}^{(k)})\|^2_{L_2(\partial\Omega\cap\Omega_k)} \\ &  
        + \left(8+4d\right) \sum_{|\alpha|=1}G_{\alpha}\left(\frac{\tilde{C}_V}{C'_{0,k}}\right)^{-2}h_0^{-2} \|\nabla_{\Gamma}(\kappa_1\mathbf{v}^{(k)})\|^2_{L_2(\partial\Omega\cap\Omega_k)} \\ 
        & + (48 + 24d)\Biggl(\sum_{|\alpha|=2}G_{\alpha}\left(\frac{\tilde{C}_V}{\tilde{C}_{0,k}}\right)^{-4}h_0^{-4}  + \sum_{|\alpha|=1}G_{\alpha}\left(\frac{\tilde{C}_V}{\tilde{C}_{0,k}}\right)^{-2}h_0^{-2}\|\nabla\mathbf{n}\|^2_{L_{\infty}(\partial\Omega\cap\Omega_k)}\Biggr)\|\kappa_1\mathbf{v}^{(k)}\|^2_{L_2(\partial\Omega\cap\Omega_k)}\Biggr).
        \label{eq:numerator_2}
    \end{split}
\end{align}}

We further aim to reformulate the local patch-wise norms with respect to the reference patch $\Omega_0$ to construct local eigenvalue bounds similar to \eqref{eq:localEigenBoundMass},~\eqref{eq:localEigenBoundStifness}. We focus on the boundary operator term to show how the transformation to the reference patch and subsequent scaling to a patch where $h_0=1$ is achieved. We use the definition of stress and local approximant from \eqref{eq:RBFinterpolantLocal2} which gives
{\small
\begin{align}
    \begin{split}
        \kappa_1\sigma(\mathbf{v}^{(k)})\cdot\mathbf{n} + \kappa_0\mathbf{v}^{(k)} & = \kappa_1\left(\mu\nabla\mathbf{v}^{(k)} + \mu\nabla^T\mathbf{v}^{(k)} + \lambda\nabla\cdot\mathbf{v}^{(k)}I\right)\cdot\mathbf{n} + \kappa_0\mathbf{v}^{(k)} \\
        & = 
        \sum_{i=1}^n\left(\kappa_1\mu(\nabla\psi_i^{(k)}\cdot\mathbf{n})I + \kappa_1\mu(\mathbf{n}(\nabla\psi_i^{(k)})^T)^T + \kappa_1\lambda\mathbf{n}(\nabla\psi_i^{(k)})^T+\kappa_0\psi_i^{(k)}I\right)\mathbf{v}^{(k)}(\mathbf{x}_i).
        \label{eq:denominator_2}
    \end{split}
\end{align}}The linear transformation to the reference patch \eqref{eq:referencePatchMap} is such that $\psi_i^{(k)}(\mathbf{y}) = \tilde{\psi}_i^{(k)}(\mathbf{y}')$, with cardinal basis $\tilde{\psi}_i^{(k)}$, and similarly for the boundary level set function in patch $\Omega_k$, $l^{(k)}(\mathbf{y})=\tilde{l}^{(k)}(\mathbf{y}')$. Hence using the chain rule we can rewrite the gradient of the basis and the normals with respect to the cardinal basis and normal in the reference patch as
\begin{equation}
    \nabla_{\mathbf{y}}\psi_i^{(k)} = J_{T_{k}}^T\nabla_{\mathbf{y}'}\tilde{\psi}^{(k)}_i, \hspace{1cm} \mathbf{n} = \frac{\nabla_{\mathbf{y}}l^{(k)}}{\|\nabla_{\mathbf{y}}l^{(k)}\|_2} = \frac{J_{T_{k}}\mathbf{n}_{0,k}}{\|J_{T_{k}}\mathbf{n}_{0,k}\|_2},
\end{equation}
where $\mathbf{n}_{0,k} = \nabla_{\mathbf{y}'}\tilde{l}^{(k)}/\|\nabla_{\mathbf{y}'}\tilde{l}^{(k)}\|_2$ is the normal of the boundary in patch $\Omega_k$ when transformed to the reference patch, and the Jacobian $J_{T_k} = S_kQ_k$. As in \eqref{eq:derivativePhysical2Ref} we omit the subscript in the gradients which gives
{\footnotesize
\begin{align}
\begin{split}
    & \kappa_1\sigma(\mathbf{v}^{(k)})\cdot\mathbf{n} + \kappa_0\mathbf{v}^{(k)} = \sum_{i=1}^n\left(\tilde{\kappa}^{(k)}_1\mathcal{B}_1\tilde{\psi}^{(k)}_i + \tilde{\kappa}^{(k)}_0\mathcal{B}_0\tilde{\psi}^{(k)}_i\right)\mathbf{v}^{(k)}\left(\mathbf{x}_i\right)\\ 
    & = \sum_{i=1}^n \left(\tilde{\kappa}^{(k)}_1\mu \nabla^T\tilde{\psi}_i^{(k)} S_k^TS_k\mathbf{n}_{0,k}I+ \tilde{\kappa}^{(k)}_1\mu J^T_{T_k}\nabla\tilde{\psi}_i^{k}\mathbf{n}_{0,k}^T J_{T_k}+\tilde{\kappa}^{(k)}_1\lambda J^T_{T_k}\mathbf{n}_{0,k}\nabla^T\tilde{\psi}_i^{(k)}J_{T_{k}}+\tilde{\kappa}^{(k)}_0\tilde{\psi}_i^{(k)}I\right)\mathbf{v}^{(k)}(\mathbf{x}_i) 
    \end{split}
\end{align}}where $\tilde{\kappa}^{(k)}_0(\mathbf{y}') = \kappa_0^{(k)}(\mathbf{y}),~\tilde{\kappa}^{(k)}_1(\mathbf{y}') = \kappa_1^{(k)}(\mathbf{y})$ are the transformed Robin coefficients from patch $\Omega_k$. Squaring the above expression and integrating over the boundary in patch $\Omega_k$ gives
{\small
\begin{align}
\begin{split}
    & \|\kappa_1\sigma(\mathbf{v}^{(k)})\cdot\mathbf{n} + \kappa_0\mathbf{v}^{(k)}\|^2_{L_2(\partial\Omega\cap \Omega_k)} \\ = & \sum_{i,j=1}^n\int_{\partial\Omega\cap  \Omega_k}\left(\tilde{\kappa}_1^{(k)}\mathcal{B}_1\tilde{\psi}_i^{(k)}+\tilde{\kappa}_0^{(k)}\mathcal{B}_0\tilde{\psi}_i^{(k)}\right)^T\left(\tilde{\kappa}_1^{(k)}\mathcal{B}_1\tilde{\psi}_j^{(k)}+\tilde{\kappa}_0^{(k)}\mathcal{B}_0\tilde{\psi}_j^{(k)}\right) d\mathbf{S}_k \mathbf{v}^{(k)}\left(\mathbf{x}_i\right)\mathbf{v}^{(k)}\left(\mathbf{x}_j\right) \\ \leq & \left|\det{(J_{T_k}^{-1})}\right|\|J_{T_{k}}\|_2\sum_{i,j=1}^n\int_{\partial\Omega_{0,k}\cap \Omega_0}\left(\tilde{\kappa}_1^{(k)}\mathcal{B}_1\tilde{\psi}_i^{(k)}+\tilde{\kappa}_0^{(k)}\mathcal{B}_0\tilde{\psi}_i^{(k)}\right)^T\left(\tilde{\kappa}_1^{(k)}\mathcal{B}_1\tilde{\psi}_j^{(k)}+\tilde{\kappa}_0^{(k)}\mathcal{B}_0\tilde{\psi}_j^{(k)}\right) d\mathbf{S}_{0,k} \tilde{\mathbf{v}}^{(k)}\left(\mathbf{x}'_i\right)\tilde{\mathbf{v}}^{(k)}\left(\mathbf{x}'_j\right),
\end{split}
\end{align}}which was transformed by integrating over the surface in the reference patch similar to \eqref{eq:derivativeRef2PhysicalL2NormBnd_1} in the final step, where $\partial\Omega_{0,k}$ is the boundary in patch $\Omega_k$ transformed to the reference patch.

Using the nodal value vector $\tilde{\mathbf{v}}^{(k)}(X_0) = (\tilde{v}_1^{(k)}(X_0),\dots,\tilde{v}_d^{(k)}(X_0))^T$ we express the norm as
\begin{align}
\begin{split}
    \|\kappa_1\sigma(\mathbf{v}^{(k)})\cdot\mathbf{n} + \kappa_0\mathbf{v}^{(k)}\|^2_{L_2(\partial\Omega\cap \Omega_k)} & \leq \left|\det{(J_{T_k}^{-1})}\right|\|J_{T_{k}}\|_2\tilde{\mathbf{v}}^{(k)}\left(X_0\right)^T\mathbf{B}_{\partial\Omega_{0,k}\cap\Omega_0}\tilde{\mathbf{v}}^{(k)}\left(X_0\right) \\ & = \left|\det{(J_{T_k}^{-1})}\right|\|J_{T_{k}}\|_2h_0^{-2}h_0^{d-1}\tilde{\mathbf{v}}^{(k)}\left(X_0\right)^T\mathbf{B}_{\hat{\partial\Omega}_{0,k}\cap\hat{\Omega}_0}\tilde{\mathbf{v}}^{(k)}\left(X_0\right)
    \label{eq:denominator_3}
\end{split}
\end{align}
where $\mathbf{B}_{\partial\Omega_{0,k}\cap\Omega_0}$ is the boundary operator on $\partial\Omega_{0,k}\cap\Omega_0$, while $\mathbf{B}_{\hat{\partial\Omega}_{0,k}\cap\hat{\Omega}_0}$ is the operator over the boundary region in a reference patch scaled by the fill distance, as described in \eqref{eq:scaled_K}. $\mathbf{B}_{\partial\Omega_{0,k}\cap\Omega_0}$ is a matrix with $d\times d$ blocks of size $n\times n$. The matrix providing the $d\times d$ elements in position $\left(i,j\right)$ of each $n\times n$ block in $\mathbf{B}_{\partial\Omega_{0,k}\cap\Omega_0}$ are given by
\begin{align}
    \begin{split}
        B_{\partial\Omega_{0,k}\cap\Omega_0} & = \int_{\partial\Omega_{0,k}\cap\Omega_0}\left(\tilde{\kappa}_1^{(k)}\mathcal{B}_1\tilde{\psi}_i^{(k)}+\tilde{\kappa}_0^{(k)}\mathcal{B}_0\tilde{\psi}_i^{(k)}\right)^T\left(\tilde{\kappa}_1^{(k)}\mathcal{B}_1\tilde{\psi}_j^{(k)}+\tilde{\kappa}_0^{(k)}\mathcal{B}_0\tilde{\psi}_j^{(k)}\right) d\mathbf{S}_{0,k} \\
        & = h_0^{-2}h_0^{d-1}\int_{\hat{\partial\Omega}_{0,k}\cap\hat{\Omega}_0}\left(\hat{\kappa}_1^{(k)}\hat{\mathcal{B}}_1\Psi_i^{(k)}+h_0\hat{\kappa}_0^{(k)}\hat{\mathcal{B}}_0\Psi_i^{(k)}\right)^T\left(\hat{\kappa}_1^{(k)}\hat{\mathcal{B}}_1\Psi_j^{(k)}+h_0\hat{\kappa}_0^{(k)}\hat{\mathcal{B}}_0\Psi_j^{(k)}\right) d\hat{\mathbf{S}}_{0,k}
        \label{eq:denominator_4}
    \end{split}
\end{align}
where $i,j=1,\dots,n$ and get the scaling to a patch with fill distance $h_0=1$ where the cardinal basis functions are $\Psi_i^{(k)} = \tilde{\psi}^{(k)}_i$. 

Similarly we can scale the derivatives in all the other norms from \eqref{eq:numerator_2} as
\begin{align}
    \begin{split}
        \|\nabla_{\Gamma}(\kappa_1\sigma(\mathbf{v}^{(k)})\cdot\mathbf{n} + \kappa_0\mathbf{v}^{(k)})\|^2_{L_2(\partial\Omega\cap\Omega_k)} & \leq h_0^{-4}h_0^{d-1}\left|\det{(J_{T_k}^{-1})}\right|\|J_{T_{k}}\|_2\sum_{|\alpha|=1}\tilde{\mathbf{v}}^{(k)}\left(X_0\right)^T \mathbf{B}^1_{\hat{\partial\Omega}_{0,k}\cap\hat{\Omega}_0}\tilde{\mathbf{v}}^{(k)}\left(X_0\right), \\
        \|\nabla_{\Gamma}(\kappa_1\mathbf{v}^{(k)})\|^2_{L_2(\partial\Omega\cap\Omega_k)} & \leq h_0^{-2}h_0^{d-1}\left|\det{(J_{T_k}^{-1})}\right|\|J_{T_{k}}\|_2\sum_{|\alpha|=1}\tilde{\mathbf{v}}^{(k)}\left(X_0\right)^T\mathbf{K}^1_{\hat{\partial\Omega}_{0,k}\cap\hat{\Omega}_0}\tilde{\mathbf{v}}^{(k)}\left(X_0\right), \\
        \|\kappa_1\mathbf{v}^{(k)}\|^2_{L_2(\partial\Omega\cap\Omega_k)} & \leq h_0^{d-1}\left|\det{(J_{T_k}^{-1})}\right|\|J_{T_{k}}\|_2\tilde{\mathbf{v}}^{(k)}\left(X_0\right)^T\mathbf{M}_{\hat{\partial\Omega}_{0,k}\cap\hat{\Omega}_0}\tilde{\mathbf{v}}^{(k)}\left(X_0\right),
        \label{eq:numerator_3}
    \end{split}
\end{align}
where $\alpha$ is a multi-index indicating the coordinate direction in which the components of the tangent vector are integrated. Specifically, $\mathbf{B}^1_{\hat{\partial\Omega}_{0,k}\cap\hat{\Omega}_0},~\mathbf{K}^1_{\hat{\partial\Omega}_{0,k}\cap\hat{\Omega}_0}$ are full block matrices with $d\times d$ blocks of size $n\times n$, while the mass matrix $\mathbf{M}_{\hat{\partial\Omega}_{0,k}\cap\hat{\Omega}_0}$ is a block diagonal matrix as in \eqref{eq:localMassMatrix}.

Substituting \eqref{eq:denominator_3} and \eqref{eq:numerator_3} in \eqref{eq:numerator_2} gives
\begin{align}
\begin{split}
&\|\kappa_1\sigma(\mathbf{v}^{(k)})\cdot\mathbf{n}+\kappa_0\mathbf{v}^{(k)}\|^2_{L_2(\partial\Omega)}\|\nabla_{\Gamma}\left(\kappa_1\sigma\left(\mathbf{v}\right)\cdot\mathbf{n} + \kappa_0\mathbf{v}\right)\|^2_{L_2(\partial\Omega)} 
\\ & \leq C_{n}C'_{n} h_0^{-6}h_0^{2d-2}\Biggl(\sum_{k=1}^P\tilde{\mathbf{v}}^{(k)}\left(X_0\right)^T\biggl(\mathbf{B}_{\hat{\partial\Omega}_{0,k}\cap\hat{\Omega}_0} + \mathbf{M}_{\hat{\partial\Omega}_{0,k}\cap\hat{\Omega}_0}\biggr)\tilde{\mathbf{v}}\left(X_0\right)\Biggr) \\ & \Biggl(\sum_{k=1}^P \tilde{\mathbf{v}}^{(k)}\left(X_0\right)^T\biggl( \mathbf{M}_{\hat{\partial\Omega}_{0,k}\cap\hat{\Omega}_0} + \mathbf{B}_{\hat{\partial\Omega}_{0,k}\cap\hat{\Omega}_0} + \sum_{|\alpha|=1}\left(\mathbf{B}^1_{\hat{\partial\Omega}_{0,k}\cap\hat{\Omega}_0} + \mathbf{K}^1_{\hat{\partial\Omega}_{0,k}\cap\hat{\Omega}_0}\right)\biggr)\tilde{\mathbf{v}}^{(k)}\left(X_0\right) \Biggr)
\label{eq:inverseBnd_2}
\end{split}
\end{align}
where $K_k$ is the non-overlapped region of patch $\Omega_k$. The scaled quadratic forms are then bounded using the following generalised eigenvalue problems
{\footnotesize
\begin{align}
    \begin{split}
& \frac{\|\kappa_1\sigma(\mathbf{v}^{(k)})\cdot\mathbf{n} + \kappa_0\mathbf{v}^{(k)}\|^2_{L_2(\partial\Omega\cap\Omega_k)} + h_0^{-2}\|\kappa_1\mathbf{v}^{(k)}\|^2_{L_2(\partial\Omega\cap\Omega_k)}}{\|\kappa_1\sigma(\mathbf{v}^{(k)})\cdot\mathbf{n} + h^{-1}\kappa_0\mathbf{v}^{(k)}\|^2_{L_2(\partial\Omega\cap K_k)} + h_0\|\nabla\cdot\sigma(\mathbf{v}^{(k)})\|^2_{L_2(\Omega\cap K_k)}} \\
           & \leq \frac{\max{\left({}_{sc}R_k,~{}_{sc}H_k\right)}}{\min{\left({}_{sc}R_k,~{}_{sc}H_k\right)}}\frac{\tilde{\mathbf{v}}^{(k)}\left(X_0\right)^T\left(  \mathbf{B}_{\hat{\partial\Omega}_{0,k}\cap\hat{\Omega}_0} + \mathbf{M}_{\hat{\partial\Omega}_{0,k}\cap\hat{\Omega}_0}\right)\tilde{\mathbf{v}}^{(k)}\left(X_0\right)}{\tilde{\mathbf{v}}^{(k)}\left(X_0\right)^T\left(\mathbf{B}'_{\hat{\partial\Omega}_{0,k}\cap\hat{K}_k} + \mathbf{L}_{\hat{K}_k}\right)\tilde{\mathbf{v}}^{(k)}\left(X_0\right)} = \lambda_{\mathcal{B}_k}, \\
         & \frac{\|\nabla_{\Gamma}(\kappa_1\sigma(\mathbf{v}^{(k)})\cdot\mathbf{n} + \kappa_0\mathbf{v}^{(k)})\|^2_{L_2(\partial\Omega\cap\Omega_k)} + h_0^{-2}\|\kappa_1\sigma(\mathbf{v}^{(k)})\cdot\mathbf{n} + \kappa_0\mathbf{v}^{(k)}\|^2_{L_2(\partial\Omega\cap\Omega_k)}}{\|\kappa_1\sigma(\mathbf{v}^{(k)})\cdot\mathbf{n} + h^{-1}\kappa_0\mathbf{v}^{(k)}\|^2_{L_2(\partial\Omega\cap K_k)} + h_0\|\nabla\cdot\sigma(\mathbf{v}^{(k)})\|^2_{L_2(\Omega\cap K_k)}} \\
        & + \frac{h_0^{-2}\|\nabla_{\Gamma}(\kappa_1\mathbf{v}^{(k)})\|^2_{L_2(\partial\Omega\cap\Omega_k)} + h_0^{-4}\|\kappa_1\mathbf{v}^{(k)}\|^2_{L_2(\partial\Omega\cap\Omega_k)}}{\|\kappa_1\sigma(\mathbf{v}^{(k)})\cdot\mathbf{n} + h^{-1}\kappa_0\mathbf{v}^{(k)}\|^2_{L_2(\partial\Omega\cap K_k)} + h_0\|\nabla\cdot\sigma(\mathbf{v}^{(k)})\|^2_{L_2(\Omega\cap K_k)}} \\
        & \leq  h_0^{-2}\frac{\max{\left({}_{sc}R_k,~{}_{sc}H_k\right)}}{\min{\left({}_{sc}R_k,~{}_{sc}H_k\right)}}\frac{\tilde{\mathbf{v}}^{(k)}\left(X_0\right)^T\left( \mathbf{M}_{\hat{\partial\Omega}_{0,k}\cap\hat{\Omega}_0} + \mathbf{B}_{\hat{\partial\Omega}_{0,k}\cap\hat{\Omega}_0} + \sum_{|\alpha|=1}\left(\mathbf{B}^1_{\hat{\partial\Omega}_{0,k}\cap\hat{\Omega}_0} + \mathbf{K}^1_{\hat{\partial\Omega}_{0,k}\cap\hat{\Omega}_0}\right)\right)\tilde{\mathbf{v}}^{(k)}\left(X_0\right)}{\tilde{\mathbf{v}}^{(k)}\left(X_0\right)^T\left(\mathbf{B}'_{\hat{\partial\Omega}_{0,k}\cap\hat{K}_k} + \mathbf{L}_{\hat{K}_k}\right)\tilde{\mathbf{v}}^{(k)}\left(X_0\right)} \\
        & = h_0^{-2}{}_{1}\lambda_{\mathcal{B}_k},
       \label{eq:generalisedEigenvalueBnd}
    \end{split}
\end{align}}where we used \eqref{eq:denominator_3} and \eqref{eq:numerator_3}. The domain term $\|\nabla\cdot\sigma(\mathbf{v}^{(k)})\|^2_{L_2(\Omega\cap K_k)}$ is included in the denominator since it is strictly positive for non-zero coefficients $\tilde{\mathbf{v}}^{(k)}$, which is not the case for norms over the boundary. Regarding this point we refer to the paragraph following \eqref{eq:scaled_K} where we discuss linear independence and positivity of norms over domains of positive measure. Note that the basis functions $\mathcal{L}\tilde{\psi}^{(k)}_j$ are linearly independent since the PDE operator defined in \eqref{eq:Elasticity_Operators} is elliptic (see Appendix \ref{app:CoercivityBounds}). The addition of this term is necessary since the basis can be linearly dependent on manifolds $\mathcal{M}\subset \mathbb{R}^d$ which have zero measure. While very unlikely, the boundary in a reference patch could be part of the family of manifolds where $\kappa_1\sigma(\mathbf{v}^{(k)})\cdot\mathbf{n} + \kappa_0\mathbf{v}^{(k)} = 0$ leading to an ill posed local generalised eigenvalue problem if the domain term were omitted.



Additionally note that the Dirichlet term is scaled with the fill distance, $h$, giving boundary operator $\|\kappa_1\sigma(\mathbf{v}^{(k)})\cdot\mathbf{n} + h^{-1}\kappa_0\mathbf{v}^{(k)}\|^2_{L_2(\partial\Omega\cap K_k)}$. We include this scale factor (equivalent to $h_0$ \eqref{eq:referenceFilltoPhysicalFill}) since we require both the displacement and traction terms to scale with the same fill distance factor especially for patches where the scaled Neumann term $\hat{\mathcal{B}}_1\Psi_i^{(k)}$ disappears on the non-overlapped part of the boundary (see \eqref{eq:denominator_4}).

Using the generalised eigenvalue bounds in \eqref{eq:inverseBnd_2} gives
\begin{align}
\begin{split}
&\|\kappa_1\sigma(\mathbf{v}^{(k)})\cdot\mathbf{n}+\kappa_0\mathbf{v}^{(k)}\|^2_{L_2(\partial\Omega)}\|\nabla_{\Gamma}\left(\kappa_1\sigma\left(\mathbf{v}\right)\cdot\mathbf{n} + \kappa_0\mathbf{v}\right)\|^2_{L_2(\partial\Omega)} \\ 
     &\leq C_{n} C'_n h_0^{-2} \left(\sum_{k=1}^P\lambda_{\mathcal{B}_k}\left(\|\kappa_1\sigma(\mathbf{v}^{(k)})\cdot\mathbf{n} + h^{-1}\kappa_0\mathbf{v}^{(k)}\|^2_{L_2(\partial\Omega\cap K_k)}  + h_0\|\nabla\cdot\sigma(\mathbf{v}^{(k)})\|^2_{L_2(\Omega\cap K_k)}\right)\right) \\ & \mathbin{\hphantom{\leq C_{n} C'_n h_0^{-2}}}\left(\sum_{k=1}^P{}_{1}\lambda_{\mathcal{B}_k}\left(\|\kappa_1\sigma(\mathbf{v}^{(k)})\cdot\mathbf{n} + h^{-1}\kappa_0\mathbf{v}^{(k)}\|^2_{L_2(\partial\Omega\cap K_k)} + h_0\|\nabla\cdot\sigma(\mathbf{v}^{(k)})\|^2_{L_2(\Omega\cap K_k)}\right)\right) \\
& \leq C_nC'_n  \lambda_{m}h_0^{-2}\left(\|\kappa_1\sigma(\mathbf{v})\cdot\mathbf{n} + h^{-1}\kappa_0\mathbf{v}\|^2_{L_2(\partial\Omega)} + h_0\|\nabla\cdot\sigma(\mathbf{v})\|^2_{L_2(\Omega)}\right)^2 \\
& \leq 2C_nC'_n\lambda_m h_0^{-2}\|\kappa_1\sigma(\mathbf{v})\cdot\mathbf{n} + h^{-1}\kappa_0\mathbf{v}\|^4_{L_2(\partial\Omega)} + 2C_nC'_n\lambda_m\|\nabla\cdot\sigma(\mathbf{v})\|^4_{L_2(\Omega)},
\label{eq:inverseBnd_3}
\end{split}
\end{align}
where $\lambda_m = \max_{1\leq k\leq P}{}_{1}\lambda_{\mathcal{B}_k}\lambda_{\mathcal{B}_k}$ and we used Young's inequality, $2ab \leq (a/\epsilon)^2 + (\epsilon b)^2$, with $\epsilon = h_0$.

Computing generalised eigenvalues \eqref{eq:generalisedEigenvalueBnd} accurately is numerically challenging. Similarly to Figure~\ref{fig:Eig} we compute the largest eigenvalues for similar but simplified versions of the bound. The bounds include the scalar valued function $v = \sum_{j=1}^P\tilde{\psi}_jv(X_0)$ and some of its derivatives integrated over the boundary in a reference patch, $\Omega_0$. Moreover, we assume the boundary itself is parallel to the faces of the cylindrical patch and that the physical patch coordinate system, height and radius are exactly equal to the reference patch to simplify computations. Examples of the eigenfunctions corresponding to the maximum eigenvalues for various problem formulations are shown in Figure~\ref{fig:Bnd}. The result may be unbounded without the added domain term in \eqref{eq:generalisedEigenvalueBnd}, but for the examples here it is not needed and therefore omitted from the computation. The subplots are representative of the types of terms arising in the cases where one $\kappa_i\approx 0$. 
\begin{figure}[!htb]
    \centering
    \includegraphics[width=0.24\linewidth]{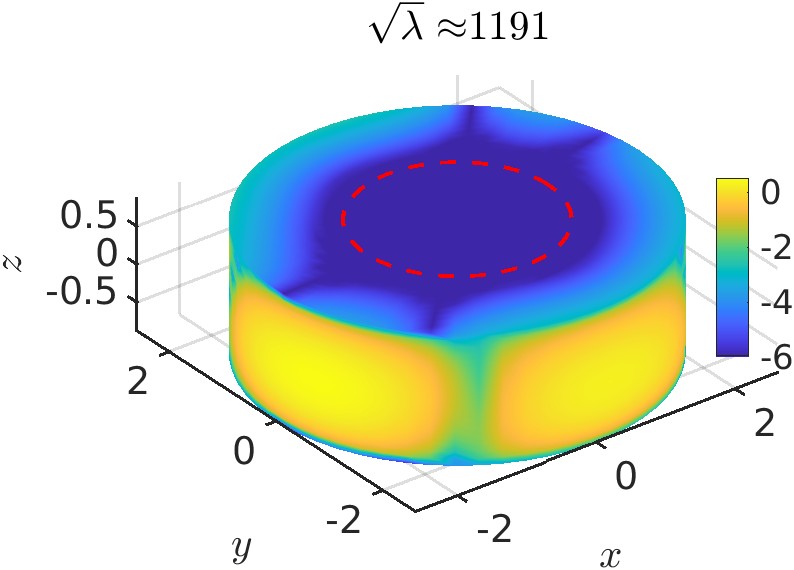}\hspace{0.1cm}
\includegraphics[width=0.24\linewidth]{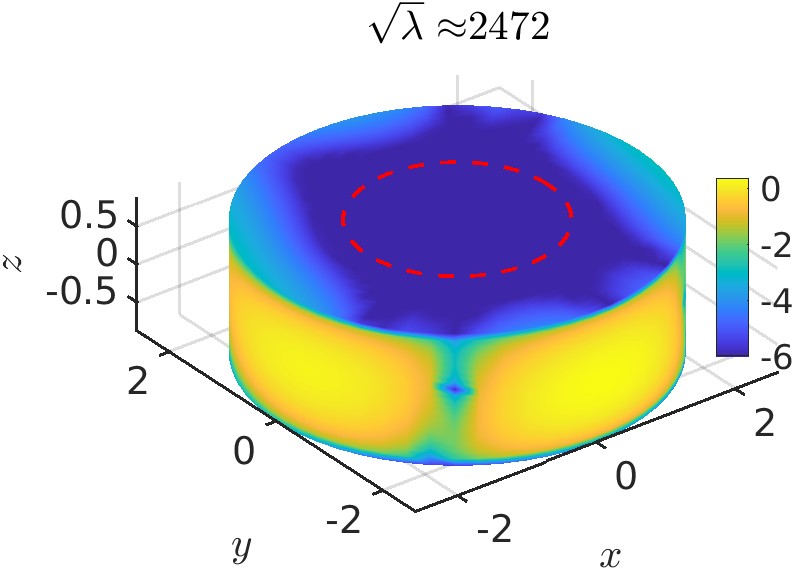}\hspace{0.1cm}
\includegraphics[width=0.24\linewidth]{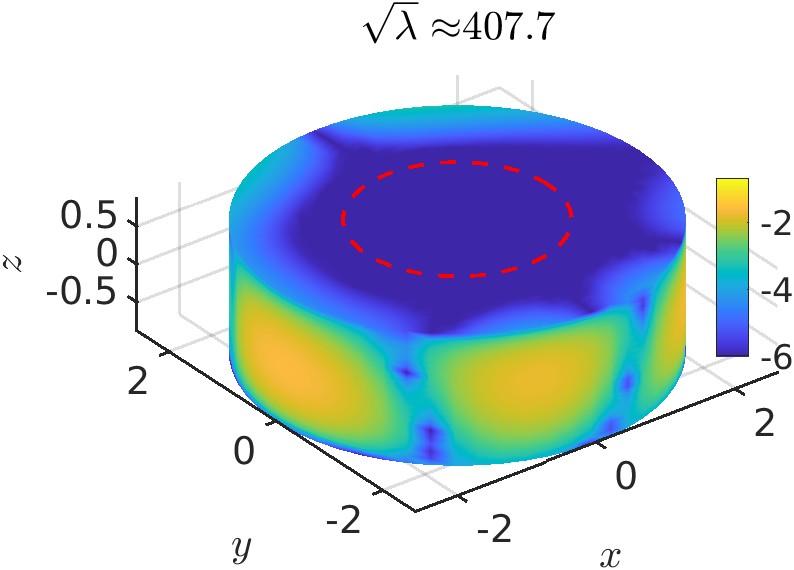}\hspace{0.1cm}
\includegraphics[width=0.24\linewidth]{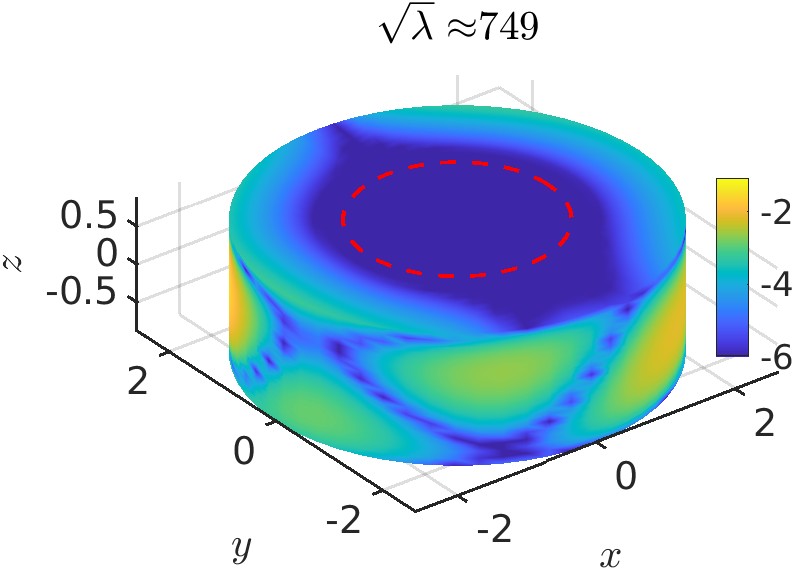}
    \caption{The boundary is denoted by two horizontal planes that cut the patch and which we denote by $\partial\Omega^p$, and their interior part by $\partial\Omega^p_I$ (red dashed curve). Only the slice of the patch that is in between the two boundary planes is shown. We consider eigenvalue problems of the form $\lambda = Q(\mathcal{L}_1v,\mathcal{L}_2v)=\max_v\|\mathcal{L}_1v\|^2_{L_2(\partial\Omega^p)}/\|\mathcal{L}_2v\|^2_{L_2(\partial\Omega_I^p)}$. The subplots show $\log_{10}|\mathcal{L}_2v|$ for the following eigenproblems: $\lambda = Q(v,v)$, 
    $\lambda = Q(v_x,v)$,
    $\lambda = Q(v_x,v_x)$, 
    and $\lambda = Q(v_{xx},v_x)$. The number of local points $n=56$, the planes that cut the patch are located at $\pm H/3$, the aspect ratio $H/R = 1$ and the shape parameter $\varepsilon=0.5$.}
    \label{fig:Bnd}
\end{figure}

To present the finalised inverse estimate we use result \eqref{eq:inverseBnd_3} in \eqref{eq:inverseEstimateGradient} which gives
\begin{align}
    \begin{split}
        \|\kappa_1\sigma\left(\mathbf{v}\right)\cdot\mathbf{n} + \kappa_0\mathbf{v}\|^4_{H^{\frac{1}{2}}(\partial\Omega)} \leq &  C^2_i\left(2C_nC'_n\lambda_m h_0^{-2}+1\right)\|\kappa_1\sigma(\mathbf{v})\cdot\mathbf{n} + h^{-1}\kappa_0\mathbf{v}\|^4_{L_2(\partial\Omega)} \\ & + 2C^2_iC_nC_n'\lambda_m\|\nabla\cdot\sigma(\mathbf{v})\|^4_{L_2(\Omega)}\\ 
        \leq & C_{inv}^2h^{-2}\|\kappa_1\sigma(\mathbf{v})\cdot\mathbf{n} + h^{-1}\kappa_0\mathbf{v}\|^4_{L_2(\partial\Omega)} + C_{inv}^2\|\nabla\cdot\sigma(\mathbf{v})\|^4_{L_2(\Omega)},
        \label{eq:inverseBnd_4}
    \end{split}
\end{align}
where we used the same argumentation as in \eqref{eq:integrationError} to bound the reference patch fill distance $h_0$ by the global fill distance $h$ given in \eqref{eq:fillX}. Taking the square root of the above gives \eqref{eq:boundaryInverseEstimate}.

\bibliographystyle{ieeetr}
\bibliography{refs}

@Misc{invive_project,
  title = 	 {The INVIVE project},
  author = {INVIVE},
  url = {https://www.it.uu.se/research/scientific_computing/project/rbf/biomech},
  year = 	 2017}

@article{llano2012mechanisms,
  title={Mechanisms underlying ICU muscle wasting and effects of passive mechanical loading},
  author={Llano-Diez, Monica and Renaud, Guillaume and Andersson, Magnus and Marrero, Humberto Gonzales and Cacciani, Nicola and Engquist, Henrik and Corpe{\~n}o, Rebeca and Artemenko, Konstantin and Bergquist, Jonas and Larsson, Lars},
  journal={Critical care},
  volume={16},
  pages={1--16},
  year={2012},
  publisher={Springer}
}

@article{stableRBFFD2021,
author = {Tominec, Igor and Larsson, Elisabeth and Heryudono, Alfa},
title = {A Least Squares Radial Basis Function Finite Difference Method with Improved Stability Properties},
journal = {SIAM Journal on Scientific Computing},
volume = {43},
number = {2},
pages = {A1441-A1471},
year = {2021},
doi = {10.1137/20M1320079},
URL = { 
        https://doi.org/10.1137/20M1320079
},
eprint = { 
        https://doi.org/10.1137/20M1320079
}
}

@article{diaphragmRBFFD2022,
title = {An unfitted radial basis function generated finite difference method applied to thoracic diaphragm simulations},
journal = {Journal of Computational Physics},
volume = {469},
pages = {111496},
year = {2022},
issn = {0021-9991},
doi = {https://doi.org/10.1016/j.jcp.2022.111496},
url = {https://www.sciencedirect.com/science/article/pii/S0021999122005587},
author = {Igor Tominec and Pierre-Frédéric Villard and Elisabeth Larsson and Víctor Bayona and Nicola Cacciani}
}

@misc{larsson2024rbfpartitionunitymethod,
      title={An RBF partition of unity method for geometry reconstruction and PDE solution in thin structures}, 
      author={Elisabeth Larsson and Pierre-Frédéric Villard and Igor Tominec and Ulrika Sundin and Andreas Michael and Nicola Cacciani},
      year={2024},
      eprint={2403.01486},
      archivePrefix={arXiv},
      primaryClass={math.NA},
      url={https://arxiv.org/abs/2403.01486}, 
}

@article{larsson2022numerical,
  title={A numerical investigation of some RBF-FD error estimates},
  author={Larsson, Elisabeth and Mavri{\v{c}}, Bo{\v{s}}tjan and Michael, Andreas and Pooladi, Fatemeh},
  journal={Dolomites Research Notes on Approximation},
  volume={15},
  number={5},
  pages={78--95},
  year={2022},
  publisher={Padova University Press}
}

@article{LarsonShcherbakovHeryudono2017,
author = {Larsson, Elisabeth and Shcherbakov, Victor and Heryudono, Alfa},
title = {A Least Squares Radial Basis Function Partition of Unity Method for Solving PDEs},
journal = {SIAM Journal on Scientific Computing},
volume = {39},
number = {6},
pages = {A2538-A2563},
year = {2017},
doi = {10.1137/17M1118087},
URL = { 
        https://doi.org/10.1137/17M1118087
}, eprint = { 
        https://doi.org/10.1137/17M1118087
}
}

@book{TheoryOfElasticity1951,
language = {eng},
publisher = {McGraw-Hill},
title = {Theory of elasticity },
year = {1951},
author = {Timoshenko, Stephen and Goodier, James Norman},
address = {New York},
booktitle = {Theory of elasticity},
edition = {2. rev. ed.},
}

@book{brenner2008mathematical,
  title={The mathematical theory of finite element methods},
  author={Brenner, Susanne C},
  year={2008},
  publisher={Springer}
}

@article{ADN_2,
author = {Agmon, S. and Douglis, A. and Nirenberg, L.},
title = {Estimates near the boundary for solutions of elliptic partial differential equations satisfying general boundary conditions II},
journal = {Communications on Pure and Applied Mathematics},
volume = {17},
number = {1},
pages = {35-92},
doi = {https://doi.org/10.1002/cpa.3160170104},
url = {https://onlinelibrary.wiley.com/doi/abs/10.1002/cpa.3160170104},
eprint = {https://onlinelibrary.wiley.com/doi/pdf/10.1002/cpa.3160170104},
year = {1964}
}

@article{FractionalSobolev2012,
title = {Hitchhiker's guide to the fractional Sobolev spaces},
journal = {Bulletin des Sciences Mathématiques},
volume = {136},
number = {5},
pages = {521-573},
year = {2012},
issn = {0007-4497},
doi = {https://doi.org/10.1016/j.bulsci.2011.12.004},
url = {https://www.sciencedirect.com/science/article/pii/S0007449711001254},
author = {Eleonora {Di Nezza} and Giampiero Palatucci and Enrico Valdinoci}
}

@article{RBFQR,
author = {Fornberg, Bengt and Larsson, Elisabeth and Flyer, Natasha},
title = {Stable Computations with Gaussian Radial Basis Functions},
journal = {SIAM Journal on Scientific Computing},
volume = {33},
number = {2},
pages = {869-892},
year = {2011},
doi = {10.1137/09076756X},
URL = {https://doi.org/10.1137/09076756X},
eprint = {https://doi.org/10.1137/09076756X}
}

@article{Wendland1995PiecewisePP,
  title={Piecewise polynomial, positive definite and compactly supported radial functions of minimal degree},
  author={Holger Wendland},
  journal={Advances in Computational Mathematics},
  year={1995},
  volume={4},
  pages={389-396},
  url={https://api.semanticscholar.org/CorpusID:36452865}
}

@book{necas2011direct,
  title={Direct Methods in the Theory of Elliptic Equations},
  author={Necas, J. and Simader, C.G. and Necasov{\'a}, {\v{S}}. and Tronel, G. and Kufner, A.},
  isbn={9783642104558},
  lccn={2011937623},
  series={Springer Monographs in Mathematics},
  url={https://books.google.se/books?id=KBqaNwzHZpkC},
  year={2011},
  publisher={Springer Berlin Heidelberg}
}

@book{bochev2009least,
  title={Least-Squares Finite Element Methods},
  author={Bochev, P.B. and Gunzburger, M.D.},
  isbn={9780387689227},
  lccn={2008943966},
  series={Applied Mathematical Sciences},
  url={https://books.google.se/books?id=5zE_XIl-fNQC},
  year={2009},
  publisher={Springer New York}
}

@article{JaRoĭtberg_1969,
doi = {10.1070/SM1969v007n03ABEH001099},
url = {https://dx.doi.org/10.1070/SM1969v007n03ABEH001099},
year = {1969},
month = {apr},
publisher = {},
volume = {7},
number = {3},
pages = {439},
author = {Ja A Roĭtberg and  Z G Šeftel'},
title = {A THEOREM ON HOMEOMORPHISMS FOR ELLIPTIC SYSTEMS
AND ITS APPLICATIONS},
journal = {Mathematics of the USSR-Sbornik}
}

@book{knops1971uniqueness,
  title={Uniqueness Theorems in Linear Elasticity},
  author={Knops, R.J. and Payne, L.E.},
  isbn={9783540052531},
  lccn={70138813},
  series={Springer tracts in natural philosophy},
  url={https://books.google.se/books?id=fgCrAAAAIAAJ},
  year={1971},
  publisher={Springer-Verlag}
}

@book{Holzapfel,
author = {Holzapfel, Gerhard},
year = {2000},
pages = {455},
title = {Nonlinear solid mechanics: a continuum approach for engineering},
publisher = {John Wiley \& Sons}
}

@article{rieger2010sampling,
  title={Sampling inequalities for infinitely smooth functions, with applications to interpolation and machine learning},
  author={Rieger, Christian and Zwicknagl, Barbara},
  journal={Advances in Computational Mathematics},
  volume={32},
  pages={103--129},
  year={2010},
  publisher={Springer}
}

@article{Larsson_2024,
doi = {10.1088/1742-6596/2766/1/012158},
url = {https://dx.doi.org/10.1088/1742-6596/2766/1/012158},
year = {2024},
month = {may},
publisher = {IOP Publishing},
volume = {2766},
number = {1},
pages = {012158},
author = {Larsson, Elisabeth and Mavrič, Boštjan and Michael, Andreas and Pooladi, Fatemeh and Tominec, Igor},
title = {Meshfree RBF–FD methods for numerical simulation of PDE problems},
journal = {Journal of Physics: Conference Series}
}

@book{Wendland_2004, place={Cambridge}, series={Cambridge Monographs on Applied and Computational Mathematics}, title={Scattered Data Approximation}, publisher={Cambridge University Press}, author={Wendland, Holger}, year={2004}, collection={Cambridge Monographs on Applied and Computational Mathematics}}

@article{Shepard1968ATI,
  title={A two-dimensional interpolation function for irregularly-spaced data},
  author={Donald S. Shepard},
  journal={Proceedings of the 1968 23rd ACM national conference},
  year={1968},
  url={https://api.semanticscholar.org/CorpusID:42723195}
}

@article{Schoenberg1938,
 ISSN = {0003486X, 19398980},
 URL = {http://www.jstor.org/stable/1968466},
 author = {I. J. Schoenberg},
 journal = {Annals of Mathematics},
 number = {4},
 pages = {811--841},
 publisher = {[Annals of Mathematics, Trustees of Princeton University on Behalf of the Annals of Mathematics, Mathematics Department, Princeton University]},
 title = {Metric Spaces and Completely Monotone Functions},
 urldate = {2025-03-31},
 volume = {39},
 year = {1938}
}

@article{wendland2005approximate,
  title={Approximate interpolation with applications to selecting smoothing parameters},
  author={Wendland, Holger and Rieger, Christian},
  journal={Numerische Mathematik},
  volume={101},
  pages={729--748},
  year={2005},
  publisher={Springer}
}

@article{Hardy2006,
author = {Hardy, Michael},
journal = {The Electronic Journal of Combinatorics [electronic only]},
language = {eng},
number = {1},
pages = {Research paper R1, 13 p., electronic only-Research paper R1, 13 p., electronic only},
publisher = {Prof. André Kündgen, Deptartment of Mathematics, California State University San Marcos, San Marcos},
title = {Combinatorics of partial derivatives.},
url = {http://eudml.org/doc/125513},
volume = {13},
year = {2006},
}

@book{Comtet1974,
author = {Comtet, Louis},
address = {Dordrecht},
booktitle = {Advanced combinatorics : the art of finite and infinite expansions},
edition = {Rev. and enl. ed.},
isbn = {9027703809},
language = {eng},
publisher = {Reidel},
title = {Advanced combinatorics : the art of finite and infinite expansions },
year = {1974},
}

@article{wilson1987,
  title={Geometry and respiratory displacement of human ribs},
  author={Wilson, TA and Rehder, K and Krayer, S and Hoffman, EA and Whitney, CG and Rodarte, JR},
  journal={Journal of Applied Physiology},
  volume={62},
  number={5},
  pages={1872--1877},
  year={1987}
}

@article{villard2011,
  title={Virtual reality simulation of liver biopsy with a respiratory component},
  author={Villard, Pierre-Fr{\'e}d{\'e}ric and Boshier, Piers and Bello, Fernando and Gould, Derek},
  journal={Liver Biopsy},
  year={2011},
  publisher={InTech}
}

@article{Mirtk,
  author={Rueckert, D. and Sonoda, L.I. and Hayes, C. and Hill, D.L.G. and Leach, M.O. and Hawkes, D.J.},
  journal={IEEE Transactions on Medical Imaging}, 
  title={Nonrigid registration using free-form deformations: application to breast MR images}, 
  year={1999},
  volume={18},
  number={8},
  pages={712-721},
  doi={10.1109/42.796284}}

@article{Tominec2025,
title = {Stability estimates for radial basis function methods applied to linear scalar conservation laws},
journal = {Applied Mathematics and Computation},
volume = {485},
pages = {129020},
year = {2025},
issn = {0096-3003},
doi = {https://doi.org/10.1016/j.amc.2024.129020},
url = {https://www.sciencedirect.com/science/article/pii/S0096300324004818},
author = {Igor Tominec and Murtazo Nazarov and Elisabeth Larsson}
}

@article{tensors2009,
author = {Kolda, Tamara G. and Bader, Brett W.},
title = {Tensor Decompositions and Applications},
journal = {SIAM Review},
volume = {51},
number = {3},
pages = {455-500},
year = {2009},
doi = {10.1137/07070111X},
URL = {https://doi.org/10.1137/07070111X},
eprint = { https://doi.org/10.1137/07070111X}
}

@book{lai2009continuumMech,
  title={Introduction to continuum mechanics},
  author={Lai, W Michael and Rubin, David and Krempl, Erhard},
  year={1993},
  edition={Third},
  publisher={Butterworth-Heinemann}
}

@book{mclean2000strongly,
  title={Strongly elliptic systems and boundary integral equations},
  author={McLean, William Charles Hector},
  year={2000},
  publisher={Cambridge university press}
}

@article{SurfPDEs_Dziuk_Elliott_2013, 
    title={Finite element methods for surface PDEs}, 
    volume={22}, 
    DOI={10.1017/S0962492913000056}, 
    journal={Acta Numerica}, 
    author={Dziuk, Gerhard and Elliott, Charles M.}, 
    year={2013}, 
    pages={289–396}
}

@book{adams2003sobolev,
  title={Sobolev spaces},
  author={Adams, Robert A and Fournier, John JF},
  volume={140},
  year={2003},
  publisher={Elsevier}
}

@article{schaback1996approximation,
  title={Approximation by radial basis functions with finitely many centers},
  author={Schaback, Robert},
  journal={Constructive Approximation},
  volume={12},
  number={3},
  pages={331--340},
  year={1996},
  publisher={Springer}
}

@article{SOMMARIVA2026116983,
title = {Unisolvence of unsymmetric random Kansa collocation by Gaussians and other analytic RBF vanishing at infinity},
journal = {Journal of Computational and Applied Mathematics},
volume = {475},
pages = {116983},
year = {2026},
issn = {0377-0427},
doi = {https://doi.org/10.1016/j.cam.2025.116983},
url = {https://www.sciencedirect.com/science/article/pii/S0377042725004972},
author = {Alvise Sommariva and Marco Vianello}
}

@article{kuchment2016overview,
  title={An overview of periodic elliptic operators},
  author={Kuchment, Peter},
  journal={Bulletin of the American Mathematical Society},
  volume={53},
  number={3},
  pages={343--414},
  year={2016}
}

@incollection{THOMPSON2003717,
title = {Convex Sets},
editor = {Robert A. Meyers},
booktitle = {Encyclopedia of Physical Science and Technology (Third Edition)},
publisher = {Academic Press},
edition = {Third Edition},
address = {New York},
pages = {717-737},
year = {2003},
isbn = {978-0-12-227410-7},
doi = {https://doi.org/10.1016/B0-12-227410-5/00146-0},
url = {https://www.sciencedirect.com/science/article/pii/B0122274105001460},
author = {A.C. Thompson}
}

@inproceedings{edelsbrunner1994triangulating,
  title={Triangulating topological spaces},
  author={Edelsbrunner, Herbert and Shah, Nimish R},
  booktitle={Proceedings of the tenth annual symposium on Computational geometry},
  pages={285--292},
  year={1994}
}

@article{Yan2009,
  TITLE = {{Isotropic Remeshing with Fast and Exact Computation of Restricted Voronoi Diagram}},
  AUTHOR = {Yan, Dong-Ming and L{\'e}vy, Bruno and Liu, Yang and Sun, Feng and Wang, Wenping},
  URL = {https://inria.hal.science/inria-00547790},
  JOURNAL = {{Computer Graphics Forum}},
  PUBLISHER = {{Wiley}},
  VOLUME = {28},
  NUMBER = {5},
  PAGES = {1445-1454},
  YEAR = {2009},
  MONTH = Jul,
  DOI = {10.1111/j.1467-8659.2009.01521.x},
  HAL_ID = {inria-00547790},
  HAL_VERSION = {v1},
}

@article{WrightFuselier2012,
author = {Fuselier, Edward and Wright, Grady B.},
title = {Scattered Data Interpolation on Embedded Submanifolds with Restricted Positive Definite Kernels: Sobolev Error Estimates},
journal = {SIAM Journal on Numerical Analysis},
volume = {50},
number = {3},
pages = {1753-1776},
year = {2012},
doi = {10.1137/110821846},
URL = {https://doi.org/10.1137/110821846},
eprint = {https://doi.org/10.1137/110821846}
}

@article{halton1960efficiency,
  title={On the efficiency of certain quasi-random sequences of points in evaluating multi-dimensional integrals},
  author={Halton, John H},
  journal={Numerische Mathematik},
  volume={2},
  number={1},
  pages={84--90},
  year={1960},
  publisher={Springer}
}

@article{Larsson_Heryudono_Michael_Piret_2026, title={A Matlab code for stable differentiation of radial basis function approximations}, volume={3}, url={https://ojs.unito.it/index.php/JAS/article/view/12293}, DOI={10.13135/3103-1935/12293}, abstractNote={&amp;lt;p&amp;gt;Radial basis function approximation allows for fitting of scattered data and for solving partial differential equations in non-trivial geometries. This requires solving potentially ill-conditioned linear systems of equations. The RBF-QR class of methods provides a change of basis that results in well-conditioned matrices in the same approximation space. In this paper, we derive improved algorithms for differentiation in the RBF-QR basis that solves some accuracy issues of the original implementation. We provide all second derivatives in three spatial dimensions, while previously only the Laplacian was implemented. We also provide a high level interface to compute differentiation matrices using either RBF-QR or the direct evaluation method.&amp;lt;/p&amp;gt;}, number={1}, journal={Journal of Approximation Software}, author={Larsson, Elisabeth and Heryudono, Alfa and Michael, Andreas and Piret, Cécile}, year={2026}, month={Feb.} }

@article{Gaur2016,
title = {Characterisation of human diaphragm at high strain rate loading},
journal = {Journal of the Mechanical Behavior of Biomedical Materials},
volume = {60},
pages = {603-616},
year = {2016},
issn = {1751-6161},
doi = {https://doi.org/10.1016/j.jmbbm.2016.02.031},
url = {https://www.sciencedirect.com/science/article/pii/S1751616116300042},
author = {Piyush Gaur and Anoop Chawla and Khyati Verma and Sudipto Mukherjee and Sanjeev Lalvani and Rajesh Malhotra and Christian Mayer}
}

@article{Gaur2019,
author = {Gaur, Piyush and Verma, Khyati and Chawla, A. and Mukherjee, Sudipto and Jain, Mohit and Mayer, Christian and Chitteti, Ravi and Ghosh, Pronoy and Malhotra, Rajesh and Lalwani, Sanjeev},
year = {2019},
month = {04},
pages = {284-298},
title = {A bilinear structural constitutive model for strain rate-dependent behaviour of human diaphragm tissue},
volume = {25},
journal = {International Journal of Crashworthiness},
doi = {10.1080/13588265.2019.1583423}
}

@article{DavisSuiteSparse,
author = {Davis, Timothy A.},
title = {Algorithm 915, SuiteSparseQR: Multifrontal multithreaded rank-revealing sparse QR factorization},
year = {2011},
issue_date = {November 2011},
publisher = {Association for Computing Machinery},
address = {New York, NY, USA},
volume = {38},
number = {1},
issn = {0098-3500},
url = {https://doi.org/10.1145/2049662.2049670},
doi = {10.1145/2049662.2049670},
journal = {ACM Trans. Math. Softw.},
month = dec,
articleno = {8},
numpages = {22}
}

@book{ryan2010anatomy,
  title={Anatomy for Diagnostic Imaging},
  author={Ryan, Stephanie and McNicholas, Michelle and Eustace, Stephen J},
  edition={3},
  year={2011},
  publisher={Elsevier Ltd}
}

@article{wilson2001respiratory,
  title={Respiratory effects of the external and internal intercostal muscles in humans},
  author={Wilson, Theodore A and Legrand, Alexandre and Gevenois, Pierre-Alain and De Troyer, Andr{\'e}},
  journal={The Journal of physiology},
  volume={530},
  number={2},
  pages={319--330},
  year={2001},
  publisher={Wiley Online Library}
}

@article{BastirThoraxMotion2017,
author = {Bastir, Markus and García-Martínez, Daniel and Torres-Tamayo, Nicole and Sanchis-Gimeno, Juan Alberto and O'Higgins, Paul and Utrilla, Cristina and Torres Sánchez, Isabel and García Río, Francisco},
title = {In Vivo 3D Analysis of Thoracic Kinematics: Changes in Size and Shape During Breathing and Their Implications for Respiratory Function in Recent Humans and Fossil Hominins},
journal = {The Anatomical Record},
volume = {300},
number = {2},
pages = {255-264},
doi = {https://doi.org/10.1002/ar.23503},
url = {https://anatomypubs.onlinelibrary.wiley.com/doi/abs/10.1002/ar.23503},
eprint = {https://anatomypubs.onlinelibrary.wiley.com/doi/pdf/10.1002/ar.23503},
year = {2017}
}

@book{guyton2006textbook,
  title={Textbook of medical physiology},
  author={Guyton, Arthur C and Hall, John E},
  year={2006},
  publisher={Elsevier inc}
}

@article{Pato11,
	Author = {Pato, M. P. and Santos, N. J. and Areias, P. and Pires, E. B. and {de Carvalho}, M. and Pinto, S. and Lopes, D. S.},
	Journal = {Comput Methods Biomech Biomed Engin},
	Number = 6,
	Pages = {505-513},
	Title = {Finite element studies of the mechanical behaviour of the diaphragm in normal and pathological cases},
	Volume = 14,
	Year = 2011}

@article{CheBor22,
title = {A theoretical framework for mechanics of diaphragm},
journal = {Journal of Biomechanics},
volume = {138},
pages = {111090},
year = {2022},
issn = {0021-9290},
doi = {10.1016/j.jbiomech.2022.111090},
author = {Chen, Yichao  and Aladin M. Boriek},
}

@Article{Zhang16,
  author = 	 {Zhang, Guangzhi and Chen, Xian and Ohgi, Junji and  Miura, Toshiro and  Nakamoto, Akira and  Matsumura, Chikanori and  Sugiura, Seiryo and Hisada, Toshiaki},
  title = 	 {Biomechanical simulation of thorax deformation using finite element approach},
  journal = 	 {Biomed. Eng. Online},
  year = 	 2016,
  eid =          18,
  volume = 	 15,
  pagetotal =    18,
  doi =          {10.1186/s12938-016-0132-y}}

@PhdThesis{Coelho18,
  author = 	 {Coelho, Brett},
  title = 	 {A Composite Material-based Computational Model for Diaphragm Muscle Biomechanical Simulation},
  school = 	 {The University of Western Ontario},
  year = 	 2018,
  type = 	 {{ME}ng thesis},
  address = 	 {London, ON, Canada},
  url = {https://hdl.handle.net/20.500.14721/31312}}

@ARTICLE{Ladjal25,
  author={Ladjal, Hamid and Beuve, Michael and Shariat, Behzad},
  journal={IEEE Trans. Med. Robot. Bionics}, 
  title={Patient-Specific Biomechanical Diaphragm-Ribs Respiratory Motion Model for Radiation Therapy}, 
  year={2025},
  volume={7},
  number={2},
  pages={802-813},
  doi={10.1109/TMRB.2025.3560383}}

@article {BabMel97,
    AUTHOR = {Babu{\v{s}}ka, I. and Melenk, J. M.},
     TITLE = {The partition of unity method},
   JOURNAL = {Internat. J. Numer. Methods Engrg.},
  FJOURNAL = {International Journal for Numerical Methods in Engineering},
    VOLUME = {40},
      YEAR = {1997},
    NUMBER = {4},
     PAGES = {727--758},
      ISSN = {0029-5981},
     CODEN = {IJNMBH},
   MRCLASS = {73V05},
  MRNUMBER = {1429534 (97j:73071)},
MRREVIEWER = {Carsten Carstensen},
DOI = {10.1002/(SICI)1097-0207(19970228)40:4<727::AID-NME86>3.3.CO;2-E}}

@incollection {Wend02,
    AUTHOR = {Wendland, Holger},
     TITLE = {Fast evaluation of radial basis functions: methods based on
              partition of unity},
 BOOKTITLE = {Approximation theory, {X} ({S}t. {L}ouis, {MO}, 2001)},
    SERIES = {Innov. Appl. Math.},
     PAGES = {473--483},
 PUBLISHER = {Vanderbilt Univ. Press},
   ADDRESS = {Nashville, TN},
      YEAR = {2002},
   MRCLASS = {41A30 (65D10)},
  MRNUMBER = {1924902 (2003f:41033)},
MRREVIEWER = {Vitaly E. Ma{\u\i}orov},
}

@article{LaMo02,
title = {Radial basis functions for the multivariate interpolation of large scattered data sets},
journal = {Journal of Computational and Applied Mathematics},
volume = {140},
number = {1},
pages = {521--536},
year = {2002},
note = {Int. Congress on Computational and Applied Mathematics 2000},
issn = {0377-0427},
doi = {https://doi.org/10.1016/S0377-0427(01)00485-X},
author = {Damiana Lazzaro and Laura B. Montefusco},
}

@article{ShcheLa16,
title = {Radial basis function partition of unity methods for pricing vanilla basket options},
journal = {Computers \& Mathematics with Applications},
volume = {71},
number = {1},
pages = {185--200},
year = {2016},
issn = {0898-1221},
doi = {10.1016/j.camwa.2015.11.007},
author = {Victor Shcherbakov and Elisabeth Larsson},
}

@article{AhlShche17,
title = {A meshfree approach to non-Newtonian free surface ice flow: Application to the Haut Glacier d'Arolla},
journal = {Journal of Computational Physics},
volume = {330},
pages = {633--649},
year = {2017},
issn = {0021-9991},
doi = {10.1016/j.jcp.2016.10.045},
author = {Josefin Ahlkrona and Victor Shcherbakov},
}

@article{Mirzaei21,
author = {Mirzaei, Davoud},
title = {The Direct Radial Basis Function Partition of Unity (D-RBF-PU) Method for Solving PDEs},
journal = {SIAM Journal on Scientific Computing},
volume = {43},
number = {1},
pages = {A54--A83},
year = {2021},
doi = {10.1137/19M128911X},    
}

\end{document}